\newcommand{\seq}{\coloneqq}
\numberwithin{equation}{section}
\newtheorem{Thm}{Theorem}[section]
\newtheorem{Cor}[Thm]{Corollary}
\newtheorem{Prop}[Thm]{Proposition}
\newtheorem{Lem}[Thm]{Lemma}
\newtheorem{Conj}[Thm]{Conjecture}
\newtheorem{THM}{Theorem}
\newtheorem{CONJ}[THM]{Conjecture}
\theoremstyle{definition}
\newtheorem{Def}[Thm]{Definition}
\newtheorem{Rem}[Thm]{Remark}
\newtheorem{Ex}[Thm]{Example}
\newcommand{\ol}{\overline}
\newcommand{\Hom}{\mathop{\mathrm{Hom}}\nolimits}
\newcommand{\Aut}{\mathop{\mathrm{Aut}}\nolimits}
\newcommand{\Ext}{\mathop{\mathrm{Ext}}\nolimits}
\newcommand{\Ker}{\mathop{\mathrm{Ker}}\nolimits}
\newcommand{\Cok}{\mathop{\mathrm{Cok}}\nolimits}
\newcommand{\id}{\mathrm{id}}
\newcommand{\Z}{\mathbb{Z}}
\newcommand{\Q}{\mathbb{Q}}
\newcommand{\C}{\mathbb{C}}
\newcommand{\kk}{\Bbbk}
\newcommand{\fg}{\mathfrak{g}}
\newcommand{\fh}{\mathfrak{h}}
\newcommand{\op}{\mathrm{op}}
\newcommand{\sQ}{\mathsf{Q}}
\newcommand{\tC}{\widetilde{C}}
\newcommand{\tc}{\widetilde{c}}
\newcommand{\gmod}{\text{-$\mathrm{gmod}$}}
\newcommand{\umod}{\text{-$\mathrm{mod}$}}
\newcommand{\ghom}{\mathop{\mathrm{hom}}\nolimits}
\newcommand{\gext}{\mathop{\mathrm{ext}}\nolimits}
\newcommand{\tQ}{\widetilde{Q}}
\newcommand{\ep}{\varepsilon}
\newcommand{\Cc}{\mathcal{C}}
\newcommand{\CC}{\mathscr{C}}
\newcommand{\fac}{\mathop{\mathrm{fac}}\nolimits}
\newcommand{\sub}{\mathop{\mathrm{sub}}\nolimits}
\newcommand{\tPi}{\widetilde{\Pi}}
\newcommand{\bD}{\mathbb{D}}
\newcommand{\Gr}{\mathrm{Gr}}
\newcommand{\hq}{\mathfrak{h}^*_{q,t}}
\newcommand{\eg}{\textrm{e.g}.~}
\newcommand{\lf}{\mathrm{l.f.}}
\newcommand{\se}{\mathrm{s}}
\newcommand{\tl}{\mathrm{t}}
\newcommand{\kb}{\mathbf{k}}
\newcommand{\Div}{\mathop{\mathrm{Div}}\nolimits}
\newcommand{\zero}{\mathop{\mathrm{zero}}\nolimits}
\newcommand{\Gg}{\mathcal{G}}
\subjclass[2020]{16G20, 17B37, 16W50, 17B67, 81R50}
\keywords{Deformed Cartan matrices; Generalized preprojective algebras; $(q, t)$-analogues and bigraded modules; Quantum affine algebras; $R$-matrices}
\thanks{R.F. was supported by JSPS Overseas Research Fellowships.}
\thanks{K.M. was supported by the Kyoto Top Global University project, Grant-in-Aid for JSPS Fellows (JSPS KAKENHI Grant Number JP21J14653) and JSPS bilateral program (Grant
Number JPJSBP120213210). This work was partly supported by Osaka City University Advanced Mathematical Institute: MEXT Joint Usage/Research Center on Mathematics and Theoretical Physics JPMXP0619217849.}
\title[DCM \and GPA]
{Deformed Cartan matrices and generalized preprojective algebras I: Finite type}
\date{\today}
\author[R.~Fujita]{Ryo Fujita}
\address[R.~Fujita]{Research Institute for Mathematical Sciences, Kyoto University, Oiwake-Kitashirakawa, Sakyo, Kyoto, 606-8502, Japan \& Institut de Math\'{e}matiques de Jussieu-Paris Rive Gauche, Universit\'{e} de Paris, F-75013, Paris, France}
\email{rfujita@kurims.kyoto-u.ac.jp}
\author[K.~Murakami]{Kota Murakami}
\address[K.~Murakami]{Department of Mathematics, Kyoto University, Kitashirakawa Oiwake-cho, Sakyo-ku, Kyoto, 606-8502, Japan}
\email{k-murakami@math.kyoto-u.ac.jp}
\begin{document} 
\maketitle
\begin{abstract}
We give an interpretation of the $(q,t)$-deformed Cartan matrices of finite type and their inverses in terms of bigraded modules over the generalized preprojective algebras of Langlands dual type in the sense of Gei\ss-Leclerc-Schr\"{o}er~[Invent.~math.~{\bf{209}} (2017)].
As an application, we compute the first extension groups between the generic kernels introduced by Hernandez-Leclerc~[J.~Eur.~Math.~Soc.~{\bf 18} (2016)], and propose a conjecture that their dimensions coincide with the pole orders of the normalized $R$-matrices between the corresponding Kirillov-Reshetikhin modules. 
\end{abstract}
\tableofcontents

\section*{Introduction}
Let $\fg$ be a complex finite-dimensional simple Lie algebra and $C=(c_{ij})_{i,j \in I}$ its Cartan matrix.
In their seminal work~\cite{FR98}, Frenkel-Reshetikhin introduced a certain two-parameter deformation $C(q,t)$ of the Cartan matrix $C$, which we call the \emph{$(q,t)$-deformed Cartan matrix}.
It is used to define a two-parameter deformation $\mathcal{W}_{q,t}(\fg)$ of the $\mathcal{W}$-algebra associated with $\fg$ (in type $\mathrm{A}$, it was previously introduced by \cite{AKOS, FF}), which is expected to ``interpolate" the representation ring of the quantum affine algebra $U_{q}'(\widehat{\fg})$ and that of its Langlands dual $U_t'({}^L\widehat{\fg})$ through appropriate specializations of the parameters $q$ and $t$.
Indeed, the specialization $C(q)\coloneqq C(q, 1)$ at $t=1$, often called the quantum Cartan matrix, or rather its inverse $\tC(q)$ appear ubiquitously as key combinatorial ingredients in the representation theory of the quantum affine algebra $U'_q(\widehat{\fg})$ and the Yangian $Y(\fg)$.
For example, they play an important role in the study of $q$-characters~\cite{FR,FM} and quantum Grothendieck rings~\cite{Nakqt,VV,Her,HL15}, the description of the commutative part of the universal $R$-matrices~\cite{KT2,GTL} and denominator formulas of the normalized $R$-matrices~\cite{Fuj,FO}.  
We also refer to~\cite{GW,CL} for their more recent appearances.

Among the properties of the matrix $\tC(q)$, it is remarkable that the coefficients of its formal Taylor expansion at $q=0$ show certain periodicity and positivity. At first,
they had been understood by experts merely as a consequence of case-by-case computation.
Recently, a unified proof using Weyl group combinatorics was given in \cite{HL15,Fuj} for simply-laced type, and in \cite{FO} for general type. 
See also \cite{GW} for another proof. 

On the other hand, it is known that some kinds of $q$-analogues of (symmetric) generalized Cartan matrices (GCM) appear as homological invariants arising from graded modules over certain classes of associative algebras (\eg \cite{HK,ET,IQ,Kels}). 
The aim of the present paper is to give a new interpretation of the deformed Cartan matrix $C(q,t)$ and its inverse $\tC(q,t)$ following a similar philosophy. 
Namely, we consider a (bi)graded version of the \emph{generalized preprojective algebras} introduced by Gei\ss-Leclerc-Schr\"{o}er~\cite{GLS} and study its relation to the deformed Cartan matrices. In their studies~\cite{GLS,GLS2,GLS3,GLS4,GLS5,GLS6}, they generalize several connections between the representation theory of quivers and Kac-Moody algebras associated with symmetric GCMs to symmetrizable settings, and they have introduced a $1$-Iwanaga-Gorenstein algebra and its ``double" called the generalized preprojective algebra. These algebras are given by specific quivers with relations, which depend on the choice of symmetrizable GCM and its symmetrizer. 
If (and only if) the GCM is of finite type, the generalized preprojective algebra becomes finite-dimensional and self-injective over a base field.
Note that the Lie algebra $\fg$ in the present paper is Langlands dual to the one in the works of Gei\ss-Leclerc-Schr\"oer and the previous works~\cite{M2,M} of the second named author.
In particular, our Cartan matrix is transposed to the one in \cite{GLS}. Remarkably, in their work~\cite{GLS6}, the root system of Langlands dual type has also appeared: They have classified finite bricks over their $1$-Iwanaga-Gorenstein algebra in terms of Schur roots associated with the transposed GCM in a viewpoint of the $\tau$-tilting theory~\cite{Asa,DIJ}.

As mentioned in \cite[\S 1.7.1]{GLS}, the definition of generalized preprojective algebras is inspired in part by the previous work~\cite{HL} of Hernandez-Leclerc on the representation theory of quantum affine algebras.
Indeed, a graded version of the generalized preprojective algebra of finite type already appeared there in the disguise of the Jacobian algebra $J_{\Gamma, W}$ associated with a certain infinite quiver $\Gamma$ with potential $W$. (The same quiver with potential also appeared in the context of theoretical physics, cf.~\cite{CdZ,Cec}.)
This fact also motivates us to study a relationship between the deformed Cartan matrices and (bi)graded modules over the generalized preprojective algebras.

Let us explain our results in the graded (i.e., $t=1$) setting for simplicity.
Let $r \in \{1,2,3\}$ be the lacing number of $\fg$ and $D = \mathrm{diag}(d_i \mid i \in I)$ the minimal left symmetrizer of $C$.  
They define the generalized preprojective algebra $\Pi$ over an arbitrary field. We endow $\Pi$ with a grading following~\cite{HL}.
In the main body of this paper, we actually endow $\Pi$ with a bigrading and work with bigraded $\Pi$-modules to establish the $(q,t)$-versions of the results below (see \S \ref{GPAdef} for definitions). 

There is a maximal indecomposable iterated self-extension $E_i$ of the simple $\Pi$-module $S_i$ associated with each index $i \in I$, called the generalized simple module.  
Though the algebra $\Pi$ has infinite global dimension and actually $E_i$ has infinite projective dimension, the \emph{graded} Euler-Poincar\'e pairing $\langle E_i, S_j \rangle_{q}$ makes sense as a formal Laurent series in $q$.
This is an advantage of our graded setting.
Indeed, in terms of the matrix $C(q)$, we obtain
\begin{equation}\label{eq:ESq}
 \langle E_i, S_j \rangle_{q} = \frac{q^{d_i}}{1-q^{2rh^\vee}} \left( C_{ij}(q) - q^{rh^\vee} C_{i^* j}(q)\right), \end{equation}
in $\Z(\!(q)\!)$ for each $i, j \in I$, where $h^\vee$ is the dual Coxeter number of $\fg$, and $i \mapsto i^*$ is the involution of $I$ given by the longest element of the Weyl group.
The denominator $1-q^{2rh^\vee}$ reflects the fact that the projective resolution of $E_i$ is periodic up to grading shift by degree $2rh^\vee$.
We prove this fact by using a kind of reflection functors for graded $\Pi$-modules and its interpretation by the braid group action studied by Bouwknegt-Pilch~\cite{BP} and Chari~\cite{Cha}.

On the other hand, we consider a certain $\Pi$-submodule $\bar{I}_i$ of the $i$-th indecomposable injective module $I_i$, which is dual to $E_i$ with respect to the graded Euler-Poincar\'e pairing, i.e., $\langle E_i, \bar{I}_j \rangle_{q,t} = \delta_{ij}$.
In the terminology of \cite{HL}, this $\bar{I}_i$ can be identified with the generic kernel corresponding to the $i$-th fundamental $U'_q(\widehat{\fg})$-module.
By an easy homological investigation, we deduce the following Theorem~\ref{THM:main} from the above formula \eqref{eq:ESq}. Note that we obtain a simple explanation for the aforementioned periodicity and positivity of the matrix $\tC(q)$ as an immediate consequence of Theorem \ref{THM:main}. 

\begin{THM}[$\Leftarrow$ Theorem~\ref{Thm:goal} \& Corollary~\ref{Cor:goal}] \label{THM:main} For any indices $i,j \in I$, let 
$\tC_{ij}(q) = \sum_{u\ge 0} \tc_{ij}(u)q^u \in \Z[\![q]\!]$
denote the formal Taylor expansion at $q=0$ of the $(i,j)$-entry of the matrix $\tC(q)$. Then, we have 
\begin{gather*} \dim_q e_i \bar{I}_j = q^{-d_j}\sum_{u=0}^{rh^\vee} \tc_{ij}(u)q^u, \allowdisplaybreaks \\
\tC_{ij}(q) = \frac{q^{d_j}}{1-q^{2rh^\vee}} \left( \dim_q e_i \bar{I}_j - q^{rh^\vee} \dim_q e_i \bar{I}_{j^*} \right).
\end{gather*}
\end{THM}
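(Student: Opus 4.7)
The strategy is to combine the Euler pairing formula~\eqref{eq:ESq} with the duality relation $\langle E_i, \bar{I}_j\rangle_q = \delta_{ij}$, reducing both parts of Theorem~\ref{THM:main} to a single matrix inversion. The graded Euler pairing $\langle E_i, -\rangle_q$ is additive on the graded Grothendieck group, so for any bigraded $\Pi$-module $M$ of appropriate finiteness,
\[
\langle E_i, M\rangle_q \;=\; \sum_{k \in I} (\dim_q e_k M)\,\langle E_i, S_k\rangle_q.
\]
Setting $B_{ij}(q) \seq \dim_q e_i \bar{I}_j$ and $A_{ij}(q) \seq \langle E_i, S_j\rangle_q$, applying this to $M = \bar{I}_j$ and using the duality hypothesis produces the single matrix equation $A(q)\,B(q) = \mathrm{Id}$.

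Next I would carry out the inversion explicitly. Let $D \seq \mathrm{diag}(q^{d_i})_{i \in I}$, let $\sigma$ denote the permutation matrix of the involution $i\mapsto i^*$ (so $\sigma^2 = \mathrm{Id}$ and $D\sigma = \sigma D$, since $d_i = d_{i^*}$), and set $W \seq \mathrm{Id} - q^{rh^\vee}\sigma$. Then \eqref{eq:ESq} rewrites as $A = \tfrac{1}{1-q^{2rh^\vee}}\,DWC$. Because $W\,(\mathrm{Id}+q^{rh^\vee}\sigma) = (1-q^{2rh^\vee})\,\mathrm{Id}$, we obtain
\[
B(q) \;=\; A(q)^{-1} \;=\; \tC(q)\,\bigl(\mathrm{Id} + q^{rh^\vee}\sigma\bigr)\,D^{-1},
\]
i.e.\ $B_{ij}(q) = q^{-d_j}\bigl(\tC_{ij}(q) + q^{rh^\vee}\tC_{i j^*}(q)\bigr)$. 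Solving for $\tC_{ij}(q)$ (using $d_j = d_{j^*}$ and $j^{**}=j$) yields the second formula of the theorem.

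For the first formula, the key additional input is that $\bar{I}_j$ is finite-dimensional (since $\Pi$ itself is finite-dimensional in the present finite-type setting), so $B_{ij}(q) \in \Z[q,q^{-1}]$ is a Laurent polynomial. Substituting this finiteness into the identity $B_{ij}(q) = q^{-d_j}\bigl(\tC_{ij}(q) + q^{rh^\vee}\tC_{i j^*}(q)\bigr)$ forces the cancellations $\tc_{ij^*}(0) = 0$ and $\tc_{ij}(rh^\vee + v) + \tc_{ij^*}(v) = 0$ for every $v > 0$, which truncates the right-hand side precisely to $q^{-d_j}\sum_{u=0}^{rh^\vee}\tc_{ij}(u)\,q^u$. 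As a bonus, the periodicity and positivity of $\tC(q)$ drop out as immediate corollaries.

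The main obstacle I expect is setting up the Grothendieck-group pairing rigorously enough to legitimate the first display. Since $\Pi$ has infinite global dimension and $E_i$ only admits a projective resolution that is periodic up to a shift by $2rh^\vee$, the Euler pairing $\langle E_i, S_k\rangle_q$ naturally lives in $\Z(\!(q)\!)$ rather than $\Z[q,q^{-1}]$, and one must check that the term-by-term expansion of $\langle E_i, \bar{I}_j\rangle_q$ over simple composition factors converges in this completion. Once this bookkeeping is handled by the $(q,t)$-graded framework of the earlier sections, the rest of the proof is the two-line matrix computation above.
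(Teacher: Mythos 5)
Your matrix-inversion argument for the \emph{second} displayed formula is essentially the paper's own proof in a mildly different dress: where you expand $[\bar{I}_j]$ into composition factors and pair against $E_i$ (getting $A(q)B(q)=\mathrm{Id}$), the paper expands $[P_i]$ along its $E$-filtration and pairs against $S_j$ (getting the product in the other order); both rest on the duality $\gext^\bullet_\Pi(E_i,\bar{I}_j)$ of Lemma~\ref{Lem:extEI}, on formula \eqref{eq:ESq}, and on the well-definedness/additivity of the pairing, which indeed only holds after passing to the bigraded setting (Proposition~\ref{Prop:heart}; Example~\ref{Ex:EP} shows the purely $q$-graded pairing is ill-defined), exactly the bookkeeping you defer to. Up to that deferral, this part is fine, and the inversion $B_{ij}(q)=q^{-d_j}\bigl(\tC_{ij}(q)+q^{rh^\vee}\tC_{ij^*}(q)\bigr)$ is correct.

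The genuine gap is in your derivation of the \emph{first} formula. From the identity above, knowing only that $\dim_q e_i\bar{I}_j$ is a Laurent polynomial (finite-dimensionality of $\bar{I}_j$) gives merely that the coefficients $\tc_{ij}(u)+\tc_{ij^*}(u-rh^\vee)$ vanish for $u$ \emph{sufficiently large}; it does not force the series to truncate exactly at $u=rh^\vee$, nor the cancellation $\tc_{ij}(rh^\vee+v)+\tc_{ij^*}(v)=0$ for \emph{every} $v>0$. A Laurent polynomial could a priori have top degree well above $rh^\vee-d_j$, so your "forced cancellations'' (and hence the claimed bonus of periodicity and positivity) do not follow; indeed the periodicity is exactly what is being established here, not an input. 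What is missing is an a priori degree bound on $e_i\bar{I}_j$, namely that $q^{d_j}\dim_q e_i\bar{I}_j$ is supported in degrees between $0$ and $rh^\vee$ (Corollary~\ref{Cor:dimIbd}). In the paper this is genuinely nontrivial: it comes from the explicit braid-group expression for $\dim_{q,t}e_i\bar{I}_j$ along a reduced word for $w_0$ (Proposition~\ref{Prop:IT}, via the $E$-filtration of $P_i$) together with Lemma~\ref{Lem:quadrant}, and from the self-duality $\bD(\bar{I}_i)\cong q^{2d_i-rh^\vee}t^{h-2}\bar{I}_{i^*}$ of Corollary~\ref{P=I}, which itself rests on the periodic resolution of Theorem~\ref{Ker}. (At $t=1$ the lower bound is automatic from your identity and Lemma~\ref{Lem:tc}, but the upper bound still requires the duality of Corollary~\ref{P=I} or an equivalent input.) Without supplying such a bound, the first displayed formula of Theorem~\ref{THM:main} — and with it Corollary~\ref{Cor:goal}-type positivity and periodicity — is not proved by your argument.
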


We believe that Theorem~\ref{THM:main} shows one aspect of the relationship between the representation theory of generalized preprojective algebras and that of affine quantum groups, especially in non-simply-laced type.
It seems interesting to relate our result with other aspects studied in~\cite{YZ,NW} in a more geometric manner. 

As an application of Theorem~\ref{THM:main}, we compute the first extension groups between the generic kernels corresponding to the Kirillov-Reshetikhin (KR) modules in the sense of~\cite{HL}.
These generic kernels are certain modules over the Jacobian algebra $J_{\Gamma, W}$, which can be obtained as non-trivial self-extensions of the fundamental generic kernels $\bar{I}_i$.
By Hernandez-Leclerc's geometric character formula, the $F$-polynomials of these generic kernels give the $q$-characters of the KR modules.   
In this paper, we show that the dimension of the first extension groups between these generic kernels can be written in terms of the coefficients $\tc_{ij}(u)$. 
Then we compare the result with the conjectural denominator formula of the normalized $R$-matrices between the KR modules proposed in the previous work~\cite{FO} of Se-jin Oh and the first named author.
As a result, we find several pieces of evidence for the following conjecture, which we newly propose in this paper as a generalization of the conjectural denominator formula in \cite{FO}. 

\begin{CONJ}[$=$ Conjecture~\ref{Conj:main2}] \label{CONJ}
The dimensions of the first extension groups between these generic kernels coincide with the pole orders of the normalized $R$-matrices between the corresponding Kirillov-Reshetikhin modules.
\end{CONJ}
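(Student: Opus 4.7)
The strategy is to reduce Conjecture~\ref{CONJ} to the comparison of two explicit expressions, each written in the coefficients $\tc_{ij}(u)$ of the inverse $(q,t)$-deformed Cartan matrix, and then to rely on the conjectural denominator formula of~\cite{FO} as the bridge between them.

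First, I would compute the first extension groups between the generic kernels attached to pairs of Kirillov--Reshetikhin modules. These generic kernels are realized as iterated self-extensions of the fundamental generic kernels $\bar{I}_i$ inside the Jacobian algebra $J_{\Gamma,W}$, so one can assemble their (bigraded) projective resolutions by stacking the resolutions of the $\bar{I}_i$. Applying the $(q,t)$-refinement of Theorem~\ref{THM:main}, together with the duality identity $\langle E_i, \bar{I}_j \rangle_{q,t} = \delta_{ij}$ and the $2rh^\vee$-periodicity of the projective resolution of $E_i$, the bigraded Euler--Poincar\'e pairing then expresses $\dim \Ext^1$ between two such generic kernels as a finite sum of coefficients $\tc_{ij}(u)$ taken over an interval prescribed by the spectral parameters of the KR modules in question.

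Second, on the quantum affine side, the denominator formula conjectured in~\cite{FO} writes the pole order of the normalized $R$-matrix between two KR modules as a sum of precisely the same coefficients $\tc_{ij}(u)$ over an interval governed by the same spectral parameters. The conjecture then follows by a term-by-term identification of the two sums, using the $\ast$-involution, the Langlands-dual symmetries of $\tC(q,t)$ recorded in the paper, and the periodicity of $\tc_{ij}(u)$ modulo $2rh^\vee$. Concrete evidence can be accumulated by specialization: in simply-laced type, both sides reduce to the classical formulas of Akasaka--Kashiwara and Kang--Kashiwara--Kim; in small non-simply-laced cases such as $\mathrm{B}_2$ and $\mathrm{G}_2$, the pole orders can be computed by hand and matched against the Ext side.

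The principal obstacle is that the denominator formula of~\cite{FO} is itself still conjectural in general non-simply-laced type, so the present conjecture is essentially as deep; a fully unconditional proof would presumably require a conceptual explanation of why the pole structure of the $R$-matrix is controlled by the homological data of the generalized preprojective algebra of Langlands dual type, perhaps through a geometric or categorical realization of the KR modules via bigraded $\Pi$-modules. A secondary technical difficulty is to rule out spurious contributions from extension degrees in $J_{\Gamma,W}$ beyond those captured by the periodic resolution of the $E_i$; this is handled by careful bidegree tracking available only in the $(q,t)$-refined framework, and would fail at the $t=1$ specialization where the bigraded information collapses.
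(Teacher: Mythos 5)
First, note that the statement you are asked about is a \emph{conjecture}: the paper does not prove it, but rather (i) computes $\gext^1_{\tPi}(K^{(i)}_k,K^{(j)}_l)$ in closed form in terms of the coefficients $\tc_{ij}(u)$, (ii) shows that away from the exceptional case $(\clubsuit)$ the resulting statement is \emph{equivalent} to the conjectural denominator formula of \cite{FO}, and (iii) supplies extra evidence in the exceptional case by direct computation. Your overall strategy --- express both the Ext side and the pole-order side in the $\tc_{ij}(u)$ and use the \cite{FO} conjecture as the bridge --- is indeed the paper's route, and you correctly identify that the outcome is conditional. But two of your technical steps would not work as stated. The Ext computation in the paper does not go through bigraded Euler--Poincar\'e pairings or stacked resolutions of the $\bar{I}_i$: the pairing only produces the alternating sum $\sum_m(-1)^m\dim\gext^m$, so it cannot isolate $\dim\Ext^1$, and the generalized preprojective algebra has infinite global dimension, so there is no truncation that would let you extract the first extension group from it. Instead the paper uses the defining short exact sequence $0\to K^{(i)}_k\to q^{kd_i}I_i(\infty)\to q^{-kd_i}I_i(\infty)\to 0$ as a length-one injective resolution in $\Pi(\infty)\gmod$, which yields the explicit identification $\gext^1_{\tPi}(K^{(i)}_k,K^{(j)}_l)\cong q^{-kd_i-ld_j}\,e_i\bigl(\tPi/(\ep_i^k\tPi+\tPi\ep_j^l)\bigr)e_j$; the dimension count then rests on the algebraic inclusion $e_i\ep_i^k\tPi e_j\subset e_i\tPi\ep_j^l e_j$ (Lemma~\ref{Lem:incl}), not on any pairing. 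Relatedly, your claim that the argument ``would fail at the $t=1$ specialization'' is misplaced here: the collapse at $t=1$ only affects the well-definedness of the Euler--Poincar\'e pairing (Example~\ref{Ex:EP}); the $\Ext^1$ computation is carried out in the singly graded setting of Sections~\ref{Sec:Rem}--\ref{Sec:K} without difficulty.

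The second gap is the exceptional case $(\clubsuit)$ (types $\mathrm{C}_n$, $\mathrm{F}_4$, $\mathrm{G}_2$ with $d_i=d_j=1$ and $k=l\notin r\Z$). There the formula \eqref{eq:KRd} of \cite{FO} is excluded, the naive expression for $\dim_q\gext^1$ acquires a correction term $\Delta_{ij}(q)$ (Proposition~\ref{Prop:dext2}), and the paper's conjecture strictly \emph{generalizes} the denominator conjecture rather than following from it; the supporting evidence there (Proposition~\ref{Prop:evid}) comes from hand computations in the appendix matched against denominators known from \cite{AK,OS19}, not from any term-by-term identification with \eqref{eq:KRd}. A plan that relies solely on matching with the \cite{FO} formula would silently miss exactly these cases, which are the main new evidence the paper offers for extending the conjecture beyond \cite{FO}. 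Finally, a small point: the list of cases where the denominator formula is actually known is types $\mathrm{A}_n,\mathrm{B}_n,\mathrm{C}_n,\mathrm{D}_n,\mathrm{G}_2$ and the case $(k,l)=(r/d_i,1)$, not a reduction of ``both sides'' to classical simply-laced formulas.
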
 

Since the original definition of $q$-characters in \cite{FR} involves the $R$-matrices, it seems natural that the generic kernels should contain some information of the $R$-matrices in light of the geometric character formula. Conjecture~\ref{CONJ} suggests one of concrete connections between the generic kernels and the $R$-matrices.

We also believe that Conjecture~\ref{CONJ} can be understood from a cluster-theoretic point of view as follows:
It is known that certain algebraic structures arising from the quiver with potential $(\Gamma, W)$ give rise to an \emph{additive} categorification of the cluster algebra associated with $\Gamma$ (see \eg \cite{DWZ1,DWZ2,Ami,FK,KY,BIRS}).
On the other hand, essentially the same cluster algebra is \emph{monoidally} categorified by a category of modules over the quantum affine algebra $U'_q(\widehat{\fg})$, as was originally conjectured in \cite{HL} and proved very recently by Kashiwara-Kim-Oh-Park~\cite{KKOP,KKOP2}. 
Roughly speaking, the notion of a cluster corresponds to a maximal mutually $\Ext^1$-vanishing collection of indecomposable rigid objects in an additive categorification, while it corresponds to a maximal mutually commuting collection of prime real simple objects in a monoidal categorification.    
Note that the latter commutativity is essentially equivalent to the regularity of the corresponding normalized $R$-matrices. 
Furthermore, it is also notable that normalized $R$-matrices with non-trivial poles play an important role when we monoidally categorify the exchange relations.
On the other hand, in an additive categorification, the exchange relations correspond to some non-trivial extensions of indecomposable rigid objects.   
Thus, one may interpret Conjecture~\ref{CONJ} as a suggestion of a partial coincidence of the numerical characteristics between additive and monoidal categorifications of the same cluster algebra.   

Recently, certain deformed Cartan matrices of more general types are considered in a study of theoretical physics \cite{KP}. This kind of generalized deformed Cartan matrices have appeared in the representation theory of deformed $\mathcal{W}$-algebras beyond finite types (cf.~\cite{FJMV}). So, it seems interesting if we can find such a natural generalization of deformed Cartan matrix as a homological invariant of the generalized preprojective algebra other than of finite type. 
We plan to come back to this problem in the future. 

\subsection*{Organization}
This paper is organized as follows.
In Section~\ref{Sec:Cartan}, we define the $(q,t)$-deformed Cartan matrices of finite type and discuss the braid group action arising from them.
Section~\ref{Sec:GPA} is a preliminary on the generalized preprojective algebras of finite type in the bigraded setting.  
Section~\ref{Sec:main} is the main part of this paper. 
We discuss bigraded projective resolutions of the generalized simple modules, $E$-filtrations of projective modules and bigraded Euler-Poincar\'e pairings.
We establish the $(q,t)$-versions of the formula \eqref{eq:ESq} and Theorem~\ref{THM:main} in \S \ref{Ssec:main}. 
In Section~\ref{Sec:Rem}, we switch to consider the graded modules (rather than bigraded modules) and prepare some materials we need in the sequel. 
In Section~\ref{Sec:K}, we study the generic kernels corresponding to the KR modules.
We compute all the first extension groups between them explicitly in terms of the matrix $\tC(q)$.
In \S \ref{Ssec:conj}, we compare our computation with the conjectural denominator formula of normalized $R$-matrices, and propose Conjecture~\ref{CONJ}. 
Computations for a few exceptions are postponed in Appendix~\ref{Apx}. 

\subsection*{Conventions and notation}
Throughout this paper, we fix an arbitrary commutative field $\kk$. 
We always refer to an algebra as a (not necessarily unital) associative algebra over $\kk$.
For an algebra $A$, we denote by $A^\op$ (resp.~$A^\times$) its opposite algebra (resp.~multiplicative group of invertible elements). When we refer to a module over an algebra $A$, it means a left $A$-module unless specified otherwise. We naturally identify a right $A$-module with an $A^{\op}$-module. We say that a subcategory $\mathcal{C}$ of an exact category $\mathcal{E}$ is extension-closed (or closed under extensions) if for any conflation $0 \rightarrow L \rightarrow M \rightarrow N \rightarrow 0$, if $L\in \mathcal{E}$ and $N \in \mathcal{E}$, then so does $M$. 
By a grading, we always mean a $\Z$-grading, and hence by a bigrading, we mean a $\Z^2$-grading. 
For a statement $P$, we set $\delta(P)$ to be $1$ or $0$
according that $P$ is true or not.
As a special case, we set $\delta_{xy} \seq \delta(x=y)$ (Kronecker's delta).   

\section{Deformed Cartan matrices}
\label{Sec:Cartan}

In this section, we introduce the $(q,t)$-deformed Cartan matrix of finite type following Frenkel-Reshetikhin~\cite{FR}. 
We also discuss the braid group actions arising from them following Bouwknegt-Pilch~\cite{BP} and Chari~\cite{Cha}. 

\subsection{Notation}\label{subsec:notation}

Let $\fg$ be a complex finite-dimensional simple Lie algebra and let $C=(c_{ij})_{i,j \in I}$ be its Cartan matrix.
We set $n \seq \#I$. 
For any distinct $i,j \in I$, we write $i \sim j$ if $c_{ij} < 0$. 
Let $r$ denote the lacing number of $\fg$, which is defined by
\[
r \seq \begin{cases}
1 & \text{if $\fg$ is of type $\mathrm{A}_{n},\mathrm{D}_{n}$, or $\mathrm{E}_{6,7,8}$}, \\
2 & \text{if $\fg$ is of type $\mathrm{B}_{n}, \mathrm{C}_{n}$, or $\mathrm{F}_{4}$}, \\
3 & \text{if $\fg$ is of type $\mathrm{G}_{2}$}.
\end{cases}
\]
We say that $\fg$ is of simply-laced type if $r=1$.  
Let $D=\mathrm{diag}(d_i \mid i \in I)$ denote the minimal left symmetrizer of $C$ (see Table~\ref{table:num}).
Namely, $(d_i)_{i \in I}$ is the unique $I$-tuple of positive integers which are mutually coprime and satisfy $d_ic_{ij}=d_jc_{ji}$ for all $i,j \in I$.
We have $d_i \in \{1,r\}$ for any $i \in I$,
and the following relation holds:
\begin{equation} \label{by_ceil}
c_{ij} = \begin{cases}
2 & \text{if $i=j$}, \\
-\lceil d_j / d_i \rceil & \text{if $i \sim j$}, \\
0 & \text{else.}
\end{cases}
\end{equation}
We set $b_{ij} \seq d_ic_{ij}$ for each $i,j \in I$. 
Note that we have $b_{ij} = b_{ji}$ for any $i,j \in I$, and
\begin{equation}
b_{ij} = -d_i \lceil d_j / d_i\rceil = -\max(d_i, d_j) \qquad \text{if $i \sim j$.}
\end{equation}

\begin{table}[htbp]\centering
 { \arraycolsep=1.6pt\def\arraystretch{1.0}
\begin{tabular}{|c|c|c|c|c|}
\hline
$r$ & type of $\fg$ & $(d_i)_{i \in I}$ & $h$ & $h^{\vee}$  \\
\hline
\hline
& $\mathrm{A}_{n}$ & $(1, \ldots, 1)$& $n+1$ & $n+1$  \\
 $1$ & $\mathrm{D}_{n}$ & $(1, \ldots, 1)$ & $2n-2$ &$2n-2$  \\
  & $\mathrm{E}_{6,7,8}$ & $(1, \ldots, 1)$ & $12, 18, 30$ & $12, 18, 30$ \\
\hline
& $\mathrm{B}_{n}$ & $(2, \ldots, 2, 1)$ & $2n$ & $2n-1$ \\
$2$ & $\mathrm{C}_{n}$ & $(1, \ldots, 1, 2)$ & $2n$ &  $n+1$ \\
 & $\mathrm{F}_{4}$ & $(2,2,1,1)$ & $12$ & $9$ \\
\hline
$3$ & $\mathrm{G}_{2}$& $(3,1)$ & $6$ & $4$ \\
\hline
\end{tabular}
  }\\[1.5ex]
\caption{Basic numerical data} \label{table:num} 
\end{table}

\begin{Rem} \label{Ldual}
Note that the matrix $rD^{-1} = \mathrm{diag}(r/d_i \mid i \in I)$ gives the minimal left symmetrizer of the transposed Cartan matrix ${}^{\mathtt{t}}C = (c_{ji})_{i,j \in I}$.
\end{Rem}

Let $\alpha_i$ be the $i$-th simple root of $\fg$ for each $i \in I$ and 
$\sQ \seq \bigoplus_{i \in I}\Z \alpha_i$ the root lattice.
For each $i \in I$, the $i$-th simple reflection $s_i$ is defined to be the $\Z$-linear transformation of $\sQ$ given by $s_i (\alpha_j) = \alpha_j - c_{ij}\alpha_i$ for any $j \in I$. 
The Weyl group $W_\fg$ of $\fg$ is the subgroup of $\Aut_\Z(\sQ)$ generated by the simple reflections $\{ s_i \}_{i \in I}$.
The pair $(W_\fg, \{s_i\}_{i \in I})$ forms a finite Coxeter system. 
Let $w_0$ denote the longest element of $W_\fg$. 
It induces the involution $i \mapsto i^*$ of the set $I$ by $w_0(\alpha_i) = - \alpha_{i^*}$.
This involution gives the non-trivial automorphism of the Dynkin diagram of $\fg$ if and only if $\fg$ is either of type $\mathrm{A}_n$ (for any $n$), $\mathrm{D}_n$ (for $n$ odd) or $\mathrm{E}_6$.

\subsection{Deformed Cartan matrices}\label{subsection:DCM}

Let $q$ and $t$ be indeterminates. 
For an integer $k$, we set 
\[
[k]_{q} \seq  \frac{q^{k}-q^{-k}}{q-q^{-1}},
\]
which is an element of $\Z[q^{\pm 1}]$.
Following \cite{FR98}, we consider the $\Z[q^{\pm 1}, t^{\pm 1}]$-valued $I \times I$-matrix $C(q,t)$ whose $(i,j)$-entry $C_{ij}(q,t)$ is given by
\[
C_{ij}(q,t) \seq\begin{cases}
q^{d_i}t^{-1} + q^{-d_i}t & \text{if $i=j$}, \\
[c_{ij}]_q & \text{if $i\neq j$}. 
\end{cases}
\]
Specializing $(q,t)$ to $(1,1)$, we get $C(1,1) = C$.
Thus the matrix $C(q,t)$ gives a $(q,t)$-deformation of the Cartan matrix $C$ of $\fg$.
Specializing $t$ to $1$, we define $C(q)\seq C(q,1)$, which is sometimes referred to as the quantum Cartan matrix of $\fg$.
When $\fg$ is of simply-laced type, we have $C(q,t) = C(qt^{-1})$.
For general $\fg$, we have $[d_i]_q C_{ij}(q,t) = [d_i c_{ij}]_q$ whenever $i\neq j$, and hence the matrix $([d_i]_q C_{ij}(q,t))_{i,j \in I}$ is symmetric. 

Let $q^{\pm D} \seq \mathrm{diag}(q^{\pm d_i} \mid i \in I)$.
We see that the matrix $C(q,t)$ can be written in the form
\[ C(q,t) = (\id - A(q,t)) q^{-D}t\]
with some $A(q,t) \in qt^{-1}\cdot\mathfrak{gl}_I(\Z[q,t^{-1}])$ (cf.~\cite[Lemma 4.3]{FO}).
Therefore $C(q,t)$ is invertible as an element of $GL_I(\Z[\![q,t^{-1}]\!][(qt^{-1})^{-1}])$.
We write $\tC(q,t)$ for its inverse. 
With the above notation, we have
\begin{equation} \label{eq:tC} 
\tC(q,t) = q^D t^{-1} \left( \id + \sum_{k =1}^{\infty} A(q,t)^k \right). 
\end{equation}
For each $i,j\in I$, we express the $(i,j)$-entry $\tC_{ij}(q,t)$ of the matrix $\tC(q,t)$ as
\[ \tC_{ij}(q,t) = \sum_{u,v \in \Z} \tc_{ij}(u,v)q^u t^v\]
with $\tc_{ij}(u,v) \in \Z$.
The equation \eqref{eq:tC} implies the following.

\begin{Lem} \label{Lem:tc}
For each $i,j \in I$, we have 
\begin{enumerate}
\item $\tc_{ij}(u,v)=0$ if $u \le d_i$ or $v \ge -1$ but $(u,v) \neq (d_i, -1)$,
\item $\tc_{ij}(d_i,-1) = \delta_{ij}$.\qedhere
\end{enumerate}
\end{Lem}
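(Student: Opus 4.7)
The plan is to read both claims directly off the explicit series expansion \eqref{eq:tC}, relying only on the degree pattern of the matrix $A(q,t)$. Since $A(q,t)$ lies in $qt^{-1}\cdot\mathfrak{gl}_I(\Z[q,t^{-1}])$, every monomial $q^a t^b$ appearing in any entry of $A(q,t)$ satisfies $a\geq 1$ and $b\leq -1$.

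First, a straightforward induction on $k\geq 1$ shows that the entries of $A(q,t)^k$ are $\Z$-linear combinations of monomials $q^a t^b$ with $a\geq k$ and $b\leq -k$ (the induction step is immediate, as multiplying two such monomials only raises the $q$-exponent and lowers the $t$-exponent by at least $1$ each). In particular, every entry of $\sum_{k\geq 1}A(q,t)^k$ lies in the $\Z$-span of $\{q^a t^b \mid a\geq 1,\ b\leq -1\}$, so the series converges entrywise in $\Z[\![q,t^{-1}]\!]$. Adding the identity matrix contributes a constant $\delta_{ij}$ to the $(i,j)$-entry, and left-multiplying the result by $q^D t^{-1}$ scales this entry by $q^{d_i}t^{-1}$. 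This yields the decomposition
\[
\tC_{ij}(q,t)\;=\;\delta_{ij}\,q^{d_i}t^{-1}\;+\;\bigl(\text{terms }q^u t^v\text{ with }u\geq d_i+1,\ v\leq -2\bigr).
\]

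Claim (2) then reads off as the coefficient of $q^{d_i}t^{-1}$ on the right-hand side. For (1), I would argue by contraposition: if $\tc_{ij}(u,v)\neq 0$ and $(u,v)\neq (d_i,-1)$, then the above decomposition forces $u\geq d_i+1$ and $v\leq -2$, so in particular neither $u\leq d_i$ nor $v\geq -1$ can hold.

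There is no real obstacle here; the lemma is a purely formal consequence of \eqref{eq:tC} together with the fact that $A(q,t)\in qt^{-1}\cdot\mathfrak{gl}_I(\Z[q,t^{-1}])$. The only nontrivial-looking step is the inductive degree bookkeeping for the entries of $A(q,t)^k$, which is essentially a one-line observation.
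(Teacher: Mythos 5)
Your argument is correct and is essentially the paper's own (implicit) proof: the lemma is stated as an immediate consequence of \eqref{eq:tC} together with $A(q,t)\in qt^{-1}\cdot\mathfrak{gl}_I(\Z[q,t^{-1}])$, and your degree bookkeeping for the entries of $A(q,t)^k$ simply spells out that reading-off. Nothing is missing.
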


\begin{Ex}
Let $\fg$ be of type $\mathrm{C}_2$ $(= \mathrm{B}_2)$ and we set $I = \{1,2\}$ with $(d_1,d_2) = (1,2)$.
By definition, we have
\[
C = \begin{pmatrix}2&-2\\-1&2\end{pmatrix} \quad \text{and} \quad
C(q,t) = \begin{pmatrix} qt^{-1} + q^{-1}t & -(q+q^{-1}) \\ -1 & q^2t^{-1} + q^{-2}t \end{pmatrix}.
\]
Since $\mathop{\mathrm{det}}C(q,t) = q^3t^{-2}+q^{-3}t^2 = q^{-3}t^2(1+q^6t^{-4})$, we have
\[
\tC(q,t) = \frac{q^3t^{-2}}{1+q^6t^{-4}}\begin{pmatrix}q^2t^{-1}+q^{-2}t&q+q^{-1}\\1&qt^{-1}+q^{-1}t\end{pmatrix}. 
\]
Here, we observe that the expansion coefficients exhibit a quasi-periodicity ($\tc_{ij}(u+6, -v-4) = -\tc_{ij}(u,-v)$ for any $i,j \in I$ and $u, v \in \Z_{\ge 0}$) and that the entries of the matrix $(q^3t^{-2}+q^{-3}t^2)\tC(q,t)$ are palindromic polynomials (i.e., invariant under the exchange $(q,t) \leftrightarrow (q^{-1}, t^{-1})$) with non-negative coefficients. 
Later, we show that these remarkable combinatorial properties hold for general $\fg$ (see Corollary~\ref{Cor:properties}).
\end{Ex}

\subsection{Braid group action}\label{subsec:Braid}
We consider an $n$-dimensional $\Q(q,t)$-vector space $\hq$ given by 
\[ \hq \seq \Q(q,t)\otimes_{\Z}\sQ = \bigoplus_{i \in I} \Q(q,t)\alpha_i.\]
We endow $\hq$ with a non-degenerate symmetric $\Q(q,t)$-bilinear pairing $(-,-)_{q,t}$ by
\[ (\alpha_i, \alpha_j)_{q,t} \seq [d_i]_q C_{ij}(q,t)\]
for each $i,j \in I$.
Let $\{\alpha_i^\vee\}_{i \in I}$ be another basis of $\hq$ defined by $\alpha_i^\vee \seq q^{-d_i}t\alpha_i/[d_i]_q$. 
We have $(\alpha_i^\vee, \alpha_j)_{q,t} = q^{-d_i}tC_{ij}(q,t)$ for any $i,j \in I$.
Let $\{ \varpi_i^\vee \}_{i\in I}$ denote the dual basis of $\{\alpha_i\}_{i \in I}$ with respect to $(-,-)_{q,t}$. 
We also consider the element $\varpi_i \seq [d_i]_q \varpi_i^\vee$ for each $i \in I$.
It is thought of a deformation of the $i$-th fundamental weight.
With our conventions, we have $(\varpi_i, \alpha_j^\vee)_{q,t} = \delta_{ij}q^{-d_i}t$ for any $i, j \in I$, and
\[ \alpha_i = \sum_{j \in I}C_{ji}(q,t)\varpi_j, \qquad
\alpha_i^\vee = q^{-d_i}t\sum_{j \in I} C_{ij}(q,t) \varpi_j^\vee
\]
for each $i \in I$.

Let $B_\fg$ denote the braid group associated with the Coxeter system $(W_\fg, \{s_i\}_{i \in I})$. 
It is the group presented by the generators $\{T_i\}_{i \in I}$ which subject to the relations:
\begin{alignat*}{2}
T_i T_j &= T_j T_i &\qquad & \text{if $c_{ij}=0$}, \\
T_i T_j T_i &= T_j T_i T_j && \text{if $c_{ij}c_{ji}=1$}, \\
(T_i T_j)^r &= (T_j T_i)^r && \text{if $c_{ij}c_{ji}=r>1$}. 
\end{alignat*}
For any $w \in W_\fg$, we choose a reduced expression $w = s_{i_1}s_{i_2} \cdots s_{i_l}$ and set $T_w \seq T_{i_1} T_{i_2} \cdots T_{i_l} \in B_{\fg}$.
The element $T_w$ is independent of the choice of reduced expression of $w$.

Following \cite[\S 3]{BP} and \cite[\S 3]{Cha}, we define an action of the braid group $B_\fg$ on the $\Q(q,t)$-vector space $\hq$ by
\begin{equation} \label{Baction}
T_i \lambda \seq \lambda - (\alpha_i^\vee, \lambda)_{q,t} \alpha_i 
\end{equation}
for any $\lambda \in \hq$.
Equivalently, in terms of the basis $\{ \alpha_i \}_{i \in I}$, we have
\begin{equation} \label{Troot}
T_i^{\pm 1} \alpha_j = \alpha_j - q^{\mp d_i}t^{\pm 1}C_{ij}(q,t) \alpha_i.
\end{equation}
Thus the action \eqref{Baction} is a $(q,t)$-deformation of the action of the Weyl group $W_\fg$ on $\fh^*$ given in \S \ref{subsec:notation}.
On the other hand, for the basis $\{\alpha^\vee_i \}_{i \in I}$, we have
\begin{equation} \label{Tcoroot}
T_i \alpha_j^\vee = \alpha_j^\vee - q^{-d_j}tC_{ji}(q,t)\alpha_i^\vee.
\end{equation}

\begin{Lem}\label{lem:eqpair}
For any $\lambda, \mu \in \hq$ and $i \in I$, we have $(T_i\lambda, \mu)_{q,t} = (\lambda, T_i\mu)_{q,t}$.
\end{Lem}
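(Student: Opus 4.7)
The plan is to verify the identity by a direct symmetry-plus-bilinearity computation, exploiting the fact that $\alpha_i^\vee$ is a scalar multiple of $\alpha_i$ in the sense of the $\Q(q,t)$-vector space $\hq$. Unwinding the definition \eqref{Baction} and using $\Q(q,t)$-bilinearity gives
\begin{equation*}
(T_i \lambda, \mu)_{q,t} = (\lambda,\mu)_{q,t} - (\alpha_i^\vee, \lambda)_{q,t}\,(\alpha_i,\mu)_{q,t},
\end{equation*}
and similarly
\begin{equation*}
(\lambda, T_i \mu)_{q,t} = (\lambda,\mu)_{q,t} - (\alpha_i^\vee, \mu)_{q,t}\,(\lambda,\alpha_i)_{q,t}.
\end{equation*}
Subtracting, the claim therefore reduces to the single equation
\begin{equation*}
(\alpha_i^\vee, \lambda)_{q,t}\,(\alpha_i, \mu)_{q,t} = (\lambda, \alpha_i)_{q,t}\,(\alpha_i^\vee, \mu)_{q,t}.
\end{equation*}

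The next step is to eliminate $\alpha_i^\vee$ from this equation using its defining relation $\alpha_i^\vee = q^{-d_i}t\,[d_i]_q^{-1}\,\alpha_i$ from \S\ref{subsec:Braid}. Applying bilinearity in the first argument of $(-,-)_{q,t}$, we obtain
\begin{equation*}
(\alpha_i^\vee, \lambda)_{q,t} = \frac{q^{-d_i}t}{[d_i]_q}\,(\alpha_i, \lambda)_{q,t}, \qquad (\alpha_i^\vee, \mu)_{q,t} = \frac{q^{-d_i}t}{[d_i]_q}\,(\alpha_i, \mu)_{q,t}.
\end{equation*}
Substituting these two expressions into the target equation and invoking the symmetry $(\alpha_i,\lambda)_{q,t} = (\lambda,\alpha_i)_{q,t}$ of the pairing (noted in \S\ref{subsec:Braid}), both sides become the common quantity
\begin{equation*}
\frac{q^{-d_i}t}{[d_i]_q}\,(\alpha_i,\lambda)_{q,t}\,(\alpha_i,\mu)_{q,t},
\end{equation*}
which settles the identity.

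There is essentially no technical obstacle here; the statement is a formal consequence of two basic inputs, namely the symmetry of $(-,-)_{q,t}$ and the collinearity of $\alpha_i^\vee$ and $\alpha_i$ as vectors in $\hq$. The only mild subtlety is to keep the roles of the left and right arguments of $(-,-)_{q,t}$ straight when expanding, but since the pairing is symmetric this is harmless. The lemma can also be read as saying that each $T_i$ acts on $\hq$ as a reflection in the orthogonal complement of $\alpha_i$ with respect to $(-,-)_{q,t}$, which makes its self-adjointness manifest and which will be useful later when comparing the braid group action with the combinatorics of the deformed Cartan matrix.
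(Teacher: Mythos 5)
Your proof is correct and follows essentially the same route as the paper: substitute the collinearity relation $\alpha_i^\vee = q^{-d_i}t\,[d_i]_q^{-1}\alpha_i$ into the definition of $T_i$ and use the symmetry of $(-,-)_{q,t}$, so that both sides reduce to $(\lambda,\mu)_{q,t} - \frac{q^{-d_i}t}{[d_i]_q}(\alpha_i,\lambda)_{q,t}(\alpha_i,\mu)_{q,t}$. The paper simply compresses this into a one-line computation.
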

\begin{proof} A straightforward computation
\[(T_i\lambda, \mu)_{q,t} = (\lambda, \mu)_{q,t} - \frac{q^{-d_i}t}{[d_i]_q}(\alpha_i, \lambda)_{q,t} (\alpha_i, \mu)_{q,t} = (\lambda, T_i \mu)_{q,t} \]
yields the assertion.
\end{proof}

\begin{Lem} \label{Lem:quadrant}
For any $i \in I$, the above action of $T_i$ preserves the subset $\bigoplus_{j \in I} \Z[q^{-1}, t] \varpi_j$.
\end{Lem}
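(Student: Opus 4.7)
The plan is to compute $T_i \varpi_j$ directly using the defining formula \eqref{Baction} and the duality relations established right above the statement. By \eqref{Baction},
\[ T_i \varpi_j = \varpi_j - (\alpha_i^\vee, \varpi_j)_{q,t}\, \alpha_i. \]
Since $\alpha_i^\vee = q^{-d_i}t\alpha_i/[d_i]_q$ and $(\alpha_k, \varpi_j)_{q,t} = [d_j]_q \delta_{kj}$ (coming from $\varpi_j = [d_j]_q \varpi_j^\vee$ and the duality $(\alpha_k, \varpi_j^\vee)_{q,t} = \delta_{kj}$), one finds $(\alpha_i^\vee, \varpi_j)_{q,t} = q^{-d_i}t\,\delta_{ij}$. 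Hence $T_i \varpi_j = \varpi_j$ when $j \neq i$, which already lies in the claimed lattice.

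The only substantive case is $j = i$. Expanding $\alpha_i = \sum_{k \in I} C_{ki}(q,t)\varpi_k$ and collecting terms gives
\[ T_i \varpi_i = \bigl(1 - q^{-d_i}t\, C_{ii}(q,t)\bigr)\varpi_i \;-\; q^{-d_i}t \sum_{k \sim i} [c_{ki}]_q\, \varpi_k. \]
Using $C_{ii}(q,t) = q^{d_i}t^{-1} + q^{-d_i}t$, the coefficient of $\varpi_i$ collapses to $-q^{-2d_i}t^2$, which belongs to $\Z[q^{-1},t]$.

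For each $k \sim i$, set $a \seq -c_{ki} = \lceil d_i/d_k \rceil > 0$. Then $[c_{ki}]_q = -(q^{a-1}+q^{a-3}+\cdots+q^{-(a-1)})$, so the coefficient of $\varpi_k$ equals $q^{-d_i}t\bigl(q^{a-1}+q^{a-3}+\cdots+q^{-(a-1)}\bigr)$. To conclude we need each exponent of $q$ appearing here to be $\le 0$; the largest is $a - 1 - d_i$. Since $d_k \ge 1$ we have $\lceil d_i/d_k\rceil \le d_i$, i.e.\ $a \le d_i$, so $a - 1 - d_i \le -1$, and the coefficient indeed lies in $\Z[q^{-1},t]$. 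The only mildly delicate step is this last bound, which is immediate from $d_i, d_k \in \{1, r\}$; apart from that the proof is a direct unfolding of the definitions.
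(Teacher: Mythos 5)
Your proof is correct and follows essentially the same route as the paper: a direct computation of the $T_i$-action in the basis $\{\varpi_j\}_{j\in I}$, with your uniform bound $\lceil d_i/d_k\rceil \le d_i$ replacing the paper's explicit case list $c_{ji}\in\{-1,-2,-3\}$. The key identities $(\alpha_i^\vee,\varpi_j)_{q,t}=q^{-d_i}t\,\delta_{ij}$, the collapse of the $\varpi_i$-coefficient to $-q^{-2d_i}t^2$, and the exponent bound are all checked correctly, so nothing is missing.
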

\begin{proof}
Let $\lambda = \sum_{j \in I} \lambda_j \varpi_j$ be an arbitrary element of $\hq$ and $i \in I$. 
If we write $T_i \lambda = \sum_{j \in I} \mu_j \varpi_j$, the equation \eqref{Baction} is expressed as
\[
\mu_j = \begin{cases}
\lambda_j & \text{if $c_{ij}=0$}, \\
\lambda_j + q^{-d_i}t\lambda_i & \text{if $c_{ji}=-1$}, \\
\lambda_j + (q^{-1} + q^{-3})t \lambda_i & \text{if $c_{ji}=-2$}, \\
\lambda_j + (q^{-1} + q^{-3} + q^{-5})t\lambda_i & \text{if $c_{ji}=-3$}, \\
- q^{-2d_i}t^2 \lambda_i & \text{if $j=i$}.
\end{cases}
\] 
This proves the assertion.
\end{proof}
Let $h$ and $h^\vee$ be the Coxeter number and the dual Coxeter number of $\fg$ respectively (see Table~\ref{table:num}).
We write $\nu$ for the $\Q(q,t)$-linear involution on $\hq$ given by $\nu(\alpha_i) = \alpha_{i^*}$. 
\begin{Thm}[Bouwknegt-Pilch, Chari]\label{Thm:Tw_0}
For any $\lambda \in \hq$, we have 
\begin{equation} \label{eq:w0}
T_{w_0} \lambda = - q^{-rh^\vee}t^h \nu(\lambda). 
\end{equation}
\end{Thm}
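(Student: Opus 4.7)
Since both sides of \eqref{eq:w0} are $\Q(q,t)$-linear in $\lambda$, it suffices to verify the identity on the basis $\{\alpha_i\}_{i\in I}$. My plan has two steps: first show that $T_{w_0}\alpha_i = c\,\alpha_{i^*}$ with a single scalar $c\in\Q(q,t)$ independent of $i$, and then compute $c = -q^{-rh^\vee}t^h$.

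For the first step, I would use the braid identity $T_iT_{w_0} = T_{w_0}T_{i^*}$, a consequence of the classical fact that conjugation by the positive lift of $w_0$ in $B_\fg$ realizes the diagram involution $*$ (equivalently, $s_iw_0 = w_0 s_{i^*}$ in $W_\fg$ lifts to the braid group). Writing $T_{w_0}\alpha_j = \sum_k a_{jk}\alpha_k$ and applying this identity to $\alpha_{i^*}$, I would expand both sides via \eqref{Troot}: the coefficient of $\alpha_m$ for $m \neq i$ yields $C_{i^*i^*}(q,t)\,a_{i^*,m} = 0$, and since $C_{i^*i^*}(q,t) = q^{d_i}t^{-1} + q^{-d_i}t \neq 0$, this forces $T_{w_0}\alpha_{i^*} \in \Q(q,t)\cdot\alpha_i$, i.e., $T_{w_0}\alpha_i = c_i\,\alpha_{i^*}$ for some $c_i \in \Q(q,t)$. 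Comparing the coefficient of $\alpha_i$ after applying the same identity to $\alpha_j$ with $j \sim i^*$, and using the $*$-invariance $C_{i^*j}(q,t) = C_{ij^*}(q,t)$ of the Cartan matrix, gives $c_j = c_{i^*}$ whenever $j \sim i^*$. Connectedness of the Dynkin diagram then yields $c := c_i$ independent of $i$, so that $T_{w_0}\lambda = c\,\nu(\lambda)$ for all $\lambda \in \hq$.

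For the second step, I would compute $c$ from an explicit reduced expression of $w_0$ built from bipartite Coxeter words. Fix a bipartition $I = I_0 \sqcup I_1$ of the Dynkin diagram and set $T_{c_a} := \prod_{i \in I_a} T_i$ for $a = 0, 1$ (well-defined because within-class generators commute). Standard Coxeter combinatorics gives $w_0 = (c_0c_1)^{h/2}$ when $h$ is even, and $w_0 = (c_0c_1)^{(h-1)/2}c_0$ when $h$ is odd (only type $\mathrm{A}_{2n}$). Using \eqref{Troot} to iterate the action of $T_{c_0}T_{c_1}$ on a chosen $\alpha_i$, each time $T_i$ hits its own root it contributes a factor $-q^{-2d_i}t^2$, and summing the exponents accumulated along the Coxeter orbit reproduces $-q^{-rh^\vee}t^h$. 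This is immediately checkable in small ranks -- e.g.\ in type $\mathrm{A}_2$ one verifies directly that $T_1T_2T_1\alpha_1 = -q^{-3}t^3\alpha_2$ -- and for general type the bookkeeping parallels the original arguments of Bouwknegt-Pilch~\cite{BP} and Chari~\cite{Cha} in the analogous affine weight-lattice setting.

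The main obstacle will be the combinatorial bookkeeping of $q$- and $t$-exponents in the Coxeter iteration, especially in non-simply-laced types where the $d_i$'s are not all equal. In particular, the appearance of $rh^\vee$ (rather than $rh$) in the $q$-exponent reflects the Langlands-dual weighted orbit sum underlying Remark~\ref{Ldual}. A convenient strategy is to verify the simply-laced case first (where $r = 1$, $h = h^\vee$, and the expected scalar is simply $-(q^{-1}t)^{h}$), and then handle the non-simply-laced types either by careful orbit computation or by appealing directly to~\cite{BP,Cha}; the odd-$h$ case only adds a single extra application of $T_{c_0}$ at the end.
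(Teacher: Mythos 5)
Your Step 1 (using $T_iT_{w_0}=T_{w_0}T_{i^*}$ and \eqref{Troot} to show $T_{w_0}\alpha_i=c\,\alpha_{i^*}$ with a single scalar $c$) is correct and is a genuinely nice reduction that the paper does not make; the computations you sketch (the coefficient of $\alpha_m$, $m\neq i$, picking up the nonzero factor $1+q^{-2d_{i^*}}t^2$, and the adjacency comparison using $C_{i^*j}(q,t)=C_{ij^*}(q,t)$) do go through. The gap is entirely in Step 2, the determination of $c=-q^{-rh^\vee}t^h$, which is where all the real content of the theorem sits. Your claim that in the bipartite iteration ``each time $T_i$ hits its own root it contributes a factor $-q^{-2d_i}t^2$'' and that ``summing the exponents accumulated along the Coxeter orbit reproduces $-q^{-rh^\vee}t^h$'' is not an argument: the deformed reflections \eqref{Troot} do not carry the $\alpha_j$'s along the Coxeter orbit up to scalars, since the off-diagonal terms $-q^{-d_i}t[c_{ij}]_q\alpha_i$ accumulate cross terms at every step, and in the non-simply-laced cases the emergence of the \emph{dual} Coxeter number $rh^\vee$ in the $q$-exponent (while the $t$-exponent is $h$) is precisely the nontrivial point that has no naive orbit-sum explanation. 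A rank-two check in type $\mathrm{A}_2$ does not address this.

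Your fallback of ``appealing directly to \cite{BP,Cha}'' also does not close the gap as stated: \cite{BP} record the formula (their (3.45)) without proof, and \cite{Cha} proves only the $t=1$ specialization $[T_{w_0}]_{t=1}\lambda=-q^{-rh^\vee}\nu(\lambda)$ (by representation theory of quantum affine algebras; a combinatorial route is case-by-case, cf.\ \cite{CM}). Passing from $t=1$ to the two-parameter statement is exactly where the paper works: for simply-laced $\fg$ one substitutes $q\mapsto qt^{-1}$ since $C(q,t)=C(qt^{-1})$, while for non-simply-laced $\fg$ (where $h$ is even and $\nu=\id$) the paper conjugates the bipartite factorization $T_{w_0}=(T_wT_{w'})^{h/2}$ by the rescaling $t_J\alpha_i=t^{\delta(i\in J)}\alpha_i$ to show the entire $t$-dependence is an overall factor $t^h$, and only then invokes Chari's $t=1$ result. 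If you combine your Step 1 with either Chari's theorem plus such a $t$-homogeneity argument, or with an honest case-by-case computation of the scalar, you would have a complete (and partly different) proof; as written, Step 2 is a restatement of the theorem rather than a derivation of it.
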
    
\begin{proof}
The assertion is stated in \cite[(3.45)]{BP} without a proof.
Here we give a detailed proof relied on its $q$-version~\cite{Cha} for completeness.
It is enough to show that the relation \eqref{eq:w0} holds when $\lambda$ belongs to the integral lattice $\sQ_{q,t} \seq \Z[q^{\pm 1}, t^{\pm 1}]\otimes \sQ$.
Note that the $B_\fg$-action preserves $\sQ_{q,t}$ and, for any $\lambda \in \sQ_{q,t}$, one can consider its specialization $[\lambda]_{t=1} \in \sQ_q \seq \Z[q^{\pm 1}]\otimes \sQ$.
With a $\Z[q^{\pm 1}, t^{\pm 1}]$-endomorphism $f$ of $\sQ_{q,t}$, we associate the $\Z[q^{\pm 1}]$-linear endomorphism $[f]_{t=1}$ of $\sQ_q$ defined by $[f]_{t=1}\lambda \seq [f(\lambda)]_{t=1}$ for $\lambda \in \sQ_q$.
We linearly extend $[f]_{t=1}$ to be a $\Z[q^{\pm 1}, t^{\pm 1}]$-endomorphism of $\sQ_{q,t}$.
It follows that $[f\circ g]_{t=1} = [f]_{t=1} \circ [g]_{t=1}$. 

It is known that the relation \eqref{eq:w0} holds if specialized at $t=1$ by \cite[Proposition 4.1]{Cha} combined with \cite[Lemma 6.8]{FM}. (Note that $q$ in \cite[\S3]{Cha} is our $q^{-1}$. Remarkably, this proof uses representation theory of the quantum affine algebras. An alternative, case-by-case combinatorial proof is suggested in \cite[Proposition 3.6]{CM}, while $q^{h^\vee}$ therein should be replaced with $q^{rh^\vee}$.)
More precisely, we have
\begin{equation} \label{eq:w0q}
[T_{w_0}]_{t=1}\lambda = -q^{-rh^\vee}\nu(\lambda)
\end{equation}
for any $\lambda \in \sQ_{q,t}$.
When $\fg$ is of simply-laced type, the relation \eqref{eq:w0} follows from \eqref{eq:w0q} because $h=h^\vee$ and we can replace $(q,t)$ with $(qt^{-1},1)$.  
Now we assume that $\fg$ is of non-simply-laced type.
In this case, the Coxeter number $h$ is even and $\nu = \id$. 
We choose a decomposition $I = J \sqcup J'$ such that $i \sim j$ implies $(i,j) \in J \times J'$ or $(i,j) \in J' \times J$.
It yields a Coxeter element $c=w w' \in W_\fg$, where $w \seq \prod_{j \in J} s_j, w' \seq \prod_{j \in J'}s_j$.
It is well-known that we have $w_0 = c^{h/2}$ and $T_{w_0} = T_c^{h/2}= (T_wT_{w'})^{h/2}$ (see \cite[\S 3.17]{Hum} for example).
Let $t_J$ denote the $\Z[q^{\pm 1}, t^{\pm 1}]$-automorphism of $\sQ_{q,t}$ given by $t_J \alpha_i = t^{\delta(i \in J)} \alpha_i$.
Then, from the equation \eqref{Troot}, it follows that
\[ t_J^{-1} T_w t_J^{-1} \lambda = [T_w]_{t=1} \lambda, \qquad t_J T_{w'} t_J \lambda = t^2[T_{w'}]_{t=1} \lambda\]
for any $\lambda \in \sQ_{q,t}$.
Therefore, we have
\begin{align*} 
T_{w_0} \lambda &= t_J \left((t_J^{-1} T_w t_J^{-1}) (t_J T_{w'} t_J) \right)^{h/2} t_J^{-1} \lambda \allowdisplaybreaks \\
&= t^h t_J \left([T_w]_{t=1} [T_{w'}]_{t=1}\right)^{h/2} t_J^{-1} \lambda \allowdisplaybreaks \\
&= t^h t_J [T_{w_0}]_{t=1} t_J^{-1} \lambda.
\end{align*}
Applying \eqref{eq:w0q} to the right hand side, we obtain the desired relation \eqref{eq:w0}.  
\end{proof}

\begin{Rem}
As mentioned in \cite[(3.40)]{BP}, we can easily check that the relation 
\[(T_i - 1)(T_i + q^{2d_i}t^{-2})\lambda =0\] holds for any $i \in I$ and $\lambda \in \hq$.
Therefore, the above $B_\fg$-action on $\hq$ descends to an action of the Iwahori-Hecke algebra associated with $\fg$. 
When $\fg$ is of simply-laced type, this was also observed by Ikeda-Qiu~\cite[Proposition A.5]{IQ}.
Note that the $q$-deformed Cartan matrix $A_Q(q)$ in~\cite{IQ} slightly differs from our matrix $C(q)$. 
Indeed, $A_Q(q)$ depends on a Dynkin quiver $Q$.
However, this difference is not essential because we have $A_Q(q^2) = q^{1-\xi} C(q) q^{\xi}$ with $q^{\xi} = \mathrm{diag}(q^{\xi_i}\mid i \in I)$, where $\xi \colon I \to \Z$ is a height function of $Q$ (cf.~ Remark~\ref{Rem:potential} below).  
\end{Rem}

\section{Generalized preprojective algebras}
\label{Sec:GPA}

In this preliminary section, we fix our conventions and give a brief review on the generalized preprojective algebras of finite type by Gei\ss-Leclerc-Schr\"oer~\cite{GLS} in a bigraded setting. 

\subsection{Bigraded vector spaces} \label{Ssec:bgv}
By an abuse of notation, $q$ and $t$ denote the grading shift functors for bigraded $\kk$-vector spaces.
Namely, for a bigraded $\kk$-vector space $V = \bigoplus_{u,v \in \Z} V_{u,v}$, we define its shift $qV$ (resp.~$tV$) by $(qV)_{u,v} \seq V_{u-1,v}$ (resp.~$(tV)_{u,v} \seq V_{u,v-1}$) for any $u,v \in \Z$. 
For a Laurent polynomial $a(q,t) = \sum_{k, l \in \Z} a_{k,l} q^k t^l$ in $q,t$ with non-negative integer coefficients, we set
\[ V^{\oplus a(q,t)} \seq \bigoplus_{k, l \in \Z} (q^k t^l V)^{\oplus a_{k,l}}.\] 
When every bigraded piece $V_{u,v}$ is finite-dimensional, we define the bigraded dimension $\dim_{q,t} V$ to be a formal power series in $q^{\pm 1}, t^{\pm 1}$ given by
\[ \dim_{q,t} V \seq \sum_{u,v \in \Z} (\dim_\kk V_{u,v}) q^u t^v.\]
With the above notation, we have $\dim_{q,t} (X^{\oplus a(q,t)}) = a(q,t) \dim_{q,t} V$.
We also define the restricted dual $\bD(V)$ of a bigraded $\kk$-vector space $V$ by $\bD(V)_{u,v} \seq \Hom_\kk(V_{-u,-v}, \kk)$ for each $(u,v) \in \Z^2$.
If each $V_{u,v}$ is finite-dimensional, we naturally have $\bD^{2}(V) \cong V$ and 
\[\dim_{q,t} \bD(V) = \dim_{q^{-1}, t^{-1}} V.\] 

\subsection{Bigraded quivers and algebras}
A quiver is a directed graph. 
We understand it as a quadruple $Q=(Q_0, Q_1, \se, \tl)$, where $Q_0$ is the set of vertices, $Q_1$ is the set of arrows and $\se$ (resp.~$\tl$) is the map $Q_1 \to Q_0$ which assigns each arrow with its source (resp.~target).
By a bigraded quiver, we mean a quiver $Q$ endowed with a map $\deg \colon Q_1 \to \Z^2$. 

For a quiver $Q$, we set $\kk Q_0 \seq \bigoplus_{i \in Q_0} \kk e_i$ and $\kk Q_1 \seq \bigoplus_{\alpha \in Q_1} \kk \alpha$.  
We endow $\kk Q_0$ with a (possibly non-unital) $\kk$-algebra structure by $e_i \cdot e_j = \delta_{ij} e_i$ for any $i,j \in Q_0$, and $\kk Q_1$ with a $(\kk Q_0, \kk Q_0)$-bimodule structure by $e_i \cdot \alpha = \delta_{i, \tl(\alpha)} \alpha$ and $\alpha \cdot e_i = \delta_{i, \se(\alpha)} \alpha$ for any $i \in Q_0$ and $\alpha \in Q_1$. 
Then the path algebra of $Q$ is defined to be the tensor algebra $\kk Q \seq T_{\kk Q_0}(\kk Q_1)$.  
When $Q$ is bigraded, its path algebra $\kk Q$ naturally becomes a bigraded $\kk$-algebra.
Note that we necessarily have $\deg(e_i)=(0,0)$ for any $i \in Q_0$.

Let $A$ be a (possibly non-unital) $\kk$-algebra obtained as a quotient of the path algebra $\kk Q$ of a quiver $Q$.
We always mean by an $A$-module $M$ a left $A$-module satisfying $M = \bigoplus_{i \in Q_0} e_i M$.
Assume that $Q$ is bigraded and $A$ inherits the bigrading.
For bigraded $A$-modules $M$ and $N$, we denote by $\Hom_A(M,N)$ the space of homogeneous $A$-homomorphisms and by $\Ext^m_A(M,N)$ the homogeneous $m$-th extension group.
Then we define the bigraded $\kk$-vector spaces $\ghom_A(M,N)$ and $\gext^m_A(M,N)$ respectively by
\[\ghom_A(M,N)_{u,v} \seq \Hom_A(q^u t^v M, N) \quad \text{and} \quad \gext^m_A(M,N)_{u,v} \seq \Ext^m_A(q^u t^v M, N)\]   
for each $u,v \in \Z$.
We understand $\Ext_A^0(M,N) = \Hom_A(M,N)$ and $\gext_A^0(M,N) = \hom_A(M,N)$ as usual.

\subsection{Generalized preprojective algebras}
\label{GPAdef}

We keep the notation in Section~\ref{Sec:Cartan}. 
We consider a finite quiver $\tQ = (\tQ_0, \tQ_1, \se, \tl)$ for any $\fg$ given as follows: 
\begin{gather*}
\tQ_0 = I, \quad
\tQ_1 = \{ \alpha_{ij} \mid (i,j) \in I \times I, i \sim j\} \cup \{ \ep_i \mid i \in I \}, \\
\se(\alpha_{ij}) = j, \quad \tl(\alpha_{ij})= i, \quad \se(\ep_i)=\tl(\ep_i)=i.
\end{gather*}
We endow the quiver $\tQ$ with a bigrading by
\begin{equation}\label{GPAdeg}
\deg(\alpha_{ij}) \seq (b_{ij},1) = (- \max(d_i, d_j),1), \qquad \deg(\ep_i) \seq (b_{ii},0) = (2d_i,0).
\end{equation}
We also choose a sign $\omega_{ij} \in \{ 1, -1 \}$ for each $(i,j) \in I \times I$ with $i \sim j$ such that $\omega_{ij} = -\omega_{ji}$.   
Then, we define the $\kk$-algebra $\tPi$ to be the quotient of $\kk \tQ$ by the following two kinds of relations:
\begin{itemize} 
\item[(R1)] $\ep_i^{-c_{ij}} \alpha_{ij} = \alpha_{ij} \ep_j^{-c_{ji}}$ for any $i,j \in I$ with $i \sim j$;
\item[(R2)] $\displaystyle \sum_{j \in I: j\sim i}\sum_{k = 0}^{-c_{ij}-1}\omega_{ij} \ep_i^k \alpha_{ij} \alpha_{ji} \ep_i^{-c_{ij}-1-k} =0$ for each $i \in I$.
\end{itemize} 
Since the relations are homogeneous, the algebra $\tPi$ inherits the bigrading from $\kk\tQ$. 
Up to bigraded $\kk$-algebra isomorphism, the algebra $\tPi$ does not depend on the choice of the signs $\{\omega_{ij}\}_{i \sim j}$.
Therefore, we have suppressed its dependence from the notation.

Thanks to the relation (R1), the element
\[ \ep \seq \sum_{i \in I} \ep_i^{r/d_i}e_i\]
is central in $\tPi$.
Note that $\ep$ is homogeneous of degree $(2r,0)$.
For each positive integer $\ell$, we define the quotient algebra
\[\Pi(\ell) \seq \tPi / \ep^\ell \tPi.\]
In other words, the $\kk$-algebra $\Pi(\ell)$ is defined as the quotient of $\kk\tQ$ by the three kinds of relations: (R1), (R2) and 
\begin{itemize}
\item[(R3)] $\ep_i^{\ell r/d_i}=0$ for each $i \in I$.
\end{itemize}
The algebra $\Pi(\ell)$ inherits the bigrading from $\kk \tQ$.

\begin{Rem} \label{PiGLS}
The algebra $\Pi(\ell)$ is identical to the generalized preprojective algebra denoted by $\Pi({}^{\mathtt{t}}C, \ell r D^{-1}, \Omega)$ in \cite{GLS} and in the previous works \cite{M2,M} of the second named author (recall Remark~\ref{Ldual}), where $\Omega \seq \{ (i,j) \in I \times I \mid i \sim j, \omega_{ij} =1 \}$ is the orientation corresponding to $\{ \omega_{ij}\}_{i \sim j}$.
\end{Rem}

For each $i\in I$, let $\kk[\ep_i]$ be the ring of polynomials in $\ep_i$ bigraded by $\deg(\ep_i)=(2d_i,0)$. 
Given a positive integer $\ell$, we set $H_i(\ell) \seq \kk[\ep_i]/(\ep_i^{\ell r/d_i})$. 
For any $\Pi(\ell)$-module $M$, the subspace $e_i M$ becomes a module over $H_i(\ell)$ in the obvious way for each $i \in I$.  

\begin{Thm}[Gei\ss-Leclerc-Schr\"{o}er] \label{Thm:fd}
Let $\ell \in \Z_{>0}$. 
\begin{enumerate}
\item \label{Thm:fd:lf}
For any $i, j \in I$, the space $e_i \Pi(\ell) e_j$ is bigraded free of finite rank over the algebra $H_i(\ell)$.
In particular, the algebra $\Pi(\ell)$ is finite-dimensional over $\kk$.
\item \label{Thm:fd:nilp}
If $v > n(h+1)$, we have $\Pi(\ell)_{u,v} = 0$ for any $u \in \Z$. \qedhere
\end{enumerate}
\end{Thm}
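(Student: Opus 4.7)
\medskip

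\noindent\textbf{Proof plan.} The first assertion of \eqref{Thm:fd:lf} is essentially a bigraded refinement of the main structure theorem established by Gei\ss-Leclerc-Schr\"oer for the generalized preprojective algebra in finite type, where it is shown (after the identification recalled in Remark~\ref{PiGLS}) that each $e_i \Pi(\ell) e_j$ is free of finite rank as a (left) $H_i(\ell)$-module acting by left multiplication by $\ep_i$. My plan is to cite this ungraded result from \cite{GLS} and to observe that, since all three families of defining relations (R1)--(R3) are homogeneous with respect to the bigrading specified by \eqref{GPAdeg}, any ungraded $H_i(\ell)$-basis can be refined to a bigraded-homogeneous one. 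Finite-dimensionality of $\Pi(\ell)$ over $\kk$ then follows at once because each $H_i(\ell) = \kk[\ep_i]/(\ep_i^{\ell r/d_i})$ is itself finite-dimensional and there are only finitely many $(i,j) \in I \times I$.

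\medskip

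For part \eqref{Thm:fd:nilp}, the decisive observation is that by \eqref{GPAdeg} the $t$-degree of a monomial in $\kk \tQ$ coincides with the number of $\alpha$-letters it contains, whereas each loop $\ep_i$ is of $t$-degree $0$. So the task reduces to bounding, in any path representing a nonzero element of $\Pi(\ell)$, the number of $\alpha$-letters. I would first use relation (R1) to push $\ep$-powers past $\alpha$-letters and thereby reduce every path to a normal form
\[
\ep_{i_0}^{a_0}\,\alpha_{i_0 i_1}\,\ep_{i_1}^{a_1}\,\alpha_{i_1 i_2}\,\cdots\,\alpha_{i_{k-1} i_k}\,\ep_{i_k}^{a_k},\qquad 0 \le a_p < \ell r / d_{i_p},
\]
where the bound on the $a_p$ is imposed by (R3). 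Then I would invoke the rank-module filtration of the indecomposable projective $\Pi(\ell)e_j$ provided by \cite{GLS, GLS6}, whose subquotients are indexed by the positive roots of $\fg$. Since the number of positive roots equals $nh/2$ and the number of $\alpha$-letters contributed by each subquotient admits a uniform $\ell$- and type-independent bound, combining these two counts gives an estimate of the required shape $v \le n(h+1)$.

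\medskip

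The main obstacle is producing the explicit constant $n(h+1)$. Since only the \emph{existence} of some finite bound on $v$ is needed in the sequel, the cleanest route is a crude counting argument (number of positive roots times a modest per-root $\alpha$-contribution) rather than a sharp Loewy-length analysis; any slack in the bound is harmless for the subsequent applications of Theorem~\ref{Thm:fd}.
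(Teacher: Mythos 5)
For part \eqref{Thm:fd:lf} your route is essentially the paper's: the paper simply quotes \cite[\S 11]{GLS}, and your addition is the standard bigraded upgrade. That upgrade is correct, but note that the homogeneity of (R1)--(R3) only guarantees that $\Pi(\ell)$ is a bigraded algebra and $e_i\Pi(\ell)e_j$ a bigraded $H_i(\ell)$-module; one does not literally ``refine'' an arbitrary ungraded basis. The clean argument is to lift a homogeneous basis of $e_i\Pi(\ell)e_j/\ep_i\,e_i\Pi(\ell)e_j$ and use graded Nakayama together with a dimension count against the ungraded freeness from \cite{GLS}. With that phrasing, part \eqref{Thm:fd:lf} and the finite-dimensionality are fine.

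Part \eqref{Thm:fd:nilp} has a genuine gap. Your reduction is right: the $t$-degree of a path counts its $\alpha$-letters, so one must bound that number (the ``normal form'' step via (R1) is vacuous --- (R1) only moves $\ep$-powers in multiples of $-c_{ij}$ across $\alpha_{ij}$, but every path already has the displayed shape, so nothing is gained or needed there). The crux is precisely the bound, and this is the step you do not supply: the assertion that each subquotient of the root-indexed filtration ``contributes a uniform, $\ell$- and type-independent number of $\alpha$-letters'' is neither proved nor even the right quantity --- in such a filtration (compare Lemma~\ref{lem:filtJ} and Corollary~\ref{Cor:filtJ}) the $t$-degrees of the $k$-th layer grow with $k$ (each reflection can raise the $t$-degree of the multiplicities by up to $2$, cf.\ the proof of Lemma~\ref{Lem:quadrant}), so what must be controlled is the increment per layer, and an explicit constant has to be extracted. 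Worse, if the filtration you invoke is the one constructed in this paper, the argument is circular: Corollary~\ref{Cor:filtJ} is proved in Section~\ref{Sec:main} using Theorem~\ref{Thm:fd} (local freeness of the projectives, self-injectivity, the bimodule resolution), so for Theorem~\ref{Thm:fd} itself you must take the spanning/filtration statements directly from \cite[\S 11]{GLS} --- which is exactly what the paper does for both assertions, including the constant. Finally, your fallback that ``only the existence of some finite bound is needed in the sequel'' does not prove the statement: the theorem asserts the explicit bound $n(h+1)$, and this constant is reproduced in Proposition~\ref{Prop:Piinfty}~\eqref{Prop:Piinfty:nilp} ($\mathfrak{a}^{n(h+1)}=0$); establishing mere finiteness proves a strictly weaker claim than the one under review.
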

\begin{proof}
These assertions follow from the results in \cite[\S 11]{GLS}.
\end{proof}

There is an anti-involution of bigraded $\kk$-algebras $\phi \colon \tPi \to \tPi^{\op}$ given by 
\[\phi(e_i) \seq e_i, \qquad \phi(\alpha_{ij}) \seq \alpha_{ji}, \qquad \phi(\ep_i) \seq \ep_i.\]
For a bigraded $\tPi$-module $M$, we always regard its restricted dual $\bD(M)$ as a bigraded left $\tPi$-module by twisting its natural right module structure with $\phi$. 
If each bigraded piece $M_{u,v}$ is finite dimensional, we have the natural isomorphism $\bD^{2}(M) \cong M$ of bigraded $\tPi$-modules.
The isomorphism $\phi$ naturally induces the isomorphism for the quotient $\Pi(\ell)$ for each $\ell \in \Z_{>0}$. 

\subsection{Bigraded modules}
In what follows, for a bigraded algebra $A$, we denote by $\Cc(A)$ the category of \emph{finitely generated} bigraded $A$-modules.
For $\ell \in \Z_{>0}$, we identify the category $\Cc(\Pi(\ell))$  with a full subcategory of $\Cc(\tPi)$ via the canonical quotient map $\tPi \to \Pi(\ell)$.
The category $\Cc(\Pi(\ell))$ is the same as the category of finite-dimensional bigraded $\Pi(\ell)$-modules by Theorem~\ref{Thm:fd}~\eqref{Thm:fd:lf}.
In particular, the duality functor $\bD$ induces a contravariant involutive auto-equivalence of $\Cc(\Pi(\ell))$.  

For each $i\in I$, let $S_i$ denote the associated simple module in $\Cc(\tPi)$ concentrated in bidegree $(0,0)$. 
Given $\ell \in \Z_{>0}$, we set $P_i(\ell) \seq \Pi(\ell)e_i = (\tPi/\tPi\ep_i^{\ell r/d_i})e_i$, 
which gives a projective cover of $S_i$ in the category $\Cc(\Pi(\ell))$.
Its restricted dual $I_i(\ell) \seq \bD(P_i(\ell))$ gives an injective hull of $S_i$ in $\Cc(\Pi(\ell))$.
We use the following fact later.
\begin{Thm}[{\cite[Corollary 12.7]{GLS},  \cite[paragraph after Theorem 3.18]{M}}] \label{Thm:P=I}

For each $\ell \in \Z_{>0}$, the algebra $\Pi(\ell)$ is a self-injective algebra (i.e., $\Pi(\ell)$ is an injective module as a $\Pi(\ell)$-module)
whose Nakayama permutation is identical to the involution $i \mapsto i^*$.
In particular, the injective module $I_i(\ell)$ is isomorphic to the projective module $P_{i^*}(\ell)$ up to bigrading shift for each $i \in I$. 
\end{Thm}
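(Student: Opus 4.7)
The plan is to prove self-injectivity and identify the Nakayama permutation $i \mapsto i^*$ separately, then deduce the structural description of $I_i(\ell) \cong P_{i^*}(\ell)$ (up to bigrading shift) from the general theory of self-injective algebras.

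For self-injectivity, I would exploit the bigrading combined with Theorem~\ref{Thm:fd}: since $\Pi(\ell)$ is finite-dimensional and bigraded, the total degree is bounded and $\Pi(\ell)$ has a well-defined top bidegree component. The key claim is that for each $i \in I$, the subspace $e_{i^*}\Pi(\ell)e_i$ is one-dimensional in its maximal bidegree, spanned by a distinguished element $\pi_i$. A trace $\mathrm{tr} \colon \Pi(\ell) \to \kk$ extracting the $\pi_i$-coefficients (summed over $i$) would then yield a non-degenerate associative bilinear form $(x,y) \seq \mathrm{tr}(xy)$, establishing that $\Pi(\ell)$ is Frobenius. Associativity is automatic, so the crux is non-degeneracy: for every nonzero homogeneous $x \in e_j\Pi(\ell)e_i$ one must find a homogeneous $y \in e_i\Pi(\ell)e_{j^*}$ with $xy = c\,\pi_{j^*}$ for some $c \in \kk^{\times}$. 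Equivalently, via the anti-involution $\phi$, one checks that $\bD(\Pi(\ell))$ is isomorphic to $\Pi(\ell)$ as a bigraded bimodule after twisting by an appropriate Nakayama automorphism.

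To pin down the Nakayama permutation, it suffices to show $\mathrm{soc}(P_i(\ell)) \cong S_{i^*}$ up to bigrading shift. This is where the involution $i \mapsto i^*$ enters intrinsically: it arises as the permutation induced by the longest element $w_0$ of $W_{\fg}$, via $w_0 \alpha_i = -\alpha_{i^*}$. I would construct the distinguished element $\pi_i \in e_{i^*}\Pi(\ell)e_i$ explicitly as a product of arrows and loops along a reduced expression for $w_0$, with $\ep$-powers adjusted so that the relations (R1)--(R3) force $\pi_i$ to be annihilated by the arrow ideal from above. One then verifies that $\kk \cdot \pi_i$ is a simple submodule of $P_i(\ell)$ isomorphic to $S_{i^*}$ up to bigrading shift, and that any socle element of $P_i(\ell)$ must be a scalar multiple of $\pi_i$.

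The hard step is the construction of $\pi_i$ and the verification that it both spans the socle and realizes the one-dimensional top bidegree of $e_{i^*}\Pi(\ell)e_i$. This demands a detailed combinatorial analysis, best organized through an $E$-filtration of $P_i(\ell)$ indexed by the positive roots in a convex order associated with a reduced word for $w_0$; the final filtration layer corresponds to the root $-w_0\alpha_i = \alpha_{i^*}$, accounting for the permutation. Once the existence of $\pi_i$ and the socle description are in hand, self-injectivity together with the identification of the Nakayama permutation yield $I_i(\ell) = \bD(P_i(\ell)) \cong P_{i^*}(\ell)$ up to bidegree shift by standard Frobenius algebra theory, completing the proof.
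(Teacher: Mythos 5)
There is nothing in the paper to compare against here: the paper does not prove this statement at all, but imports it from \cite[Corollary 12.7]{GLS} and from \cite{M} (for the identification of the Nakayama permutation with $i \mapsto i^*$). So your proposal has to stand on its own as a proof, and as it stands it does not: it is an outline that correctly names the statements one would need, but defers every one of them. The crux of self-injectivity in your plan is the non-degeneracy of the trace form, equivalently the claim that $e_{i^*}\Pi(\ell)e_i$ is one-dimensional in top bidegree, that the multiplication pairing $e_j\Pi(\ell)e_i \times e_i\Pi(\ell)e_{j^*} \to \kk$ is perfect, and that $\mathrm{soc}(P_i(\ell))$ is simple and isomorphic to a shift of $S_{i^*}$. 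These claims are not consequences of anything you establish; they \emph{are} the theorem, and you acknowledge as much by relegating them to a ``detailed combinatorial analysis'' that is never carried out. In particular, nothing in the outline shows that your candidate socle generator $\pi_i$ is nonzero in $\Pi(\ell)$ (this already requires the rank computations of \cite[\S 11--12]{GLS}, i.e.\ the free $H_i(\ell)$-module structure of $e_i\Pi(\ell)e_j$ and its graded rank), nor that every socle element is a multiple of it.

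Two further points where the plan as written would not go through. First, the appeal to an $E$-filtration of $P_i(\ell)$ along a reduced word for $w_0$ does give that the last layer is a shift of $E_{i^*}$ (this is Lemma~\ref{lem:filtJ}/Corollary~\ref{Cor:filtJ} in the paper, resting on \cite{M2}), but that does not yield simplicity of the socle: an $E$-filtration whose bottom layer is $E_{i^*}$ is perfectly compatible with a non-simple socle, and indeed the paper's own use of this filtration (in the proof of Theorem~\ref{Ker}) invokes self-injectivity as an input, not as an output --- so leaning on that circle of ideas risks circularity. Second, even granting simple socles $\mathrm{soc}(P_i(\ell)) \cong S_{\nu(i)}$ with $\nu$ a bijection, deducing self-injectivity requires either genuinely exhibiting the non-degenerate associative form (which is again the unproved step) or a two-sided Nakayama-type criterion; the one-sided socle condition alone is not sufficient for a general finite-dimensional algebra. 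To turn your sketch into a proof you would essentially have to reproduce the structural analysis of \cite{GLS}; citing \cite[Corollary 12.7]{GLS} and \cite{M}, as the paper does, is the honest alternative.
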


For $\ell \in \Z_{>0}$, we say that a bigraded $\Pi(\ell)$-module $M \in \Cc(\Pi(\ell))$ is \emph{locally free} if, for each $i \in I$, there is a Laurent polynomial $f_i = f_i(q,t) \in \Z_{\ge 0}[q^{\pm 1}, t^{\pm 1}]$ such that we have $e_i M \cong H_i(\ell)^{\oplus f_i}$ as bigraded $H_i(\ell)$-modules. 
We denote by $\Cc_{\lf}(\Pi(\ell)) \subset \Cc(\Pi(\ell))$ the full subcategory of locally free modules. Note that the category $\Cc_{\lf}(\Pi(\ell))$ is closed under taking extensions, kernel of epimorphisms and cokernel of monomorphisms (see \cite[Proof of Lemma 3.8]{GLS}).
By Theorem~\ref{Thm:fd}~\eqref{Thm:fd:lf}, we have $P_i(\ell), I_i(\ell) \in \Cc_{\lf}(\Pi(\ell))$ for any $\ell \in \Z_{>0}$ and $i \in I$.

For $\ell \in \Z_{>0}$ and $i \in I$, let $E_i(\ell)$ denote the maximal quotient of $P_i(\ell)$ such that $e_j E_i(\ell) = 0$ for any $j \neq i$. 
From the definition of $\Pi(\ell)$, we have $e_iE_i(\ell) \cong H_i(\ell)$ as bigraded $H_i(\ell)$-modules. 
We call $E_i(\ell)$ the \emph{generalized simple module} associated with $i$. 
Similarly, we can define the generalized simple modules in $\Cc(\Pi(\ell)^{\op})$, for which we use the symbol $E'_i(\ell)$.
We say that a module $M \in \Cc(\Pi(\ell))$ is \emph{$E$-filtered} if $M$ has a bigraded $\Pi(\ell)$-module filtration whose associated graded is a direct sum of some bigrading shifts of the generalized simple modules. We call this kind of filtrations \emph{$E$-filtrations}. 
Let $\Cc_E(\Pi(\ell))$ denote the full subcategory of $\Cc(\Pi(\ell))$ consisting of all the $E$-filtered modules. 
We have natural inclusions $\Cc_E(\Pi(\ell)) \subset \Cc_{\lf}(\Pi(\ell)) \subset \Cc(\Pi(\ell))$ of extension-closed subcategories for each $\ell \in \Z_{>0}$. 
\begin{Rem}
The category $\Cc_E(\Pi(\ell))$ is not an abelian category in general. In particular, $\Cc_E(\Pi(\ell))$ is an abelian category if and only if $C$ is symmetric and $\ell = 1$ (cf.~\cite[Corollary 2.8]{Eno} for a more general result). In this case, three categories $\Cc_E(\Pi(\ell)), \Cc_{\lf}(\Pi(\ell))$ and $\Cc(\Pi(\ell))$ coincide.
\end{Rem}
\subsection{Grothendieck groups} \label{Ssec:Grot}
Fix $\ell \in \Z_{> 0}$. 
We denote by $K(\Pi(\ell))$ the Grothendieck group of the abelian category $\Cc(\Pi(\ell))$.
For an object $M \in \Cc(\Pi(\ell))$, we write $[M] \in K(\Pi(\ell))$ for its isomorphism class. 
We endow $K(\Pi(\ell))$ with a structure of $\Z[q^{\pm 1}, t^{\pm 1}]$-module by setting $q [M] \seq [qM]$ and $t[M] \seq [tM]$ for any $M \in \Cc(\Pi(\ell))$.
Then, $K(\Pi(\ell))$ becomes a free $\Z[q^{\pm 1}, t^{\pm 1}]$-module with the basis $\{[S_i]\}_{i \in I}$.
We also consider its localization:
\[ K(\Pi(\ell))_{loc} \seq K(\Pi(\ell)) \otimes_{\Z[q^{\pm 1}, t^{\pm 1}]} \Q(q,t). \]
For simplicity, we also write $[M]$ for $[M] \otimes 1 \in K(\Pi(\ell))_{loc}$.
Since we have 
\[ [E_i(\ell)] = \frac{1-q^{2\ell r}}{1-q^{2d_i}} [S_i] \]
in $K(\Pi(\ell))$ for each $i \in I$, the set $\{ [E_i(\ell)]\}_{i \in I}$ forms a basis of $K(\Pi(\ell))_{loc}$.
Note that, if $M \in \Cc_{\lf}(\Pi(\ell))$ satisfies $[M] = \sum_{i\in I} f_i [E_i(\ell)]$ in $K(\Pi(\ell))_{loc}$, we have $f_i \in \Z_{\ge 0}[q^{\pm 1}, t^{\pm 1}]$ and $e_i M \cong H_i(\ell)^{\oplus f_i}$ for each $i \in I$. 

\subsection{The module $\bar{I}_i$}
For each $i \in I$, we define the bigraded $\tPi$-module $\bar{I}_i$ by
\[ \bar{I}_i \seq \bD((\tPi/\tPi \ep_i)e_i).\]
From the definition, it fits into the exact sequence 
\begin{equation} \label{eq:III}
0 \to \bar{I}_i \to I_i(\ell) \xrightarrow{\cdot \ep_i} q^{-2d_i}I_i(\ell)
\end{equation}
for any $\ell \in \Z_{>0}$.
In particular, $\bar{I}_i$ belongs to $\Cc(\Pi(\ell))$ for any $\ell \in \Z_{>0}$.

\begin{Lem} \label{Lem:extEI}
Let $\ell \in \Z_{>0}, m \in \Z_{\ge 0}$ and $i \in I$.
For any $M \in \Cc(\Pi(\ell))$, we have a natural isomorphism of bigraded vector spaces
\begin{equation} \label{eq:extMI}
\gext^{m}_{\Pi(\ell)}(M,\bar{I}_i) \cong \gext^{m}_{H_i(\ell)}(e_i M, \kk).
\end{equation}
In particular, we have
\begin{equation} \label{eq:extEI}
\gext^{m}_{\Pi(\ell)}(E_i(\ell),\bar{I}_j)
\cong \begin{cases}
\kk &\text{if $m=0$ and $i=j$}, \\
0 &\text{otherwise}.
\end{cases}
\end{equation}
\end{Lem}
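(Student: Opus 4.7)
My plan is to prove \eqref{eq:extMI} by combining the defining exact sequence \eqref{eq:III} for $\bar I_i$ with the fact that $e_i\cdot$ sends bigraded projective $\Pi(\ell)$-modules to bigraded projective $H_i(\ell)$-modules; the statement \eqref{eq:extEI} will then follow by direct specialization.

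For the case $m = 0$: since $I_i(\ell) = \bD(P_i(\ell))$ is the restricted dual of the projective $P_i(\ell) = \Pi(\ell)e_i$, the standard duality (obtained from the $\phi$-twist with $\phi(e_i) = e_i$, $\phi(\ep_i) = \ep_i$) gives $\ghom_{\Pi(\ell)}(M, I_i(\ell)) \cong \bD(e_iM)$ naturally in $M$. Applying $\ghom_{\Pi(\ell)}(M, -)$ to \eqref{eq:III} and noting that the map $\cdot\,\ep_i$ dualizes to left multiplication by $\ep_i$ on $e_iM$, I would obtain
\[ \ghom_{\Pi(\ell)}(M, \bar I_i) \cong \bD\bigl( e_iM/\ep_i e_iM \bigr) \cong \ghom_{H_i(\ell)}(e_iM, \kk), \]
where the last step uses the identification $e_iM/\ep_i e_iM = e_iM \otimes_{H_i(\ell)} \kk$ together with Hom--tensor adjunction.

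To handle arbitrary $m$, I would take a bigraded projective resolution $P_\bullet \to M$ in $\Cc(\Pi(\ell))$. The functor $e_i\cdot$ is exact, since multiplication by the idempotent $e_i$ is a split projection on the underlying bigraded vector space of any $\Pi(\ell)$-module. Moreover, each $P_k$ is a direct summand of a direct sum of bigrading shifts of modules $\Pi(\ell)e_j$, so $e_iP_k$ is a summand of shifts of $e_i\Pi(\ell)e_j$; by Theorem~\ref{Thm:fd}\eqref{Thm:fd:lf}, each such piece is bigraded free of finite rank over $H_i(\ell)$. Hence $e_iP_\bullet \to e_iM$ is a bigraded $H_i(\ell)$-projective resolution of $e_iM$. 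Applying the $m = 0$ isomorphism of the previous paragraph termwise and taking cohomology yields
\[ \gext^m_{\Pi(\ell)}(M, \bar I_i) \cong H^m\ghom_{H_i(\ell)}(e_iP_\bullet, \kk) \cong \gext^m_{H_i(\ell)}(e_iM, \kk), \]
which is \eqref{eq:extMI}.

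Finally, for \eqref{eq:extEI}, I would specialize the preceding isomorphism to $M = E_i(\ell)$ with target $\bar I_j$. By the very definition of the generalized simple module, $e_jE_i(\ell) = 0$ whenever $j \neq i$, which makes all Ext groups vanish in that case. When $j = i$, we have $e_iE_i(\ell) \cong H_i(\ell)$ as bigraded $H_i(\ell)$-modules, so $\gext^m_{H_i(\ell)}(H_i(\ell), \kk) = 0$ for $m > 0$ (since $H_i(\ell)$ is free over itself) while $\ghom_{H_i(\ell)}(H_i(\ell), \kk) \cong \kk$ is one-dimensional in bidegree $(0, 0)$. The only delicate step I anticipate is carefully tracking the $\phi$-twist in establishing $\ghom_{\Pi(\ell)}(M, I_i(\ell)) \cong \bD(e_iM)$ and then propagating it through the exact sequence \eqref{eq:III}; beyond that, the argument is routine homological bookkeeping given Theorem~\ref{Thm:fd}\eqref{Thm:fd:lf}.
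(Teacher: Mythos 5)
Your argument is correct and takes essentially the same route as the paper: the $m=0$ case is obtained exactly as there, from $\ghom_{\Pi(\ell)}(M,I_i(\ell)) \cong \bD(e_iM)$ and the sequence \eqref{eq:III}, and your explicit resolution step (exactness of $e_i(-)$ plus Theorem~\ref{Thm:fd}~\eqref{Thm:fd:lf} making $e_iP_\bullet$ an $H_i(\ell)$-projective resolution) is just the concrete form of the paper's appeal to universal $\delta$-functors. The specialization yielding \eqref{eq:extEI} is likewise the intended one.
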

\begin{proof}
Since $\ghom_{\Pi(\ell)}(M,I_i(\ell)) \cong \bD(e_i M)$, the exact sequence \eqref{eq:III} yields the isomorphism
\[ \ghom_{\Pi(\ell)}(M, \bar{I}_i) \cong \bD(e_i(M/\ep_i M)) \cong \ghom_{H_i(\ell)}(e_i M, \kk),\]
which is functorial in $M \in \Cc(\Pi(\ell))$.
This isomorphism extends to the desired isomorphism \eqref{eq:extMI} of the universal $\delta$-functors.
\end{proof}

\begin{Cor} \label{Cor:ME}
If $M \in \Cc_{\lf}(\Pi(\ell))$, we have
\[ [M] = \sum_{i \in I} \dim_{q^{-1}, t^{-1}} \ghom_{\Pi(\ell)}(M,\bar{I}_i) [E_i(\ell)] \]
in $K(\Pi(\ell))_{loc}$.
\end{Cor}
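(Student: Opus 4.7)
The plan is to separately compute $[M]$ and the coefficients $\dim_{q^{-1}, t^{-1}}\ghom_{\Pi(\ell)}(M, \bar{I}_i)$ from the local data of $M$, and then to observe that they match. Since $M \in \Cc_{\lf}(\Pi(\ell))$, by definition there exist $f_i \in \Z_{\ge 0}[q^{\pm 1}, t^{\pm 1}]$ with $e_i M \cong H_i(\ell)^{\oplus f_i}$ as bigraded $H_i(\ell)$-modules, so the goal reduces to establishing $f_i = \dim_{q^{-1}, t^{-1}}\ghom_{\Pi(\ell)}(M, \bar{I}_i)$ for each $i \in I$.

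For the class $[M]$, I note that $e_i S_j \cong \delta_{ij}\kk$ is concentrated in bidegree $(0,0)$, so the bigraded composition multiplicity of $S_i$ in $M$ equals $\dim_{q, t} e_i M = f_i \cdot \dim_{q, t} H_i(\ell) = f_i \cdot (1 - q^{2\ell r})/(1 - q^{2d_i})$. Combining this with the identity $[E_i(\ell)] = \frac{1-q^{2\ell r}}{1-q^{2d_i}}[S_i]$ recalled in \S\ref{Ssec:Grot}, I obtain
\[
[M] = \sum_{i \in I} f_i \cdot \frac{1-q^{2\ell r}}{1-q^{2d_i}}\,[S_i] = \sum_{i \in I} f_i\,[E_i(\ell)]
\]
in $K(\Pi(\ell))_{loc}$.

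For the Hom coefficients, I apply the $m = 0$ case of Lemma~\ref{Lem:extEI} to rewrite $\ghom_{\Pi(\ell)}(M, \bar{I}_i) \cong \ghom_{H_i(\ell)}(e_i M, \kk)$. A homogeneous $H_i(\ell)$-linear map $H_i(\ell) \to \kk$ is determined by its value on $1 \in H_i(\ell)_{0,0}$, which must lie in $\kk_{0,0}$; hence $\ghom_{H_i(\ell)}(H_i(\ell), \kk) \cong \kk$ concentrated in bidegree $(0,0)$, and consequently $\ghom_{H_i(\ell)}(q^k t^l H_i(\ell), \kk) \cong q^{-k} t^{-l}\kk$. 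Summing over the decomposition of $e_i M$ gives $\dim_{q, t}\ghom_{\Pi(\ell)}(M, \bar{I}_i) = f_i(q^{-1}, t^{-1})$, equivalently $\dim_{q^{-1}, t^{-1}}\ghom_{\Pi(\ell)}(M, \bar{I}_i) = f_i(q, t)$, which is precisely what is needed to conclude. I do not foresee any substantive obstacle beyond bookkeeping with the bigrading shifts; the flip $(q, t) \leftrightarrow (q^{-1}, t^{-1})$ in the final formula simply reflects the contravariance of $\ghom_{H_i(\ell)}(-, \kk)$.
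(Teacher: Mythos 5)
Your proposal is correct and follows essentially the same route the paper intends: write $e_iM \cong H_i(\ell)^{\oplus f_i}$ using local freeness, identify $[M]=\sum_i f_i[E_i(\ell)]$ via the relation $[E_i(\ell)]=\frac{1-q^{2\ell r}}{1-q^{2d_i}}[S_i]$, and then use the $m=0$ case of Lemma~\ref{Lem:extEI} together with $\ghom_{H_i(\ell)}(H_i(\ell),\kk)\cong\kk$ in bidegree $(0,0)$ to see that $\dim_{q^{-1},t^{-1}}\ghom_{\Pi(\ell)}(M,\bar{I}_i)=f_i$. The bookkeeping of the bigrading shifts and the $(q,t)\leftrightarrow(q^{-1},t^{-1})$ flip is handled correctly.
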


\section{Interpretation of deformed Cartan matrices}
\label{Sec:main}

In this section, we give a representation-theoretic interpretation of the $(q,t)$-deformed Cartan matrix $C(q,t)$ and its inverse $\tC(q,t)$ in terms of bigraded modules over the generalized preprojective algebras.
Along the way, we discuss bigraded projective resolutions of the generalized simple modules (\S \ref{Ssec:res}), $E$-filtrations of projective modules (\S \ref{Ssec:filt}) and bigraded Euler-Poincar\'e pairings (\S \ref{Ssec:EP}). 

Throughout this section, we fix a positive integer $\ell \in \Z_{>0}$ and consider $\Pi(\ell)$-modules only.
It turns out that all the results are essentially independent of this fixed $\ell$ except for Corollary~\ref{P=I}~\eqref{eq:P=I}.
For the sake of simplicity, we abbreviate $\Pi \seq \Pi(\ell)$ and $X_i \seq X_i(\ell)$ for each $i \in I$, where $X \in \{H, P, I, E, E' \}$.
Also, we set $\otimes_i \seq \otimes_{H_i}$ for each $i \in I$.

\subsection{Bigraded projective resolutions}
\label{Ssec:res}
In this subsection, we develop the projective resolution of $E_i$ in our bigraded category $\Cc(\Pi)$.
Following \cite[\S 5.1]{GLS}, for each $i, j \in I$ with $i \sim j$, we define the bigraded $(H_i, H_j)$-bimodule ${}_i H_j$ by
\[ {}_i H_j  \seq H_i \alpha_{ij} H_j  \subset \Pi. \]
It is free as a left $H_i$-module and free as a right $H_j$-module. 
Moreover, the relation $(R1)$ gives the following:
\[ {}_i H_j = \bigoplus_{k=0}^{-c_{ji}-1}H_i(\alpha_{ij}\varepsilon_j^k) 
    = \bigoplus_{k=0}^{-c_{ij}-1}(\varepsilon_i^k \alpha_{ij}) H_j. \]
In particular, we get the following lemma, which is essential to understand the relationship between the deformed Cartan matrices and the generalized preprojective algebras:
\begin{Lem}\label{lem:iHjCartan}
For $i, j\in I$ with $i\sim j$, we have two isomorphisms
\[
{}_{H_i}({}_i H_j) \cong H_i^{\oplus (-q^{-d_j}tC_{ji}(q,t))}, \qquad
({}_i H_j) {}_{H_j} \cong H_j^{\oplus (-q^{-d_i}tC_{ij}(q,t))}
\]
as bigraded left $H_i$-modules and as bigraded right $H_j$-modules respectively.
\end{Lem}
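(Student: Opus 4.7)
The plan is a direct bookkeeping comparison between the bigrading \eqref{GPAdeg} on $\tPi$ and the Frenkel--Reshetikhin normalization of $C(q,t)$, starting from the explicit decompositions of ${}_iH_j$ already recorded just above the statement:
\[ {}_iH_j \;=\; \bigoplus_{k=0}^{-c_{ji}-1} H_i(\alpha_{ij}\ep_j^k) \;=\; \bigoplus_{k=0}^{-c_{ij}-1}(\ep_i^k\alpha_{ij})H_j. \]
By \eqref{GPAdeg}, the generators carry bidegrees $\deg(\alpha_{ij}\ep_j^k) = (b_{ij}+2kd_j,\,1)$ and $\deg(\ep_i^k\alpha_{ij}) = (b_{ij}+2kd_i,\,1)$. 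Hence the two asserted isomorphisms reduce, in the notation of \S\ref{Ssec:bgv}, to the polynomial identities
\[ \sum_{k=0}^{-c_{ji}-1} q^{b_{ij}+2kd_j}\, t \;=\; -q^{-d_j}\,t\,C_{ji}(q,t), \qquad \sum_{k=0}^{-c_{ij}-1} q^{b_{ij}+2kd_i}\, t \;=\; -q^{-d_i}\,t\,C_{ij}(q,t) \]
in $\Z[q^{\pm 1},t^{\pm 1}]$.

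I will verify the first identity; the second is proved identically, swapping the roles of $i$ and $j$. Since $i \ne j$, the definition of $C(q,t)$ in \S\ref{subsection:DCM} gives $C_{ji}(q,t) = [c_{ji}]_q$, and since $c_{ji}\le -1$ one has $-[c_{ji}]_q = [-c_{ji}]_q = q^{-c_{ji}-1}+q^{-c_{ji}-3}+\cdots+q^{c_{ji}+1}$. Invoking the symmetrizability relation $d_ic_{ij} = d_jc_{ji}$, i.e.\ $b_{ij}=d_jc_{ji}$, the left-hand side becomes $t\sum_{k=0}^{-c_{ji}-1} q^{d_jc_{ji}+2kd_j}$. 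I would then close the proof by a short case split based on Table~\ref{table:num}: if $d_j=1$, the sum is an arithmetic progression of $-c_{ji}$ terms with step $2$, whose exponents run from $c_{ji}$ up to $-c_{ji}-2$, matching $q^{-1}[-c_{ji}]_q$ term by term; if $d_j\ge 2$, then $d_j = r$ forces $d_i \le d_j$ and hence $c_{ji} = -\lceil d_i/d_j\rceil = -1$, so both sides collapse to the single term $q^{-d_j}t$.

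I do not anticipate any substantive obstacle: the lemma has no genuine homological content, only a compatibility between the bigrading convention on the quiver with relations and the Frenkel--Reshetikhin normalization of $C(q,t)$, mediated by the symmetrizability identity. The only mildly delicate point is the sign coming from expanding $[c_{ji}]_q$ when $c_{ji}$ is negative, and the separate handling of the long-arrow direction of the Dynkin diagram; both are standard features of the non-simply-laced setting and are precisely what the minimal-symmetrizer data in Table~\ref{table:num} is designed to encode.
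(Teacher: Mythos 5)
Your proposal is correct and follows the same route as the paper: the lemma is deduced directly from the two free decompositions ${}_iH_j=\bigoplus_{k}H_i(\alpha_{ij}\ep_j^k)=\bigoplus_{k}(\ep_i^k\alpha_{ij})H_j$ coming from relation (R1), with the bigrading \eqref{GPAdeg} and the identity $b_{ij}=d_ic_{ij}=d_jc_{ji}$ turning the generator degrees into exactly $-q^{-d_j}tC_{ji}(q,t)$ and $-q^{-d_i}tC_{ij}(q,t)$. The paper leaves this degree bookkeeping implicit, and your explicit verification (including the $d_j=1$ versus $d_j=r$ case split) is accurate.
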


In the representation theory of (generalized) preprojective algebras, some bimodule resolutions developed in Brenner-Butler-King \cite{BBK} and Gei\ss-Leclerc-Schr\"oer \cite{GLS,GLS8} are very useful. 
Here we shall give a bigraded analogue of them by inspection.
Consider the following sequence of bigraded $(\Pi, \Pi)$-bimodules: 
\begin{equation} \label{eq:bmodres}
 \bigoplus_{i\in I} q^{-2d_i} t^2 \Pi e_i \otimes_i e_i \Pi
 \xrightarrow{\psi}
 \bigoplus_{i, j \in I: i \sim j} \Pi e_j \otimes_j {}_jH_i \otimes_i e_i \Pi 
 \xrightarrow{\varphi}
 \bigoplus_{i\in I} \Pi e_i \otimes_i e_i \Pi \rightarrow \Pi \rightarrow 0,
\end{equation}
where the morphisms $\psi$ and $\varphi$ are given by 
\begin{align*}
\psi(e_i \otimes e_i) &\seq \sum_{j \sim i}\sum_{k=0}^{-c_{ij}-1} \omega_{ij} \left(\ep_i^{k} \alpha_{ij} \otimes \alpha_{ji} \ep_i^{-c_{ij}-1-k}\otimes e_i + e_i \otimes \ep_i^{k} \alpha_{ij} \otimes \alpha_{ji} \ep_i^{-c_{ij}-1-k}\right), \\
\varphi(e_j \otimes x\otimes e_i) &\seq x \otimes e_i + e_j \otimes x.
\end{align*}
The other arrows $\bigoplus_{i\in I} \Pi e_i \otimes_i e_i \Pi \to \Pi \to 0$ are canonical. 
The relation (R2) ensures that the sequence~\eqref{eq:bmodres} forms a complex. 
For each $i \in I$, applying $(-)\otimes_{\Pi} E_i$ to \eqref{eq:bmodres} yields the following complex of bigraded (left) $\Pi$-modules:
\begin{equation} \label{res3}
P_2^{(i)} \xrightarrow{\psi^{(i)}}
P_1^{(i)}
\xrightarrow{\varphi^{(i)}}
P_0^{(i)} \to E_i \to 0,
\end{equation}
where 
\[ P_0^{(i)} = P_i, \qquad
P_1^{(i)} = \bigoplus_{j\sim i} P_j^{\oplus (-q^{-d_i}tC_{ij}(q,t))}, \qquad
P_2^{(i)} = q^{-2d_i}t^2 P_i
\]
by Lemma~\ref{lem:iHjCartan}.
The morphisms $\psi^{(i)}$ and $\varphi^{(i)}$ are induced from $\psi$ and $\varphi$ respectively.

\begin{Thm}[{\cite[Proposition 12.1 and Corollary 12.2]{GLS}}]
The complexes \eqref{eq:bmodres} and \eqref{res3} are exact.
\end{Thm}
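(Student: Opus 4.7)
The plan is to reduce the bigraded statement to the ungraded bimodule exactness \cite[Proposition 12.1]{GLS} by verifying compatibility of bigradings. First I would check that $\psi$ and $\varphi$ are homogeneous of bidegree $(0,0)$. For $\varphi$ this is immediate, since each of the two summands in $\varphi(e_j \otimes x \otimes e_i)=x\otimes e_i + e_j\otimes x$ has the same bidegree as $e_j\otimes x\otimes e_i$. For $\psi$, using $\deg(\alpha_{ij})=\deg(\alpha_{ji})=(b_{ij},1)$, $\deg(\ep_i)=(2d_i,0)$, and $b_{ij}=b_{ji}=d_ic_{ij}$, a direct computation shows that each summand $\ep_i^k\alpha_{ij}\otimes\alpha_{ji}\ep_i^{-c_{ij}-1-k}\otimes e_i$ of $\psi(e_i\otimes e_i)$ has total bidegree $(-2d_i,2)$, matching the bidegree of $e_i\otimes e_i$ viewed inside $q^{-2d_i}t^2\,\Pi e_i \otimes_i e_i \Pi$.

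That \eqref{eq:bmodres} is a complex follows directly from the defining relations of $\Pi$: (R1) ensures that $\psi$ is well defined on the balanced tensor products, while (R2) gives $\varphi\circ\psi=0$. Given the bigraded compatibility established above, the ungraded exactness of \eqref{eq:bmodres} from \cite[Proposition 12.1]{GLS} refines verbatim to the bigraded category, since every map and every summand is homogeneous of bidegree $(0,0)$.

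To deduce exactness of \eqref{res3}, I would apply $(-)\otimes_\Pi E_i$ to \eqref{eq:bmodres}. Each bimodule appearing in \eqref{eq:bmodres} is projective as a bigraded right $\Pi$-module: by Theorem~\ref{Thm:fd}\eqref{Thm:fd:lf}, $\Pi e_j$ is bigraded free over $H_j$, so $\Pi e_j\otimes_j e_j\Pi$ is a direct sum of bigrading shifts of the projective right module $e_j\Pi$; Lemma~\ref{lem:iHjCartan} gives the analogous decomposition of $\Pi e_j\otimes_j {}_jH_i\otimes_i e_i\Pi$, with multiplicities recorded by $-q^{-d_i}tC_{ij}(q,t)$. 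Consequently $(-)\otimes_\Pi E_i$ preserves the exactness of \eqref{eq:bmodres}, and Lemma~\ref{lem:iHjCartan} simultaneously identifies $(\Pi e_j\otimes_j {}_jH_i\otimes_i e_i\Pi)\otimes_\Pi E_i \cong P_j\otimes_j{}_jH_i \cong P_j^{\oplus(-q^{-d_i}tC_{ij}(q,t))}$, producing $P_1^{(i)}$ in the asserted form.

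The only genuinely non-trivial input is the ungraded bimodule exactness of \eqref{eq:bmodres} from \cite{GLS}, which is used here as a black box; it relies on the machinery of locally free modules and self-injectivity developed throughout that paper. The main obstacle in the present argument is therefore careful bigraded bookkeeping: ensuring that the shift $q^{-2d_i}t^2$ on the source of $\psi$ matches the computed bidegree of the image, and that Lemma~\ref{lem:iHjCartan} is applied with the correct handedness when identifying the left-module summands appearing in $P_1^{(i)}$.
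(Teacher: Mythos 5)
Your proposal is correct and follows essentially the same route as the paper, which offers no argument of its own beyond citing \cite[Proposition 12.1, Corollary 12.2]{GLS} and leaving the bigraded refinement to inspection -- exactly your homogeneity check of $\psi$ and $\varphi$ (with the correct handedness of Lemma~\ref{lem:iHjCartan} giving the multiplicity $-q^{-d_i}tC_{ij}(q,t)$), followed by applying $(-)\otimes_\Pi E_i$. One small phrasing caveat: projectivity of the terms of an exact complex does not in general survive tensoring, but your conclusion is valid here because \eqref{eq:bmodres} ends in $\Pi\to 0$ with $\Pi$ projective as a right $\Pi$-module, so each syzygy is a right-projective direct summand (equivalently, $\Tr$-free of Tor obstructions since $\mathrm{Tor}^\Pi_1(\Pi,E_i)=0$), and exactness is preserved.
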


By \cite[Theorem 3.16]{M}, we know that $\Ker(\psi^{(i)})$ is isomorphic to $E_{i^*}$ after forgetting the bigrading. 
Our first aim is to prove the following:
\begin{Thm} \label{Ker}
For any $i \in I$, we have $\Ker(\psi^{(i)}) \cong q^{-rh^\vee}t^hE_{i^*}$. In particular, each $E_i$ has the bigraded projective resolution
\begin{equation} \label{PE}
\cdots \to
P_3^{(i)} \to
P_2^{(i)} \to
P_1^{(i)} \to
P_0^{(i)} \to E_i \to 0
\end{equation}  
which extends \eqref{res3} and satisfies $P_{k + 3}^{(i)} = q^{-rh^\vee}t^hP_k^{(i^*)}$ for any $k \in \Z_{\ge 0}$.
\end{Thm}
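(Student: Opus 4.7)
The plan is to identify the bigrading on $\Ker(\psi^{(i)})$ by a class computation in $K(\Pi)_{loc}$ that gets matched against the braid group identity of Theorem~\ref{Thm:Tw_0}, and then to propagate this one case into the periodicity of the whole resolution. By the ungraded statement from \cite{M} invoked just before the theorem, we already have $\Ker(\psi^{(i)}) \cong E_{i^*}$ after forgetting the bigrading; since $E_{i^*}$ is cyclic with one-dimensional top at the vertex $i^*$, there is a unique pair $(a_i,b_i)\in\Z^2$ with $\Ker(\psi^{(i)}) \cong q^{a_i}t^{b_i}E_{i^*}$ as bigraded modules, and the problem reduces to showing $(a_i,b_i)=(-rh^\vee,h)$.

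To pin down $(a_i,b_i)$, I would take Euler classes along \eqref{res3}. Using Lemma~\ref{lem:iHjCartan} to read off $[P_1^{(i)}]=-q^{-d_i}t\sum_{j\sim i}C_{ij}(q,t)[P_j]$ and combining with the identity $1+q^{-2d_i}t^2=q^{-d_i}t\,C_{ii}(q,t)$, one obtains the clean formula
\begin{equation*}
q^{a_i}t^{b_i}[E_{i^*}] \;=\; q^{-d_i}t \sum_{j \in I} C_{ij}(q,t)\,[P_j] \;-\; [E_i] \qquad \text{in } K(\Pi)_{loc}.
\end{equation*}
Since $\{[E_k]\}_{k\in I}$ is a $\Q(q,t)$-basis of $K(\Pi)_{loc}$, matching coefficients of $[E_k]$ on both sides will determine $(a_i,b_i)$, provided we know the expansion $[P_j]=\sum_{k} f_{jk}(q,t)\,[E_k]$. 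These expansion coefficients are to be computed in \S\ref{Ssec:filt} via a bigraded $E$-filtration of $P_j$; the expected outcome is that the matrix $(f_{jk})$ agrees, after an explicit $q^D$-normalization and a twist by the involution $*$, with the inverse deformed Cartan matrix $\tC(q,t)$. Substituting back, the right-hand side becomes a matrix expression in $C(q,t)\tC(q,t)$, and the periodicity $T_{w_0}=-q^{-rh^\vee}t^h\nu$ from Theorem~\ref{Thm:Tw_0} forces it to collapse to $q^{-rh^\vee}t^h[E_{i^*}]$.

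Once $\Omega^3 E_i \cong q^{-rh^\vee}t^h E_{i^*}$ is established, the bigraded projective cover of this syzygy is $q^{-rh^\vee}t^h P_{i^*}$, giving $P_3^{(i)}$; applying the same three-term resolution now to $E_{i^*}$ and shifting by $q^{-rh^\vee}t^h$ produces $P_4^{(i)}$ and $P_5^{(i)}$, and induction on $k$, using $i^{**}=i$, yields $P_{k+3}^{(i)} \cong q^{-rh^\vee}t^h P_k^{(i^*)}$ for every $k\ge 0$.

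The main obstacle is the precise identification of the $E$-multiplicities of $P_j$ with entries of $\tC(q,t)$: this is the bridge that lets the purely combinatorial $T_{w_0}$-periodicity on $\hq$ detect the shift $(-rh^\vee,h)$ inside the homology of $\Pi$. Without this step the Grothendieck group identity above only shows that some shift exists; showing that it is exactly $(-rh^\vee,h)$ is what really makes the deformed-Cartan/generalized-preprojective correspondence at the heart of the paper visible.
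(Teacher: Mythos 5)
Your reduction of the problem to pinning down a single bidegree shift $(a_i,b_i)$ is sound (the kernel is a bigraded submodule whose ungraded underlying module is $E_{i^*}$ by \cite{M}, and since its top is one-dimensional and homogeneous it must be a shift of $E_{i^*}$), your Grothendieck-group identity $q^{a_i}t^{b_i}[E_{i^*}]=q^{-d_i}t\sum_{j}C_{ij}(q,t)[P_j]-[E_i]$ is correct, and the inductive continuation to the full periodic resolution is routine. The genuine gap sits exactly where you place "the main obstacle'': the identification of the $E$-multiplicities $f_{jk}$ of $P_j$ with entries of $\tC(q,t)$ is not available at this stage of the paper --- it is Theorem \ref{Thm:goal} (and Proposition \ref{Prop:comb}), whose proofs rely on the periodic resolution of Theorem \ref{Ker} through the pairing formula \eqref{ES}, Proposition \ref{Prop:heart} and Corollary \ref{P=I}. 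Invoking that "expected outcome'' therefore makes your argument circular; it is also not literally what holds, since the $f_{jk}$ are Laurent polynomials and agree only with a truncation of the series $\tC_{jk}(q,t)$ (Corollary \ref{Cor:goal}), another sign that this identification is downstream of the theorem. What \S\ref{Ssec:filt} provides independently of Theorem \ref{Ker} is only Corollary \ref{Cor:filtJ}: $[P_j]=\sum_{k=1}^{l}(\varpi_j^\vee,T_{i_1}\cdots T_{i_{k-1}}\alpha_{i_k})_{q,t}[E_{i_k}]$ for a reduced word of $w_0$; your display alone then shows merely that \emph{some} shift exists, which you yourself concede.

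The gap is repairable without ever mentioning $\tC(q,t)$: substituting Corollary \ref{Cor:filtJ} and using $\alpha_i^\vee=q^{-d_i}t\sum_jC_{ij}(q,t)\varpi_j^\vee$, your right-hand side becomes $\sum_{k=1}^{l}(\alpha_i^\vee,T_{i_1}\cdots T_{i_{k-1}}\alpha_{i_k})_{q,t}[E_{i_k}]-[E_i]$, and the telescoping identity $\lambda-T_{w_0}\lambda=\sum_{k=1}^{l}(\alpha_{i_k}^\vee,\lambda)_{q,t}\,T_{i_1}\cdots T_{i_{k-1}}\alpha_{i_k}$, applied to $\lambda=\varpi_j$ and combined with Theorem \ref{Thm:Tw_0} (and $\nu\varpi_j=\varpi_{j^*}$), yields $\sum_{k\colon i_k=j}(\alpha_i^\vee,T_{i_1}\cdots T_{i_{k-1}}\alpha_j)_{q,t}=\delta_{ij}+q^{-rh^\vee}t^{h}\delta_{i^*j}$, so the right-hand side collapses to $q^{-rh^\vee}t^{h}[E_{i^*}]$ and $(a_i,b_i)=(-rh^\vee,h)$. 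With that step supplied, your route is a legitimate alternative to the paper's proof, which bypasses the Grothendieck-group computation entirely: there one realizes $\Ker(\psi^{(i)})$ as the concrete submodule $J_{i_{l-1}}\cdots J_{i_1}e_i\subset P_i$, the unique ideal isomorphic to $E_{i^*}$ by self-injectivity, and reads off its shift from Lemma \ref{lem:filtJ} as $(\varpi_i^\vee,T_{w_0}T_{i_l}^{-1}\alpha_{i_l})_{q,t}=q^{2d_i-rh^\vee}t^{h-2}\delta_{i^*,i_l}$.
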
 

A proof of Theorem~\ref{PE} is given in the next subsection.

\begin{Cor} \label{P=I}
For each $i \in I$, we have the following isomorphisms of bigraded $\Pi(\ell)$-modules:
\begin{enumerate}
\item \label{eq:P=I} $P_i(\ell) \cong q^{r(2\ell-h^\vee)}t^{h-2}I_{i^*}(\ell)$,
\item \label{eq:DI=I} $\bD(\bar{I}_i) \cong q^{2d_i-rh^\vee}t^{h-2}\bar{I}_{i^*}$.\qedhere
\end{enumerate}
\end{Cor}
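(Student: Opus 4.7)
The plan is to derive part (1) from Theorem~\ref{Ker} by locating the bidegree of the socle of $P_i$, and then to deduce part (2) from part (1) via a transport-of-structure argument combined with a direct computation of the $\ep_i$-torsion of $P_i$.

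For part (1): Theorem~\ref{Thm:P=I} provides a bigraded isomorphism $P_i \cong q^c t^d I_{i^*}$ for some unique shift $(c,d)$, and since $\mathrm{soc}(P_i)$ consists of a single copy of $S_{i^*}$ by the Nakayama permutation, the shift is determined by its bidegree. I locate this bidegree using Theorem~\ref{Ker}: the embedding $q^{-rh^\vee}t^h E_{i^*} \cong \Ker(\psi^{(i)}) \hookrightarrow P_2^{(i)} = q^{-2d_i}t^2 P_i$ shows that the socle of $E_{i^*}$ (which is $S_{i^*}$ in bidegree $(2\ell r - 2d_{i^*}, 0)$, since $e_{i^*}E_{i^*} = H_{i^*}$) embeds into the socle of $P_2^{(i)}$, forcing $\mathrm{soc}(P_i) = S_{i^*}$ in bidegree $(2\ell r - 2d_{i^*} + 2d_i - rh^\vee, h - 2)$. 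At this point the crucial observation is that $d_i = d_{i^*}$ for every $i \in I$: the involution $i \mapsto i^*$ is non-trivial only in the simply-laced types $\mathrm{A}_n$, $\mathrm{D}_n$ ($n$ odd) and $\mathrm{E}_6$, where all $d_i = 1$. Hence $\mathrm{soc}(P_i) = S_{i^*}$ in bidegree $(r(2\ell - h^\vee), h - 2)$, and matching the top of $P_i$ with that of $q^c t^d I_{i^*} = q^c t^d \bD(P_{i^*})$ (whose top sits in the negated bidegree of $\mathrm{soc}(P_{i^*})$) yields $(c, d) = (r(2\ell - h^\vee), h - 2)$.

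For part (2): Fix the isomorphism $\alpha \colon P_i \xrightarrow{\sim} q^{r(2\ell - h^\vee)}t^{h-2} I_{i^*}$ from part (1). Right multiplication by $\ep_i$ defines a $\Pi$-linear endomorphism of $P_i$ of bidegree $(2d_i, 0)$, which $\alpha$ transports to a $\Pi$-linear endomorphism of $I_{i^*}$ of the same bidegree. Since $\End_\Pi(I_{i^*}) \cong H_{i^*}$ is a commutative local bigraded ring whose degree-$(2d_{i^*}, 0) = (2d_i, 0)$ piece is one-dimensional and spanned by $\ep_{i^*}$, the transported endomorphism must be a scalar multiple of right multiplication by $\ep_{i^*}$, with non-zero scalar whenever $\ep_i \neq 0$ in $\Pi$; in the remaining exceptional case $\ell r = d_i$ both endomorphisms vanish identically and the conclusion follows from the shift coincidence $r(2\ell - h^\vee) = 2d_i - rh^\vee$. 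In either case, $\alpha$ identifies $\bar{I}_{i^*} = \Ker(\cdot \ep_{i^*}) \subset I_{i^*}$ with $\Ker(\cdot \ep_i \colon P_i \to q^{-2d_i}P_i)$.

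It remains to identify this kernel with the desired shift of $\bD(\bar{I}_i)$. Because $P_i = \Pi e_i$ is free as a right $H_i$-module---a consequence of Theorem~\ref{Thm:fd}\eqref{Thm:fd:lf} combined with the anti-involution $\phi$---right multiplication by $\ep_i^{\ell r/d_i - 1}$ induces a bigraded isomorphism $q^{2\ell r - 2d_i}(P_i / \Pi\ep_i) \xrightarrow{\sim} \Ker(\cdot \ep_i)$. Since $P_i / \Pi\ep_i = \bD(\bar{I}_i)$ by the definition of $\bar{I}_i$, combining with the identification above yields $q^{r(2\ell - h^\vee)}t^{h-2}\bar{I}_{i^*} \cong q^{2\ell r - 2d_i}\bD(\bar{I}_i)$, which rearranges to the claimed $\bD(\bar{I}_i) \cong q^{2d_i - rh^\vee}t^{h-2}\bar{I}_{i^*}$. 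The most delicate step is the transport of structure in part (2): although $\alpha$ is only an isomorphism of left $\Pi$-modules, the one-dimensionality of the relevant homogeneous piece of $\End_\Pi(I_{i^*})$ forces $\alpha$ to intertwine $\cdot \ep_i$ with $\cdot \ep_{i^*}$ up to an inessential scalar, and the uniform handling of the degenerate case $\ep_i = 0$ requires the shift computation of part (1) to be numerically consistent.
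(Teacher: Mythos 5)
Your part (1) is essentially the paper's own argument: the paper likewise combines Theorem~\ref{Ker} with the bigraded socle $q^{2(\ell r-d_{i^*})}S_{i^*}\subset E_{i^*}$ coming from \eqref{eq:S=Ker}, and then invokes Theorem~\ref{Thm:P=I} to pin down the shift (your explicit use of $d_i=d_{i^*}$ is also implicit there). For part (2) you take a genuinely different route. The paper applies the functor $(-)\otimes_{H_i}S_i$ (and $(-)\otimes_{\kk[\ep]/(\ep^\ell)}\kk$ when $i\neq i^*$) to the isomorphism of part (1), using $P_i\otimes_{H_i}S_i\cong \bD(\bar{I}_i)$ and $I_i\otimes_{H_i}S_i\cong q^{2(d_i-\ell r)}\bar{I}_i$, with a case split according to whether $i=i^*$. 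You instead transport the right $\ep_i$-action through the isomorphism of part (1), identify the two kernels via \eqref{eq:III}, and use freeness of $P_i$ as a right $H_i$-module to recognize $\Ker(\cdot\ep_i)\subset P_i$ as $q^{2\ell r-2d_i}\bD(\bar{I}_i)$. This treats both cases uniformly and makes explicit a compatibility of $\ep$-actions under a purely left-module isomorphism which the paper's tensor argument leaves tacit; the cost is the extra bookkeeping with the degenerate case $\ell r=d_i$, which you handle correctly.

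One justification does need repair: the claim $\End_{\Pi}(I_{i^*})\cong H_{i^*}$ is false in general. Since $I_{i^*}\cong\bD(P_{i^*})$, the full bigraded endomorphism algebra is identified with $e_{i^*}\Pi(\ell)e_{i^*}$ (with multiplication reversed), which is typically strictly larger than $H_{i^*}(\ell)$ and need not be commutative; already for type $\mathrm{A}_3$ with $\ell=1$ and $i^*$ the middle vertex it is $2$-dimensional while $H_{i^*}(1)=\kk$. What your argument actually requires--and what is true--is only that the homogeneous component of bidegree $(2d_i,0)=(2d_{i^*},0)$ of $e_{i^*}\Pi(\ell)e_{i^*}$ is $\kk\,\ep_{i^*}$: every arrow $\alpha_{jk}$ has $t$-degree $1$, so any homogeneous element of $t$-degree $0$ in $e_{i^*}\Pi(\ell)e_{i^*}$ is a polynomial in $\ep_{i^*}$, and in $q$-degree $2d_{i^*}$ this leaves exactly $\kk\,\ep_{i^*}$ (zero precisely when $\ell r=d_i$). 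With this bigrading-dependent substitute, the rest of your transport argument--the nonvanishing of the scalar when $\ep_i\neq0$, the identification of kernels, and the shift bookkeeping--goes through as written.
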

\begin{proof}
Note that we have 
\begin{equation} \label{eq:S=Ker}
q^{2(\ell r -d_i)}S_{i} \cong \Ker\left(E_{i} \xrightarrow{\cdot \ep_i} q^{-2d_i}E_i\right) \subset E_i
\end{equation} 
as bigraded $\Pi(\ell)$-modules.
Combining with Theorem~\ref{Ker}, we have an embedding 
\[
q^{r(2\ell-h^\vee)}t^{h-2} S_{i^*} \hookrightarrow q^{2d_i - rh^\vee}t^{h-2}E_{i^*} \hookrightarrow  P_i. 
\]
Since $P_i$ is isomorphic to $I_{i^*}$ up to bigrading shift by Theorem~\ref{Thm:P=I}, it implies the isomorphism \eqref{eq:P=I}. 
Next, by applying $I_i \otimes_{H_i} (-)$ to \eqref{eq:S=Ker} and recalling \eqref{eq:III}, we obtain $I_i \otimes_{H_i} S_i \cong q^{2(d_i-\ell r)}\bar{I}_i$. 
When $i^* = i$, applying $(-)\otimes_{H_i}S_i$ to the isomorphism \eqref{eq:P=I}, we have
\[ \bD(\bar{I}_i) \cong P_i \otimes_{H_i} S_i \cong q^{r(2\ell-h^\vee)}t^{h-2} I_i \otimes_{H_i}S_i \cong q^{2d_i-rh^\vee}t^{h-2}\bar{I}_{i},\]
which is \eqref{eq:DI=I} in this case. 
When $i \neq i^*$, $\fg$ is necessarily of simply-laced type.
Then, we apply $(-)\otimes_{\kk[\ep]/(\ep^\ell)}\kk$ to the isomorphism \eqref{eq:P=I} and compute similarly, to obtain \eqref{eq:DI=I}.     
\end{proof}

\subsection{$E$-filtrations of $P_i$} 
\label{Ssec:filt}
First, we prepare additional notation. 
For each $i \in I$, we set
$J_i\coloneqq \Pi(1-e_i)\Pi$. 
This is a bigraded two-sided ideal of $\Pi$. 
For $M \in \Cc(\Pi)$ and $i\in I$, let $\sub_i{M}$ (resp.~$\fac_i{M}$) be the largest bigraded submodule (resp.~factor module) of $M$ such that $e_i\sub_i{M}=\sub_i{M}$ (resp.~$e_i\fac_i{M}=\fac_i{M}$). 
In what follows, we endow the localized Grothendieck group $K(\Pi)_{loc}$ (see \S \ref{Ssec:Grot}) with an action of the braid group $B_\fg$ via the isomorphism $K(\Pi)_{loc} \cong \hq$ which identifies the class $[E_i]$ with the element $\alpha_i^\vee$ for each $i \in I$. 
Namely, recalling \eqref{Tcoroot}, we set 
\begin{equation} \label{eq:TE}
T_i [E_j] \seq [E_j] - q^{-d_j}tC_{ji}(q,t)[E_i]
\end{equation}
in $K(\Pi)_{loc}$ for $i,j \in I$.
We can find an analogue of \cite[Proposition 9.4]{GLS} by an easy adaptation of arguments about relationships between idempotent ideals and reflection functors in \cite{BKT,Kul} as follows.

\begin{Lem}\label{lem:reflBraid}
Let $M\in \Cc_{\lf}(\Pi)$. 
\begin{enumerate}
\item \label{lem:reflBraid:sub} 
If $\sub_{i}(M)=0$, we have $J_i \otimes_{\Pi} M \in \Cc_{\lf}(\Pi)$ and $[J_i \otimes_{\Pi} M] = T_i[M]$.
\item \label{lem:reflBraid:fac}
If $\fac_{i}(M)=0$, we have $\hom_{\Pi}(J_i, M) \in \Cc_{\lf}(\Pi)$ and $[\hom_{\Pi}(J_i, M)]=T_i^{-1}[M]$. \qedhere
\end{enumerate}
\end{Lem}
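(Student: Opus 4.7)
The plan is to install the bigrading in the standard idempotent-ideal analysis of reflection functors for preprojective algebras, as developed by Baumann-Kamnitzer-Tingley \cite{BKT} and K\"ulshammer \cite{Kul}, and realized for symmetrizable generalized preprojective algebras in \cite[Proposition~9.4]{GLS}. I will sketch part~(1); part~(2) is entirely parallel, either by applying $\ghom_\Pi(-, M)$ in place of $-\otimes_\Pi M$ to the bimodule sequence below, or by invoking the anti-involution $\phi$ of Section~\ref{GPAdef} (which swaps $\sub_i$ with $\fac_i$ and $T_i$ with $T_i^{-1}$).

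First, identify $\Pi/J_i \cong H_i$ as bigraded $(\Pi,\Pi)$-bimodules and apply $-\otimes_\Pi M$ to the short exact sequence $0 \to J_i \to \Pi \to H_i \to 0$ to obtain the four-term exact sequence
\begin{equation*}
0 \to \mathrm{Tor}_1^\Pi(H_i, M) \to J_i \otimes_\Pi M \to M \to \fac_i M \to 0,
\end{equation*}
using $H_i \otimes_\Pi M \cong M/J_iM = \fac_i M$. Since $\mathrm{Tor}_1^\Pi(H_i, M)$ and $\fac_i M$ are both $H_i$-modules concentrated at vertex~$i$, this yields $[J_i\otimes_\Pi M] = [M] + [\mathrm{Tor}_1^\Pi(H_i, M)] - [\fac_i M]$ in $K(\Pi)_{loc}$, with the correction in $\Q(q,t)\cdot[E_i]$. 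The lemma reduces to verifying: (a) the local-freeness $J_i \otimes_\Pi M \in \Cc_\lf(\Pi)$, and (b) the identity $[\fac_i M] - [\mathrm{Tor}_1^\Pi(H_i, M)] = \bigl(\sum_j f_j\, q^{-d_j}t\, C_{ji}(q,t)\bigr)[E_i]$ when $[M]=\sum_j f_j[E_j]$, which by \eqref{eq:TE} is equivalent to $[J_i\otimes_\Pi M] = T_i[M]$. For the generalized simples $M = E_j$ with $j \neq i$, both properties are immediate: $\sub_i E_j = 0$ and $\fac_i E_j = 0$ hold by inspection, and tensoring the bigraded projective resolution~\eqref{PE} of $E_j$ with $H_i$ (only the $P_i$-component of $P_1^{(j)}$ survives) gives $\mathrm{Tor}_1^\Pi(H_i, E_j) \cong H_i^{\oplus(-q^{-d_j}t\,C_{ji}(q,t))}$, matching \eqref{eq:TE} exactly and making local freeness transparent.

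The main step is to extend from generalized simples to an arbitrary locally free $M$ with $\sub_i M = 0$, via a vertex-by-vertex analysis of the four-term sequence: at any $j \neq i$ one obtains $e_j(J_i\otimes_\Pi M) \cong e_j M$, which is $H_j$-free by hypothesis; at vertex~$i$, the hypothesis $\sub_i M = 0$ forces the image $e_iJ_iM \subset e_iM$ to be an $H_i$-free direct summand of~$e_iM$, so that the complement, together with $\mathrm{Tor}_1^\Pi(H_i, M)$, assembles into an $H_i$-free module whose bigraded rank realizes identity~(b). The principal obstacle is exactly this bigraded $H_i$-free splitting of $e_iJ_iM \subset e_iM$ under $\sub_i M = 0$: it simultaneously underpins local freeness and the refined class formula. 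Tracking the bigrading shifts from \eqref{GPAdeg} and the defining relations (R1)--(R2) coherently through the tensor product so that the $H_i$-ranks match the coefficients $-q^{-d_j}tC_{ji}(q,t)$ of \eqref{eq:TE} is the new content beyond \cite{BKT}; once this bookkeeping is done, the classical simply-laced argument extends mutatis mutandis.
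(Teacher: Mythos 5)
Your reduction via the four-term sequence $0 \to \mathrm{Tor}_1^{\Pi}(H_i,M) \to J_i\otimes_\Pi M \to M \to \fac_i M \to 0$ is sound, and your verification for $M=E_j$ with $j\neq i$ is correct. The genuine gap is the passage from generalized simples to an arbitrary $M \in \Cc_{\lf}(\Pi)$ with $\sub_i M=0$: this is where the whole content of the lemma lies, and your sketch does not carry it out. First, the pivotal claim that $\sub_i M=0$ forces $e_iJ_iM$ to be a bigraded $H_i$-free direct summand of $e_iM$ is asserted, not proved; it is not a formal consequence of $\sub_i M=0$ but depends on the relations (R1)--(R2) in a way that becomes delicate as soon as $-c_{ij}>1$, and no argument is offered. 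Second, even granting that claim, the conclusions do not follow: the vertex-$i$ part $e_i(J_i\otimes_\Pi M)$ is an extension of $e_iJ_iM$ by $\mathrm{Tor}_1^{\Pi}(H_i,M)$, so local freeness also requires $\mathrm{Tor}_1^{\Pi}(H_i,M)$ to be graded free over $H_i$, and the class formula requires the exact value of $[\fac_i M]-[\mathrm{Tor}_1^{\Pi}(H_i,M)]$; your proposal computes neither for general $M$. The sentence about the complement and $\mathrm{Tor}_1$ ``assembling'' into a free module of the right rank is not an argument (one cannot realize a difference of classes by putting two modules together), and the mechanism that produces the coefficients $q^{-d_j}tC_{ji}(q,t)$ for all $j$ simultaneously never appears outside the case $M=E_j$. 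Third, the implicit devissage from generalized simples has nothing to stand on: locally free modules need not be $E$-filtered, and even on $\Cc_E(\Pi)$ the functors $\fac_i$ and $\mathrm{Tor}_1^{\Pi}(H_i,-)$ are only half-exact, so their contributions do not add along filtrations without controlling connecting maps --- which is exactly the point where the hypothesis $\sub_i M=0$ must intervene and where your outline is silent.

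For comparison, the paper closes precisely this gap by a computation valid for every locally free $M$ at once: since $e_iJ_i$ is the first syzygy of $E'_i$ in $\Cc(\Pi^{\op})$, applying $E'_i\otimes_\Pi(-)$ to the bimodule sequence \eqref{eq:bmodres} gives a presentation $q^{-2d_i}t^2\,e_i\Pi \to \bigoplus_{j\sim i}{}_iH_j\otimes_j e_j\Pi \to e_iJ_i \to 0$, and after tensoring with $M$ the hypothesis $\sub_i M=0$ is used exactly to show that the induced map $\zeta\colon q^{-2d_i}t^2\,e_iM \to \bigoplus_{j\sim i}{}_iH_j\otimes_j e_jM$ is injective. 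Hence $e_i(J_i\otimes_\Pi M)\cong\Cok(\zeta)$ is the cokernel of a (split, since free $H_i$-modules are injective over the self-injective $H_i$) monomorphism between graded free $H_i$-modules, so it is free, and Lemma~\ref{lem:iHjCartan} gives its rank $-\sum_{j\sim i}q^{-d_j}tC_{ji}(q,t)f_j - q^{-2d_i}t^2 f_i$; this yields local freeness and $[J_i\otimes_\Pi M]=T_i[M]$ in one stroke. If you wish to keep your Tor-based formulation, you would still have to run essentially this computation (identifying $\mathrm{Tor}_1^{\Pi}(H_i,M)$ and $\fac_i M$ together from a projective presentation of $E'_i$), so the shortcut through generalized simples does not avoid it.
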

\begin{proof}
We only prove the assertion \eqref{lem:reflBraid:sub} because the assertion \eqref{lem:reflBraid:fac} is dual to \eqref{lem:reflBraid:sub}.
Since $M\in \Cc_{\lf}(\Pi)$, we can write $[M] = \sum_{j\in I} f_j [E_j]$ in $K(\Pi)_{loc}$ with some $f_j \in \Z_{\geq 0}[q^{\pm 1}, t^{\pm 1}]$.
Note that $e_iJ_i$ is the first syzygy of $E'_i$ in $\Cc(\Pi^{\op})$. 
We get the following exact sequence of bigraded $(H_i, \Pi)$-bimodules by applying $E'_i \otimes_{\Pi} (-)$ to \eqref{eq:bmodres} and taking a kernel:
\[
q^{-2d_i}t^2 e_i\Pi \rightarrow \bigoplus_{j\sim i} {}_iH_j\otimes_j e_j\Pi
\rightarrow e_i J_i \rightarrow 0. 
\]
We apply $(-) \otimes_{\Pi} M $ to the above exact sequence to obtain a short exact sequence
\[
0 \rightarrow q^{-2d_i}t^2 e_i M \xrightarrow{\zeta} \bigoplus_{j\sim i} {}_iH_j\otimes_j e_j M
\rightarrow e_i J_i \otimes_{\Pi} M \rightarrow 0.
\]
Here the map $\zeta$ is injective because $\sub_i M= 0$. 
In particular, we have $e_i J_i \otimes_\Pi M \cong \Cok(\zeta)\cong H_i^{\oplus a}$, where 
\[
a = -\sum_{j\sim i} q^{-d_j}tC_{ji}(q, t)f_j -q^{-2d_i}t^2 f_i
\]
by Lemma~\ref{lem:iHjCartan}. 
Combined with the fact $e_j J_i \otimes_\Pi M \cong e_j M$ for $j\neq i$, we obtain $J_i \otimes_\Pi M \in \Cc_{\lf}(\Pi)$ and an equality
\begin{align*}
    [J_i \otimes_\Pi M]&= \sum_{j\neq i}f_j[E_j] - \sum_{j\sim i} q^{-d_j}tC_{ji}(q, t)f_j[E_i] -q^{-2d_i}t^2f_i [E_i]\\
    &=[M]- \sum_{j \in I} q^{-d_j}t C_{ji}(q,t) f_j[E_i]
\end{align*}
in $K(\Pi)_{loc}$.
The right hand side is equal to $T_i[M]$ by \eqref{eq:TE}.
\end{proof}

\begin{Lem}\label{lem:filtJ}
Let $(i_1, \dots, i_l)$ be a reduced expression of $w_0$. 
For any $i \in I$ and $1 \le k \le l$, we have a bigraded $\Pi$-module isomorphism
    \begin{equation*}
    J_{i_{k-1}}\cdots J_{i_1}e_i/ J_{i_k}\cdots J_{i_1}e_i \cong E_{i_k}^{\oplus (\varpi^{\vee}_{i}, T_{i_1}\cdots T_{i_{k-1}}\alpha_{i_k})_{q,t}}.
    \end{equation*}
    \end{Lem}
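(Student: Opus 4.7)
The plan is to argue by induction on $k$. Set $M_j \seq J_{i_j}\cdots J_{i_1}P_i$ for $0 \le j \le l$ so that $M_0 = P_i$; by construction, $M_{k-1}/M_k = \fac_{i_k}(M_{k-1})$ is the maximal factor of $M_{k-1}$ supported on vertex $i_k$. The base case $k=1$ is a direct path-algebra computation: for $i \ne i_1$, every path in $e_{i_1}\Pi e_i$ already starts at $i \ne i_1$ and therefore lies in $\Pi(1-e_{i_1})\Pi e_i = J_{i_1}P_i$, so $\fac_{i_1}(P_i) = 0$; for $i = i_1$, the quotient $e_{i_1}\Pi e_{i_1}/e_{i_1}J_{i_1}e_{i_1}$ collapses to $H_{i_1} \cong E_{i_1}$ in bidegree $(0,0)$. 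Both outcomes match $(\varpi_i^\vee,\alpha_{i_1})_{q,t} = \delta_{i,i_1}$.

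For the inductive step at $k \ge 2$, assume the claim (and in particular local freeness of $M_{k-1}$) is established for all indices $< k$. I would first establish the structural assertions: (i) $e_{i_k}(M_{k-1}/M_k)$ is a free $H_{i_k}$-module, whence $M_{k-1}/M_k \cong E_{i_k}^{\oplus f_k}$ for some $f_k \in \Z_{\ge 0}[q^{\pm 1},t^{\pm 1}]$, and (ii) $M_k \in \Cc_{\lf}(\Pi)$. The freeness in (i) is delicate because $H_{i_k}$ is not hereditary; I would approach it via a careful analysis of how $\ep_{i_k}$ acts on the $H_{i_k}$-generators of $e_{i_k}M_{k-1}$, using local freeness of $M_{k-1}$. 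Once (i) is known, (ii) follows from closure of $\Cc_{\lf}(\Pi)$ under kernels of surjections.

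The remaining arithmetic task is to identify $f_k$ with $(\varpi_i^\vee, T_{i_1}\cdots T_{i_{k-1}}\alpha_{i_k})_{q,t}$. The natural approach is to iterate Lemma \ref{lem:reflBraid}\eqref{lem:reflBraid:sub}: when $\sub_{i_k}(M_{k-1}) = 0$, the canonical map $J_{i_k}\otimes_\Pi M_{k-1} \to M_{k-1}$ is injective with image $M_k$ and $[M_k] = T_{i_k}[M_{k-1}]$ in $K(\Pi)_{loc}$. Iterating, $[M_{k-1}/M_k] = (1-T_{i_k})T_{i_{k-1}}\cdots T_{i_1}[P_i]$, and combining the identity $(1-T_{i_k})\lambda = (\alpha_{i_k}^\vee,\lambda)_{q,t}\alpha_{i_k}$, the equivariance of the pairing in Lemma \ref{lem:eqpair}, and the basis change $\alpha_{i_k} = q^{d_{i_k}}t^{-1}[d_{i_k}]_q\alpha_{i_k}^\vee$, the coefficient of $[E_{i_k}]$ unpacks to the desired pairing against $\varpi_i^\vee$.

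The main obstacle is that $\sub_{i_k}(M_{k-1})$ need not vanish at every step---already in type $\mathrm{A}_2$ with the reduced expression $(2,1,2)$ of $w_0$, we have $\sub_2(P_1) = q^{-1}t\,S_2 \ne 0$, the map $J_2\otimes_\Pi P_1 \to P_1$ is not injective, and $[M_1] \ne T_2[P_1]$ in $K(\Pi)_{loc}$, so the naive iteration of Lemma \ref{lem:reflBraid} fails. To circumvent this, I would track the filtration via the auxiliary ``dual rank'' invariant $\lambda(M) \seq \sum_j \dim_{q^{-1},t^{-1}}\ghom_\Pi(M,\bar{I}_j)\,\varpi_j^\vee \in \hq$, which by Corollary \ref{Cor:ME} encodes the same information as $[M]$ on $\Cc_{\lf}(\Pi)$ but should transform more robustly under the braid action, thanks to Lemma \ref{Lem:quadrant} and the self-duality Corollary \ref{P=I}. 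Carefully matching $\lambda(M_{k-1})$ with $T_{i_{k-1}}\cdots T_{i_1}\varpi_i^\vee$ (modulo a correction term that pairs trivially with $\alpha_{i_k}$ for $1 \le k \le l$) and reading off $f_k$ via pairing with $\alpha_{i_k}$ is the technical crux of the argument.
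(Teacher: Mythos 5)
Your base case is fine and your diagnosis of the obstruction is accurate (the $\mathrm{A}_2$ example showing $\sub_{i_k}$ need not vanish, so the left-handed iteration of Lemma~\ref{lem:reflBraid} breaks, is exactly right), but the two load-bearing steps of the induction are not actually proved. First, the structural claim that $J_{i_{k-1}}\cdots J_{i_1}e_i/J_{i_k}\cdots J_{i_1}e_i$ is a direct sum of bigraded shifts of $E_{i_k}$ (equivalently, freeness of its $e_{i_k}$-part over $H_{i_k}$) is only announced as something you ``would approach via a careful analysis of how $\ep_{i_k}$ acts''; no argument is given, and this is precisely the nontrivial input the paper imports as a bigraded analogue of \cite[Lemma 3.10]{M2}. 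Second, the multiplicity identification: your proposed fix via the invariant $\lambda(M)=\sum_j \dim_{q^{-1},t^{-1}}\ghom_{\Pi}(M,\bar{I}_j)\,\varpi_j^\vee$ does not gain anything, since by Corollary~\ref{Cor:ME} this is just $[M]$ rewritten in another basis, and the assertion that $\lambda(M_{k-1})$ equals $T_{i_{k-1}}\cdots T_{i_1}\varpi_i^\vee$ up to ``a correction term that pairs trivially with $\alpha_{i_k}$'' is exactly the hard statement that needs proof --- controlling that correction is equivalent to controlling the kernel of $J_{i_k}\otimes_\Pi M_{k-1}\to M_{k-1}$, i.e.\ to the lemma itself. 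Moreover, invoking Corollary~\ref{P=I} here is circular: that corollary is deduced from Theorem~\ref{Ker}, whose proof uses the present lemma.

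For comparison, the paper avoids the left-module $\sub$-vanishing problem by switching sides: it identifies $J_{i_{k-1}}\cdots J_{i_1}/J_{i_k}\cdots J_{i_1}\cong E'_{i_k}\otimes_\Pi J_{i_{k-1}}\cdots J_{i_1}$ in $\Cc(\Pi^{\op})$ (a bigraded analogue of \cite[Proposition 3.8]{M2}), applies the \emph{right}-module version of Lemma~\ref{lem:reflBraid} --- where the needed vanishing does hold along the reduced word read from the correct end --- to compute the bigraded dimension of the column-$i$ piece as a free right $H_i$-module with multiplicity $q^{d_i}t^{-1}(\varpi_i,T_{i_1}\cdots T_{i_{k-1}}\alpha_{i_k}^\vee)_{q,t}$, and separately uses the bigraded analogue of \cite[Lemma 3.10]{M2} to know the quotient is $E_{i_k}^{\oplus b}$ as a left module; comparing the two bigraded dimensions then determines $b$. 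A purely left-handed induction of the kind you sketch would require reproving both of these inputs, which your proposal does not do.
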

\begin{proof}
We have an isomorphism
    $J_{i_{k-1}}\cdots J_{i_1} / J_{i_k}\cdots J_{i_1} \cong
        E'_{i_k}\otimes_{\Pi} J_{i_{k-1}} \cdots J_{i_1}$ in $\Cc(\Pi^{\op})$ by a bigraded analogue of an argument in \cite[Proposition 3.8]{M2}. 
        Then, the right module version of Lemma \ref{lem:reflBraid} yields an isomorphism $J_{i_{k-1}}\cdots J_{i_1}e_i/ J_{i_k}\cdots J_{i_1}e_i \cong H_i^{\oplus a}$ in $\Cc(H_i^\op)$ with $a = q^{d_i}t^{-1}(\varpi_{i}, T_{i_1}\cdots T_{i_{k-1}}\alpha_{i_k}^\vee)_{q,t}$ (recall the relation $q^{d_i}t^{-1}(\varpi_i, \alpha_j^\vee)_{q,t} = \delta_{ij}$).
Thus, we have 
\begin{equation} \label{eq:dim1}
\dim_{q,t}(J_{i_{k-1}}\cdots J_{i_1}e_i/J_{i_k}\cdots J_{i_1}e_i) = \frac{1-q^{2\ell r}}{1-q^{2d_i}}q^{d_i}t^{-1}(\varpi_{i}, T_{i_1}\cdots T_{i_{k-1}}\alpha_{i_k}^\vee)_{q,t}.
\end{equation}
On the other hand, by a bigraded analogue of \cite[Lemma 3.10]{M2}, we have an isomorphism $J_{i_{k-1}}\cdots J_{i_1}e_i/ J_{i_k}\cdots J_{i_1}e_i \cong E_{i_k}^{\oplus b}$ in $\Cc(\Pi)$ with some $b \in \Z_{\ge 0}[q^{\pm 1}, t^{\pm 1}]$. 
This yields 
\begin{equation} \label{eq:dim2}
\dim_{q,t} (J_{i_{k-1}}\cdots J_{i_1}e_i/J_{i_k}\cdots J_{i_1}e_i) = b \frac{1-q^{2\ell r}}{1-q^{2d_{i_k}}}. 
\end{equation}
Comparing \eqref{eq:dim1} and \eqref{eq:dim2}, we obtain
\[ b = \frac{1-q^{2d_{i_k}}}{1-q^{2d_i}}q^{d_i}t^{-1}(\varpi_{i}, T_{i_1}\cdots T_{i_{k-1}}\alpha_{i_k}^\vee)_{q,t} 
= (\varpi^{\vee}_{i}, T_{i_1} \cdots T_{i_{k-1}}\alpha_{i_k})_{q,t},\]
where we used the relations $\alpha_{i_k}^\vee = q^{-d_{i_k}}t\alpha_{i_k}/[d_{i_k}]_q$ and  $\varpi_i = [d_i]_q \varpi_i^\vee$.
\end{proof}

\begin{proof}[Proof of Theorem \ref{Ker}]
It is known that $J_{i_l}\cdots J_{i_1}=0$ holds for any reduced expression $(i_1, \ldots, i_l)$ of $w_0$ (cf.~\cite[Theorem 1.2]{FG} and \cite[Theorem 2.31]{M2}).
Therefore, by Lemma~\ref{lem:filtJ}, we have $J_{i_{l-1}}\cdots J_{i_1}e_i \cong 
E_{i_l}^{\oplus a}$ as bigraded (left) $\Pi$-modules, where
\[
a = (\varpi_i^\vee, T_{i_1} \cdots T_{i_{l-1}}\alpha_{i_l})_{q,t} =
(\varpi^{\vee}_{i}, T_{w_0} T_{i_l}^{-1}\alpha_{i_l})_{q,t} =q^{2d_i-rh^\vee} t^{-2+h}\delta_{i^*, i_l}.
\]
Here, the last equality follows from  Theorem~\ref{Thm:Tw_0} and the relation $T_{i_l}^{-1} \alpha_{i_l} = -q^{2d_{i_l}}t^{-2}\alpha_{i_l}$. 
Now we choose a reduced expression $(i_1, \dots, i_l)$ of $w_0$ with $i_l = i^*$. Then $P_i$ contains $J_{i_{l-1}}\cdots J_{i_1}e_i \cong q^{2d_i-rh^\vee}t^{-2+h}E_{i^*}$ as a submodule. 
Since $\Pi$ is self-injective (recall Theorem~\ref{Thm:P=I}), there are no ideals isomorphic to $E_{i^*}$ other than $J_{i_{l-1}}\cdots J_{i_1}e_i$ even if disregarding the bigradings (cf. \cite[Proof of Lemma 2.20]{Miz}).
On the other hand, we know that $\Ker \psi^{(i)}$ in $q^{-2d_i}t^2 P_i$ is isomorphic to $E_{i^*}$ after forgetting the bigradings by \cite[Theorem 3.16]{M}.
Therefore, we have
\[ \Ker \psi^{(i)} \cong q^{-2d_i}t^2 J_{i_{l-1}}\cdots J_{i_1}e_i \cong q^{-rh^\vee}t^{h} E_{i^*}\]
as bigraded $\Pi$-modules.
\end{proof}

We have the following immediate corollary of Lemma \ref{lem:filtJ}, which we use later.
\begin{Cor} \label{Cor:filtJ}
For each $i \in I$, the projective $\Pi$-module $P_i$ is $E$-filtered.
Moreover, we have 
\begin{equation}
[P_i]=\sum_{k=1}^l (\varpi^{\vee}_{i}, T_{i_1} \cdots T_{i_{k-1}}\alpha_{i_k})_{q,t}[E_{i_k}] \label{eq:qtrankPi}
\end{equation}
in $K(\Pi)_{loc}$ for any reduced expression $(i_1, \ldots, i_l)$ of $w_0$.
\end{Cor}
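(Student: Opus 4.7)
My plan is to chain together Lemma \ref{lem:filtJ} across all $k$ from $1$ to $l$ and then read off the Grothendieck class. Fix a reduced expression $(i_1,\dots,i_l)$ of $w_0$ and consider the descending sequence of bigraded $\Pi$-submodules of $P_i=\Pi e_i$,
\[
P_i \supseteq J_{i_1}e_i \supseteq J_{i_2}J_{i_1}e_i \supseteq \cdots \supseteq J_{i_l}J_{i_{l-1}}\cdots J_{i_1}e_i = 0,
\]
where the terminal vanishing $J_{i_l}\cdots J_{i_1}=0$ is exactly the fact already invoked in the proof of Theorem \ref{Ker} (from \cite{FG, M2}). For each $1 \le k \le l$, Lemma \ref{lem:filtJ} identifies the $k$-th successive quotient with $E_{i_k}^{\oplus (\varpi^{\vee}_{i}, T_{i_1}\cdots T_{i_{k-1}}\alpha_{i_k})_{q,t}}$, i.e., a direct sum of bigrading shifts of the generalized simple module $E_{i_k}$. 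This is by definition an $E$-filtration of $P_i$, hence $P_i \in \Cc_E(\Pi)$.

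For the class equality I would simply pass to $K(\Pi)_{loc}$. Additivity of the class map on short exact sequences, combined with the identity $[M^{\oplus a}] = a[M]$ valid for $a \in \Z_{\ge 0}[q^{\pm 1}, t^{\pm 1}]$ (this uses the $\Z[q^{\pm 1}, t^{\pm 1}]$-module structure on $K(\Pi)$ via $q[M]=[qM]$ and $t[M]=[tM]$ from \S \ref{Ssec:Grot}), summing the contributions of all $l$ filtration subquotients yields exactly
\[
[P_i] = \sum_{k=1}^l (\varpi^{\vee}_{i}, T_{i_1} \cdots T_{i_{k-1}}\alpha_{i_k})_{q,t}[E_{i_k}],
\]
as claimed.

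I do not anticipate any genuine obstacle: the substantive content sits in Lemma \ref{lem:filtJ} and in the vanishing $J_{i_l}\cdots J_{i_1}=0$, both already available. The only point that deserves a line of care is the convention for the first step $k=1$, where the empty product $J_{i_0}\cdots J_{i_1}$ has to be read as $\Pi$ so that the filtration indeed begins with all of $P_i$; this is consistent with the indexing in Lemma \ref{lem:filtJ}.
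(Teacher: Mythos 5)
Your proposal is correct and follows essentially the same route as the paper: the paper's proof also takes the chain $0=J_{i_l}\cdots J_{i_1}e_i \subseteq \cdots \subseteq J_{i_k}\cdots J_{i_1}e_i \subseteq \cdots \subseteq \Pi e_i$ (with the vanishing $J_{i_l}\cdots J_{i_1}=0$ from the cited results) and invokes Lemma~\ref{lem:filtJ} to identify the subquotients, the class formula \eqref{eq:qtrankPi} then being immediate from additivity in $K(\Pi)_{loc}$. Your remark on reading the empty product as $\Pi$ at the step $k=1$ is the right convention and matches the paper's indexing.
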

\begin{proof}
By Lemma~\ref{lem:filtJ}, the filtration $0=J_{i_l}\cdots J_{i_1}e_i\subseteq \cdots \subseteq J_{i_k} \cdots J_{i_1}e_i\subseteq \cdots \subseteq \Pi e_i$ gives an $E$-filtration of $P_i$ for any reduced expression $(i_1, \ldots, i_l)$ of $w_0$.
\end{proof}

\subsection{Bigraded dimension of $\bar{I}_i$}
Let $i \in I$.
Applying Corollary~\ref{Cor:ME} to the case $M=P_i$, we have
\begin{equation} \label{eq:PE}
[P_i] = \sum_{j \in I} \dim_{q^{-1}, t^{-1}} (e_i \bar{I}_j) [E_j]
\end{equation}
in $K(\Pi)_{loc}$.
Combining with \eqref{eq:qtrankPi}, we obtain the following.

\begin{Prop} \label{Prop:IT}
Let $(i_1, \ldots, i_l)$ be a reduced expression of $w_0$.
For any $i,j \in I$, we have 
\begin{equation} \label{eq:IT}
\dim_{q^{-1},t^{-1}}(e_i \bar{I}_j) = \sum_{k\colon i_k =j} (\varpi^\vee_i, T_{i_1} \cdots T_{i_{k-1}}\alpha_j)_{q,t}. 
\end{equation}
\end{Prop}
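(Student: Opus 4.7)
The plan is to read off both sides of the claimed identity from two different expressions for the same class $[P_i] \in K(\Pi)_{loc}$, and then invoke the fact that $\{[E_j]\}_{j \in I}$ is a $\Q(q,t)$-basis of $K(\Pi)_{loc}$ (recorded in \S \ref{Ssec:Grot}) to equate coefficients index by index.

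First, I would feed the projective module $P_i$ into Corollary~\ref{Cor:ME}. Since $P_i = \Pi e_i$ is locally free as a $\Pi$-module by Theorem~\ref{Thm:fd}~\eqref{Thm:fd:lf}, the corollary applies, and the canonical identification $\ghom_\Pi(\Pi e_i, \bar{I}_j) \cong e_i \bar{I}_j$ (as bigraded vector spaces) rewrites its output as
\[ [P_i] = \sum_{j \in I} \dim_{q^{-1}, t^{-1}}(e_i \bar{I}_j)\, [E_j], \]
so the left-hand side of \eqref{eq:IT} appears as the coefficient of $[E_j]$.

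Second, I would invoke Corollary~\ref{Cor:filtJ}: for any reduced expression $(i_1, \ldots, i_l)$ of $w_0$, the filtration of $P_i$ by the submodules $J_{i_k}\cdots J_{i_1}e_i$ supplies
\[ [P_i] = \sum_{k=1}^{l} (\varpi^\vee_i,\, T_{i_1}\cdots T_{i_{k-1}} \alpha_{i_k})_{q,t}\, [E_{i_k}]. \]
Grouping together only those indices $k$ with $i_k = j$ extracts the coefficient of $[E_j]$, which is precisely the right-hand side of \eqref{eq:IT}. Equating the two expansions of $[P_i]$ and comparing coefficients in the basis $\{[E_j]\}_{j \in I}$ completes the proof.

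Given the inputs already in hand, the only real content is to recognize that both computations are expanding the same class $[P_i]$ in the basis $\{[E_j]\}$. The genuine work has been carried out upstream: in building the $E$-filtration of $P_i$ (which rests on Lemma~\ref{lem:filtJ} and the vanishing $J_{i_l}\cdots J_{i_1}=0$) and in Lemma~\ref{Lem:extEI} underlying Corollary~\ref{Cor:ME}. Consequently I do not anticipate any further obstacle at this final step; the proposition is essentially a bookkeeping consequence of the two corollaries combined with the basis property in the localized Grothendieck group.
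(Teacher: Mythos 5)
Your argument is correct and is essentially identical to the paper's proof: the paper also applies Corollary~\ref{Cor:ME} to $M=P_i$ (using $\ghom_\Pi(\Pi e_i,\bar{I}_j)\cong e_i\bar{I}_j$) and compares the resulting expansion with the one from Corollary~\ref{Cor:filtJ} in the basis $\{[E_j]\}_{j\in I}$ of $K(\Pi)_{loc}$. No gaps; the heavy lifting is indeed done upstream in Lemma~\ref{lem:filtJ} and Lemma~\ref{Lem:extEI}.
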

\begin{Cor} \label{Cor:dimIbd}
For any $i,j \in I$, we have 
\[q^{d_j}t^{-1} \dim_{q,t} e_i \bar{I}_j \in \left(q^{d_i}t^{-1}\Z[q,t^{-1}]\right) \cap \left( q^{rh^\vee-d_i}t^{-h+1}\Z[q^{-1},t]\right). \]
\end{Cor}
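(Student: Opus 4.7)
The plan is to prove the two containments separately: the first follows directly from Proposition~\ref{Prop:IT} by a lattice argument for the braid action, and the second then follows from the first by invoking the duality of Corollary~\ref{P=I}.

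For the first containment, set $g_{ij}(q,t) \seq q^{d_j}t^{-1}\dim_{q,t}(e_i\bar{I}_j)$. By Proposition~\ref{Prop:IT}, one has
$\dim_{q^{-1},t^{-1}}(e_i\bar{I}_j) = \sum_{k:i_k=j}(\varpi_i^\vee,\beta_k)_{q,t}$
with $\beta_k \seq T_{i_1}\cdots T_{i_{k-1}}\alpha_{i_k}$, and the identity $(\varpi_i^\vee,\alpha_l)_{q,t}=\delta_{il}$ identifies each such pairing with the $\alpha_i$-coefficient of $\beta_k$ in the basis $\{\alpha_l\}_{l\in I}$. The key step is to introduce, for each $j\in I$, the $\Z[q^{\pm 1},t^{\pm 1}]$-submodule
\[ \mathcal{L}_j \seq \bigoplus_{l\in I} q^{d_j-d_l}\Z[q^{-1},t]\,\alpha_l \subset \hq, \]
and prove that $\alpha_j\in\mathcal{L}_j$ (trivial) and $T_i\mathcal{L}_j\subseteq\mathcal{L}_j$ for every $i \in I$. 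To verify the stability, given $v=\sum_l c_l\alpha_l\in\mathcal{L}_j$, expanding via~\eqref{Troot} gives $(T_iv)_l=c_l$ for $l\neq i$ (preserving the bound) and
\[ (T_iv)_i = -q^{-2d_i}t^2 c_i - q^{-d_i}t\sum_{l'\sim i}c_{l'}[c_{il'}]_q. \]
The first summand clearly respects the bound $q$-degree $\leq d_j-d_i$ and picks up a factor of $t^2$; for each $l'\sim i$, the $q$-degree bound on the second summand reduces to the elementary inequality $\lceil d_{l'}/d_i\rceil \leq d_{l'}+1$, valid in all four cases $d_i, d_{l'}\in\{1,r\}$ in view of~\eqref{by_ceil}. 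Iterating this stability yields $\beta_k\in\mathcal{L}_{i_k}$ for every $k$, and summing those $k$ with $i_k=j$ gives $\dim_{q,t}(e_i\bar{I}_j)\in q^{d_i-d_j}\Z[q,t^{-1}]$, which after multiplication by $q^{d_j}t^{-1}$ is the first containment.

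For the second containment, I use the symmetry afforded by Corollary~\ref{P=I}. Combining the isomorphism $\bD(\bar{I}_j)\cong q^{2d_j-rh^\vee}t^{h-2}\bar{I}_{j^*}$ with the natural identification $e_i\bD(\bar{I}_j)\cong\bD(e_i\bar{I}_j)$ (which holds since $\phi(e_i)=e_i$), and using $d_{j^*}=d_j$, one deduces the palindromic symmetry
\[ g_{ij}(q^{-1},t^{-1}) = q^{-rh^\vee}t^h\,g_{ij^*}(q,t). \]
Applying the first containment with $j^*$ in place of $j$ and inverting gives $g_{ij^*}(q^{-1},t^{-1})\in q^{-d_i}t\,\Z[q^{-1},t]$, whence the symmetry above yields $g_{ij}(q,t)\in q^{rh^\vee-d_i}t^{-h+1}\Z[q^{-1},t]$, as required.

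The principal obstacle is the stability of $\mathcal{L}_j$ under the braid operators: although the inductive structure is elementary, a uniform treatment of the $q$-degree bound hinges on the explicit form $c_{ij}=-\lceil d_j/d_i\rceil$ from~\eqref{by_ceil} and on the very restricted list $d_i\in\{1,r\}$, which keeps the case-analysis compact.
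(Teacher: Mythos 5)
Your proof is correct, and its second half (deducing the containment in $q^{rh^\vee-d_i}t^{-h+1}\Z[q^{-1},t]$ from the first containment via Corollary~\ref{P=I}~\eqref{eq:DI=I} and $d_{j^*}=d_j$) is exactly the paper's route, just written out in more detail. The first half, however, is genuinely different: the paper first uses the self-adjointness of $T_i$ (Lemma~\ref{lem:eqpair}) to rewrite $(\varpi_i^\vee, T_{i_1}\cdots T_{i_{k-1}}\alpha_j)_{q,t}$ as $\frac{[d_j]_q}{[d_i]_q}(T_{i_{k-1}}\cdots T_{i_1}\varpi_i, q^{d_j}t^{-1}\alpha_j^\vee)_{q,t}$ and then invokes the stability of the weight lattice $\bigoplus_j\Z[q^{-1},t]\varpi_j$ (Lemma~\ref{Lem:quadrant}), whereas you prove directly that the shifted root-basis lattices $\mathcal{L}_j=\bigoplus_l q^{d_j-d_l}\Z[q^{-1},t]\alpha_l$ are stable under every $T_i$, using \eqref{Troot}, \eqref{by_ceil} and the inequality $\lceil d_{l'}/d_i\rceil\le d_{l'}+1$ (in fact $\le d_{l'}$), and then read off the $\alpha_i$-coefficient via $(\varpi_i^\vee,\alpha_l)_{q,t}=\delta_{il}$. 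Your computation is correct in all four cases $d_i,d_{l'}\in\{1,r\}$, and it has a small advantage: it avoids the factor $[d_j]_q/[d_i]_q$, which is not a Laurent polynomial when $d_i=r$, $d_j=1$, so the paper's conclusion there implicitly also uses the a priori Laurent-polynomiality of the pairing (guaranteed by Lemma~\ref{lem:filtJ}/Corollary~\ref{Cor:filtJ}); your argument sidesteps this subtlety at the cost of redoing a lattice-stability computation parallel to Lemma~\ref{Lem:quadrant} rather than reusing it. One cosmetic slip: $\mathcal{L}_j$ is not a $\Z[q^{\pm1},t^{\pm1}]$-submodule of $\hq$ (it is not stable under $q$ or $t^{-1}$), only a $\Z[q^{-1},t]$-submodule; this wording should be corrected, but the argument only uses additivity and $T_i$-stability, so nothing breaks.
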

\begin{proof}
Keep the notation in Proposition~\ref{Prop:IT}.
For any $k$ with $i_k = j$, we have
\[ (\varpi^\vee_i, T_{i_1}\cdots T_{i_{k-1}}\alpha_j)_{q,t} = \frac{q^{d_j}-q^{-d_j}}{q^{d_i}-q^{-d_i}} (T_{i_{k-1}} \cdots T_{i_1} \varpi_i, q^{d_j}t^{-1}\alpha^\vee_j)_{q,t}\]
by Lemma~\ref{lem:eqpair} and the relations $\varpi_i = [d_i]_q \varpi_i^\vee$, $\alpha_j = q^{d_j}t^{-1}[d_j]_q \alpha_j^\vee$.
With Lemma~\ref{Lem:quadrant}, it implies $(\varpi^\vee_i, T_{i_1}\cdots T_{i_{k-1}}\alpha_j)_{q,t} \in q^{d_j -d_i}\Z[q^{-1},t]$.
Therefore, by Proposition~\ref{Prop:IT}, we get 
\[q^{d_j}t^{-1} \dim_{q,t} e_i \bar{I}_j \in q^{d_i}t^{-1}\Z[q,t^{-1}].\]
Combining this with Corollary~\ref{P=I}~\eqref{eq:DI=I}, we obtain the assertion. 
\end{proof}

\subsection{Euler-Poincar\'{e} pairing}  
\label{Ssec:EP}
We consider the following finiteness condition for a pair $(M,N)$ of modules in $\Cc(\Pi)$:
\begin{itemize}
\item[$(\heartsuit)$]
For each $u, v \in \Z$, the extension group $\Ext_{\Pi}^m(q^u t^v M, N)$ vanishes for $m \gg 0$.
\end{itemize}
If $(M,N)$ satisfies the condition $(\heartsuit)$, their \emph{Euler-Poincar\'{e} pairing} 
\begin{align*}
\langle M, N \rangle_{q,t} &\seq \sum_{m=0}^\infty (-1)^m \dim_{q,t} \gext^m_{\Pi}(M,N) \allowdisplaybreaks\\
&= \sum_{m=0}^\infty \sum_{u,v \in \Z} (-1)^m q^u t^v \dim_\kk \Ext^m_{\Pi}(q^u t^v M,N) 
\end{align*}
is well-defined as a formal power series in $q^{\pm 1}, t^{\pm 1}$. 
The following lemma is immediate from the definition and the standard argument using the long exact sequences for $\gext_{\Pi}^{m}(-,-)$'s.

\begin{Lem} \label{Lem:EP}
Let $M, N \in \Cc(\Pi)$.
\begin{enumerate}
\item If $(M,N)$ satisfies $(\heartsuit)$, the pair $(M^{\oplus a}, N^{\oplus b})$ also satisfies $(\heartsuit)$ for any $a, b \in \Z_{\ge 0}[q^{\pm 1}, t^{\pm 1}]$ and we have 
$\langle M^{\oplus a}, N^{\oplus b} \rangle_{q,t} = \bar{a} b \langle M, N \rangle_{q,t}$, where $\ol{a(q,t)} \seq a(q^{-1}, t^{-1})$.
\item Suppose that there is an exact sequence $0 \to M' \to M \to M'' \to 0$ in $\Cc(\Pi)$, and the both pairs $(M', N)$ and $(M'', N)$ satisfy $(\heartsuit)$.
Then the pair $(M,N)$ also satisfies $(\heartsuit)$ and we have $\langle M, N\rangle_{q,t} = \langle M', N \rangle_{q,t} + \langle M'', N \rangle_{q,t}.$
\item Suppose that there is an exact sequence $0 \to N' \to N \to N'' \to 0$ in $\Cc(\Pi)$, and the both pairs $(M, N')$ and $(M, N'')$ satisfy $(\heartsuit)$.
Then the pair $(M,N)$ also satisfies $(\heartsuit)$ and we have $\langle M, N\rangle_{q,t} = \langle M, N' \rangle_{q,t} + \langle M, N'' \rangle_{q,t}.$  \qedhere
\end{enumerate}
\end{Lem}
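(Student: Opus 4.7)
My plan is to reduce each part to standard homological bookkeeping: the additivity of $\gext^m_\Pi$ under finite direct sums and bigrading shifts in both arguments (for part (i)), and the long exact sequence of Ext groups attached to a short exact sequence (for parts (ii) and (iii)). The three statements are essentially the additivity of an Euler characteristic and so present no conceptual difficulty; most of the work is bookkeeping.

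For part (i) I would begin by recording the natural isomorphisms of bigraded $\kk$-vector spaces
\[
\gext^m_\Pi(q^u t^v M, N) \cong q^{-u} t^{-v}\, \gext^m_\Pi(M,N), \qquad
\gext^m_\Pi(M, q^u t^v N) \cong q^u t^v\, \gext^m_\Pi(M,N),
\]
which come from the tautological identifications $\Hom_\Pi(q^u t^v M, N) = \Hom_\Pi(M, q^{-u} t^{-v} N)$ at the level of homogeneous morphisms, extended to all $\gext^m_\Pi$ by functoriality of derived functors. Combined with the fact that $\gext^m_\Pi$ sends finite direct sums in each slot to direct sums, expanding $a = \sum_{k,l} a_{k,l} q^k t^l$ and $b = \sum_{k',l'} b_{k',l'} q^{k'} t^{l'}$ yields, for every $m$,
\[
\gext^m_\Pi(M^{\oplus a}, N^{\oplus b}) \cong
\bigoplus_{k,l,k',l'}\bigl(q^{-k+k'} t^{-l+l'}\gext^m_\Pi(M,N)\bigr)^{\oplus a_{k,l} b_{k',l'}}.
\]
The preservation of $(\heartsuit)$ is immediate from this, and taking bigraded dimensions produces the factor $\ol{a}\, b$; summing with signs then gives the identity of (i).

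For (ii) I would apply $\Hom_\Pi(q^u t^v (-), N)$ to the given short exact sequence and glue the resulting long exact sequences, bidegree by bidegree, into a single infinite exact sequence of bigraded $\kk$-vector spaces
\[
\cdots \to \gext^m_\Pi(M'', N) \to \gext^m_\Pi(M, N) \to \gext^m_\Pi(M', N) \to \gext^{m+1}_\Pi(M'', N) \to \cdots.
\]
The hypotheses $(\heartsuit)$ for the two outer pairs mean that, in each fixed bidegree, the outer terms vanish for $m$ large; exactness then forces the middle term to vanish as well, proving $(\heartsuit)$ for $(M, N)$. The alternating-sum identity for any eventually-zero exact sequence of finite-dimensional bigraded vector spaces yields $\langle M, N\rangle_{q,t} = \langle M', N\rangle_{q,t} + \langle M'', N\rangle_{q,t}$. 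Part (iii) is entirely dual: one applies $\Hom_\Pi(M, -)$ to a short exact sequence in the second slot and runs the same argument.

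I do not foresee any real obstacle. The only delicate points are the asymmetry between the bar on $a$ and its absence on $b$ in (i), which is a direct consequence of the contravariance of $\Hom$ in the first slot and is visible in the two shift isomorphisms above, and the need to establish $(\heartsuit)$ for the middle pair \emph{before} writing down the alternating sum in (ii) and (iii), so that the Euler characteristic is a legitimate formal series in $q^{\pm 1}, t^{\pm 1}$.
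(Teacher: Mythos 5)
Your argument is correct and is exactly the ``standard argument'' the paper invokes: the paper gives no detailed proof of this lemma, stating only that it is immediate from the definition together with the long exact sequences for $\gext^m_{\Pi}(-,-)$, which is precisely what you spell out (shift/direct-sum compatibility of $\gext^m_\Pi$ for (i), bidegree-wise long exact sequences and the eventually-zero alternating-sum identity for (ii) and (iii)).
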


Using Theorem~\ref{Ker}, we can prove the following proposition. 
This is an advantage of working with the bigrading. If we forget the $t$-degree and work only with the $q$-degree, the analogous statement fails in general (see Example~\ref{Ex:EP} below).

\begin{Prop} \label{Prop:heart}
For any $M,N \in \Cc(\Pi)$, the pair $(M,N)$ satisfies the condition $(\heartsuit)$. Namely, the pairing $\langle M, N \rangle_{q,t}$ only depends on their classes $[M], [N]\in K(\Cc(\Pi))$.
\end{Prop}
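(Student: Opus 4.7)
The plan is to prove the $(\heartsuit)$ condition by a three-step dévissage; the second assertion then follows immediately from Lemma~\ref{Lem:EP}.

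First I would verify $(\heartsuit)$ for the pair $(E_i, N)$ using the periodic projective resolution of Theorem~\ref{Ker}. The summands of $P_0^{(i)}, P_1^{(i)}, P_2^{(i)}$ carry $t$-shifts $0, 1, 2$ respectively, and the periodicity $P_{k+3}^{(i)} \cong q^{-rh^\vee} t^h P_k^{(i^*)}$ increases the $t$-shift by $h$ every three homological steps, so every summand $q^a t^b P_j$ appearing in $P_m^{(i)}$ satisfies $b \ge \lfloor m/3 \rfloor h$. Since the $t$-support of $N$ is bounded (Theorem~\ref{Thm:fd}), the bigraded Hom $\ghom_\Pi(P_m^{(i)}, N)[u, v]$ vanishes for $m$ sufficiently large at each fixed $(u, v) \in \Z^2$, yielding $(\heartsuit)$ for $(E_i, N)$.

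Next I would pass to $(S_i, N)$ by constructing an $E_i$-resolution of $S_i$. Because $e_i E_i = H_i = \kk[\ep_i]/(\ep_i^e)$ with $e = \ell r/d_i$ and $e_j E_i = 0$ for $j \neq i$, left multiplication by $\ep_i$ (and by $\ep_i^{e-1}$) defines a bigraded $\Pi$-linear endomorphism of $E_i$: a direct check on the quiver generators shows that the commutator $[\ep_i, x]$ annihilates $E_i$ for every $x \in \Pi$. Lifting the standard periodic $H_i$-projective resolution of the residue field $\kk$ then yields the exact sequence in $\Cc(\Pi)$
\begin{equation*}
\cdots \xrightarrow{\ep_i^{e-1}} q^{2\ell r + 2d_i} E_i \xrightarrow{\ep_i} q^{2\ell r} E_i \xrightarrow{\ep_i^{e-1}} q^{2d_i} E_i \xrightarrow{\ep_i} E_i \to S_i \to 0.
\end{equation*}
Denote this resolution $R_\bullet \to S_i$, with $R_p \cong q^{a_p} E_i$ and $a_p \to \infty$. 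The standard spectral sequence attached to this resolution gives $E_1^{p, q} = \gext^q_\Pi(R_p, N) \Rightarrow \gext^{p+q}_\Pi(S_i, N)$. For each fixed bigrading $(u, v)$, Step~1 bounds the $q$-range of nonzero $E_1^{p, q}[u, v]$, while within that bounded $q$-range the bigraded support of $\gext^q_\Pi(E_i, N)$ is finite and $a_p \to \infty$ eliminates all but finitely many $p$. Therefore $\gext^m_\Pi(S_i, N)[u, v] = 0$ for $m$ sufficiently large.

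Finally, any $M \in \Cc(\Pi)$ admits a finite bigraded composition series with simple subquotients by finite-dimensionality of $\Pi$, so iterating Lemma~\ref{Lem:EP}(2) propagates $(\heartsuit)$ from simples to all of $M$. The main obstacle I foresee is Step~2: both the verification that $\ep_i\cdot$ is genuinely $\Pi$-linear on $E_i$ (despite $\ep_i$ not being central in $\Pi$) and the identification of its kernel and image (which reduces on the $i$-th idempotent component to the standard minimal resolution of $\kk$ over $H_i$) require a careful setup, after which the spectral sequence bookkeeping is routine.
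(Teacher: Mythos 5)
Your proposal is correct and follows essentially the same route as the paper: reduce to simples via Lemma~\ref{Lem:EP}, resolve $S_i$ by the same alternating $\ep_i,\ep_i^{e-1}$ sequence of shifted copies of $E_i$, and combine this with the periodic bigraded projective resolution of $E_i$ from Theorem~\ref{Ker}, using that the $t$-shifts grow along the projective resolution while the $q$-shifts grow along the $E_i$-resolution. The only difference is presentational (you phrase the double-complex step as a hypercohomology spectral sequence and keep the second argument $N$ arbitrary, whereas the paper reduces to $(S_i,S_j)$ and argues by a convergent bigraded dimension count in $\Z(\!(q^{-1})\!)[\![t^{-1}]\!]$), and the $\Pi$-linearity worry you flag is harmless since $E_i$ is supported at the single vertex $i$, so the $\Pi$-action factors through the commutative algebra $H_i$.
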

\begin{proof}
In view of Lemma~\ref{Lem:EP}, it is enough to show that the pair $(S_i, S_j)$ satisfies the condition $(\heartsuit)$ for any $i, j \in I$.
The simple module $S_i$ has the following ``$E_i$-resolution":   
\[ \cdots \to q^{p(3)} E_i \xrightarrow{\ep_i^{c}} q^{p(2)}E_i \xrightarrow{\ep_i} q^{p(1)}E_i \xrightarrow{\ep_i^{c}} q^{p(0)} E_i \xrightarrow{\ep_i} E_i \to S_i \to 0,\]
where $c \seq \ell r/d_i -1$, and $p \colon \Z_{\ge 0} \to \Z_{\ge 0}$ is a strictly increasing function given by $p(2k) = 2k\ell r + 2d_i$ and $p(2k-1) = 2k\ell r$. 
Using Theorem~\ref{Ker}, let us take the projective resolution of each term $q^{p(k)}E_i$ to obtain a double complex $(P^{(i)}_{u,v})_{u,v \ge 0}$ with $P^{(i)}_{u,v} = q^{p(u)}P^{(i)}_v$, whose total complex gives a bigraded projective $\Pi$-resolution of $S_i$. 
Since $\dim_{q,t} \ghom_{\Pi}(P_v^{(i)}, S_j) \in t^{-s(v)}\Z[q^{\pm 1}]$ with a strictly increasing function $s$, we observe that $\sum_{u,v \ge 0} \dim_{q,t}\ghom_{\Pi}(P_{u,v}^{(i)}, S_j) \in \Z(\!(q^{-1})\!)[\![t^{-1}]\!]$
holds.
This implies the condition $(\heartsuit)$ for $(S_i, S_j)$.
\end{proof}

\begin{Ex} \label{Ex:EP}
Let $\fg$ be of type $\mathrm{C}_2$ ($=\mathrm{B}_2$) and set $I = \{1,2\}$ with $(d_1, d_2) = (1,2)$. 
Taking the bigraded projective resolution of the simple module $S_1$ as in the proof of Proposition~\ref{Prop:heart} above, we can directly compute  
\[
\dim_{q,t} \gext_{\Pi(1)}^m (S_1, S_1) = q^{-2m}\sum_{l=0}^{\lfloor 2m/3 \rfloor}(q^6 t^{-2})^l
\]
for any $m \in \Z_{\ge 0}$.
Therefore we have
\begin{equation} \label{eq:S1S1}
\langle S_1, S_1 \rangle_{q,t} = \sum_{m=0}^{\infty} (-q^{-2})^m \sum_{l=0}^{\lfloor 2m/3 \rfloor}(q^6 t^{-2})^l,
\end{equation}
which gives a well-defined element of $\Z(\!(q^{- 1})\!)[\![t^{-1}]\!]$.
On the other hand, specializing $t$ to $1$ in \eqref{eq:S1S1} does not give a well-defined element of $\Z[\![q^{\pm 1}]\!]$. 
This implies that the Euler-Poincar\'e pairing between $S_1$ and itself is ill-defined if we forget the $t$-grading.    
\end{Ex}

\subsection{Interpretation of $(q,t)$-deformed Cartan matrices}
\label{Ssec:main}
For any $i ,j \in I$, Theorem~\ref{Ker} implies that 
\[
\dim_{q,t} \gext^{m+3}_\Pi (E_i, S_j) = q^{rh^\vee}t^{-h} \dim_{q,t} \gext^m_\Pi (E_{i^*}, S_j)
\]
holds for any $m \in \Z_{\ge 0}$, and we have
\[
\dim_{q,t} \gext^m_\Pi(E_i, S_j) = \begin{cases}
\delta_{ij} & \text{if $m=0$}, \\
(\delta_{ij}-1)q^{d_i}t^{-1}C_{ij}(q,t) & \text{if $m=1$}, \\
\delta_{ij}q^{2d_i}t^{-2} & \text{if $m=2$}.
\end{cases}
\]
Therefore we get
\begin{equation} \label{ES}
\langle E_i, S_j \rangle_{q,t} = \frac{q^{d_i}t^{-1}}{1-(q^{rh^\vee}t^{-h})^2} \left( C_{ij}(q,t) - q^{rh^\vee}t^{-h} C_{i^* j}(q,t)\right) 
\end{equation}
as an element of $\Z[q^{\pm 1}](\!(t^{-1})\!)$.
Let $\nu \seq (\delta_{ij^*})_{i,j \in I}$ be the permutation matrix corresponding to the involution $i \mapsto i^*$. 
Then the equation \eqref{ES} can be expressed in the matrix identity 
\begin{equation} \label{ESmat}
\left(\langle E_i, S_j \rangle_{q,t}\right)_{i,j \in I} = \frac{q^Dt^{-1}(\id-q^{rh^\vee}t^{-h}\nu)}{1-(q^{rh^\vee}t^{-h})^2} C(q,t)
\end{equation} 
in $GL_I\left(\Z[q^{\pm 1}](\!(t^{-1})\!)\right)$.
If the involution $i \mapsto i^*$ is trivial, this identity \eqref{ESmat} simplifies to 
\[
\left(\langle E_i, S_j \rangle_{q,t}\right)_{i,j \in I} = \frac{q^Dt^{-1}}{1+q^{rh^\vee}t^{-h}} C(q,t),
\]
which applies especially to the case of non-simply-laced type.

For any $i,j \in I$, we have
\[
\delta_{ij}  = \langle P_i, S_j \rangle_{q,t}
= \sum_{k \in I} (\dim_{q,t}e_i\bar{I}_k) \langle E_k, S_j \rangle_{q,t}, 
\]
where we used Corollary~\ref{Cor:filtJ}, Lemma~\ref{Lem:EP} and \eqref{eq:PE}.
This can be expressed in an equation of matrices:
\begin{align*}
\id &= \left( \dim_{q,t} e_i\bar{I}_j\right)_{i,j \in I} \left(\langle E_i, S_j \rangle_{q,t}\right)_{i,j \in I} \allowdisplaybreaks\\
&= \frac{1}{1-(q^{rh^\vee}t^{-h})^2} \left( \dim_{q,t}e_i\bar{I}_j\right)_{i,j \in I}  q^{D}t^{-1} (\id - q^{rh^\vee}t^{-h}\nu) C(q,t),
\end{align*}
where the second equality is due to \eqref{ESmat}.
Therefore we have
\[
\tC(q,t) =  \frac{1}{1-(q^{rh^\vee}t^{-h})^2}\left( \dim_{q,t}e_i\bar{I}_j\right)_{i,j \in I} q^{D}t^{-1}(\id - q^{rh^\vee}t^{-h}\nu) 
\]
Comparing the $(i,j)$-entries, we obtain the following formula.
\begin{Thm} \label{Thm:goal}
For any $i, j \in I$, we have
\begin{equation} \label{goal1}
\tC_{ij}(q,t) = \frac{q^{d_j}t^{-1}}{1-(q^{rh^\vee}t^{-h})^2} \left( \dim_{q,t} e_i\bar{I}_j - q^{rh^\vee}t^{-h}\dim_{q,t} e_i\bar{I}_{j^*}\right).
\end{equation}
If the involution $i \mapsto i^*$ is trivial (e.g.~when $\fg$ is of non-simply-laced type), it simplifies to 
\[ \tC_{ij}(q,t) = \frac{q^{d_j}t^{-1}}{1+q^{rh^\vee}t^{-h}} \dim_{q,t} e_i\bar{I}_j.
\]
\end{Thm}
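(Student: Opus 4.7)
My plan is to compute the matrix of Euler--Poincar\'e pairings $\bigl(\langle E_i, S_j \rangle_{q,t}\bigr)_{i,j\in I}$ in closed form using the $3$-periodic projective resolution of $E_i$ from Theorem~\ref{Ker}, and then to extract $\tC(q,t)$ by comparing two evaluations of $\langle P_i, S_j \rangle_{q,t}$. The ingredient that legitimizes this approach is Proposition~\ref{Prop:heart}, which ensures that the pairing is well-defined on $K(\Pi)_{loc}$ and hence biadditive.

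For the first step I apply $\ghom_\Pi(-, S_j)$ to the resolution~\eqref{PE}. Since $\ghom_\Pi(P_k, S_j) = \delta_{kj}\kk$ concentrated in bidegree $(0,0)$, the three initial terms $P_0^{(i)}=P_i$, $P_1^{(i)} = \bigoplus_{k\sim i} P_k^{\oplus(-q^{-d_i}tC_{ik}(q,t))}$ and $P_2^{(i)} = q^{-2d_i}t^2 P_i$ immediately yield
\[
\dim_{q,t}\gext^m_\Pi(E_i,S_j) =
\begin{cases}
\delta_{ij} & \text{if } m=0,\\
(\delta_{ij}-1)\,q^{d_i}t^{-1}C_{ij}(q,t) & \text{if } m=1,\\
\delta_{ij}\,q^{2d_i}t^{-2} & \text{if } m=2.
\end{cases}
\]
The periodicity $P_{k+3}^{(i)} = q^{-rh^\vee}t^h P_k^{(i^*)}$ repeats these three contributions with each copy further shifted by $q^{rh^\vee}t^{-h}$ and with $i$ alternating with $i^*$. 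Summing the resulting geometric series with the signs of the Euler characteristic produces the closed form~\eqref{ES}, which in matrix notation reads
\[
\bigl(\langle E_i, S_j \rangle_{q,t}\bigr)_{i,j\in I} = \frac{q^D t^{-1}(\id - q^{rh^\vee}t^{-h}\nu)}{1-(q^{rh^\vee}t^{-h})^2}\, C(q,t).
\]

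For the second step I combine Corollary~\ref{Cor:filtJ} (the existence of an $E$-filtration on $P_i$) with Corollary~\ref{Cor:ME} to rewrite $[P_i] = \sum_k \dim_{q^{-1},t^{-1}}(e_i\bar{I}_k)\,[E_k]$ in $K(\Pi)_{loc}$, as recorded in~\eqref{eq:PE}. By Proposition~\ref{Prop:heart} and Lemma~\ref{Lem:EP} the pairing $\langle P_i, S_j\rangle_{q,t}$ is then evaluated by biadditivity from this expansion. On the other hand projectivity of $P_i$ forces $\langle P_i, S_j\rangle_{q,t} = \dim_{q,t}\ghom_\Pi(P_i, S_j) = \delta_{ij}$. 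Equating the two evaluations produces the matrix identity
\[
\id = \bigl(\dim_{q,t} e_i \bar{I}_j\bigr)_{i,j\in I}\,\bigl(\langle E_i, S_j \rangle_{q,t}\bigr)_{i,j\in I},
\]
and substituting the closed expression for the Euler matrix from the previous step and multiplying on the right by $C(q,t)^{-1} = \tC(q,t)$ delivers the claimed formula entry by entry.

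I expect the main difficulty to be bookkeeping: the bidegree shifts along the $3$-periodic resolution must be tracked carefully, and the involution $i\mapsto i^*$ appearing in Theorem~\ref{Ker} is exactly what forces \emph{both} $\dim_{q,t}e_i\bar{I}_j$ and $\dim_{q,t}e_i\bar{I}_{j^*}$ to occur in the final formula (the second term collapses automatically in non-simply-laced type, where $i\mapsto i^*$ is trivial). One should also verify that the geometric series and matrix inversion live in a single ring; the growth bound of Corollary~\ref{Cor:dimIbd} together with the form~\eqref{eq:tC} of $\tC(q,t)$ shows that all the manipulations fit consistently inside $GL_I(\Z[q^{\pm 1}](\!(t^{-1})\!))$, so no convergence issue arises.
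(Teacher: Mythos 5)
Your proposal is correct and follows essentially the same route as the paper: computing $\dim_{q,t}\gext^m_\Pi(E_i,S_j)$ from the $3$-periodic resolution of Theorem~\ref{Ker}, summing the geometric series to get the matrix form \eqref{ESmat} of $\langle E_i,S_j\rangle_{q,t}$, and then comparing with $\delta_{ij}=\langle P_i,S_j\rangle_{q,t}$ via the $E$-filtration \eqref{eq:PE} before inverting $C(q,t)$. The only point to keep explicit is the bar twist in Lemma~\ref{Lem:EP}, which converts the coefficients $\dim_{q^{-1},t^{-1}}e_i\bar{I}_k$ of \eqref{eq:PE} into the $\dim_{q,t}e_i\bar{I}_k$ appearing in the final identity, exactly as in the paper.
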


\begin{Cor} \label{Cor:goal}
For any $i,j \in I$, we have
\begin{equation*}
    \dim_{q,t} e_i \bar{I}_j = q^{-d_j}t\sum_{u=0}^{rh^\vee} \sum_{v=0}^{h} \tc_{ij}(u,-v)q^u t^{-v}.
\end{equation*}
\end{Cor}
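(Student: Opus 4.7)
The plan is to read off $\dim_{q,t} e_i \bar{I}_j$ from Theorem~\ref{Thm:goal} by a support analysis in the $(q,t)$-plane. Clearing the denominator, Theorem~\ref{Thm:goal} becomes
\[
(1 - q^{2rh^\vee}t^{-2h})\,\tC_{ij}(q,t) \;=\; \Phi_{ij}(q,t) \;-\; q^{rh^\vee}t^{-h}\Phi_{ij^*}(q,t),
\]
where $\Phi_{ij}(q,t) \seq q^{d_j}t^{-1}\dim_{q,t} e_i\bar{I}_j$. Since $w_0$ preserves root lengths we have $d_{j^*}=d_j$, so Corollary~\ref{Cor:dimIbd} tells me that both $\Phi_{ij}$ and $\Phi_{ij^*}$ are polynomials in $q,t^{-1}$ whose $(q,t)$-support lies in the rectangle $R_0 \seq [d_i,\,rh^\vee-d_i] \times [-h+1,\,-1]$. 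Hence $q^{rh^\vee}t^{-h}\Phi_{ij^*}$ is supported in $R_1 \seq R_0 + (rh^\vee,-h)$, and one checks that $R_0 \cap R_1 = \varnothing$.

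Expanding the left hand side as $\sum_{u,v} \bigl(\tc_{ij}(u,v) - \tc_{ij}(u-2rh^\vee, v+2h)\bigr)\,q^u t^v$ and comparing coefficients, I would then extract two observations. First, for $(u,v)\in R_0$ the shifted point $(u-2rh^\vee, v+2h)$ has first coordinate strictly less than $d_i$, so $\tc_{ij}(u-2rh^\vee, v+2h)=0$ by Lemma~\ref{Lem:tc}~(1), giving $\tc_{ij}(u,v) = [q^u t^v]\Phi_{ij}$. Second, for $(u,v) \notin R_0 \cup R_1$ the same comparison yields the periodicity relation $\tc_{ij}(u,v) = \tc_{ij}(u-2rh^\vee, v+2h)$.

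The conclusion will follow once I verify that $\tc_{ij}(u,v)$ vanishes on $[0,rh^\vee] \times [-h,0] \setminus R_0$; together with the first observation above this yields
\[
\Phi_{ij}(q,t) \;=\; \sum_{u=0}^{rh^\vee}\sum_{v=0}^{h} \tc_{ij}(u,-v)\,q^u t^{-v},
\]
and multiplying by $q^{-d_j}t$ gives the corollary. The cases $u<d_i$ or $v \ge -1$ (excluding $(d_i,-1)$) are immediate from Lemma~\ref{Lem:tc}~(1). In the remaining boundary strips, namely $u \in [rh^\vee-d_i+1, rh^\vee]$ with $v \in [-h,-1]$, or $v=-h$ with $u \in [d_i, rh^\vee-d_i]$, the point lies outside $R_0 \cup R_1$, so the periodicity sends it to a point whose first coordinate is at most $-rh^\vee$ (hence strictly less than $d_i$), and Lemma~\ref{Lem:tc}~(1) again forces vanishing.

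The only real obstacle is packaged into Corollary~\ref{Cor:dimIbd}: it is this tight support bound on $\Phi_{ij}$ that makes $R_0$ fit inside a single fundamental domain of the shift $(u,v) \mapsto (u+2rh^\vee, v-2h)$, allowing the clean coefficient extraction above. Once that is in hand, the remaining work is simply a careful case distinction along the boundary of $[0,rh^\vee]\times[-h,0]$.
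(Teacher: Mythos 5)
Your proof is correct and follows the paper's own route: the paper deduces the corollary precisely from Theorem~\ref{Thm:goal} together with the support bound of Corollary~\ref{Cor:dimIbd}, which is exactly your rectangle argument (your extra appeal to Lemma~\ref{Lem:tc} to kill the far-shifted coefficients is a harmless way of making the coefficient extraction explicit). You have simply spelled out the support/periodicity bookkeeping that the paper leaves implicit, and all the boundary cases check out.
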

\begin{proof}
It follows from Theorem~\ref{Thm:goal} and Corollary~\ref{Cor:dimIbd}.
\end{proof}

\begin{Cor} \label{Cor:properties}
The integers $\{ \tc_{ij}(u,v) \}_{i,j \in I, u,v \in \Z}$ satisfy the following properties.
\begin{enumerate}
\item $\tc_{ij}(u,v) = - \tc_{ij^*}(u+rh^\vee, v-h)$ for any $u \ge 0$ and $v \le 0$, 
\item $\tc_{ij}(u,v) \ge 0$ if $0 \le u \le rh^\vee$ and $-h \le v \le 0$,
\item $\tc_{ij}(rh^{\vee}-u, -h-v) = \tc_{ij^{*}}(u,v)$ for any $0 \le u \le rh^\vee$ and $-h \le v \le 0$. \qedhere
\end{enumerate}
\end{Cor}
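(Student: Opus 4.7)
The plan is to derive the three properties from the explicit formulas of Corollary~\ref{Cor:goal} and Corollary~\ref{P=I}(2), using Lemma~\ref{Lem:tc} and Corollary~\ref{Cor:dimIbd} to clean up boundary cases. I would treat (2) and (3) first as straightforward consequences, then attack (1) via the functional equation underlying Theorem~\ref{Thm:goal}.

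Part (2) is immediate from Corollary~\ref{Cor:goal}: the Laurent polynomial $q^{d_j}t^{-1}\dim_{q,t} e_i \bar{I}_j = \sum_{u,v}\tc_{ij}(u,-v)q^u t^{-v}$ is a shift of the bigraded dimension of a $\kk$-vector space, hence its coefficients $\tc_{ij}(u,-v)$ are non-negative integers for $(u,v)\in[0,rh^\vee]\times[0,h]$. For (3), I would apply the duality isomorphism $\bD(\bar{I}_j)\cong q^{2d_j-rh^\vee}t^{h-2}\bar{I}_{j^*}$ from Corollary~\ref{P=I}(2), take its $e_i$-component, and combine it with the identity $\dim_{q,t}e_i\bD(\bar{I}_j)=\dim_{q^{-1},t^{-1}}e_i\bar{I}_j$ to deduce
\[
\dim_{q^{-1},t^{-1}}e_i\bar{I}_j \;=\; q^{2d_j-rh^\vee}t^{h-2}\dim_{q,t}e_i\bar{I}_{j^*}.
\]
Substituting Corollary~\ref{Cor:goal} into both sides (and using $d_{j^*}=d_j$), the resulting pair of Laurent polynomials must agree coefficient-by-coefficient; a short relabeling of the indices gives (3).

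For (1), the main step is to multiply Theorem~\ref{Thm:goal} by $1-q^{2rh^\vee}t^{-2h}$ and substitute Corollary~\ref{Cor:goal}, producing the functional equation
\[
(1-q^{2rh^\vee}t^{-2h})\tC_{ij}(q,t) \;=\; \sum_{(u,w)}\tc_{ij}(u,w)\,q^{u}t^{w} \;-\; \sum_{(u,w)}\tc_{ij^*}(u,w)\,q^{u+rh^\vee}t^{w-h},
\]
with both sums running over $(u,w)\in[0,rh^\vee]\times[-h,0]$. Comparing coefficients of $q^{u+rh^\vee}t^{v-h}$ for $(u,v)$ in the open interior $(0,rh^\vee)\times(-h,0)$, the shifted term $\tc_{ij}(u-rh^\vee,v+h)$ on the left vanishes because $u-rh^\vee<0\le d_i$ (Lemma~\ref{Lem:tc}(1)), leaving $\tc_{ij}(u+rh^\vee,v-h)=-\tc_{ij^*}(u,v)$; swapping the roles of $j$ and $j^*$ yields (1) on the open fundamental domain.

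The main obstacle is extending (1) to the boundary of $[0,rh^\vee]\times[-h,0]$ and to indices outside it. Here I would invoke the tighter support bounds of Corollary~\ref{Cor:dimIbd}, which force $\tc_{ij}(u',v')=0$ unless $(u',v')\in[d_i,rh^\vee-d_i]\times[-h+1,-1]$. On each of the four boundary edges $u\in\{0,rh^\vee\}$, $v\in\{0,-h\}$ and on the exterior of the closed fundamental region, both sides of (1) vanish directly by these support restrictions combined with Lemma~\ref{Lem:tc} on the lines $u\le d_i$ and $v\ge -1$. Each case reduces to an elementary inequality on indices, so only the bookkeeping itself is slightly tedious.
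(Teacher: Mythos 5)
Parts (2) and (3) of your argument are fine and match the intended derivation: (2) is coefficientwise non-negativity of a bigraded dimension via Corollary~\ref{Cor:goal}, and (3) follows from Corollary~\ref{P=I}~\eqref{eq:DI=I} together with $\dim_{q,t}e_i\bD(\bar{I}_j)=\dim_{q^{-1},t^{-1}}e_i\bar{I}_j$ and $d_{j^*}=d_j$. The gap is in your treatment of (1) outside the open fundamental domain. Your key claim there --- that Corollary~\ref{Cor:dimIbd} forces $\tc_{ij}(u',v')=0$ unless $(u',v')\in[d_i,rh^\vee-d_i]\times[-h+1,-1]$ --- is false: Corollary~\ref{Cor:dimIbd} bounds the support of the Laurent polynomial $q^{d_j}t^{-1}\dim_{q,t}e_i\bar{I}_j$, not of the power series $\tC_{ij}(q,t)$, which has infinitely many nonzero coefficients in the region $u\ge 0$, $v\le 0$; this is exactly what the quasi-periodicity (1) asserts. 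Concretely, expanding $\tC_{ij}$ via Theorem~\ref{Thm:goal} as a geometric series in $q^{2rh^\vee}t^{-2h}$ gives $\tc_{ii}(d_i+2rh^\vee,-1-2h)=\tc_{ii}(d_i,-1)=1\neq 0$, and at $(u,v)=(d_i+rh^\vee,-1-h)$ both sides of (1) equal $-\delta_{ij^*}$, which is nonzero for $j=i^*$. So ``both sides of (1) vanish on the exterior'' fails, and even on the boundary edges the vanishing of terms such as $\tc_{ij^*}(rh^\vee,v-h)$ is not given by Lemma~\ref{Lem:tc} or Corollary~\ref{Cor:dimIbd} alone, since those points violate neither $u\le d_i$ nor $v\ge -1$.

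The repair is short and stays within the paper's toolkit: use Theorem~\ref{Thm:goal} for \emph{both} $j$ and $j^*$. Adding the identity for $j$ to $q^{rh^\vee}t^{-h}$ times the identity for $j^*$ (or telescoping the geometric series) yields
\begin{equation*}
\tC_{ij}(q,t)+q^{rh^\vee}t^{-h}\,\tC_{ij^*}(q,t)=q^{d_j}t^{-1}\dim_{q,t}e_i\bar{I}_j .
\end{equation*}
The right-hand side is supported in $[0,rh^\vee]\times[-h,0]$ (indeed in $[d_i,rh^\vee-d_i]\times[-h+1,-1]$ by Corollary~\ref{Cor:dimIbd}), so its coefficient at $q^{u+rh^\vee}t^{v-h}$ vanishes for every $u\ge 0$, $v\le 0$; comparing coefficients there gives $\tc_{ij}(u+rh^\vee,v-h)=-\tc_{ij^*}(u,v)$ for all such $(u,v)$, which after exchanging $j$ and $j^*$ is exactly (1) in the stated generality, with no boundary or exterior case analysis needed. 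With this modification your argument coincides with the paper's intended (unwritten) proof, which likewise combines Theorem~\ref{Thm:goal}, Corollary~\ref{Cor:goal} and Corollary~\ref{P=I}~\eqref{eq:DI=I}.
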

\begin{proof}
If follows from Theorem~\ref{Thm:goal}, Corollary~\ref{Cor:goal} and Corollary~\ref{P=I}~\eqref{eq:DI=I}.
\end{proof}

As a by-product, we also obtain the following combinatorial formula.

\begin{Prop} \label{Prop:comb}
Let $(i_1, \ldots, i_l)$ be a reduced expression of $w_0$.
We extend it to an infinite sequence $(i_k)_{k \in \Z_{>0}}$ such that $i_{k+l} = i_k^*$ for all $k \in \Z_{>0}$.
Then, for any $i,j \in I$, we have
\[ \tC_{ij}(q,t) = q^{d_j}t^{-1} \sum_{k>0, i_k=j} (\varpi_i^\vee, T_{i_1}^{-1} \cdots T_{i_{k-1}}^{-1}\alpha_j)_{q,t}.\]
\end{Prop}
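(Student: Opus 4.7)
The plan is to combine Proposition~\ref{Prop:IT} and Theorem~\ref{Thm:goal} with the quasi-periodicity of the braid action $T_{w_0}^{-2} = (q^{rh^\vee}t^{-h})^2 \cdot \id$ on $\hq$, in order to rewrite the rational expression for $\tC_{ij}(q,t)$ from Theorem~\ref{Thm:goal} as a single infinite sum indexed by the periodically extended reduced expression.

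First, I would observe that the bilinear form $(-,-)_{q,t}$ is invariant under the bar involution $q\mapsto q^{-1}, t\mapsto t^{-1}$, because each $C_{ij}(q,t)$ and each $[d_i]_q$ is bar-invariant; consequently the dual basis $\{\varpi_i^\vee\}$ is bar-invariant, and the simple roots $\alpha_j$ are bar-invariant by construction. Moreover, comparing the two cases of \eqref{Troot} shows that the formulas for $T_i$ and $T_i^{-1}$ are swapped by the bar involution. Hence, applying the bar involution to the identity of Proposition~\ref{Prop:IT} yields
\[
\dim_{q,t} e_i \bar{I}_j = \sum_{1 \le k \le l,\, i_k = j} (\varpi_i^\vee,\, T_{i_1}^{-1}\cdots T_{i_{k-1}}^{-1}\alpha_j)_{q,t}
\]
for any reduced expression $(i_1, \dots, i_l)$ of $w_0$.

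Next, I would invoke Theorem~\ref{Thm:Tw_0}, which gives $T_{w_0}^{-1}\lambda = -q^{rh^\vee}t^{-h}\nu(\lambda)$ and therefore $T_{w_0}^{-2} = (q^{rh^\vee}t^{-h})^2\cdot\id$ on $\hq$. Since $w_0$ commutes with the Dynkin involution $\nu$, the sequence $(i_1^*, \dots, i_l^*)$ is again a reduced expression for $w_0$, so $T_{i_1^*}\cdots T_{i_l^*} = T_{w_0}$. Combining this with the relation $T_{i^*} = \nu T_i \nu$ (which follows from the $\nu$-invariance of the Cartan matrix), I would deduce that for $k = l + k'$ with $i_{l+k'} = j$ (equivalently $i_{k'} = j^*$),
\[
T_{i_1}^{-1}\cdots T_{i_{l+k'-1}}^{-1}\alpha_j = -q^{rh^\vee}t^{-h}\, T_{i_1}^{-1}\cdots T_{i_{k'-1}}^{-1}\alpha_{j^*},
\]
while for $k = 2l + k''$ with $1 \le k'' \le 2l$,
\[
T_{i_1}^{-1}\cdots T_{i_{k-1}}^{-1}\alpha_j = (q^{rh^\vee}t^{-h})^2\, T_{i_1}^{-1}\cdots T_{i_{k''-1}}^{-1}\alpha_j.
\]

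Finally, summing these contributions as a geometric series in $(q^{rh^\vee}t^{-h})^2$ (which converges formally, e.g.\ in $\Z[q^{\pm 1}][\![t^{-1}]\!]$ in view of Corollary~\ref{Cor:dimIbd}), I would obtain
\[
\sum_{k>0,\, i_k=j} (\varpi_i^\vee,\, T_{i_1}^{-1}\cdots T_{i_{k-1}}^{-1}\alpha_j)_{q,t}
= \frac{\dim_{q,t} e_i \bar{I}_j \;-\; q^{rh^\vee}t^{-h}\dim_{q,t} e_i \bar{I}_{j^*}}{1 - (q^{rh^\vee}t^{-h})^2}.
\]
Multiplying by $q^{d_j}t^{-1}$ and applying Theorem~\ref{Thm:goal} then yields the desired identity. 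The main bookkeeping obstacle is the sign management: I must verify that the factor $-q^{rh^\vee}t^{-h}\nu$ from $T_{w_0}^{-1}$ interacts correctly with the interchange $j \leftrightarrow j^*$ induced by the Dynkin involution, and that the bar involution indeed converts each $T_i$ into $T_i^{-1}$ when paired against the bar-invariant vectors $\varpi_i^\vee$ and $\alpha_j$. Once those checks are in place, the remainder is formal manipulation of geometric series.
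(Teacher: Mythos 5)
Your proposal is correct and follows essentially the same route as the paper: the paper's proof also combines Theorem~\ref{Thm:goal}, Proposition~\ref{Prop:IT} (via the bar-conjugation identity $\overline{(\varpi_i^\vee, T_{i_1}\cdots T_{i_{k-1}}\alpha_j)_{q,t}} = (\varpi_i^\vee, T_{i_1}^{-1}\cdots T_{i_{k-1}}^{-1}\alpha_j)_{q,t}$ coming from \eqref{Troot}) and Theorem~\ref{Thm:Tw_0} to resum the periodic extension as a geometric series in $(q^{rh^\vee}t^{-h})^2$. You have simply made explicit the bookkeeping (the relations $T_{i^*}=\nu T_i\nu$, $T_{i_1}^{-1}\cdots T_{i_l}^{-1}=T_{w_0}^{-1}=-q^{rh^\vee}t^{-h}\nu$, and the sign/involution exchange $j\leftrightarrow j^*$) that the published proof leaves to the reader.
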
 
\begin{proof}
It follows from Theorem~\ref{Thm:goal}, Proposition~\ref{Prop:IT} and Theorem~\ref{Thm:Tw_0}. 
Note that we have $\overline{(\varpi_i^\vee, T_{i_1} \cdots T_{i_{k-1}} \alpha_j)_{q,t}} = (\varpi_i^\vee, T_{i_1}^{-1} \cdots T_{i_{k-1}}^{-1}\alpha_j)_{q,t}$ for any $k \in \Z_{>0}$ by~\eqref{Troot}, where $\overline{a(q,t)}\seq a(q^{-1}, t^{-1})$.
\end{proof}
\begin{Rem}
When $\fg$ is of simply-laced type and the reduced expression $(i_1, \ldots, i_l)$ is adapted to a Dynkin quiver, Proposition~\ref{Prop:comb} recovers Hernandez-Leclerc's formula \cite[Proposition 2.1]{HL15}.
For the other case, it seems new.
\end{Rem}
\section{Remarks on $q$-version and projective limit}
\label{Sec:Rem}

In this complimentary section, we switch to consider the graded version of $\Pi(\ell)$ and $\tPi$.
In \S \ref{Ssec:qC}, we summarize the $q$-version of our results obtained so far.
In \S \ref{Ssec:HL}, we explain in detail how to identify graded $\tPi$-modules with modules over the Jacobian algebra studied by Hernandez-Leclerc~\cite{HL}. 
In \S \ref{Ssec:projlim}, we discuss the projective limit of $\Pi(\ell)$'s, which we need in the next section.

\subsection{Change of conventions}
In the remaining part of this paper (Sections~\ref{Sec:Rem} \& \ref{Sec:K} and Appendix~\ref{Apx}), we regard $\tPi$ as a graded $\kk$-algebra with respect to the degree map $\deg_1 \seq \mathrm{pr}_1 \circ \deg$, where $\mathrm{pr}_1 \colon \Z^2 \to \Z$ is the projection of the first factor.
Explicitly, it is given by
\[ \deg_1(\alpha_{ij}) \seq b_{ij}= - \max(d_i, d_j), \qquad \deg_1(\ep_i) \seq b_{ii} = 2d_i.\]

We again write $q$ for the upward grading shift functor for graded $\kk$-vector spaces.
For a graded $\kk$-vector space $V$, 
its graded dimension $\dim_q V$ and restricted dual $\bD(V)$ are defined in the analogous way as in \S \ref{Ssec:bgv}. 
For a graded $\kk$-algebra $A$, we write $\Hom_A(M,N)$ for the space of homogeneous $A$-homomorphisms between graded $A$-modules $M$ and $N$. 
We define $\ghom_{A}(M,N) \seq \bigoplus_{u \in \Z} \Hom_A(q^uM,N)$ as graded $\kk$-vector space.
The same convention applies to $\Ext_A^m(M,N)$ and $\gext_A^m(M,N)$ as well. 

In the sequel, we work only on the category of graded $\tPi$-modules, instead of bigraded $\tPi$-modules.
To simplify the notation, for a bigraded $\tPi$-module $M$, we keep the same symbol $M$ to denote the graded $\tPi$-module obtained from $M$ by forgetting the ``$t$-degree''.
Namely, we regard $M$ as the graded $\tPi$-module whose $u$-th graded piece is given by $M_u \seq \bigoplus_{v \in \Z} M_{u,v}$ for each $u \in \Z$. 
In particular, even if $M$ and $N$ are bigraded $\tPi$-modules, we switch to use the symbol $\Hom_{\tPi}(M,N)$ to denote the space of graded $\tPi$-homomorphisms, rather than bigraded ones. 
The same convention applies to $\Ext_{\tPi}^m(M,N)$ and others.

\subsection{Inverse of $q$-deformed Cartan matrix}
\label{Ssec:qC}
We set $\tC(q) \seq \tC(q,1)$.
This is the inverse of the $q$-deformed Cartan matrix $C(q) = C(q,1)$. 
Its $(i,j)$-entry $\tC_{ij}(q) = \tC_{ij}(q,1)$ is expanded at $q=0$ as
\[ \tC_{ij}(q) = \sum_{u \in \Z}\tc_{ij}(u)q^u \in \Z[\![q]\!], \qquad \text{where } \tc_{ij}(u) \seq \sum_{v \in \Z}\tc_{ij}(u,v).\]
Here we list some properties of $\tC(q)$ for future reference.
They immediately follow from Corollaries~\ref{Cor:goal} \& \ref{Cor:properties} together with Lemma~\ref{Lem:tc}.
See \cite[Corollary 4.10]{FO} and \cite[\S 6.6]{GW} for alternative proofs of Proposition~\ref{Prop:tc}.

\begin{Prop} \label{Prop:dI} 
For each $i, j \in I$, we have
\begin{equation} \label{eq:dI}
 \dim_q e_i\bar{I}_j = q^{-d_j}\sum_{u=0}^{rh^\vee} \tc_{ij}(u)q^{u}.
\end{equation}
In particular, the space $e_i \bar{I}_i$ is non-negatively graded.
\end{Prop}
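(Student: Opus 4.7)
The plan is to deduce Proposition \ref{Prop:dI} directly from the bigraded results already established. The key input is Corollary \ref{Cor:goal}, which states
\[ \dim_{q,t} e_i \bar{I}_j = q^{-d_j}t\sum_{u=0}^{rh^\vee} \sum_{v=0}^{h} \tc_{ij}(u,-v)q^u t^{-v}. \]
First, I would specialize $t = 1$ on both sides: on the left this produces the graded dimension $\dim_q e_i \bar{I}_j$ (i.e., collapses the bidegree onto its first component), and on the right it produces $q^{-d_j}\sum_{u=0}^{rh^\vee}\bigl(\sum_{v=0}^{h}\tc_{ij}(u,-v)\bigr) q^u$.

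Next, I would identify the inner sum $\sum_{v=0}^{h}\tc_{ij}(u,-v)$ with the coefficient $\tc_{ij}(u) = \sum_{v\in\Z}\tc_{ij}(u,v)$. This is where Lemma \ref{Lem:tc} together with the ``quasi-periodicity'' relation \ref{Cor:properties}(1) enters: Lemma \ref{Lem:tc}(1) forces $\tc_{ij}(u,v)=0$ whenever $v \ge 0$ (with the single harmless exception $(u,v)=(d_i,-1)$ which still lies in the admitted range $-h \le v \le 0$), and Corollary \ref{Cor:properties}(1) transports the vanishing on $u\le d_i$ to vanishing on $v<-h$ for $0 \le u \le rh^\vee$. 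Consequently, for $0 \le u \le rh^\vee$ the coefficient $\tc_{ij}(u,v)$ is supported in $-h\le v\le 0$, and the partial sum $\sum_{v=0}^{h}\tc_{ij}(u,-v)$ equals the full $t$-sum $\tc_{ij}(u)$. This yields the identity \eqref{eq:dI}.

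For the final assertion, I would specialize to $j=i$ and inspect the lowest $q$-degree appearing in the right-hand side of \eqref{eq:dI}. By Lemma \ref{Lem:tc}(1), $\tc_{ii}(u,v)=0$ for $u<d_i$, and for $u=d_i$ the only non-zero value is $\tc_{ii}(d_i,-1)=1$ by Lemma \ref{Lem:tc}(2); hence $\tc_{ii}(u)=0$ for $u<d_i$ and $\tc_{ii}(d_i)=1$. The prefactor $q^{-d_j}=q^{-d_i}$ then shifts this so that the lowest degree is $0$, with coefficient $1$, so $e_i\bar{I}_i$ is non-negatively graded (and in degree $0$ equal to $\kk$).

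There is essentially no obstacle here; the proposition is a bookkeeping corollary of the bigraded framework. The only mildly delicate point is making sure that the double-sum range $0 \le u \le rh^\vee$, $0 \le v \le h$ in Corollary \ref{Cor:goal} actually captures the entire support of $\tc_{ij}(u,v)$ in $v$ for each fixed $u$ in that range, which is exactly what Lemma \ref{Lem:tc} and Corollary \ref{Cor:properties} guarantee. No new homological computation is needed.
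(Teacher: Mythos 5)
Your overall route is the paper's: Proposition \ref{Prop:dI} is indeed obtained by specializing the bigraded statement of Corollary \ref{Cor:goal} at $t=1$, and both your $t=1$ specialization and your treatment of the final assertion on $e_i\bar{I}_i$ (reading off the lowest term from Lemma \ref{Lem:tc}) are fine. The gap is in the key support step, where you claim that for $0\le u\le rh^\vee$ the coefficients $\tc_{ij}(u,v)$ vanish for $v<-h$ because ``Corollary \ref{Cor:properties}(1) transports the vanishing on $u\le d_i$''. Corollary \ref{Cor:properties}(1) asserts $\tc_{ij}(u,v)=-\tc_{ij^*}(u+rh^\vee,v-h)$ only for source points with $u\ge 0$ and $v\le 0$, so it moves the known vanishing at sources with $0\le u\le d_i$ to targets with $rh^\vee\le u\le rh^\vee+d_i$; it says nothing about the strip $d_i<u<rh^\vee$, $v<-h$. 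Reading the relation backwards at a point $(U,V)$ in that strip would mean applying it at the source $(U-rh^\vee,V+h)$ with $U-rh^\vee<0$, which lies outside the corollary's stated range, and the validity of the relation there is precisely equivalent to the vanishing you want, so it cannot be assumed. In fact Lemma \ref{Lem:tc}, Corollary \ref{Cor:goal} and Corollary \ref{Cor:properties} taken as black boxes do not force this vanishing: one can build a family of integers satisfying all three with a nonzero entry at some $(u_0,v_0)$, $d_i<u_0<rh^\vee$, $v_0<-h$, propagated along the recursion of \ref{Cor:properties}(1). So the identification $\sum_{v=0}^{h}\tc_{ij}(u,-v)=\tc_{ij}(u)$ on $0\le u\le rh^\vee$ genuinely needs further input.

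The missing input is Corollary \ref{Cor:dimIbd} together with Theorem \ref{Thm:goal}. For example: by Corollary \ref{Cor:dimIbd}, $q^{d_j}t^{-1}\dim_{q,t}e_i\bar{I}_j$ has bidegree support in $d_i\le u\le rh^\vee-d_i$, $1-h\le v\le -1$; substituting this into Theorem \ref{Thm:goal} and expanding $\bigl(1-(q^{rh^\vee}t^{-h})^{2}\bigr)^{-1}$ as a geometric series, the $m$-th term of the resulting expansion of $\tC_{ij}(q,t)$ has $q$-support in $[m\,rh^\vee+d_i,(m+1)rh^\vee-d_i]$, so for $0\le u\le rh^\vee$ only the $m=0$ term contributes and its $t$-support lies in $[1-h,-1]\subset[-h,0]$ --- exactly the support fact you need. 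Alternatively, bypass the bigraded support claim entirely: specialize Theorem \ref{Thm:goal} at $t=1$ (legitimate, since for each power of $q$ only finitely many powers of $t$ occur), obtaining $\tC_{ij}(q)\,(1-q^{2rh^\vee})=q^{d_j}\dim_q e_i\bar{I}_j-q^{rh^\vee+d_j}\dim_q e_i\bar{I}_{j^*}$, and compare coefficients of $q^u$ for $0\le u\le rh^\vee$ using the $q$-support bounds $[d_i,rh^\vee-d_i]$ from Corollary \ref{Cor:dimIbd}; this yields \eqref{eq:dI} directly, after which your argument for the non-negative grading of $e_i\bar{I}_i$ goes through unchanged.
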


\begin{Prop} \label{Prop:tc}
The expansion coefficients $\{\tc_{ij}(u)\}_{i,j \in I, u \in \Z}$ satisfy the following properties:
\begin{enumerate}
\item $\tc_{ij}(u+rh^\vee) = - \tc_{ij^*}(u)$ for $u \ge 0$.
\item $\tc_{ij}(u) \ge 0$ for $0 \le u \le rh^\vee$.
\item $\tc_{ij}(rh^\vee-u) = \tc_{ij^*}(u)$ for $0 \le u \le rh^\vee$.
\item $\tc_{ij}(u)=0$ if $|u-krh^\vee| \le d_i - \delta_{ij}$ for some $k \in \Z_{\ge 0}$. \qedhere
\end{enumerate}
\end{Prop}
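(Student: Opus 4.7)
My plan is to derive all four parts as direct specialisations at $t=1$ of the bigraded results already established, namely Proposition~\ref{Prop:dI} (the graded-dimension formula for $e_i\bar{I}_j$), Corollary~\ref{Cor:goal}, Corollary~\ref{Cor:properties}, and Lemma~\ref{Lem:tc}. In each case the argument reduces to a coefficient comparison in the identity
\[ \dim_q e_i\bar{I}_j = q^{-d_j}\sum_{u=0}^{rh^\vee}\tc_{ij}(u)q^u. \]

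For (2), non-negativity is immediate: the coefficient $\tc_{ij}(u)$ with $0 \le u \le rh^\vee$ equals $\dim_\kk (e_i\bar{I}_j)_{u-d_j}$ by Proposition~\ref{Prop:dI}. For (1), I would start from the $t=1$ specialisation of Theorem~\ref{Thm:goal}, namely
\[ (1 - q^{2rh^\vee})\tC_{ij}(q) = q^{d_j}\bigl(\dim_q e_i\bar{I}_j - q^{rh^\vee}\dim_q e_i\bar{I}_{j^*}\bigr), \]
and observe that the right-hand side is a polynomial supported in $[0, 2rh^\vee]$. Comparing coefficients of $q^N$ for $rh^\vee < N \le 2rh^\vee$ yields the anti-periodicity $\tc_{ij}(u + rh^\vee) = -\tc_{ij^*}(u)$ for $0 \le u < rh^\vee$; the remaining cases follow by induction, since the same equation forces $\tc_{ij}(N) = \tc_{ij}(N - 2rh^\vee)$ for $N > 2rh^\vee$. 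Part (3) is obtained by specialising the duality isomorphism $\bD(\bar{I}_j) \cong q^{2d_j - rh^\vee}t^{h-2}\bar{I}_{j^*}$ of Corollary~\ref{P=I}(2) at $t=1$, taking graded dimensions, and matching coefficients of $q^u$ via Proposition~\ref{Prop:dI}.

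Part (4) needs a little more care but is essentially combinatorial. The base case $k=0$ is read off from Lemma~\ref{Lem:tc}: $\tc_{ij}(u,v) = 0$ whenever $u < d_i$, while the sole surviving bigraded contribution at $u = d_i$ is $\tc_{ij}(d_i, -1) = \delta_{ij}$; summing over $v$ yields $\tc_{ij}(u) = 0$ for $0 \le u \le d_i - \delta_{ij}$. For $k \ge 1$, I would transport this small-$u$ vanishing to a neighbourhood of $u = krh^\vee$ by iterating the anti-periodicity of (1) together with the palindromic symmetry of (3), using the freedom to select the best $k$ attached to each given $u$.

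The main (and essentially the only) obstacle I foresee is the Kronecker-symbol bookkeeping in (4): each application of (1) swaps $j$ with $j^*$, so the vanishing window translated to $krh^\vee$ naturally involves $\delta_{i,j^*}$ for odd $k$ and $\delta_{ij}$ for even $k$. Verifying that these translated intervals lie inside the asserted bound $|u - krh^\vee| \le d_i - \delta_{ij}$ will require a short case distinction according to whether $j^* = j$, $j^* = i$, or neither; all other steps are routine consequences of the graded-dimension formula and the self-injectivity/duality isomorphisms proved earlier.
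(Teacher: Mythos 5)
Parts (1)--(3) of your plan are correct and are in substance the paper's own (one-line) argument: Proposition~\ref{Prop:tc}(1)--(3) is obtained by summing the bigraded statements of Lemma~\ref{Lem:tc} and Corollaries~\ref{Cor:goal}, \ref{Cor:properties} over the $t$-exponent, i.e.\ by specialising at $t=1$; your coefficient comparisons via Theorem~\ref{Thm:goal}, Proposition~\ref{Prop:dI} and Corollary~\ref{P=I}(2) are an equivalent packaging of exactly that.

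The genuine gap is the step you postpone in (4), and it is not routine bookkeeping. Transporting the $k=0$ window by iterating (1), and using (3) for the points just below $krh^\vee$, gives $\tc_{ij}(krh^\vee+s)=(-1)^k\,\tc_{ij^{(k)}}(s)$ for $s\ge 0$ and $\tc_{ij}(krh^\vee-s)=(-1)^{k-1}\,\tc_{ij^{(k)}}(s)$ for $0\le s\le rh^\vee$, where $j^{(k)}=j$ for $k$ even and $j^{(k)}=j^*$ for $k$ odd. So the vanishing you actually obtain around $krh^\vee$ has radius $d_i-\delta_{ij^{(k)}}$, and what you need is that the \emph{asserted} window of radius $d_i-\delta_{ij}$ is contained in this one (you state the inclusion in the opposite direction). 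That containment requires $\delta_{ij^{(k)}}\le\delta_{ij}$, which fails precisely in your third case $i=j^*\neq j$: there, for odd $k$, the transported radius is $d_i-1$ while the asserted radius is $d_i$, and Lemma~\ref{Lem:tc} gives $\tc_{ij}(krh^\vee\pm d_i)=\pm\,\tc_{ii}(d_i)=\pm 1\neq 0$. No case analysis can repair this, because the inequality in (4) as printed is simply not available there: in type $\mathrm{A}_2$ one has $\tC_{12}(q)=q^2-q^4+q^8-q^{10}+\cdots$, so $\tc_{12}(2)=1$ although $|2-rh^\vee|=1=d_1-\delta_{12}$. What your argument does prove (correctly) is the parity-corrected statement with $\delta_{ij}$ replaced by $\delta_{ij^{(k)}}$, which agrees with (4) whenever $j=j^*$ or $i\neq j^*$ (in particular in all non-simply-laced types); you should record that version rather than expect the deferred case distinction to close the case $j^*=i\neq j$.
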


\subsection{Comparison with Jacobian algebras}
\label{Ssec:HL}
As remarked in \cite[\S1.7.1]{GLS}, the definition of the algebra $\tPi$ is inspired in part by \cite{HL}.
In particular, the category of graded $\tPi$-modules can be identified with the category of (non-graded) modules over the Jacobian algebra $J_{\Gamma, W}$ associated with a certain quiver $\Gamma$ with potential $W$ studied in \cite{HL}. 
In this subsection, we explain this identification in detail for completeness.

Following \cite{HL}, let us consider an infinite quiver $\Gamma = (\Gamma_0, \Gamma_1, \se, \tl)$ given as follows:
\begin{gather*}
\Gamma_0 = I \times \Z,  \quad
\Gamma_1 = \{ \alpha_{ij}(p) \mid i, j \in I, i \sim j, p \in \Z \} \cup \{\ep_i(p) \mid i \in I, p \in \Z \}, \\
\se(\alpha_{ij}(p)) = (j,p), \quad \tl(\alpha_{ij}(p)) = (i, p +b_{ij}), \quad
\se(\ep_i(p)) = (i,p), \quad \tl(\ep_i(p)) = (i, p+2d_i).  
\end{gather*} 
Note that the quiver $\Gamma$ consists of two mutually isomorphic connected components. 
In Figure~\ref{Fig:Gamma}, a connected component of $\Gamma$ is depicted in types $\mathrm{A}_5, \mathrm{B_3}, \mathrm{C}_3$ and $\mathrm{D}_4$.

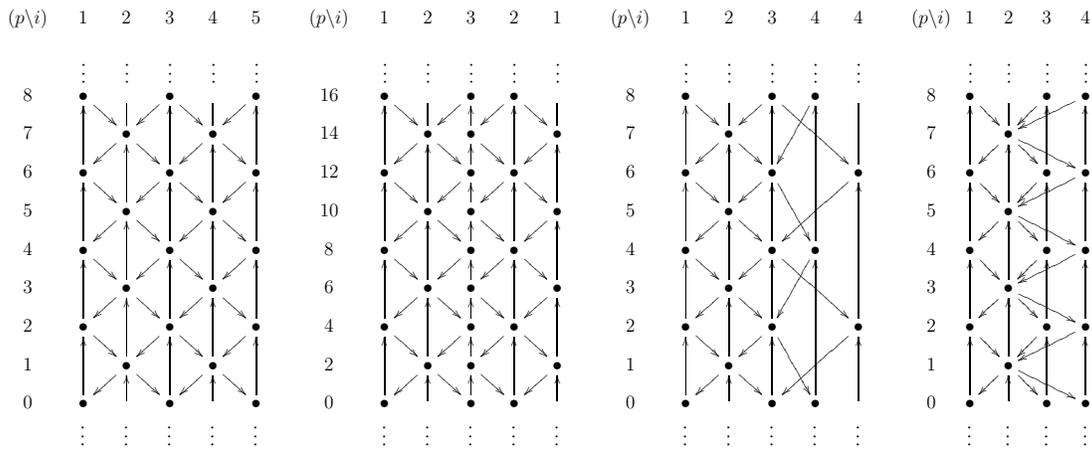
\begin{figure}[htbp] 
\[
\scalebox{0.6}{
\xymatrix@C=5mm@!R=0.1mm{
(p \backslash i) & 1 & 2 &3 & 4 & 5\\
& \text{\rotatebox{270}{$\cdots$}} & \text{\rotatebox{270}{$\cdots$}} & \text{\rotatebox{270}{$\cdots$}} & \text{\rotatebox{270}{$\cdots$}} & \text{\rotatebox{270}{$\cdots$}}\\
8& \bullet \ar@{->}[dr] & & \bullet \ar@{->}[dr] \ar@{->}[dl]& & \bullet \ar@{->}[dl] \\
7 & & \bullet  \ar@{->}[dl] \ar@{->}[dr] \ar@{-}[u] && \bullet \ar@{->}[dl] \ar@{->}[dr] \ar@{-}[u]&\\
6 & \bullet \ar@{->}[uu] \ar@{->}[dr] && \bullet  \ar@{->}[uu] \ar@{->}[dr] \ar@{->}[dl]&& \bullet \ar@{->}[dl] \ar@{->}[uu]\\
5 & & \bullet \ar@{->}[uu] \ar@{->}[dl] \ar@{->}[dr] && \bullet \ar@{->}[uu] \ar@{->}[dl] \ar@{->}[dr]&\\
4 & \bullet \ar@{->}[uu] \ar@{->}[dr] && \bullet  \ar@{->}[uu] \ar@{->}[dr] \ar@{->}[dl]&& \bullet \ar@{->}[dl] \ar@{->}[uu]\\
3 & & \bullet \ar@{->}[uu] \ar@{->}[dl] \ar@{->}[dr] && \bullet \ar@{->}[uu] \ar@{->}[dl] \ar@{->}[dr]&\\
2 & \bullet \ar@{->}[uu] \ar@{->}[dr] && \bullet  \ar@{->}[uu] \ar@{->}[dr] \ar@{->}[dl]&& \bullet \ar@{->}[dl] \ar@{->}[uu]\\
1 & & \bullet \ar@{->}[uu] \ar@{->}[dl] \ar@{->}[dr] & & \bullet \ar@{->}[uu] \ar@{->}[dl] \ar@{->}[dr]& \\
0 & \bullet \ar@{->}[uu] & \ar@{->}[u] & \bullet  \ar@{->}[uu]& \ar@{->}[u] & \bullet \ar@{->}[uu]\\
 &  \text{\rotatebox{90}{$\cdots$}} & \text{\rotatebox{90}{$\cdots$}} & \text{\rotatebox{90}{$\cdots$}} & \text{\rotatebox{90}{$\cdots$}} &  \text{\rotatebox{90}{$\cdots$}} 
}}
\quad
\scalebox{0.6}{
\xymatrix@C=5mm@!R=0.1mm{
(p \backslash i) & 1 & 2 &3 & 2 & 1\\
& \text{\rotatebox{270}{$\cdots$}} & \text{\rotatebox{270}{$\cdots$}} & \text{\rotatebox{270}{$\cdots$}} & \text{\rotatebox{270}{$\cdots$}}& \text{\rotatebox{270}{$\cdots$}} \\
16& \bullet \ar@{->}[dr] && \bullet  \ar@{->}[dl] & \bullet  \ar@{->}[dr] \ar@{->}[dl] &  \\
14& & \bullet  \ar@{->}[dl] \ar@{->}[dr] \ar@{-}[u] &\bullet \ar@{->}[u] \ar@{->}[dr] && \bullet \ar@{->}[dl] \ar@{-}[u]\\
12& \bullet \ar@{->}[uu] \ar@{->}[dr] && \bullet \ar@{->}[u] \ar@{->}[dl] & \bullet \ar@{->}[uu] \ar@{->}[dr] \ar@{->}[dl] &  \\
10 & & \bullet \ar@{->}[uu] \ar@{->}[dl] \ar@{->}[dr] &\bullet \ar@{->}[u] \ar@{->}[dr] && \bullet  \ar@{->}[uu]  \ar@{->}[dl] \\
8& \bullet \ar@{->}[uu] \ar@{->}[dr] && \bullet \ar@{->}[u] \ar@{->}[dl] & \bullet \ar@{->}[uu] \ar@{->}[dr] \ar@{->}[dl] &  \\
6 & & \bullet \ar@{->}[uu] \ar@{->}[dl] \ar@{->}[dr] &\bullet \ar@{->}[u] \ar@{->}[dr] && \bullet  \ar@{->}[uu]  \ar@{->}[dl] \\
4& \bullet \ar@{->}[uu] \ar@{->}[dr] && \bullet \ar@{->}[u] \ar@{->}[dl] & \bullet \ar@{->}[uu] \ar@{->}[dr] \ar@{->}[dl] &  \\
2& & \bullet \ar@{->}[uu] \ar@{->}[dl] \ar@{->}[dr] &\bullet \ar@{->}[u] \ar@{->}[dr] & & \bullet  \ar@{->}[uu] \ar@{->}[dl] \\
0& \bullet \ar@{->}[uu] & \ar@{->}[u]& \bullet \ar@{->}[u] & \bullet \ar@{->}[uu] &\ar@{->}[u]\\
& \text{\rotatebox{90}{$\cdots$}} & \text{\rotatebox{90}{$\cdots$}} & \text{\rotatebox{90}{$\cdots$}} &\text{\rotatebox{90}{$\cdots$}} & \text{\rotatebox{90}{$\cdots$}}
}}
\quad
\scalebox{0.60}{
\xymatrix@C=5mm@!R=0.1mm{
(p \backslash i) & 1 & 2 &3 & 4 & 4 \\
&\text{\rotatebox{270}{$\cdots$}}&\text{\rotatebox{270}{$\cdots$}}&\text{\rotatebox{270}{$\cdots$}}&\text{\rotatebox{270}{$\cdots$}}& \text{\rotatebox{270}{$\cdots$}}\\
8& \bullet \ar@{->}[dr]& & \bullet \ar@{->}[ddrr]\ar@{->}[dl]& \bullet \ar@{->}[ddl]& \\ 
7& & \bullet  \ar@{->}[dl] \ar@{-}[u] \ar@{->}[dr]& & & \\
6& \bullet \ar@{->}[uu] \ar@{->}[dr]& & \bullet \ar@{->}[uu] \ar@{->}[ddr] \ar@{->}[dl]& & \bullet \ar@{->}[ddll] \ar@{-}[uu] \\ 
5& & \bullet \ar@{->}[uu] \ar@{->}[dl] \ar@{->}[dr]& & & \\ 
4& \bullet \ar@{->}[uu] \ar@{->}[dr]& & \bullet \ar@{->}[uu] \ar@{->}[ddrr]\ar@{->}[dl]& \bullet \ar@{->}[uuuu]\ar@{->}[ddl]& \\ 
3& & \bullet \ar@{->}[uu] \ar@{->}[dl] \ar@{->}[dr]& & & \\
2& \bullet \ar@{->}[uu] \ar@{->}[dr]& & \bullet \ar@{->}[uu] \ar@{->}[dl] \ar@{->}[ddr]& & \bullet \ar@{->}[uuuu] \ar@{->}[ddll]\\ 
1& & \bullet \ar@{->}[uu] \ar@{->}[dr] \ar@{->}[dl] & & & \\ 
0& \bullet \ar@{->}[uu]&\ar@{->}[u]  & \bullet \ar@{->}[uu]& \bullet \ar@{->}[uuuu]& \ar@{->}[uu] \\
& \text{\rotatebox{90}{$\cdots$}} &\text{\rotatebox{90}{$\cdots$}}& \text{\rotatebox{90}{$\cdots$}} &\text{\rotatebox{90}{$\cdots$}} & \text{\rotatebox{90}{$\cdots$}}
}}
\quad
\scalebox{0.60}{
\xymatrix@!C=5mm@!R=0.1mm{
(p \backslash i) & 1 & 2 &3 & 4 \\
& \text{\rotatebox{270}{$\cdots$}} & \text{\rotatebox{270}{$\cdots$}} &\text{\rotatebox{270}{$\cdots$}} & \text{\rotatebox{270}{$\cdots$}} \\
8& \bullet \ar@{->}[dr] && \bullet  \ar@{->}[dl]& \bullet \ar@{->}[dll] \\
7 & & \bullet  \ar@{->}[dl] \ar@{->}[dr] \ar@{->}[drr] \ar@{-}[u]&& \\
6 & \bullet \ar@{->}[uu] \ar@{->}[dr] && \bullet  \ar@{->}[uu]  \ar@{->}[dl]&  \bullet \ar@{->}[uu] \ar@{->}[dll] \\
5 & & \bullet \ar@{->}[uu] \ar@{->}[dl] \ar@{->}[dr]\ar@{->}[drr] &&\\
4 & \bullet \ar@{->}[uu] \ar@{->}[dr] && \bullet  \ar@{->}[uu]  \ar@{->}[dl]& \bullet \ar@{->}[uu] \ar@{->}[dll] \\
3 & & \bullet \ar@{->}[uu] \ar@{->}[dl] \ar@{->}[dr] \ar@{->}[drr]&& \\
2 & \bullet \ar@{->}[uu] \ar@{->}[dr] && \bullet  \ar@{->}[uu] \ar@{->}[dl]&  \bullet \ar@{->}[uu] \ar@{->}[dll] \\
1 & & \bullet \ar@{->}[uu] \ar@{->}[dl] \ar@{->}[dr] \ar@{->}[drr] && \\
0 & \bullet \ar@{->}[uu] &\ar@{->}[u]& \bullet  \ar@{->}[uu]&  \bullet \ar@{->}[uu] \\
& \text{\rotatebox{90}{$\cdots$}}&\text{\rotatebox{90}{$\cdots$}} &\text{\rotatebox{90}{$\cdots$}}&\text{\rotatebox{90}{$\cdots$}}}}
\]
\caption{Component of $\Gamma$ in types $\mathrm{A}_5, \mathrm{B_3}, \mathrm{C}_3$ and $\mathrm{D}_4$ (from left to right)} \label{Fig:Gamma}
\end{figure}

Recall we have chosen a collection of signs $\{\omega_{ij}\}_{i \sim j}$ (or equivalently, an orientation $\Omega$) in \S\ref{GPAdef}.
Let $W$ be the potential given by
\[ 
W \seq \sum_{p} \sum_{i,j \in I; i \sim j} \omega_{ij} \ep_i(p-2d_i) \ep_i(p-4d_i) \cdots \ep_i(p+2b_{ij})\alpha_{ij}(p+b_{ij}) \alpha_{ji}(p)
\] 
and $J_{\Gamma, W}$ the Jacobian algebra associated with $(\Gamma, W)$, i.e.,~$J_{\Gamma, W} \seq \kk \Gamma / \langle \partial W\rangle$, where $\langle \partial W\rangle$ denote the two-sided ideal generated by all the cyclic derivations of $W$ (see \cite{DWZ1}). 

\begin{Rem} \label{Rem:potential}
In \cite{HL}, a slightly different potential $W'$ was considered instead of $W$.
It is given by
\[
W' \seq \sum_{p} \sum_{i,j \in I; i \sim j} \ep_i(p-2d_i) \ep_i(p-4d_i) \cdots \ep_i(p+2b_{ij})\alpha_{ij}(p+b_{ij}) \alpha_{ji}(p).
\]
However the difference between $W$ and $W'$ is not essential. 
Indeed, the potential $-W$ is obtained from $W'$ via an explicit automorphism of $\kk\Gamma$ given as follows.
Let $\xi \colon I \to \Z$ be a function satisfying the condition: $\xi_i = \xi_j + \omega_{ij}b_{ij}$ if $i \sim j$, where we write $\xi_i \seq \xi(i)$ for simplicity. (When $\fg$ is of simply-laced type, such a function $\xi$ is the sane as a height function of the Dynkin quiver $(I, \Omega)$ appearing in \cite[\S 2]{HL15}.)
Then we define the automorphism $\phi_{\xi}$ of the path algebra $\kk\Gamma$ by the assignment
\[ \ep_i(p) \mapsto \ep_i(p), \qquad \alpha_{ij}(p) \mapsto (-1)^{\lfloor (\xi_j -p)/2b_{ij} \rfloor} \alpha_{ij}(p) \]
for any $p \in \Z$ and $i,j \in I$ with $i \sim j$. 
Since
\begin{align*}
\left\lfloor \frac{\xi_j-p}{2b_{ij}} \right\rfloor + \left\lfloor \frac{\xi_i-(p - b_{ij})}{2b_{ij}} \right\rfloor 
&=  \left\lfloor \frac{\xi_j-p}{2b_{ij}} \right\rfloor + \left\lfloor \frac{\xi_j-p}{2b_{ij}} + \frac{\xi_i - \xi_j + b_{ij}}{2b_{ij}} \right\rfloor  \allowdisplaybreaks \\
&= \left\lfloor \frac{\xi_j-p}{2b_{ij}} \right\rfloor + \left\lfloor \frac{\xi_j-p}{2b_{ij}} + \frac{\omega_{ij} + 1}{2} \right\rfloor \allowdisplaybreaks \\
& \equiv \frac{\omega_{ij} + 1}{2} \pmod 2,
\end{align*}
we have 
\[ \phi_{\xi} (\alpha_{ij}(p+b_{ij}) \alpha_{ji}(p)) = - \omega_{ij} \alpha_{ij}(p+b_{ij}) \alpha_{ji}(p)\]
for any $p \in \Z$ and $i,j \in I$ with $i \sim j$. 
Thus we obtain $\phi_{\xi}(W') = -W$. 
In particular, the automorphism $\phi_\xi$ induces the isomorphism 
\[J_{\Gamma, W'} \cong J_{\Gamma, -W} = J_{\Gamma, W}. \]
In this paper, we work with the potential $W$ rather than $W'$ because it matches with the definition of $\tPi$ in \S\ref{GPAdef} (see the proof of Proposition~\ref{Prop:catisom} below).  
\end{Rem}
In what follows, for an algebra (resp.~a graded algebra) $A$, we denote by $A\umod$ (resp.~$A\gmod$) the category of all the $A$-modules (resp.~graded $A$-modules).
We naturally identify a graded module $M$ over the path algebra $\kk \tQ$ with a graded representation of $\tQ$, which consists of an $I$-tuple of graded $\kk$-vector spaces $(e_iM)_{i \in I}$ together with linear maps
\[ M(\ep_i) \in \Hom_{\kk} (q^{2d_i}e_i M, e_i M), \qquad M(\alpha_{ij}) \in \Hom_{\kk}(q^{b_{ij}}e_j M, e_i M) \] 
for each $i,j \in I$ with $i \sim j$.

For a graded $\kk \tQ$-module $M$, we associate the representation $\Phi(M)$ of $\Gamma$ over $\kk$ given by $\Phi(M)_{(i,p)} \seq e_i M_p$ and
\[
\Phi(M)(\ep_i(p)) \seq M(\ep_i)|_{e_iM_p}, \qquad \Phi(M)(\alpha_{ij}(p)) \seq M(\alpha_{ij})|_{e_j M_p}
\]
for any $p \in \Z$ and $i,j \in I$ with $j \sim i$.
This assignment $M \mapsto \Phi(M)$ defines an $\kk$-linear functor $\Phi \colon \kk\tQ \gmod \to \kk \Gamma \umod$, which is an isomorphism of categories.

\begin{Prop} \label{Prop:catisom}
Under the isomorphism $\Phi \colon \kk\tQ \gmod \to \kk \Gamma \umod$, the category $\tPi \gmod$ corresponds to the category $J_{\Gamma, W} \umod$.
Therefore there is an isomorphism of categories 
\[\tPi \gmod \cong J_{\Gamma, W} \umod.\]
\end{Prop}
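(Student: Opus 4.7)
The plan is to reduce the equivalence of categories to a direct comparison between the defining relations of $\tPi$ and the Jacobian relations $\partial_a W = 0$. Since the text has already identified $\Phi$ as an isomorphism of categories $\kk\tQ\gmod \cong \kk\Gamma\umod$, and since $\tPi\gmod$ (resp.~$J_{\Gamma,W}\umod$) is the full subcategory of $\kk\tQ\gmod$ (resp.~$\kk\Gamma\umod$) cut out by annihilation of the relations (R1), (R2) (resp.~by the family $\{\partial_a W\}_{a \in \Gamma_1}$), it suffices to verify: a graded $\kk\tQ$-module $M$ satisfies (R1) and (R2) if and only if the $\kk\Gamma$-module $\Phi(M)$ satisfies $\partial_a W = 0$ for every arrow $a$ of $\Gamma$.

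The arrows of $\Gamma$ split into two families: $\alpha_{ji}(p)$ for $i \sim j$ and $\ep_i(p)$ for $i \in I$, so I would compute the cyclic derivatives in two separate steps. For the first family, observe that the only cyclic words in $W$ containing $\alpha_{ji}(p)$ come from exactly two summands of $W$: the $(i,j)$-summand at index $p$, and the $(j,i)$-summand at index $p - b_{ij}$ (where I use $b_{ij} = b_{ji}$ to match the source of the $\alpha_{ji}(p)$ in the latter). Performing the cyclic rotation and removal in each, and using the sign relation $\omega_{ji} = -\omega_{ij}$, the equation $\partial_{\alpha_{ji}(p)} W = 0$ becomes, after applying $\Phi^{-1}$, exactly the relation
\[ \ep_i^{-c_{ij}}\alpha_{ij} \;=\; \alpha_{ij}\,\ep_j^{-c_{ji}}, \]
that is, the position-$p$ instance of (R1).

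For the second family, observe that $\ep_i(p)$ appears as the $k$-th epsilon of the $(i,j)$-summand of $W$ at index $p + 2kd_i$, for each $j \sim i$ and each $k \in \{1, \dots, -c_{ij}\}$. Cyclically rotating each such occurrence to the front of the word and deleting it produces a path which, upon applying $\Phi^{-1}$, becomes $\omega_{ij}\,\ep_i^{-c_{ij}-k}\alpha_{ij}\alpha_{ji}\ep_i^{k-1}$ in $\tPi$. Reindexing $k \mapsto k' = -c_{ij}-k$ rewrites the total sum as
\[\sum_{j \sim i}\sum_{k'=0}^{-c_{ij}-1} \omega_{ij}\,\ep_i^{k'}\alpha_{ij}\alpha_{ji}\ep_i^{-c_{ij}-1-k'},\]
which is precisely the position-$p$ instance of (R2). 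The converse—that every instance of (R1) or (R2) arises this way—is immediate from the construction of $W$, so the two two-sided ideals correspond under $\Phi$, and the quotient categories are isomorphic.

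The main obstacle is the bookkeeping in the $\ep_i(p)$ case: one must correctly identify all contributing summands of $W$ (one per pair $(j,k)$ with $j \sim i$ and $1 \le k \le -c_{ij}$), track how the cyclic rotation splits the remaining letters into a block of $\ep_i$'s, then $\alpha_{ij}\alpha_{ji}$, then another block of $\ep_i$'s, and check that the resulting sum matches (R2) exactly rather than up to some reshuffling. The $\alpha_{ji}(p)$ case is comparatively easy once the convention $b_{ij} = b_{ji}$ and the sign $\omega_{ji} = -\omega_{ij}$ are used correctly.
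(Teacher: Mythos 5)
Your proposal is correct and follows essentially the same route as the paper: both identify $J_{\Gamma,W}\umod$ (resp.\ $\tPi\gmod$) as the full subcategory of $\kk\Gamma\umod$ (resp.\ $\kk\tQ\gmod$) cut out by the cyclic derivatives (resp.\ (R1),(R2)), and then match $\partial_{\alpha_{ji}(p)}W$ with the graded-piece restriction of $\omega_{ij}\bigl(\ep_i^{-c_{ij}}\alpha_{ij}-\alpha_{ij}\ep_j^{-c_{ji}}\bigr)$ and $\partial_{\ep_i(p)}W$ with that of $\sum_{j\sim i}\sum_{k}\omega_{ij}\ep_i^{k}\alpha_{ij}\alpha_{ji}\ep_i^{-c_{ij}-1-k}$. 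Your bookkeeping of the contributing summands and the reindexing $k\mapsto -c_{ij}-k$ is exactly the computation the paper leaves implicit, so there is nothing to add.
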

\begin{proof}
The category $J_{\Gamma, W} \umod$ is identical to the full subcategory of $\kk \Gamma \umod$ on which the cyclic derivations $\partial_{\alpha_{ji}(p)}W$ and $\partial_{\ep_i(p)} W$ vanish for any $p \in \Z$ and $i,j \in I$ with $i \sim j$. 
Under the above isomorphism $\kk \Gamma \umod \cong \kk \tQ \gmod$, the actions of the elements $\partial_{\alpha_{ji}(p-b_{ij})}W$ and $\partial_{\ep_i(p-2d_i)} W$ correspond to the restrictions to the $p$-th graded piece of the actions of the elements
\[ 
\omega_{ij}\ep_j^{-c_{ji}} \alpha_{ji} + \omega_{ji} \alpha_{ji} \ep_i^{-c_{ji}} \quad \text{and} \quad \sum_{j\sim i}\sum_{k + l = -c_{ij}-1}\omega_{ij} \ep_i^k \alpha_{ij} \alpha_{ji} \ep_i^{l} 
\]
respectively. 
Therefore, under the isomorphism, the relation $\partial_{\alpha_{ji}(p)}W =0$ corresponds to the relation (R1), and the relation $\partial_{\ep_i(p)} W =0$ corresponds to the relation (R2).
This completes the proof.
\end{proof}

\subsection{Projective limit}
\label{Ssec:projlim}
In this subsection, we briefly discuss the projective limit of the graded $\kk$-algebras $\Pi(\ell)$, which we use in the next section.
Taking the projective limit in the category of graded $\kk$-algebras, we define
\[\Pi(\infty) \seq \varprojlim_{\ell} \Pi(\ell) = \varprojlim_{\ell} \tPi/\ep^{\ell}\tPi.\]
By construction, we have the canonical homomorphism of graded $\kk$-algebras $\tPi \to \Pi(\infty)$, whose kernel is $\bigcap_{\ell > 0} \ep^\ell \tPi$.
For each $\ell \in \Z_{>0}$, we have $\Pi(\infty)/\ep^{\ell}\Pi(\infty) \cong \tPi/\ep^{\ell}\tPi\cong \Pi(\ell)$.
We consider the projective module $P_i(\infty) \seq \Pi(\infty)e_i$ and the injective module $I_i(\infty) \seq \bD(P_i(\infty))$ for each $i \in I$. 
Note that we have $I_i(\infty) = \bigcup_{\ell \in \Z_{>0}}I_i(\ell)$ and hence it is not finitely generated over $\Pi(\infty)$. 
Let $\mathfrak{a} \subset \Pi(\infty)$ denote the two-sided ideal generated by $\{ \alpha_{ij} \}_{i \sim j}$. 
\begin{Prop} \label{Prop:Piinfty} The followings hold.
\begin{enumerate}
\item \label{Prop:Piinfty:lf} For any $i, j \in I$, the subspace $e_i \Pi(\infty) e_j$ is graded free of finite rank over $\kk[\ep_i]$.
\item \label{Prop:Piinfty:ep} The center of $\Pi(\infty)$ contains $\kk[\ep]$ and $\Pi(\infty)$ is graded free of finite rank as a $\kk[\ep]$-module.
In particular, we have $\dim_q \Pi(\infty) \in \Z(\!(q)\!)$. 
\item \label{Prop:Piinfty:sj} For each $k \in \Z$, we have $\Pi(\infty)_k = \Pi(\ell)_k$ for $\ell \gg 0$.   
In particular, the canonical homomorphism $\tPi \to \Pi(\infty)$ is surjective. 
\item \label{Prop:Piinfty:nilp} The ideal $\mathfrak{a}$ is nilpotent. Indeed, we have $\mathfrak{a}^{n(h+1)} = 0$.\qedhere
\end{enumerate}
\end{Prop}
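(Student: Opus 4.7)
The plan is to exploit the bigraded structure on $\tPi$ and $\Pi(\ell)$ from Section~\ref{Sec:GPA}: although the proposition concerns only the graded ($\deg_1$) setting, the $v$-grading provides a uniform finiteness bound via Theorem~\ref{Thm:fd}~\eqref{Thm:fd:nilp} that drives every step. I would first dispatch \eqref{Prop:Piinfty:nilp}: any element of $\mathfrak{a}^N$ is a sum of products containing at least $N$ arrows of type $\alpha_{ij}$, hence has $v$-degree $\ge N$. For $N$ sufficiently large (explicitly $N>n(h+1)$), such elements vanish in every $\Pi(\ell)$ by Theorem~\ref{Thm:fd}~\eqref{Thm:fd:nilp}, and hence in the projective limit $\Pi(\infty)$.

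The central step is \eqref{Prop:Piinfty:lf}. By Theorem~\ref{Thm:fd}~\eqref{Thm:fd:lf}, for each $\ell$ we have $e_i\Pi(\ell)e_j \cong H_i(\ell)^{\oplus f_{ij}(\ell)}$ as bigraded $H_i(\ell)$-modules for a unique Laurent polynomial $f_{ij}(\ell)=f_{ij}(\ell;q,t) \in \Z_{\ge 0}[q^{\pm 1}, t^{\pm 1}]$. I claim $f_{ij}(\ell)$ is \emph{independent} of $\ell$. Indeed, restricting the canonical surjection $\Pi(\ell+1) \twoheadrightarrow \Pi(\ell+1)/\ep^\ell\Pi(\ell+1) = \Pi(\ell)$ to $e_i(-)e_j$ and noting that $\ep$ acts there as $\ep_i^{r/d_i}$, the quotient of $H_i(\ell+1)^{\oplus f_{ij}(\ell+1)}$ by $\ep_i^{\ell r/d_i}$ equals $H_i(\ell)^{\oplus f_{ij}(\ell+1)}$ on one hand, and equals $e_i\Pi(\ell)e_j \cong H_i(\ell)^{\oplus f_{ij}(\ell)}$ on the other; uniqueness of the bigraded decomposition forces $f_{ij}(\ell)=f_{ij}(\ell+1)$. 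Writing $f_{ij} \seq f_{ij}(\ell)$ and taking the projective limit $\varprojlim_\ell H_i(\ell) = \kk[\ep_i]$ in each graded piece yields $e_i\Pi(\infty)e_j \cong \kk[\ep_i]^{\oplus f_{ij}}$, which is graded free of finite rank.

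The remaining parts then follow formally. For \eqref{Prop:Piinfty:ep}, the centrality of $\ep$ in $\tPi$ descends to $\Pi(\infty)$; since $\kk[\ep_i]$ is free of rank $r/d_i$ over $\kk[\ep_i^{r/d_i}] = \kk[\ep]e_i$, combining \eqref{Prop:Piinfty:lf} with the decomposition $\Pi(\infty) = \bigoplus_{i,j\in I} e_i\Pi(\infty)e_j$ exhibits $\Pi(\infty)$ as graded free of finite rank over $\kk[\ep]$, whence $\dim_q \Pi(\infty) \in \Z(\!(q)\!)$. For \eqref{Prop:Piinfty:sj}, in each fixed degree $k$ the natural map $(e_i\Pi(\infty)e_j)_k \cong (\kk[\ep_i]^{\oplus f_{ij}})_k \to (H_i(\ell)^{\oplus f_{ij}})_k \cong (e_i\Pi(\ell)e_j)_k$ becomes an isomorphism as soon as $\ell$ is large enough that $\kk[\ep_i]_m \to H_i(\ell)_m$ is bijective for every $m$ in the finite range contributing to $k$; combined with surjectivity of $\tPi \to \Pi(\ell)$, this yields the surjectivity of $\tPi \to \Pi(\infty)$. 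The main obstacle I anticipate is the $\ell$-independence of $f_{ij}(\ell)$ in \eqref{Prop:Piinfty:lf}; while intuitively clear, it relies on the precise compatibility between the central ideal $\ep^\ell\Pi(\ell+1)$ and the action of $\ep_i^{\ell r/d_i}$ on $e_i\Pi(\ell+1)e_j$, together with uniqueness of the bigraded decomposition of free $H_i(\ell)$-modules.
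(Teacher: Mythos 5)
Your proposal is correct and takes essentially the same route as the paper: part (1) from Theorem~\ref{Thm:fd}~\eqref{Thm:fd:lf} by passing to the limit $e_i\Pi(\infty)e_j = \varprojlim_\ell e_i\Pi(\ell)e_j$ (your verification that the multiplicity $f_{ij}$ is $\ell$-independent is exactly the detail the paper leaves implicit), then (2), (3) by the same formal deductions, and (4) from Theorem~\ref{Thm:fd}~\eqref{Thm:fd:nilp}. The only caveat is that your degree count literally yields $\mathfrak{a}^{n(h+1)+1}=0$ rather than the stated $\mathfrak{a}^{n(h+1)}=0$ (elements of $v$-degree exactly $n(h+1)$ are not excluded by the quoted vanishing), but the paper's one-line proof invokes the same theorem, so this is not a substantive divergence.
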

\begin{proof}
Since $e_i \Pi(\infty) e_j = \varprojlim e_i \Pi(\ell) e_j$, \eqref{Prop:Piinfty:lf} follows from Theorem~\ref{Thm:fd}.
Since $\kk[\ep_i]$ is graded free of finite rank over $\kk[\ep]$, \eqref{Prop:Piinfty:ep} follows from \eqref{Prop:Piinfty:lf}. 
\eqref{Prop:Piinfty:sj} is immediate from \eqref{Prop:Piinfty:ep}.
\eqref{Prop:Piinfty:nilp} follows from Theorem~\ref{Thm:fd}~\eqref{Thm:fd:nilp}.
\end{proof}

\begin{Prop}
Assume that $\fg$ is of simply-laced type. 
Then the canonical homomorphism $\tPi \to \Pi(\infty)$ is an isomorphism.
Moreover, we have $\tPi \cong \Pi(1)\otimes_\kk \kk[\ep]$ as graded $\kk$-algebras. 
\end{Prop}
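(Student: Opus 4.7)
The plan is first to establish the structural isomorphism $\tPi \cong \Pi(1)\otimes_\kk \kk[\ep]$ directly, and then to deduce the isomorphism $\tPi\to\Pi(\infty)$ as a formal consequence. In the simply-laced case one has $r=1$ and $d_i=1$ for all $i\in I$, so the central element simplifies to $\ep=\sum_{i\in I}\ep_ie_i$ and satisfies $\ep e_i=\ep_i$ for every $i$. Moreover, under the quotient map $\tPi\twoheadrightarrow\tPi/\ep\tPi=\Pi(1)$, the relation (R1) collapses to $0=0$, and (R2) reduces to $\sum_{j\sim i}\omega_{ij}\alpha_{ij}\alpha_{ji}=0$ (because $-c_{ij}-1=0$ whenever $i\sim j$). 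Hence $\Pi(1)$ is presented by the generators $\{e_i,\alpha_{ij}\}$ modulo these quadratic relations alone, i.e., it is the classical Gelfand--Ponomarev preprojective algebra.

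With this presentation of $\Pi(1)$ in hand, I will construct mutually inverse graded $\kk$-algebra homomorphisms
\[
\Phi\colon \Pi(1)\otimes_\kk\kk[\ep]\longrightarrow\tPi,\qquad \Psi\colon\tPi\longrightarrow\Pi(1)\otimes_\kk\kk[\ep],
\]
defined on generators by
\[
\Phi(e_i\otimes 1)=e_i,\quad \Phi(\alpha_{ij}\otimes 1)=\alpha_{ij},\quad \Phi(1\otimes\ep)=\ep,
\]
and
\[
\Psi(e_i)=e_i\otimes 1,\quad \Psi(\alpha_{ij})=\alpha_{ij}\otimes 1,\quad \Psi(\ep_i)=e_i\otimes\ep.
\]
Well-definedness of $\Phi$ follows because the defining relation of $\Pi(1)$ already holds in $\tPi$ and because $\ep$ is central in $\tPi$. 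For $\Psi$ I check (R1) via the direct computation
\[
\Psi(\ep_i\alpha_{ij})=(e_i\otimes\ep)(\alpha_{ij}\otimes 1)=\alpha_{ij}\otimes\ep=(\alpha_{ij}\otimes 1)(e_j\otimes\ep)=\Psi(\alpha_{ij}\ep_j),
\]
while (R2) maps to $(\sum_{j\sim i}\omega_{ij}\alpha_{ij}\alpha_{ji})\otimes 1=0$. Finally, verifying $\Phi\circ\Psi=\mathrm{id}$ and $\Psi\circ\Phi=\mathrm{id}$ on generators is routine; the only non-obvious step is $\Psi(\Phi(1\otimes\ep))=\Psi(\sum_i\ep_ie_i)=\sum_i(e_i\otimes\ep)(e_i\otimes 1)=1\otimes\ep$.

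For the first claim, I will observe that under the isomorphism $\tPi\cong\Pi(1)\otimes_\kk\kk[\ep]$ established above the two-sided ideal $\ep^\ell\tPi$ corresponds to $\Pi(1)\otimes\ep^\ell\kk[\ep]$, so that
\[
\Pi(\ell)=\tPi/\ep^\ell\tPi\;\cong\;\Pi(1)\otimes_\kk\kk[\ep]/(\ep^\ell).
\]
Taking the projective limit in the category of graded algebras degree-wise, and using the fact from Proposition~\ref{Prop:Piinfty}~\eqref{Prop:Piinfty:sj} that each graded piece stabilizes (which is possible here because $\Pi(1)$ is finite-dimensional in the finite type case), one obtains $\Pi(\infty)\cong\Pi(1)\otimes_\kk\kk[\ep]$ and the composition agrees with the canonical map $\tPi\to\Pi(\infty)$. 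Equivalently, one may argue more directly that $\bigcap_{\ell>0}\ep^\ell\tPi$ corresponds to $\Pi(1)\otimes\bigcap_{\ell>0}\ep^\ell\kk[\ep]=0$, which combined with the already-known surjectivity yields the isomorphism.

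I do not anticipate any serious obstacle, since the heart of the argument is the single observation that in simply-laced type all the arrow loops $\ep_i$ coalesce into the single central scalar $\ep$, uncoupling the preprojective part from the polynomial part. The only subtle point to keep in mind is that the graded limit of $\kk[\ep]/(\ep^\ell)$ is $\kk[\ep]$ itself and not the formal power series ring $\kk[\![\ep]\!]$, which is exactly what allows the structural isomorphism to pass through the limit.
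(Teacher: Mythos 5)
Your proposal is correct, but it runs in the opposite direction from the paper and uses a different mechanism for the key step. The paper first proves injectivity of the canonical map $\tPi \to \Pi(\infty)$: it observes that the image $A$ of $\Pi(1) \to \tPi$ is finite-dimensional and that $\tPi = \sum_{\ell \ge 0}\ep^{\ell}A$ (since $\ep_i = \ep e_i$ in the simply-laced case), so the grading of $\tPi$ is bounded below and $\bigcap_{\ell>0}\ep^{\ell}\tPi = 0$; it then deduces the decomposition $\tPi \cong \Pi(1)\otimes_\kk \kk[\ep]$ from the graded freeness of $\Pi(\infty)$ over $\kk[\ep]$ (Proposition~\ref{Prop:Piinfty}~\eqref{Prop:Piinfty:ep}), which ultimately rests on the Gei\ss-Leclerc-Schr\"oer finiteness results. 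You instead prove the tensor decomposition first, by a purely presentation-theoretic argument: the explicit pair $\Phi,\Psi$ is well defined (your checks of (R1), (R2) and of $\Psi(\ep)=1\otimes\ep$ are the right ones, and both maps are graded since they preserve the degrees of generators), and mutually inverse because the composites fix generating sets. From this the injectivity of $\tPi\to\Pi(\infty)$ is immediate, since $\bigcap_{\ell}\bigl(\Pi(1)\otimes\ep^{\ell}\kk[\ep]\bigr)=0$ by uniqueness of the finite expansion $\sum_m x_m\otimes\ep^m$; like the paper, you still need Proposition~\ref{Prop:Piinfty}~\eqref{Prop:Piinfty:sj} for surjectivity. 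What your route buys is that the second assertion becomes independent of Proposition~\ref{Prop:Piinfty}~\eqref{Prop:Piinfty:ep}, whereas the paper's route is shorter given that that freeness is already established. One small remark: your first sketch of the limit argument (``taking the projective limit degree-wise, and the composition agrees with the canonical map'') is looser than it should be, and the parenthetical attribution of the stabilization in Proposition~\ref{Prop:Piinfty}~\eqref{Prop:Piinfty:sj} to finite-dimensionality of $\Pi(1)$ is not how that statement is proved in the paper; but your alternative argument (kernel zero plus known surjectivity) is complete, so this does not affect correctness.
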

One can expect that $\tPi$ is isomorphic to $\Pi(\infty)$ for general $\fg$, but we could not find a proof.
\begin{proof}
Note that the algebra $\Pi(1)$ is the same as the usual preprojective algebra and hence we have a graded $\kk$-algebra homomorphism $\Pi(1) \to \tPi$ whose image is identical to the subalgebra $A$ of $\tPi$ generated by $\{e_i\}_{i \in I} \cup \{\alpha_{ij} \}_{i \sim j}$.
In particular, $A$ is finite-dimensional.  
Since the element $\ep$ is central, we have $\tPi = \sum_{\ell=0}^{\infty} \ep^\ell A$.
Thus the gradation of $\tPi$ is bounded from below,
which implies that $\Ker(\tPi \to \Pi(\infty)) = \bigcap_{\ell > 0} \ep^\ell \tPi =0$.     
Combined with Proposition~\ref{Prop:Piinfty}~\eqref{Prop:Piinfty:sj}, we obtain the former assertion.
Now the latter assertion follows from Proposition~\ref{Prop:Piinfty}~\eqref{Prop:Piinfty:ep}.   
\end{proof}

In what follows, we identify the category $\Pi(\infty) \gmod$ with a full subcategory of $\tPi\gmod$ via the canonical surjection $\tPi \to \Pi(\infty)$. 
\begin{Prop} \label{Prop:bd}
The category $\Pi(\infty)\gmod$ is identical to the full subcategory of $\tPi\gmod$ consisting of graded $\tPi$-modules $M$ satisfying the following property: for each homogeneous element $y \in M$, the gradation of the submodule $\tPi y \subset M$ is bounded from below.  
\end{Prop}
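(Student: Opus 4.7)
The plan is to identify the full subcategory $\Pi(\infty)\gmod \subset \tPi\gmod$ with the kernel condition $K \cdot M = 0$, where $K \seq \ker(\tPi \twoheadrightarrow \Pi(\infty)) = \bigcap_{\ell > 0} \ep^\ell \tPi$. This identification is justified by Proposition~\ref{Prop:Piinfty}~\eqref{Prop:Piinfty:sj}: the canonical map $\tPi \to \Pi(\infty)$ is surjective, and its kernel equals $\bigcap_\ell \ep^\ell \tPi$ by the definition of the projective limit. Since $K$ is a graded ideal, the condition $KM = 0$ is equivalent to $Ky = 0$ for every homogeneous $y \in M$, so the task reduces to showing that this is equivalent to the lower boundedness of each cyclic submodule $\tPi y$.

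For the direction ``$M \in \Pi(\infty)\gmod$ implies the lower-boundedness condition,'' I would invoke Proposition~\ref{Prop:Piinfty}~\eqref{Prop:Piinfty:ep}, which says that $\Pi(\infty)$ is graded free of finite rank over $\kk[\ep]$. Picking homogeneous free generators $f_1, \dots, f_N$ of degrees $a_1, \dots, a_N$ and noting that $\deg \ep = 2r > 0$, one sees that $\Pi(\infty)_k = 0$ for $k < \min_i a_i$, so $\Pi(\infty)$ is bounded below in degree. Hence for a $\Pi(\infty)$-module $M$ and a homogeneous $y \in M$, the submodule $\tPi y = \Pi(\infty) y$ is concentrated in degrees $\geq \min_i a_i + \deg y$.

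The reverse direction is the substantive part. Given homogeneous $z \in K$ and homogeneous $y \in M$, I want to show $zy = 0$. For each $\ell \geq 1$, the graded submodule $\ep^\ell \tPi$ contains $z$, so I can choose a homogeneous $w_\ell \in \tPi$ with $z = \ep^\ell w_\ell$ and $\deg w_\ell = \deg z - 2r\ell$. Then $zy = \ep^\ell(w_\ell y)$, with $w_\ell y \in \tPi y$ of degree $\deg z - 2r\ell + \deg y$, which tends to $-\infty$ as $\ell \to \infty$. The assumed lower boundedness of $\tPi y$ forces $w_\ell y = 0$ for $\ell$ sufficiently large, hence $zy = 0$, as required.

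No serious obstacle is anticipated. Morally, the positive degree of $\ep$ makes ``divisibility by arbitrary powers of $\ep$'' (the elements of $K$) equivalent to ``negligibility in the $\ep$-adic direction,'' and lower boundedness of the graded cyclic submodules is the clean way to detect this negligibility at the module level. The only point requiring a bit of care is the homogeneity of the witnesses $w_\ell$, which comes for free because each $\ep^\ell \tPi$ is a graded sub-bimodule of $\tPi$.
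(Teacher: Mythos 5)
Your argument is correct and coincides with the paper's own proof: the inclusion $\Pi(\infty)\gmod\subset\mathcal{B}$ via Proposition~\ref{Prop:Piinfty}~\eqref{Prop:Piinfty:ep}, and the converse by writing a homogeneous $x\in\bigcap_{\ell>0}\ep^{\ell}\tPi$ as $\ep^{\ell}x_{\ell}$ with $\deg_1(x_{\ell})=\deg_1(x)-2\ell r$ and using lower boundedness of $\tPi y$ to force $x_{\ell}y=0$ for $\ell\gg 0$. Your extra remark on extracting homogeneous witnesses $w_\ell$ is a point the paper leaves implicit, but the route is essentially identical.
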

\begin{proof}
Let $\mathcal{B} \subset \tPi\gmod$ be the full subcategory in question, i.e., $\mathcal{B}$ consists of all the graded $\tPi$-modules $M$ such that $\tPi y \subset M$ is bounded from below for any homogeneous $y \in M$. 
The inclusion $\Pi(\infty)\gmod \subset \mathcal{B}$ follows from Proposition~\ref{Prop:Piinfty}~\eqref{Prop:Piinfty:ep}.
To see the opposite inclusion, it is enough to show that $xM = 0$ holds for any $M \in \mathcal{B}$ and $x \in \Ker(\tPi \to \Pi(\infty)) = \bigcap_{\ell > 0} \ep^\ell \tPi$. 
We may assume that $x$ is homogeneous. 
For any $\ell > 0$, we can write $x=\ep^\ell x_\ell$ for some $x_\ell \in \tPi$ such that $\deg_1(x_\ell) = \deg_1(x)-2\ell r$.
For any $y \in M$, we have $x_\ell y =0$ for $\ell \gg 0$ by the assumption $M \in \mathcal{B}$.
Therefore we have $xy = \ep^\ell (x_\ell y)=0$. 
This completes the proof.
\end{proof}

\begin{Cor} \label{Cor:bd}
The category $\Pi(\infty) \gmod$ is a Serre subcategory of $\tPi\gmod$, i.e., $\Pi(\infty) \gmod$ is closed under taking subobjects, quotients and extensions.
In particular, we have the natural isomorphism $\Ext^1_{\Pi(\infty)}(M,N) \cong \Ext^1_{\tPi}(M,N)$ for any $M,N \in \Pi(\infty) \gmod$.
\end{Cor}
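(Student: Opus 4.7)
The plan is to combine the characterization of $\Pi(\infty)\gmod$ from Proposition~\ref{Prop:bd}---namely as the full subcategory of $\tPi\gmod$ consisting of modules whose cyclic submodules $\tPi y$ are bounded below in degree---with the structure of the kernel ideal $K \seq \ker(\tPi \to \Pi(\infty))$. Closure under subobjects is immediate, since every cyclic submodule of a submodule $N \subset M$ is already a cyclic submodule of $M$, and hence bounded below. Closure under quotients is equally direct: a cyclic submodule $\tPi \bar{y}$ of a quotient $Q$ of $M$ is a graded quotient of $\tPi y$ for any homogeneous lift $y$ of $\bar{y}$, and quotients of bounded-below graded spaces are bounded below.

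The main content is extension closure. Given an exact sequence $0 \to M' \to M \to M'' \to 0$ in $\tPi\gmod$ with $M', M'' \in \Pi(\infty)\gmod$, and a homogeneous $y \in M$, I plan to analyze the cyclic submodule $\tPi y \subset M$ through the three-step filtration
\[
0 \subset Ky \subset \tPi y \cap M' \subset \tPi y.
\]
The topmost quotient $\tPi y / (\tPi y \cap M') \cong \tPi\bar{y}$ is bounded below because it is a cyclic submodule of $M'' \in \Pi(\infty)\gmod$. The quotient $\tPi y/Ky$ is a cyclic $\Pi(\infty)$-module (as $K$ acts as zero on it via $K \tPi = K$), and by Proposition~\ref{Prop:Piinfty} it is finitely generated over the central noetherian polynomial subalgebra $\kk[\ep]$, hence bounded below. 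Sandwiched between two bounded-below modules, the middle quotient $(\tPi y \cap M')/Ky$ is therefore bounded below as well, and the whole task reduces to showing $Ky = 0$.

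To control $Ky$, I would first establish $K = \ep K$: for any $x = \ep z \in K$, the image $\bar{z} \in \Pi(\infty)$ satisfies $\ep \bar{z} = \bar{x} = 0$, and since $\Pi(\infty)$ is graded free over $\kk[\ep]$ by Proposition~\ref{Prop:Piinfty}, $\ep$ is a non-zero-divisor there, forcing $\bar{z} = 0$ and hence $z \in K$. Iterating yields $Ky = \ep^n(Ky) \subset \ep^n M'$ for every $n$. The hard part, which I anticipate as the main obstacle, is converting this infinite $\ep$-divisibility in $M'$ into vanishing. The plan here is to show that for any putative nonzero homogeneous $m \in \bigcap_n \ep^n M'$, the ascending union $W = \bigcup_n \Pi(\infty) w_n$ of cyclic $\Pi(\infty)$-submodules generated by successive $\ep$-witnesses $w_n$ (with $\ep^n w_n = m$) has its ranks over $\kk[\ep]$ uniformly bounded by the finite rank of $e_j \Pi(\infty) e_j$ from Proposition~\ref{Prop:Piinfty}(1); this forces the chain to stabilize, after which a graded degree-zero analysis of the resulting identity $(1 - \ep p) w_{n_0} = 0$ between consecutive witnesses yields $w_{n_0} = 0$ and so $m = 0$.

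The Ext isomorphism is then a formal consequence. The natural restriction map $\Ext^1_{\Pi(\infty)}(M,N) \to \Ext^1_{\tPi}(M,N)$ is injective because any $\tPi$-linear splitting of a short exact sequence of $\Pi(\infty)$-modules is automatically $\Pi(\infty)$-linear, and it is surjective precisely because the extension closure just established ensures that every $\tPi$-extension of $\Pi(\infty)$-modules already lies in $\Pi(\infty)\gmod$.
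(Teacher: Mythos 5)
Your closure under subobjects and quotients is fine, and so is the reduction of extension-closure to the single statement $Ky=0$: the observation that $K\seq\Ker(\tPi\to\Pi(\infty))$ satisfies $K=\ep K$ (via $\ep$ being a non-zero-divisor on $\Pi(\infty)$, which is graded free over $\kk[\ep]$) is correct, as is the inclusion $Ky\subset\bigcap_{n}\ep^{n}M'$ and the fact that $\tPi y/Ky$ is a cyclic $\Pi(\infty)$-module, hence bounded below.

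The final step, however, rests on a false statement. You claim that $\bigcap_{n}\ep^{n}M'=0$ for every $M'\in\Pi(\infty)\gmod$, but $\Pi(\infty)\gmod$ here consists of \emph{all} graded $\Pi(\infty)$-modules, not the finitely generated ones, and there are $\ep$-divisible objects: the injective module $I_i(\infty)=\bD(P_i(\infty))$ is one (since $\ep$ is central and acts injectively on $P_i(\infty)=\Pi(\infty)e_i$, its action on the restricted dual is surjective because the graded pieces are finite-dimensional; compare the surjectivity used in the sequence \eqref{sesK}), and $\Pi(\infty)[\ep^{-1}]$ is an even simpler one. For such $M'$ one has $\bigcap_n\ep^nM'=M'\neq 0$, so nothing can be concluded from $Ky\subset\bigcap_n\ep^nM'$ alone. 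The mechanism you sketch to prove the vanishing also does not work internally: the submodules $\Pi(\infty)w_n$ generated by witnesses $\ep^nw_n=m$ form no ascending chain (there is no reason that $w_n\in\Pi(\infty)w_{n+1}$), the rank bound by $e_j\Pi(\infty)e_j$ from Proposition~\ref{Prop:Piinfty}(1) applies to subspaces of $\Pi(\infty)$ and not to an arbitrary $M'$, and a relation $(1-\ep p)w=0$ does not force $w=0$ by degree considerations, since $p$ need not have non-negative degree. What is genuinely needed at this point is an argument that $Ky$ itself is bounded below (equivalently, in the extreme case $M=\tPi/K^2$, that $K=K^2$); for instance, finite generation of $K$ as a left ideal would give $Ky=\sum_j\tPi(k_jy)$ bounded below, and then $Ky=\ep^nKy$ would force $Ky=0$ — but your proposal establishes no such finiteness, so the extension-closure part, which is the only non-trivial part of the corollary, remains unproven.
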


\section{Application to generic kernels}
\label{Sec:K}
In this section, we discuss the generic kernels corresponding to the Kirillov-Reshetikhin (KR) modules introduced by Hernandez-Leclerc~\cite{HL}.
Since they yield the geometric $q$-character formulas \cite[Theorem 4.8]{HL} (see Remark~\ref{Rem:qch} below), one can think of them as an additive counterpart of KR modules in the context of the categorifications of cluster algebras.  
In \S \ref{Ssec:K}, we introduce the generic kernels as certain graded $\tPi$-modules, and explain that our definition is equivalent to the one in~\cite{HL}.  
We compute all the first extension groups among them explicitly in \S \ref{Ssec:Ext}, and compare the results with the conjectural denominator formula of the normalized $R$-matrices due to \cite{FO} in \S \ref{Ssec:conj}. 
Computations for a few exceptional cases are postponed in Appendix~\ref{Apx}. 

\subsection{Generic kernels}
\label{Ssec:K}
For each $i \in I$ and $k \in \Z_{>0}$, we define the graded $\tPi$-module $K^{(i)}_{k}$ by 
\[ K^{(i)}_{k} \seq q^{kd_i}\bD((\tPi/\tPi\ep_i^k)e_i).\]
As special cases, we have $K^{(i)}_{1} \cong q^{d_i}\bar{I}_i$ and $K^{(i)}_{\ell r/d_i} \cong q^{\ell r}I_i(\ell)$ for $\ell \in \Z_{>0}$.
From the definition, the module $K^{(i)}_{k}$ fits into the following short exact sequence:
\begin{equation}\label{sesK}
0 \to K^{(i)}_{k} \to q^{kd_i}I_i(\infty) \xrightarrow{\cdot \ep_i^k} q^{-kd_i}I_i(\infty) \to 0.
\end{equation}
Here the surjectivity of $\cdot \ep_i^k$ follows from Proposition~\ref{Prop:Piinfty}~\eqref{Prop:Piinfty:lf}.
The modules $K^{(i)}_{k}$ are referred to as \emph{generic kernels} after the following fact (see also Remark~\ref{Rem:K_HL} below).

\begin{Prop} \label{Prop:generic}
For each $i \in I$ and $k \in \Z_{>0}$, the set of homomorphisms $f \colon q^{2kd_i}I_i(\infty) \to I_i(\infty)$ satisfying $\Ker(f) = \Ker(\cdot \ep_i^k)$ is Zariski dense in the affine space $\Hom_{\tPi}(q^{2kd_i}I_i(\infty), I_i(\infty))$. 
\end{Prop}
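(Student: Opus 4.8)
The plan is to transport the statement, via the duality $\bD$, into a question about one-sided ideals of the algebra $\Pi(\infty)$. Since $\bD$ is an anti-equivalence on the category of graded $\tPi$-modules with finite-dimensional graded pieces, since $\bD(I_i(\infty)) = P_i(\infty)$, and since $P_i(\infty)$ has finite-dimensional graded pieces by Proposition~\ref{Prop:Piinfty}~\eqref{Prop:Piinfty:lf}, applying $\bD$ gives natural isomorphisms
\[
\Hom_{\tPi}\!\bigl(q^{2kd_i}I_i(\infty),\,I_i(\infty)\bigr)\;\cong\;\Hom_{\tPi}\!\bigl(P_i(\infty),\,q^{-2kd_i}P_i(\infty)\bigr)\;\cong\;\bigl(e_i\Pi(\infty)e_i\bigr)_{2kd_i},
\]
under which a morphism $f$ corresponds to right multiplication by an element $\xi\in(e_i\Pi(\infty)e_i)_{2kd_i}$; in particular the Hom space is a finite-dimensional vector space, so Zariski density is meaningful. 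Under this dictionary, $f$ is surjective exactly when $\cdot\xi$ is injective, i.e.\ when $\xi$ is a non-zero-divisor (equivalently $e_j\Pi(\infty)e_i\xrightarrow{\cdot\xi}e_j\Pi(\infty)e_i$ is injective for every $j\in I$); and, comparing with \eqref{sesK}, the map $\cdot\ep_i^k$ corresponds to right multiplication with image the submodule $\Pi(\infty)e_i\,\ep_i^k$ of $P_i(\infty)$, so that $\Ker(f)=\Ker(\cdot\ep_i^k)$ holds exactly when $\Pi(\infty)e_i\,\xi=\Pi(\infty)e_i\,\ep_i^k$.

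First I would show that $\{f : f\ \text{surjective}\}$ is a nonempty Zariski-open subset of $(e_i\Pi(\infty)e_i)_{2kd_i}$. It is nonempty because $\cdot\ep_i^k$ is surjective by the exact sequence~\eqref{sesK} (the surjectivity there uses Proposition~\ref{Prop:Piinfty}~\eqref{Prop:Piinfty:lf}). It is open because, fixing a homogeneous $\kk[\ep_j]$-basis of the free module $e_j\Pi(\infty)e_i$ for each $j\in I$ (Proposition~\ref{Prop:Piinfty}~\eqref{Prop:Piinfty:lf}), injectivity of $\cdot\xi$ on $e_j\Pi(\infty)e_i$ is equivalent to the non-vanishing of the determinant over $\kk[\ep_j]$ of the matrix of $\cdot\xi$, whose coefficients are polynomials in the coordinates of $\xi$; this is a finite set of open conditions. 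Hence the surjective locus is dense.

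It then remains --- and this is the crux --- to prove that \emph{every} surjective $f$ already satisfies $\Ker(f)=\Ker(\cdot\ep_i^k)$, i.e.\ $\Pi(\infty)e_i\,\xi=\Pi(\infty)e_i\,\ep_i^k$ for every non-zero-divisor $\xi$ of internal degree $2kd_i$. Using the $\Z_{\ge0}$-grading of $\Pi(\infty)$ by the number of occurring arrows $\alpha_{jk}$ (for which the relations (R1) and (R2) are homogeneous) together with the structure of $e_i\Pi(\infty)e_i$ as a graded $\kk[\ep_i]$-module, one shows that the homogeneous $\kk[\ep_i]$-generators of $e_i\Pi(\infty)e_i$ all lie in nonpositive internal degree; consequently every element of $(e_i\Pi(\infty)e_i)_{2kd_i}$ is right divisible by $\ep_i^k$, and we may write $\xi=u\,\ep_i^k$ with $u\in(e_i\Pi(\infty)e_i)_0$. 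If $u$ belonged to the ideal $\mathfrak a$, then $\xi\in e_i\mathfrak a e_i$ would be nilpotent by Proposition~\ref{Prop:Piinfty}~\eqref{Prop:Piinfty:nilp}, contradicting injectivity of $\cdot\xi$; hence $u=\lambda e_i+u'$ with $\lambda\in\kk^\times$ and $u'\in\mathfrak a$ nilpotent, so $u$ is a unit of $e_i\Pi(\infty)e_i$. Then $\cdot u$ is an automorphism of the right $e_i\Pi(\infty)e_i$-module $\Pi(\infty)e_i$, whence $\Pi(\infty)e_i\,\xi=(\Pi(\infty)e_i\,u)\,\ep_i^k=\Pi(\infty)e_i\,\ep_i^k$. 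Dualizing back yields $\Ker(f)=\Ker(\cdot\ep_i^k)$ for every surjective $f$, and combined with the density of the surjective locus this proves the proposition. The hard part is the degree bound on the $\kk[\ep_i]$-generators of $e_i\Pi(\infty)e_i$ (equivalently, the right divisibility of $(e_i\Pi(\infty)e_i)_{2kd_i}$ by $\ep_i^k$); should a direct bookkeeping of generators prove unwieldy, an alternative is to run the whole argument at finite level, replacing $\Pi(\infty)$ by $\Pi(\ell)$ with $\ell\gg0$ and appealing to Theorem~\ref{Thm:fd}.
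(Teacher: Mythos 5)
Your overall reduction is sound and, in fact, runs very close to the paper's own argument: both identify $\Hom_{\tPi}(q^{2kd_i}I_i(\infty),I_i(\infty))$ with $(e_i\Pi(\infty)e_i)_{2kd_i}$, both reduce the problem to showing that every homogeneous element of degree $2kd_i$ is a degree-zero element times $\ep_i^k$ (the paper uses the left-handed version $\ep_i^k(e_i\Pi(\infty)e_i)_0$; via $\phi$ this is equivalent to your right-handed one), and both use nilpotency of $\mathfrak a$ (Proposition~\ref{Prop:Piinfty}~\eqref{Prop:Piinfty:nilp}) to recognize the non-nilpotent degree-zero elements as units. The paper then notes that the $\Aut(I_i(\infty))$-orbit $\ep_i^k(e_i\Pi(\infty)e_i)_0^\times$ is the complement of a codimension-one linear subspace, whereas you go through the open locus of surjective maps; that packaging difference is harmless, and your openness argument (determinants over $\kk[\ep_j]$ on the free modules $e_j\Pi(\infty)e_i$) is fine.

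The gap is the step you yourself flag as the crux and then leave unproved: the claim that the homogeneous $\kk[\ep_i]$-generators of $e_i\Pi(\infty)e_i$ all lie in nonpositive internal degree, equivalently that $(e_i\Pi(\infty)e_i)_{2kd_i}=(e_i\Pi(\infty)e_i)_0\,\ep_i^k$. The tools you propose cannot deliver it. The arrow-count grading only bounds the number of arrows $\alpha_{jk}$ in a nonzero path (again by nilpotency of $\mathfrak a$), but it puts no bound on the number of interior loops $\ep_j$ with $j\neq i$, each of which contributes $+2d_j$ to $\deg_1$; in non-simply-laced type it is not formal that such positive contributions can be rewritten, modulo $(e_i\Pi(\infty)e_i)\ep_i$, using (R1) and (R2). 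Your fallback does not help either: Theorem~\ref{Thm:fd} gives freeness and finite-dimensionality at level $\ell$ but says nothing about the sign of the generator degrees. The statement you need is precisely the assertion that $e_i\bar I_i$ is non-negatively graded, i.e.\ Proposition~\ref{Prop:dI} (equivalently Corollary~\ref{Cor:dimIbd}), which in this paper is a nontrivial consequence of the main machinery — the $E$-filtration of $P_i$, the braid group action and Theorem~\ref{Thm:Tw_0}, via Proposition~\ref{Prop:IT} — and it is exactly what the paper's proof invokes at this point. So the argument is repaired simply by citing Proposition~\ref{Prop:dI}; as written, the decisive inequality is asserted rather than proved, and the bookkeeping route you sketch would not succeed.
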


\begin{proof}
To simplify the notation, we set $\Pi \seq \Pi(\infty)$ and $I_i \seq I_i(\infty)$ in this proof.
We have the natural isomorphism
\begin{equation} \label{tr}
\Hom_{\Pi}(q^{2kd_i}I_i, I_i) \xrightarrow{\simeq} \Hom_{\Pi}(\Pi e_i, q^{-2kd_i}\Pi e_i) \xrightarrow{\simeq} (e_i \Pi e_i)_{2kd_i}, 
\end{equation}
which transforms $f \in \Hom_{\Pi}(q^{2kd_i}I_i, I_i)$ into ${}^{\mathtt{t}}\!f(e_i) \in (e_i \Pi e_i)_{2kd_i}$.
Note that the homomorphism $\cdot\ep_i^k \colon q^{2kd_i}I_i \to I_i$ simply corresponds to the element $\ep_i^k \in (e_i \Pi e_i)_{2kd_i}$ under the isomorphism \eqref{tr}.
The group $\Aut_{\Pi}(I_i)$ naturally acts on the space $\Hom_{\Pi}(q^{2kd_i}I_i, I_i)$ from the left. 
By the isomorphism \eqref{tr}, this action is translated into the natural right action of the group $(e_i \Pi e_i)^\times_0$ on $(e_i \Pi e_i)_{2kd_i}$. 
Here we identify $\Aut_{\Pi}(I_i)^{\op}$ with $(e_i \Pi e_i)^\times_0$ via the isomorphism \eqref{tr} with $k=0$.  
Since $\Ker(f)$ is invariant under this action, it is enough to show that the orbit $\ep_i^k (e_i \Pi e_i)^\times_0$ is Zariski dense in the affine space $(e_i \Pi e_i)_{2kd_i}$.

Recall that $e_i \Pi e_i$ is graded free of finite rank as a left $\kk[\ep_i]$-module by Proposition~\ref{Prop:Piinfty}~\eqref{Prop:Piinfty:lf}.
Since the space $e_i \Pi e_i / \ep_i e_i \Pi e_i \cong e_i (\tPi / \ep_i\tPi) e_i\cong e_i\bD(\bar{I}_i)$ is non-positively graded by Proposition~\ref{Prop:dI}, we can choose a free $\kk[\ep_i]$-basis $\{ x_0, x_1, \ldots, x_m \}$ of $e_i \Pi e_i$ satisfying the followings:
\begin{enumerate}
\item $x_0 = e_i$,
\item For each $1 \le l \le m$, we have $u_l \seq \deg_1(x_l) \le 0$ and $x_l \in \mathfrak{a}$,
\item There is an integer $0 \le m' \le m$ such that we have 
\[ \begin{cases} u_l \in 2d_i \Z & \text{if $0 \le l \le m'$}, \\ 
u_l \not \in 2d_i \Z & \text{if $m' < l \le m$}.
\end{cases}
\] 
\end{enumerate}
Then we have
\[ (e_i \Pi e_i)_{2kd_i} = \kk \ep_i^k e_i \oplus \bigoplus_{l=1}^{m'} \kk \ep_i^{k - u_l/2d_i} x_l = \ep_i^k (e_i \Pi e_i)_0\]
for any $k \in \Z_{\ge 0}$.
Since each $\ep_i^{- u_l/2d_i} x_l$ with $1 \le l \le m'$ is nilpotent in $\Pi$ by Proposition~\ref{Prop:Piinfty}~\eqref{Prop:Piinfty:nilp}, we have 
\[ (e_i \Pi e_i)_0^\times = \kk^\times e_i \oplus \bigoplus_{l=1}^{m'} \kk \ep_i^{- u_l/2d_i} x_l. \]
Therefore the orbit $\ep_i^k (e_i \Pi e_i)_0^\times$ is Zariski dense in $(e_i \Pi e_i)_{2kd_i} = \ep_i^k (e_i \Pi e_i)_0$ as it is the complement of a linear subspace of codimension $1$.  
\end{proof}

\begin{Rem} \label{Rem:K_HL}
Our terminology of generic kernels coincides with the one in \cite{HL} via the isomorphism $\Phi \colon \tPi \gmod \to J_{\Gamma, W} \umod$ in Proposition~\ref{Prop:catisom} in the following sense. 
In \cite{HL}, one actually concerns the full subcategory of $J_{\Gamma, W}\umod$ consisting of modules supported on the ``semi-infinite" full subquiver $\Gamma^-$ of $\Gamma$ given by $\Gamma_0^- \seq \{ (i,p) \in I \times \Z \mid p \le -d_i\}$.  
If $p \le d_i - rh^\vee$, we can easily see from Propositions~\ref{Prop:dI} \& \ref{Prop:tc} that $e_j(q^p I_i(\infty))_u = 0$ holds unless $(j,u) \in \Gamma_0^-$, and hence $\Phi(q^{p-kd_i}K^{(i)}_{k})$ is supported on $\Gamma^-$ for any $k \in \Z_{>0}$.
By Proposition~\ref{Prop:generic}, under the condition $p \le d_i - rh^\vee$, the $J_{\Gamma, W}$-module $\Phi(q^{p-kd_i}K^{(i)}_{k})$ is identical to the generic kernel denoted by $K^{(i)}_{k, p}$ in \cite[4.3]{HL}. Note that the injective module $I_{i,p}$ in \cite[4.3]{HL} is identical to our $\Phi(q^p I_i(\infty))$ in view of Proposition~\ref{Prop:bd}.
\end{Rem}

\begin{Lem} \label{Lem:dimK}
For any $i,j \in I$ and $k \in \Z_{>0}$, we have
\begin{equation} \label{eq:dimK}
\dim_q e_jK^{(i)}_k = \frac{[kd_i]_q}{[d_i]_q} \sum_{u=0}^{rh^\vee} \tc_{ji}(u) q^u.
\end{equation}
\end{Lem}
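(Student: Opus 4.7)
The plan is to derive the formula by combining the defining short exact sequence \eqref{sesK} with Proposition~\ref{Prop:dI}, using the case $k=1$ as a bridge between general $k$ and the already-known formula for $\dim_q e_j\bar{I}_i$.

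First I would apply the exact functor $e_j\cdot(-)$ to \eqref{sesK}. By Proposition~\ref{Prop:Piinfty}\eqref{Prop:Piinfty:lf}, every graded piece of $e_jI_i(\infty)$ is finite-dimensional, so additivity of $\dim_q$ on the resulting short exact sequence of graded vector spaces immediately yields
\[
\dim_q e_jK^{(i)}_k = (q^{kd_i}-q^{-kd_i})\dim_q e_jI_i(\infty).
\]
Since the right-hand side is linear in $(q^{kd_i}-q^{-kd_i})$, specializing this identity to $k=1$ and invoking the definitional identity $K^{(i)}_1 = q^{d_i}\bar{I}_i$ lets me eliminate the unknown quantity $\dim_q e_jI_i(\infty)$ in favor of $\dim_q e_j\bar{I}_i$; namely,
\[
\dim_q e_jI_i(\infty) = \frac{\dim_q e_j\bar{I}_i}{1-q^{-2d_i}}.
\]

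Substituting back and simplifying via the elementary identity
\[
\frac{q^{kd_i}-q^{-kd_i}}{1-q^{-2d_i}} = q^{d_i}\frac{[kd_i]_q}{[d_i]_q}
\]
reduces the problem to
\[
\dim_q e_jK^{(i)}_k = q^{d_i}\frac{[kd_i]_q}{[d_i]_q}\dim_q e_j\bar{I}_i,
\]
at which point plugging in the formula $\dim_q e_j\bar{I}_i = q^{-d_i}\sum_{u=0}^{rh^\vee}\tc_{ji}(u)q^u$ from Proposition~\ref{Prop:dI} (with indices $i,j$ swapped relative to that statement) delivers the claimed \eqref{eq:dimK}.

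I do not anticipate any genuine obstacle. The only point requiring attention is the well-definedness and additivity of $\dim_q$ on $e_jI_i(\infty)$, which is guaranteed by the finite-rank freeness of $e_j\Pi(\infty)e_i$ over $\kk[\ep_i]$ supplied by Proposition~\ref{Prop:Piinfty}\eqref{Prop:Piinfty:lf}. In particular the lemma is a formal consequence of the defining sequence \eqref{sesK} together with the rank-one case already established in Section~\ref{Sec:Rem}.
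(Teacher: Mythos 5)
Your argument is correct, and it reaches the formula by a slightly different route than the paper. The paper's proof never invokes the exact sequence \eqref{sesK} at this point: it works with the definition of $K^{(i)}_k$ directly, observing that $\bD(K^{(i)}_k) \cong q^{-kd_i}\,\Pi(\infty)e_i \otimes_{\kk[\ep_i]} \kk[\ep_i]/(\ep_i^k) \cong q^{-kd_i}\,\bD(\bar{I}_i)\otimes_\kk \kk[\ep_i]/(\ep_i^k)$ as graded vector spaces (using the freeness of $e_j\Pi(\infty)e_i$ over $\kk[\ep_i]$ from Proposition~\ref{Prop:Piinfty}), so that the graded dimension factors as $q^{kd_i}\dim_q e_j\bar I_i\cdot\dim_{q^{-1}}\kk[\ep_i]/(\ep_i^k)$, and then Proposition~\ref{Prop:dI} finishes. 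You instead stay entirely at the level of graded dimensions: additivity on \eqref{sesK} gives $\dim_q e_jK^{(i)}_k = (q^{kd_i}-q^{-kd_i})\dim_q e_jI_i(\infty)$, and the $k=1$ case together with $K^{(i)}_1\cong q^{d_i}\bar I_i$ eliminates the auxiliary series $\dim_q e_jI_i(\infty)$. Both arguments rest on the same two inputs (Proposition~\ref{Prop:Piinfty}~\eqref{Prop:Piinfty:lf} and Proposition~\ref{Prop:dI}), and the factor $[kd_i]_q/[d_i]_q$ arises in the paper as $\dim_{q^{-1}}\kk[\ep_i]/(\ep_i^k)$ and in your version as the ratio $(q^{kd_i}-q^{-kd_i})/(q^{d_i}-q^{-d_i})$; the only extra care your route needs is that the series $\dim_q e_jI_i(\infty)$ lives in $\Z(\!(q^{-1})\!)$ (graded pieces finite-dimensional and the grading of $e_j\Pi(\infty)e_i$ bounded below), so that cancelling the unit $q^{d_i}-q^{-d_i}$ is legitimate — which your appeal to the finite-rank freeness over $\kk[\ep_i]$ does supply. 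The paper's module-level isomorphism is marginally more structural, while yours is a purely numerical Euler-characteristic bookkeeping; both are complete.
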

\begin{proof}
By the definition of $K^{(i)}_k$, we have
\[ \bD(K^{(i)}_k) 
\cong q^{-kd_i} \Pi(\infty)e_i \otimes_{\kk[\ep_i]} \kk[\ep_i]/(\ep_i^k) 
\cong q^{-kd_i} \bD(\bar{I}_i) \otimes_{\kk} \kk[\ep_i]/(\ep_i^k)  \]
as graded $\kk$-vector spaces.
Therefore, 
\begin{align*}
\dim_q e_j K^{(i)}_k &= \dim_{q^{-1}} e_j\bD(K^{(i)}_k) \allowdisplaybreaks \\
&= q^{kd_i} \dim_q e_j\bar{I}_i \cdot \dim_{q^{-1}} \kk[\ep_i]/(\ep_i^k) \allowdisplaybreaks \\
&= q^{kd_i} \left(q^{-d_i} \sum_{u=0}^{rh^\vee} \tc_{ji}(u)q^u \right) \frac{1-q^{-2kd_i}}{1-q^{-2d_i}},
\end{align*}
where the last equality is due to \eqref{eq:dI}. 
This proves \eqref{eq:dimK}.
\end{proof}

\subsection{First extension groups between generic kernels}
\label{Ssec:Ext}

\begin{Prop} \label{Prop:Ext}
For each $i,j \in I$ and $k,l \in \Z_{>0}$, we have
\begin{equation} \label{eq:gextKK}
\gext^1_{\tPi}(K^{(i)}_{k}, K^{(j)}_{l}) \cong \gext^1_{\tPi}(K^{(j)}_{l}, K^{(i)}_{k}) \cong q^{-kd_i -ld_j} e_i\frac{\tPi}{\ep_i^k \tPi + \tPi \ep_j^l} e_j
\end{equation}
as graded $\kk$-vector spaces.
\end{Prop}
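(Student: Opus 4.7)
The strategy is to derive both $\gext^1$ groups from long exact sequences associated to the defining short exact sequence \eqref{sesK}, and to recognize the resulting cokernels as the same graded vector space via the anti-involution $\phi$. By Corollary~\ref{Cor:bd}, $\gext^1_{\tPi}$ between modules in $\Pi(\infty)\gmod$ coincides with $\gext^1_{\Pi(\infty)}$, and $I_j(\infty)$ is injective in $\Pi(\infty)\gmod$ because the functor $\ghom_{\Pi(\infty)}(-, I_j(\infty)) \cong \bD(e_j(-))$ is exact.

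First I would apply $\ghom_{\Pi(\infty)}(K^{(i)}_k, -)$ to the short exact sequence $0 \to K^{(j)}_l \to q^{ld_j}I_j(\infty) \xrightarrow{\ep_j^l\cdot} q^{-ld_j}I_j(\infty) \to 0$ from \eqref{sesK} and use the injectivity of $I_j(\infty)$ to obtain
\[
\gext^1_{\tPi}(K^{(i)}_k, K^{(j)}_l) \cong \mathrm{coker}\!\left(q^{ld_j}\bD(e_jK^{(i)}_k) \xrightarrow{(\ep_j^l)_*} q^{-ld_j}\bD(e_jK^{(i)}_k)\right).
\]
Under the natural identification $\ghom_{\tPi}(M, I_j(\infty)) \cong \bD(e_jM)$, the map $(\ep_j^l)_*$ is the transpose of left multiplication by $\ep_j^l$ on $e_jK^{(i)}_k$, so its cokernel equals $q^{-ld_j}\bD$ of the kernel of this multiplication. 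Writing $N \seq (\tPi/\tPi\ep_i^k)e_i$, we have $e_jK^{(i)}_k = q^{kd_i}\bD(e_jN)$; since $\phi(\ep_j^l)=\ep_j^l$, the $\ep_j^l$-action on $e_jK^{(i)}_k$ is itself the transpose of the $\ep_j^l$-action on $e_jN$, and the kernel in question identifies with $q^{kd_i}\bD(e_jN/\ep_j^l e_jN)$.

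The key computation is then the explicit identification
\[
e_jN/\ep_j^l e_jN \;=\; e_j\tPi e_i \big/ \bigl(\ep_j^l\tPi e_i + e_j\tPi\ep_i^k\bigr) \;\cong\; e_j\!\left[\tPi/(\tPi\ep_i^k+\ep_j^l\tPi)\right]\!e_i,
\]
which follows from the presentation $e_jN \cong e_j\tPi e_i/e_j\tPi\ep_i^k$ together with the relations $\ep_j^l = e_j\ep_j^l$ and $\ep_i^k = \ep_i^k e_i$. Combining these steps yields
\[
\gext^1_{\tPi}(K^{(i)}_k, K^{(j)}_l) \cong q^{-kd_i-ld_j}\,e_j\!\left[\tPi/(\tPi\ep_i^k+\ep_j^l\tPi)\right]\!e_i.
\]
The anti-involution $\phi\colon \tPi\to\tPi^{\op}$, which fixes each $e_i$ and each $\ep_i$ while reversing products, then provides a graded $\kk$-linear isomorphism between this and $q^{-kd_i-ld_j}\,e_i[\tPi/(\ep_i^k\tPi+\tPi\ep_j^l)]e_j$, which is the desired right-hand side.

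The second isomorphism, for $\gext^1_{\tPi}(K^{(j)}_l, K^{(i)}_k)$, follows by repeating the same argument with $(i,k)$ and $(j,l)$ interchanged; this time the right-hand side comes out directly in the desired form without any need to invoke $\phi$. I expect the main technical obstacle to be the careful bookkeeping of the degree shifts introduced by the $\phi$-twist built into the definition of $\bD$ and by the factors $q^{\pm ld_j}$ in the short exact sequence. Once these are pinned down, the proof reduces to a manipulation of right and left ideals in $\tPi$ combined with the symmetry afforded by $\phi$.
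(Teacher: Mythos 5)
Your proposal is correct and follows essentially the same route as the paper: reduce to $\Pi(\infty)\gmod$ via Corollary~\ref{Cor:bd}, use \eqref{sesK} as an injective resolution together with $\ghom_{\Pi(\infty)}(-,I_j(\infty))\cong \bD(e_j(-))$, identify the resulting cokernel with the bi-quotient $e\,\tPi/(\ep^k\tPi+\tPi\ep^l)\,e$, and use the anti-involution $\phi$ to pass between the two $\gext^1$ groups. The only (immaterial) difference is that you compute $\gext^1_{\tPi}(K^{(i)}_k,K^{(j)}_l)$ directly and obtain the other group by symmetry, with an extra kernel/cokernel dualization step where the paper uses $e_i\bD(K^{(j)}_l)\cong q^{-ld_j}e_i(\tPi/\tPi\ep_j^l)e_j$ at once.
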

\begin{proof}
We only have to show the second isomorphism thanks to the opposition $\phi$ and the symmetry. 
Moreover, we know $\gext^1_{\tPi}(K^{(j)}_{l}, K^{(i)}_{k}) \cong \gext^1_{\Pi(\infty)}(K^{(j)}_{l}, K^{(i)}_{k})$ by Corollary~\ref{Cor:bd}.
Using the injective resolution \eqref{sesK} in $\Pi(\infty)\gmod$, we have
\begin{align*} 
\gext^1_{\Pi(\infty)}(K^{(j)}_{l}, K^{(i)}_{k}) &\cong \Cok\left(\ghom_{\Pi(\infty)}(K^{(j)}_{l}, q^{kd_i} I_i(\infty)) \xrightarrow{(\cdot \ep_i^k) \circ -}\ghom_{\Pi(\infty)}(K^{(j)}_{l}, q^{-kd_i}I_i(\infty)) \right) \allowdisplaybreaks \\
&\cong \Cok\left( q^{kd_i-ld_j}e_i(\tPi/\tPi\ep_j^l)e_j \xrightarrow{\ep_i^k \cdot}q^{-kd_i-ld_j}e_i(\tPi/\tPi\ep_j^l)e_j \right),
\end{align*}
where, for the second isomorphism, we used the facts $\ghom_{\Pi(\infty)}(-, I_j(\infty)) \cong e_j\bD(-)$, and $e_i\bD(K^{(j)}_{l}) \cong q^{-ld_j}e_i(\tPi/\tPi \ep_j^l)e_j$. 
This yields the desired isomorphism.
\end{proof}

To compute the dimensions of these extension groups, we need the following lemma.

\begin{Lem} \label{Lem:incl}
Let $i,j \in I$ and $k,l \in \Z_{>0}$. If $kd_i \ge ld_j$, we have 
\begin{equation} \label{eq:incl}
e_i\ep_i^k \tPi e_j \subset e_i \tPi \ep_j^l e_j
\end{equation} unless the following condition is satisfied:
\begin{itemize}
\item[$(\clubsuit)$] $\fg$ is either of type $\mathrm{C}_n$, $\mathrm{F}_4$ or $\mathrm{G}_2$, and we have $d_i = d_j = 1$, $k=l \not \in r\Z$.  \qedhere
\end{itemize}
\end{Lem}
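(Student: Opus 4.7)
The plan is to reduce the inclusion to a short list of ``small'' residual cases by exploiting the centrality of the element $\ep = \sum_{i\in I}\ep_i^{r/d_i}e_i \in \tPi$, and then to verify each residual case from the defining relations (R1) and (R2). Centrality yields, for every $x \in e_i\tPi e_j$ and $m \geq 0$, the identity $\ep_i^{mr/d_i}\,x = x\,\ep_j^{mr/d_j}$. Hence whenever a non-negative integer $m$ can be found with $ld_j \leq mr \leq kd_i$, the factorization
\[
\ep_i^k x \;=\; \ep_i^{k-mr/d_i}\bigl(\ep_i^{mr/d_i}x\bigr) \;=\; \ep_i^{k-mr/d_i}\,x\,\ep_j^{mr/d_j} \;\in\; \tPi\ep_j^l
\]
proves the inclusion for that $x$. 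Such an $m$ exists whenever $d_i=r$ (take $m=k$), whenever $d_j=r$ (take $m=l$), or whenever $d_i=d_j=1$ and the integer interval $[l,k]$ meets $r\Z$ --- in particular whenever $k-l \geq r-1$.

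The remaining cases have $d_i = d_j = 1$ and $[l,k]\cap r\Z = \emptyset$. Taking $m$ to be the largest integer with $mr < l$ and absorbing the central shift $\ep^m$ into both sides, the problem reduces to the same inclusion under the further assumption $1 \leq l \leq k \leq r-1$. This range is empty for $r=1$; for $r=2$ only $(k,l) = (1,1)$ survives; for $r=3$ the triples $(1,1),(2,2),(2,1)$ survive. The pairs $(1,1)$ (in types $\mathrm{C}_n,\mathrm{F}_4$) and $(1,1),(2,2)$ (in type $\mathrm{G}_2$) all fall under the excluded condition $(\clubsuit)$. Thus only two residual small cases must be verified: \emph{(i)} $\fg=\mathrm{B}_n$, $i=j=n$, $k=l=1$; and \emph{(ii)} $\fg=\mathrm{G}_2$, $i=j=2$, $k=2$, $l=1$.

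For case \emph{(ii)}, the relation (R2) at vertex $2$ reads $X\ep_2^2 + \ep_2 X\ep_2 + \ep_2^2 X = 0$ with $X = \alpha_{21}\alpha_{12}$, which rearranges to $\ep_2^2 X = -X\ep_2^2 - \ep_2 X\ep_2 \in \tPi\ep_2$. Moreover, $\alpha_{12}\alpha_{21} = 0$ by (R2) at vertex $1$, forcing $X^2 = 0$, so $e_2\tPi e_2$ is $\kk$-spanned by $\{e_2\} \cup \{\ep_2^a X\ep_2^b \mid a,b \geq 0\}$; a direct computation then yields $\ep_2^2\cdot e_2\tPi e_2 \subset \tPi\ep_2$.

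For case \emph{(i)}, the relation (R2) at vertex $n$ gives the anticommutation $\ep_n Y + Y\ep_n = 0$ with $Y = \alpha_{n,n-1}\alpha_{n-1,n}$, so $\ep_n \cdot \ep_n^a Y^m \ep_n^b = (-1)^m Y^m \ep_n^{a+b+1} \in \tPi\ep_n$ for all $a,b,m \geq 0$. The main obstacle will be that $e_n\tPi e_n$ also contains elements not manifestly of this shape: paths carrying stray $\ep_{n-1}$-factors such as $\alpha_{n,n-1}\ep_{n-1}^c\alpha_{n-1,n}$, and longer cyclic excursions through the arm $n-1,n-2,\ldots$ of the Dynkin diagram. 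These will be reduced by an inductive ``straightening'': (R1) at the edge $(n,n-1)$ absorbs boundary $\ep_{n-1}$-powers as $\ep_n^{2\ast}$-powers via $\alpha_{n,n-1}\ep_{n-1}^c = \ep_n^{2c}\alpha_{n,n-1}$, while (R2) at the internal vertices $n-1,n-2,\ldots$ folds deeper excursions into higher powers of $Y$. Combining this straightening with the (anti)commutation of $\ep_n$ with $Y^m$, and invoking the centrality of $\ep_n^2$ (which coincides with $\ep$ on the $e_n$-summand) to handle the ``even'' remainder, then yields the desired inclusion $\ep_n \cdot e_n\tPi e_n \subset \tPi\ep_n$, completing the argument.
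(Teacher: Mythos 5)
Your reduction via the central element $\ep$ is sound and in fact packages the paper's own case analysis more uniformly: subtracting the largest multiple of $r$ below $l$ disposes at once of the cases where $d_i=r$, $d_j=r$, or $[l,k]$ meets $r\Z$, and your two residual cases (i) and (ii) are exactly the ones the paper verifies by hand (its type $\mathrm{B}_n$ case and the $(k,l)=(2,1)$ subcase of type $\mathrm{G}_2$). Your outline of case (i) is the paper's argument: anticommutation $\ep_nY=-Y\ep_n$ from (R2) at the vertex $n$, plus a straightening of excursions into the long arm using (R1) and (R2) at the vertices $1,\dots,n-1$ (the sign in your displayed identity should be $(-1)^{m(a+1)}$ rather than $(-1)^m$, which is harmless).

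Case (ii), however, has a genuine gap. From $\alpha_{12}\alpha_{21}=0$ you only get that two \emph{adjacent} factors $X=\alpha_{21}\alpha_{12}$ annihilate; it does not follow that $e_2\tPi e_2$ is $\kk$-spanned by $\{e_2\}\cup\{\ep_2^aX\ep_2^b \mid a,b\ge 0\}$. Words with several $X$'s separated by \emph{positive} powers of $\ep_2$ are not covered by $X^2=0$ and are genuinely nonzero: your proposed span is concentrated in degrees $\ge -6$, whereas $e_2\tPi e_2$, even modulo $\tPi\ep_2$, has a nonzero component in degree $-10$ spanned by $X\ep_2X=\alpha_{21}\alpha_{12}\ep_2\alpha_{21}\alpha_{12}$; this is visible from Table~\ref{Table:G}, or from the graded dimension $1+q^{-4}+q^{-6}+q^{-10}$ of $e_2(\tPi/\tPi\ep_2)e_2$ that follows from Proposition~\ref{Prop:dI}. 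Consequently the ``direct computation'' you invoke does not establish $e_2\ep_2^2\tPi e_2\subset e_2\tPi\ep_2 e_2$. The inclusion is nevertheless true, and the repair is the paper's: use the correct spanning set of words $\ep_2^{k_0}X\ep_2^{k_1}X\cdots X\ep_2^{k_s}$ with interior exponents positive, and induct on the number $s$ of $X$-factors, repeatedly applying $\ep_2^2X=-\ep_2X\ep_2-X\ep_2^2$ (together with $X^2=0$ and the centrality of $\ep_2^3e_2=\ep e_2$) to push $\ep_2^2$ past each $X$ into the right-hand tail.
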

\begin{proof}
It suffices to show that the inclusion~\eqref{eq:incl} holds in the following four cases:
\begin{enumerate}
\item \label{C1} at least one of the numbers $kd_i$ and $ld_j$ belongs to $r\Z$, and $kd_i \ge ld_j$,
\item \label{C2} $\fg$ is of type $\mathrm{C}_n$ or $\mathrm{F}_4$, $d_i=d_j=1$, $kl \not \in 2\Z$, and $k > l$,
\item \label{C3} $\fg$ is of type $\mathrm{B}_n$, $d_i = d_j = 1$, and $k \ge l$,
\item \label{C4} $\fg$ is of type $\mathrm{G}_2$, $d_i = d_j = 1$, $kl \not \in 3\Z$, and $k > l$.
\end{enumerate} 

Case \eqref{C1}: If $kd_i = mr$ for some $m \in \Z_{>0}$, we have 
\[ e_i \ep_i^k \tPi e_j = e_i \ep^m \tPi e_j = e_i \tPi \ep^m e_j = e_i \tPi \ep_j^{mr/d_j} e_j \subset e_i \tPi \ep_j^l e_j. \]
If $ld_j = mr$ for some $m \in \Z_{>0}$, we have
\[ e_i \ep_i^k \tPi e_j = e_i \ep_i^{kd_i - ld_j} \ep^m \tPi e_j = e_i \ep_i^{kd_i - ld_j} \tPi \ep^m e_j = e_i \ep_i^{kd_i - ld_j} \tPi \ep_j^l e_j \subset e_i \tPi \ep_j^l e_j.\]
 
Case \eqref{C2}: There is a positive integer $m$ such that $k > 2m > l$.
Then we have 
\[ e_i \ep_i^k \tPi e_j = e_i \ep_i^{k - 2m} \ep^m \tPi e_j = e_i \ep_i^{k - 2m} \tPi \ep^m e_j = e_i \ep_i^{k - 2m} \tPi \ep_j^{2m} e_j \subset e_i \tPi \ep_j^l e_j. \]

Case \eqref{C3}: We identify $I = \{1,\ldots,n\}$ so that $n$ is the single vertex with $d_n =1$ and we have $n-1 \sim n$. 
Then $i=j=n$ and we have to prove that $e_n \ep_n^k \tPi e_n \subset e_n \tPi \ep_n^l e_n$ under the assumption $k \ge l$.
Using the relations (R1) and (R2) at the vertices $1, \ldots, n-1$, we can easily see that the space $e_n\tPi e_n$ is spanned by the elements of the form
\[ \rho = e_n \ep_n^{k_0} \alpha_{n,n-1} \alpha_{n-1,n} \ep_n^{k_1} \alpha_{n,n-1} \alpha_{n-1,n} \ep_n^{k_2} \cdots \alpha_{n,n-1} \alpha_{n-1,n} \ep_n^{k_m} e_n\]
for some $m \in \Z_{\ge 0}$ and $k_0, \ldots, k_m \in \Z_{\ge 0}$. 
On the other hand, the relation (R2) for the vertex $n$ yields
\[ \ep_n \alpha_{n,n-1} \alpha_{n-1,n} = -\alpha_{n,n-1} \alpha_{n-1,n} \ep_n. \]
Therefore we get $\ep_n^k \rho = (-1)^{mk} \rho \ep_n^k \in e_n \tPi \ep_n^l e_n$.

Case \eqref{C4}: If there is an integer $m \in \Z_{\ge 0}$ satisfying $k > 3m > l$, we get the desired conclusion similarly as in the case \eqref{C2}.
So it is enough to consider the case $k=3m+2$ and $l=3m+1$ for some $m \in \Z_{\ge 0}$. 
We identify $I = \{1,2\}$ so that $(d_1, d_2) = (3, 1)$.
Then we have $i=j=2$ and we have to prove that $e_2 \ep_2^{3m+2} \tPi e_2 \subset e_2 \tPi \ep_2^{3m+1} e_2$. 
Since $e_2 \ep_2^{3m} = e_2 \ep^m$, it suffices to consider the case when $m=0$.
Since $\alpha_{12}\alpha_{21}=0$ by the relation (R2) for the vertex $1$, we see that the space $e_2 \tPi e_2$ is spanned by the elements of the form
\[ \rho = e_2 \ep_2^{k_0} \alpha_{21} \alpha_{12} \ep_2^{k_1} \alpha_{21} \alpha_{12} \ep_2^{k_2} \cdots \alpha_{21} \alpha_{12} \ep_2^{k_s}e_2\]
for some $s \in \Z_{\ge 0}$, $k_0, k_s \in \Z_{\ge 0}$ and $k_1, \ldots, k_{s-1} \in \Z_{> 0}$. 
On the other hand, the relation (R2) for the vertex $2$ yields
\[ \ep_2^2 \alpha_{21} \alpha_{12} = - \ep_2 \alpha_{21} \alpha_{12} \ep_2 - \alpha_{21} \alpha_{12} \ep_2^2. \]
Using this relation, we can easily show that $\ep_{2}^2 \rho \in e_2 \tPi \ep_2 e_2$ holds by induction on $s$.
This completes the proof.  
\end{proof}

\begin{Prop} \label{Prop:dext}
Let $i,j \in I$ and $k,l \in \Z_{>0}$. 
Assume $kd_i \ge ld_j$.
Then we have
\begin{equation} \label{eq:dext}
\dim_q \gext_{\tPi}^1(K^{(i)}_k, K^{(j)}_l)
= q^{-kd_i}\frac{[ld_j]_q}{[d_j]_q} \sum_{u=0}^{rh^\vee}\tc_{ij}(u)q^{-u}
\end{equation}
unless the condition $(\clubsuit)$ in Lemma~\ref{Lem:incl} is satisfied.
Even if $(\clubsuit)$ is satisfied, the difference $(\mathrm{RHS}) - (\mathrm{LHS})$ in \eqref{eq:dext} belongs to $\Z_{\ge 0}[q^{\pm 1}]$. 
\end{Prop}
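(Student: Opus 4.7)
The plan is to put together three ingredients already in place: the identification of $\gext^1$ with a double quotient of $\tPi$ from Proposition~\ref{Prop:Ext}, the inclusion $e_i \ep_i^k \tPi e_j \subset e_i \tPi \ep_j^l e_j$ from Lemma~\ref{Lem:incl}, and the graded dimension formula for generic kernels from Lemma~\ref{Lem:dimK}. The main task is to keep track of grading shifts carefully; no genuinely new computation is required.

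First, by Proposition~\ref{Prop:Ext} we have
\[
\dim_q \gext^1_{\tPi}(K^{(i)}_k, K^{(j)}_l) \;=\; q^{-kd_i - ld_j}\,\dim_q e_i\bigl(\tPi/(\ep_i^k \tPi + \tPi \ep_j^l)\bigr) e_j.
\]
Under the hypothesis $kd_i \ge ld_j$ and assuming that $(\clubsuit)$ does not hold, Lemma~\ref{Lem:incl} gives $e_i \ep_i^k \tPi e_j \subset e_i \tPi \ep_j^l e_j$. Hence $e_i(\ep_i^k \tPi + \tPi \ep_j^l) e_j = e_i \tPi \ep_j^l e_j$ and the above double quotient collapses to $e_i(\tPi/\tPi \ep_j^l) e_j$. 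By the very definition of generic kernels, the latter is identified with $q^{ld_j}\, e_i \bD(K^{(j)}_l)$, so that
\[
\dim_q \gext^1_{\tPi}(K^{(i)}_k, K^{(j)}_l) \;=\; q^{-kd_i}\,\dim_{q^{-1}} e_i K^{(j)}_l.
\]

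Next I would substitute the formula of Lemma~\ref{Lem:dimK} (with indices $(j,l)$ in place of $(i,k)$ and $i$ in place of $j$), using that $[n]_q$ is invariant under $q \leftrightarrow q^{-1}$. This yields exactly
\[
\dim_q \gext^1_{\tPi}(K^{(i)}_k, K^{(j)}_l) \;=\; q^{-kd_i}\,\frac{[ld_j]_q}{[d_j]_q}\sum_{u=0}^{rh^\vee}\tc_{ij}(u)q^{-u},
\]
proving \eqref{eq:dext} outside of $(\clubsuit)$.

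Finally, for the second assertion, note that the inclusion $e_i\tPi\ep_j^l e_j \subseteq e_i(\ep_i^k\tPi + \tPi\ep_j^l)e_j$ holds unconditionally. It induces a surjection from $e_i(\tPi/\tPi\ep_j^l)e_j$ onto $e_i(\tPi/(\ep_i^k\tPi + \tPi\ep_j^l))e_j$ in each graded piece, so the right-hand side of \eqref{eq:dext} dominates the left-hand side coefficient-wise even when $(\clubsuit)$ is satisfied. The only delicate point here is confirming that both sides have finite-dimensional graded pieces so that termwise comparison in $\Z[q^{\pm 1}]$ makes sense; this follows from Lemma~\ref{Lem:dimK}, which already shows that $\dim_q e_i K^{(j)}_l$ is a Laurent polynomial. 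The whole argument is essentially bookkeeping: the only genuine input beyond Proposition~\ref{Prop:Ext} and Lemma~\ref{Lem:dimK} is the inclusion of Lemma~\ref{Lem:incl}, which is the sole place where the exceptional condition $(\clubsuit)$ can spoil an equality and force one to settle for an inequality.
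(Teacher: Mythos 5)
Your proposal is correct and follows essentially the same route as the paper: identify $\gext^1_{\tPi}(K^{(i)}_k,K^{(j)}_l)$ with $q^{-kd_i-ld_j}e_i(\tPi/(\ep_i^k\tPi+\tPi\ep_j^l))e_j$ via Proposition~\ref{Prop:Ext}, use Lemma~\ref{Lem:incl} to collapse this to $q^{-kd_i}\bD(e_iK^{(j)}_l)$ outside $(\clubsuit)$, and conclude with Lemma~\ref{Lem:dimK}, while the unconditional surjection from $e_i(\tPi/\tPi\ep_j^l)e_j$ gives the coefficient-wise inequality in the exceptional case. The grading bookkeeping in your argument is accurate, so no changes are needed.
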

\begin{proof}
By Proposition~\ref{Prop:Ext}, we have a surjection 
\begin{align*}
q^{-kd_i}\bD(e_i K^{(j)}_l) &\cong q^{-kd_i -ld_j} e_i(\tPi/\tPi \ep_j^l) e_j \allowdisplaybreaks \\
&\twoheadrightarrow
q^{-kd_i -ld_j} e_i(\tPi/(\ep_i^k \tPi + \tPi \ep_j^l)) e_j \cong 
\gext^1_{\tPi}(K^{(i)}_{k}, K^{(j)}_{l}). 
\end{align*}
Under the assumption $kd_i \ge ld_j$, it is an isomorphism unless the condition $(\clubsuit)$ is satisfied, thanks to Lemma~\ref{Lem:incl}.
Combined with Lemma~\ref{Lem:dimK}, we obtain the assertion.
\end{proof}

In Appendix~\ref{Apx}, we compute the difference $(\mathrm{RHS}) - (\mathrm{LHS})$ in \eqref{eq:dext} explicitly when the condition $(\clubsuit)$ is satisfied. See Proposition~\ref{Prop:dext2} below.

\begin{Cor}\label{cor:rigid}
For each $i \in I$ and $k \in \Z_{>0}$, the $\tPi$-module $K^{(i)}_{k}$ is rigid, that is
\[ \Ext^1_{\tPi}(K^{(i)}_{k}, K^{(i)}_{k}) = 0.\] 
\end{Cor}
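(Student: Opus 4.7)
The plan is to deduce the corollary directly from the upper bound provided by Proposition~\ref{Prop:dext}, using that under the paper's conventions $\Ext^1_{\tPi}(K^{(i)}_k,K^{(i)}_k)$ is precisely the homogeneous degree-zero component of the graded space $\gext^1_{\tPi}(K^{(i)}_k,K^{(i)}_k)$. Hence it suffices to show that every nonzero bigraded piece of the latter sits in strictly negative $q$-degree.

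To carry this out, I would first specialize Proposition~\ref{Prop:dext} to $i=j$ and $k=l$: the hypothesis $kd_i\ge ld_j$ is then automatic, so whether or not the exceptional case $(\clubsuit)$ occurs we obtain the coefficient-wise upper bound
\[
\dim_q\gext^1_{\tPi}(K^{(i)}_k,K^{(i)}_k)\;\le\; q^{-kd_i}\,\frac{[kd_i]_q}{[d_i]_q}\,\sum_{u=0}^{rh^\vee}\tc_{ii}(u)\,q^{-u}.
\]
Next, I would invoke Proposition~\ref{Prop:tc}(4) (with the parameter $k=0$ there) to conclude that $\tc_{ii}(u)=0$ for all $0\le u\le d_i-1$. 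Consequently the sum on the right is supported in $u\ge d_i$, so its highest power of $q$ is at most $q^{-d_i}$. Combining this with the top $q$-powers $q^{-kd_i}$ and $q^{(k-1)d_i}$ of the remaining two factors, the highest power of $q$ occurring in the right-hand side is at most
\[
q^{-kd_i}\cdot q^{(k-1)d_i}\cdot q^{-d_i}\;=\;q^{-2d_i}.
\]
Since $d_i\ge 1$, this exponent is strictly negative, so the degree-zero coefficient of $\dim_q\gext^1_{\tPi}(K^{(i)}_k,K^{(i)}_k)$ must vanish, giving the desired rigidity.

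The only point that requires a little care is that in the exceptional case $(\clubsuit)$ (types $\mathrm{C}_n$, $\mathrm{F}_4$, $\mathrm{G}_2$ with $d_i=1$ and $k\notin r\Z$) Proposition~\ref{Prop:dext} supplies only a coefficient-wise upper bound rather than an equality; this is harmless, however, because the estimate on the top $q$-degree still applies. The substantive input is the vanishing $\tc_{ii}(u)=0$ for $u<d_i$ coming from Proposition~\ref{Prop:tc}(4), which lets the top $q$-degree of the bound strictly undercut $q^0$; after that, no further obstacle is anticipated.
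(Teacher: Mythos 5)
Your proof is correct and follows essentially the same route as the paper's: one bounds the constant term of $\dim_q\gext^1_{\tPi}(K^{(i)}_k,K^{(i)}_k)$ coefficient-wise by the constant term of $q^{-kd_i}\frac{[kd_i]_q}{[d_i]_q}\sum_{u=0}^{rh^\vee}\tc_{ii}(u)q^{-u}$ (valid also in the exceptional case $(\clubsuit)$), and observes that this constant term vanishes because the whole expression is supported in strictly negative $q$-degrees. The only cosmetic difference is your extra appeal to Proposition~\ref{Prop:tc}(4), which is harmless but unnecessary: since $u\ge 0$ and the prefactor $q^{-kd_i}[kd_i]_q/[d_i]_q$ already has top degree $q^{-d_i}$, every exponent occurring is at most $-d_i<0$.
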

\begin{proof}
Note that $\dim_\kk \Ext^1_{\tPi}(K^{(i)}_{k}, K^{(i)}_{k})$ is the constant term of $\dim_q \gext^1_{\tPi}(K^{(i)}_{k}, K^{(i)}_{k})$.
By Proposition~\ref{Prop:Ext}, it is not greater than the constant term of $q^{-kd_i}\frac{[kd_i]_{q}}{[d_i]_q}\sum_{u=0}^{rh^\vee}\tc_{ii}(u)q^{-u}$, which is zero.
\end{proof}

\subsection{Pole orders of normalized $R$-matrices}
\label{Ssec:conj}
Let $U'_q(\widehat{\fg})$ be the (untwisted) quantum affine algebra associated with our simple Lie algebra $\fg$.
This is a Hopf algebra defined over an algebraic closure $\kb \seq \ol{\Q(q)}$ of the field of rational functions in $q$.
The category $\CC$ of finite-dimensional $U'_q(\widehat{\fg})$-modules (of type $\mathbf{1}$) forms an interesting $\kb$-linear monoidal abelian category.
Around $U'_q(\widehat{\fg})$, we basically follow the convention of \cite[\S 5 and \S 6]{FO} except that we replace the quantum parameter $q$ therein with $q^r$ (and hence $q_s$ therein is $q$ here). 

It is well-known (see \cite[Chapter 12]{CP} for example) that simple $U'_q(\widehat{\fg})$-modules in $\CC$ are parametrized by the set $(1+x\kb[x])^I$ of $I$-tuples of polynomials with constant terms $1$, which are called \emph{Drinfeld polynomials}. 
For a nonzero scalar $a \in \kb^\times$ and a module $M \in \CC$, we can define another module $M_a \in \CC$, called a spectral parameter shift of $M$, by twisting its module structure with an automorphism of the Hopf algebra $U'_q(\widehat{\fg})$. 
If $L$ is a simple module of $\CC$ associated with Drinfeld polynomials $\pi(x) \in (1+x\kb[x])^I$, its spectral parameter shift $L_a$ is associated with $\pi(ax)$. 
It defines an action of the group $\kb^\times$ on the monoidal category $\CC$.

For each $i \in I$ and $k \in \Z_{>0}$, we denote by $V^{(i)}_k$ the simple $U'_q(\widehat{\fg})$-module in $\CC$ associated with the Drinfeld polynomials $\pi^{(i)}_k(x) = (\pi^{(i)}_{k,j}(x))_{j \in I}$ given by 
\[ 
\pi^{(i)}_{k,j}(x) \seq \begin{cases}
(1-q^{(k-1)d_i}x)(1-q^{(k-3)d_i}x) \cdots (1-q^{-(k-1)d_i}x) & \text{if $j=i$}, \\
1 & \text{if $j \neq i$}.
\end{cases}
\]
These modules $\{ V^{(i)}_k \}_{i \in I, k \in \Z_{>0}}$ and their spectral parameter shifts are called the \emph{Kirillov-Reshetikhin modules} (KR modules for short). 

\begin{Rem} \label{Rem:qch}
When $\kk = \C$, Hernandez-Leclerc's {geometric character formula} \cite[Theorem 4.8]{HL} tells us that the $F$-polynomial of the $J_{\Gamma, W}$-module $\Phi(q^pK^{(i)}_k)$ in the sense of \cite{DWZ2} gives the $q$-character of the KR module $V^{(i)}_{k, q^p}$ in the sense of \cite{FR}. 
To explain it more precisely, we need some notation.
Recall that each $M \in \CC$ has a spectral decomposition $M = \bigoplus_{\gamma \in \Gg} M_\gamma$ with respect to the action of the quantum analog of loop Cartan part, whose spectra are parametrized by the Grothendieck group $\Gg$ of the multiplicative monoid $(1+x\kb[x])^I$.
Then the $q$-character of $M$ is the element $\chi_q(M) \seq \sum_{\gamma \in \Gg} (\dim_{\kb} M_\gamma)\gamma$ of the group ring $\Z \Gg$.  
For each $i \in I$ and $p \in \Z$, we set $Y_{i,p}^{\pm 1} \seq \pi^{(i)}_{1}(q^px)^{\pm 1} \in \Gg$ and
\[ A_{i,p} \seq Y_{i,p-d_i}Y_{i,p+d_i}
\prod_{j\colon c_{ji} = -1}Y_{j,p}^{-1}
\prod_{j\colon c_{ji} = -2}Y_{j,p-1}^{-1}Y_{j,p+1}^{-1}
\prod_{j\colon c_{ji} = -3}Y_{j,p-2}^{-1}Y_{j,p}^{-1}Y_{j,p+2}^{-1}.\]
Note that the transformation from $Y_{i,p}$'s to $A_{i,p}$'s is given by the quantum Cartan matrix $C(q)$.  
Then the geometric character formula is expressed as
\begin{equation} \label{eq:gcf} 
\chi_q(V^{(i)}_{k, q^p}) = \pi^{(i)}_{k}(q^px)\sum_{\nu \in (\Z_{\ge 0})^{I \times \Z}} \chi\left(\Gr_\nu(\Phi(q^pK^{(i)}_{k}))\right) \prod_{(j,s) \in I \times \Z} A_{j,s}^{-\nu_{j,s}},
\end{equation}
where $\Gr_\nu(K)$ is the projective variety parametrizing the $J_{\Gamma, W}$-submodules $L \subset K$ satisfying $\dim_\C e_{j,s}L = \nu_{j,s}$ for all $(j,s) \in I \times \Z$, and $\chi(\Gr_\nu(K))$ is its Euler characteristic.

As explained in \cite[Remark 4.14]{HL}, if we consider the case $(k,p)=(1,0)$ and look at the term with $\nu_{j,s} = \dim_\C e_{j,s}\Phi(K^{(i)}_1)=\dim_\C (e_j \bar{I}_{i})_{s-d_i}$ in \eqref{eq:gcf}, we obtain
\[ Y_{i,0}Y_{i^*,rh^\vee} = \prod_{(j,s) \in I \times \Z}A_{i,s+d_i}^{\dim_\C (e_j \bar{I}_{i})_{s}} \]
by \cite[Lemma 6.8]{FM}. 
Taking \cite[Lemma 6.13]{FM} (or Corollary~\ref{Cor:dimIbd}) into account, it gives an alternative proof of Proposition~\ref{Prop:dI} when $\kk = \C$.
By the similar argument, Lemma~\ref{Lem:dimK} can also be deduced from \eqref{eq:gcf}.
\end{Rem}

Let $z$ be an indeterminate and $N_z = N \otimes_\kk \kk(z)$ the formal spectral parameter shift of a module $N \in \CC$.
For any pair $(M,N)$ of simple modules in $\CC$, we can associate the \emph{normalized $R$-matrix}
as a $U'_q(\widehat{\fg})\otimes_\kk \kk(z)$-isomorphism
\[ R_{M,N} \colon M \otimes N_z \to N_z \otimes M \]
which sends a specific cyclic vector of $M\otimes N_z$ to that of $N_z \otimes M$. 
Since $R_{M,N}$ can be seen as a matrix-valued rational function in $z$, one can consider its denominator $d_{M,N}(z) \in \kb[z]$, which is uniquely determined as a monic polynomial in the variable $z$ with $d_{M,N}(0) \neq 0$. 
For any nonzero scalars $a, b \in \kb^\times$, we have
\begin{equation} \label{spshiftd}
d_{M_a, N_b}(z) \equiv d_{M,N}((b/a)z) \pmod {\kb^\times}.
\end{equation}
The denominator $d_{M,N}(z)$ contains some important information of the structure of the tensor product module $M \otimes N$ (see~\cite{KKKO} for example).  
For the denominators between the KR modules, we have the following conjectural formula in terms of the matrix $\tC(q)$. 

\begin{Conj}[{cf.~\cite[Conjecture 6.7]{FO}}] \label{Conj:KRd}
Let $i,j \in I$ and $k,l \in \Z_{>0}$. 
Assume $kd_i \ge ld_j$.
Then we have
\begin{equation} \label{eq:KRd}
d_{V^{(i)}_k, V^{(j)}_l}(z) = d_{V^{(j)}_l, V^{(i)}_k}(z) 
= \prod_{a=0}^{l-1}\prod_{u=0}^{rh^\vee}\left( z-q^{u+kd_i+(2a-l+1)d_j}\right)^{\tc_{ij}(u)}.
\end{equation}
unless the condition $(\clubsuit)$ in Lemma~\ref{Lem:incl} is satisfied.
\end{Conj}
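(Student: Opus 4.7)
The plan is to reduce the conjecture to the still-unproven Conjecture~\ref{CONJ} combined with Proposition~\ref{Prop:dext}, but refined so as to track not merely the total multiplicity of poles but also their precise locations. At the level of dimensions, setting $q=1$ in \eqref{eq:dext} yields $\dim_\kk \Ext^1_{\tPi}(K^{(i)}_k, K^{(j)}_l) = l\sum_{u=0}^{rh^\vee}\tc_{ij}(u)$, which matches the total degree in $z$ of the right-hand side of \eqref{eq:KRd}; hence Conjecture~\ref{CONJ} already implies the equality of degrees, and the remaining task is to match the individual roots.

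For this refinement one should interpret the internal $q$-grading on $\gext^1$ as spectral-parameter information. Expanding Proposition~\ref{Prop:dext} as
\[
\dim_q \gext^1_{\tPi}(K^{(i)}_k, K^{(j)}_l) = \sum_{a=0}^{l-1}\sum_{u=0}^{rh^\vee} \tc_{ij}(u)\, q^{-kd_i + (2a-l+1)d_j - u},
\]
the $(a,u)$-indexing precisely parallels the product $\prod_{a=0}^{l-1}\prod_{u=0}^{rh^\vee}(z-q^{u+kd_i+(2a-l+1)d_j})^{\tc_{ij}(u)}$ appearing on the $R$-matrix side of \eqref{eq:KRd}. Under the standard convention that a grading shift $q^p$ on the additive side corresponds to a spectral-parameter shift $V \mapsto V_{q^{-p}}$ on the monoidal side (compatibly with \eqref{spshiftd} and the geometric character formula recalled in Remark~\ref{Rem:qch}), each homogeneous basis element of $\gext^1$ should contribute a root of $d_{V^{(i)}_k,V^{(j)}_l}(z)$ at the dual spectral value; summing over $(a,u)$ then reproduces \eqref{eq:KRd}. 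The symmetry $d_{M,N}(z)=d_{N,M}(z)$ is reflected on the additive side by the isomorphism in Proposition~\ref{Prop:Ext}.

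The principal obstacle is establishing Conjecture~\ref{CONJ} itself, which requires a functorial bridge between the additive categorification of the Hernandez--Leclerc cluster algebra by $\tPi\gmod$ (equivalently $J_{\Gamma,W}\umod$ via Proposition~\ref{Prop:catisom}) and its monoidal categorification by a subcategory of $\CC$ established in \cite{KKOP,KKOP2}. Rigidity of the generic kernels $K^{(i)}_k$ (Corollary~\ref{cor:rigid}) matches the real simplicity of the KR modules $V^{(i)}_k$, and the $F$-polynomials of $\Phi(K^{(i)}_k)$ compute $\chi_q(V^{(i)}_k)$ via \eqref{eq:gcf}, so the two categorifications coincide at the level of cluster variables. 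What is missing is a transparent mechanism turning an exchange-type non-trivial extension in $\tPi\gmod$ (detected by $\Ext^1$) into a reducibility locus of $V^{(i)}_k\otimes V^{(j)}_l$ under spectral-parameter degeneration, so that a generator of $\Ext^1$ in a given $q$-degree corresponds to a specific root of the denominator.

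The exceptional configurations captured by condition~$(\clubsuit)$ of Lemma~\ref{Lem:incl} (namely certain cases in types $\mathrm{C}_n$, $\mathrm{F}_4$, $\mathrm{G}_2$ with $d_i=d_j=1$ and $k=l\notin r\Z$) require a separate treatment, parallel to the correction computations carried out in Appendix~\ref{Apx} for \eqref{eq:dext}; the $R$-matrix side of \cite[Conjecture 6.7]{FO} in those cases is likewise incomplete and must be corrected in tandem. For partial supporting evidence one can cross-check the conjectured formula \eqref{eq:KRd} against known denominator computations for KR modules in simply-laced and low-rank non-simply-laced types and verify agreement modulo the $(\clubsuit)$-corrections, thereby matching both the graded refinement outlined above and the numerical conclusion of Proposition~\ref{Prop:dext}.
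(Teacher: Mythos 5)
This statement is a \emph{conjecture} in the paper (imported from \cite[Conjecture 6.7]{FO}, with the case $(\clubsuit)$ excised because of gaps in the literature); the paper offers no proof of it. What the paper does is (a) record the cases known from \cite{FO} (types $\mathrm{A}_n$--$\mathrm{D}_n$, $\mathrm{G}_2$, and $(k,l)=(r/d_i,1)$, all established there by direct $R$-matrix/denominator computations), and (b) prove, via Proposition~\ref{Prop:Ext}, Proposition~\ref{Prop:dext} and Lemma~\ref{Lem:KRO}, that Conjecture~\ref{Conj:KRd} is \emph{equivalent} to the homological identity \eqref{eq:O=dext}, which it then generalizes to Conjectures~\ref{Conj:main1}/\ref{Conj:main2}. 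Your proposal does not prove the statement, and the reduction you outline is essentially circular: deducing \eqref{eq:KRd} from Conjecture~\ref{CONJ} ($=$ Conjecture~\ref{Conj:main2}) is exactly the paper's equivalence run in the opposite direction, and Conjecture~\ref{CONJ} is itself open --- indeed it is proposed in this paper only as a generalization of \eqref{eq:KRd}, supported by \eqref{eq:KRd}'s known cases plus Proposition~\ref{Prop:evid}. Assuming it therefore assumes something at least as strong as what is to be proved.

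Two further inaccuracies in the outline. First, you claim Conjecture~\ref{CONJ} only controls the total degree of the denominator and that matching individual roots requires a new ``$q$-grading $\leftrightarrow$ spectral parameter'' mechanism; in fact Conjecture~\ref{Conj:main2} is stated for all spectral shifts $p,s$, and the lemma following it (together with \eqref{spshiftd} and the fact, noted in the paper via \cite{KKOP}, that all zeros of the KR denominators lie in $q^{\Z}$) shows the ungraded statement for all $p,s$ is equivalent to the full graded identity \eqref{eq:O=dext}; so the root locations are already encoded, and your proposed ``dictionary'' is not new content but a restatement of that lemma. Second, the genuinely missing ingredient is not a translation convention but an actual proof on either side: either new denominator computations extending \cite{FO}, or a functorial mechanism converting $\Ext^1$-classes of generic kernels into poles of normalized $R$-matrices --- which you yourself acknowledge is absent. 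As it stands, the proposal establishes nothing beyond the equivalences the paper already proves.
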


Conjecture~\ref{Conj:KRd} is known to be true in the following cases:
\begin{itemize}
\item $\fg$ is either of type $\mathrm{A}_n, \mathrm{B}_n, \mathrm{C}_n, \mathrm{D}_n$ or $\mathrm{G}_2$ (cf.~\cite[Theorem 6.9]{FO}),
\item $\fg$ is of any type and $(k,l) = (r/d_i, 1)$ (cf.~\cite[Proposition 6.5]{FO}).
\end{itemize}

\begin{Rem}
Strictly speaking, the statement of \cite[Conjecture 6.7]{FO} is slightly different from that of Conjecture~\ref{Conj:KRd} above.
Actually, it was conjectured in \cite[Conjecture 6.7]{FO} that the equality~\eqref{eq:KRd} should hold also when
\begin{itemize}
\item[$(\ast)$] $\fg$ is of type $\mathrm{CFG}$, and we have $d_i = d_j = 1, k = l \in \Z_{\ge r} \setminus r\Z_{>0}$.
\end{itemize}
Then it was claimed in \cite[Theorem 6.9]{FO} that the equality~\eqref{eq:KRd} is indeed true for type $\mathrm{C}_n$ and $\mathrm{G}_2$ under the condition $(\ast)$ based on the results of~\cite{OS}.
However, after the publication of \cite{FO}, it has turned out that there are some gaps in the proofs in \cite{OS} for the case $(\ast)$ (the authors thank Se-jin Oh for clarifying this point).
Thus we have excluded the case $(\ast)$ in the statement of Conjecture~\ref{Conj:KRd}.  
\end{Rem}

\begin{Def}\hfill
\begin{enumerate}
\item
For $a \in \kb$ and $f(z) \in \kb[z]$, let $\zero_{z=a} f(z)$ denote the order of zeros of $f(z)$ at $z=a$.
For $f(z) \in \kb[z]$ with $f(0) \neq 0$, we define its divisor $\Div f(z)$ to be an element of the group ring $\Z[\kb^\times]$ given by $\Div f(z) \seq \sum_{a \in \kb^\times} (\zero_{z=a} f(z)) a$.
\item For a pair $(M,N)$ of simple modules in $\CC$, we set
\[ \mathfrak{o}(M,N) \seq \zero_{z=1} d_{M,N}(z), \qquad \mathfrak{O}(M,N) \seq \Div d_{M,N}(z). \]
Note that $\mathfrak{o}(M,N)$ is the coefficient of $1 \in \kb^\times$ in $\mathfrak{O}(M,N)$. \qedhere
\end{enumerate}
\end{Def}

For any $i,j \in I$ and $k, l \in \Z_{>0}$, all the zeros of $d_{V^{(i)}_k, V^{(j)}_l}(z)$ belong to $q^\Z \subset \kb^\times$. This follows from \cite[Propositions 2.11 \& 2.12]{KKOP} and the known formulas of denominators and universal coefficients for the fundamental modules.
In particular, $\mathfrak{O}(V^{(i)}_k, V^{(j)}_l)$ can be thought of a Laurent polynomial in $q$.

\begin{Lem} \label{Lem:KRO}
The equation \eqref{eq:KRd} is equivalent to the equation 
\begin{equation} \label{eq:KRO}
\mathfrak{O}(V^{(i)}_k, V^{(j)}_l)= \mathfrak{O}(V^{(j)}_l, V^{(i)}_k) = q^{kd_i} \frac{[ld_j]_q}{[d_j]_q}\sum_{u=0}^{rh^\vee} \tc_{ij}(u)q^u.
\end{equation}
\end{Lem}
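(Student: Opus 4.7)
The plan is to reduce this lemma to an elementary bookkeeping computation. Both $d_{V^{(i)}_k, V^{(j)}_l}(z)$ and the conjectural right-hand side of \eqref{eq:KRd} are monic polynomials in $\kb[z]$ with nonzero constant term whose zeros all lie in $q^{\Z} \subset \kb^{\times}$: for the denominator, this is recorded in the paragraph preceding the lemma, while for the product $\prod_{a,u}(z-q^{u+kd_i+(2a-l+1)d_j})^{\tc_{ij}(u)}$, it is visible by inspection (and the nonvanishing at $z=0$ follows from Proposition~\ref{Prop:tc}~(4), which guarantees $\tc_{ij}(u)=0$ when $u \le d_i-\delta_{ij}$, so no exponent equals zero). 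A monic polynomial with nonzero constant term is determined by its multiset of roots, so the divisor map $\Div$ is injective on this class. Therefore \eqref{eq:KRd} is equivalent to the equality of divisors of its two sides.

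Once that reduction is in place, the proof consists in checking that $\Div$ applied to the right-hand side of \eqref{eq:KRd} agrees with the right-hand side of \eqref{eq:KRO}. By definition of $\Div$, we have
\begin{equation*}
\Div \prod_{a=0}^{l-1}\prod_{u=0}^{rh^\vee}\left(z-q^{u+kd_i+(2a-l+1)d_j}\right)^{\tc_{ij}(u)}
= q^{kd_i}\Bigl(\sum_{u=0}^{rh^\vee}\tc_{ij}(u)q^u\Bigr)\Bigl(\sum_{a=0}^{l-1}q^{(2a-l+1)d_j}\Bigr),
\end{equation*}
so the content reduces to the standard geometric series identity
\begin{equation*}
\sum_{a=0}^{l-1}q^{(2a-l+1)d_j}
= q^{-(l-1)d_j}\cdot\frac{q^{2ld_j}-1}{q^{2d_j}-1}
= \frac{q^{ld_j}-q^{-ld_j}}{q^{d_j}-q^{-d_j}}
= \frac{[ld_j]_q}{[d_j]_q}.
\end{equation*}
Substituting this in produces exactly the expression on the right-hand side of \eqref{eq:KRO}, establishing the equivalence.

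There is no real obstacle here; the lemma is essentially unpacking the definition of $\Div$ together with the symmetric-quantum-integer identity above. The only subtle point worth flagging is the nonvanishing at $z=0$ of the conjectural denominator, which must be invoked to justify that $\Div$ loses no information — and this is where the vanishing statement in Proposition~\ref{Prop:tc}~(4) enters.
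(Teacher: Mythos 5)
Your argument is correct and is exactly the ``straightforward'' computation the paper has in mind: apply $\Div$, note that a monic polynomial with nonzero constant term is recovered from its divisor, and sum the geometric series $\sum_{a=0}^{l-1}q^{(2a-l+1)d_j}=[ld_j]_q/[d_j]_q$. One quibble: the appeal to Proposition~\ref{Prop:tc}~(4) for nonvanishing at $z=0$ is unnecessary (and the phrase ``no exponent equals zero'' is not the right justification), since every root of the product is a power of $q$ and hence lies in $\kb^\times$ no matter what the exponents are.
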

\begin{proof}
Straightforward.
\end{proof}

Thanks to Proposition~\ref{Prop:Ext}, Proposition~\ref{Prop:dext} and Lemma~\ref{Lem:KRO}, the following conjecture is equivalent to Conjecture~\ref{Conj:KRd}.

\begin{Conj}[$\Leftrightarrow$ Conjecture~\ref{Conj:KRd}]
Let $i,j \in I$ and $k, l \in \Z_{>0}$. Assuming $kd_i \ge ld_j$, we have
\begin{equation} \label{eq:O=dext}
\mathfrak{O}(V^{(i)}_k, V^{(j)}_l) = \dim_{q^{-1}} \gext_{\tPi}^1(K^{(i)}_k, K^{(j)}_l) 
\end{equation}
unless the condition $(\clubsuit)$ in Lemma~\ref{Lem:incl} is satisfied.
\end{Conj}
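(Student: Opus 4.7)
The plan is to observe that this equivalence is essentially a bookkeeping exercise: the right-hand sides of \eqref{eq:O=dext} and of the divisor reformulation \eqref{eq:KRO} supplied by Lemma~\ref{Lem:KRO} turn out to be the \emph{same} Laurent polynomial in $q$, once the substitution $q \leftrightarrow q^{-1}$ is applied to the Ext-dimension formula already established in Proposition~\ref{Prop:dext}. So the strategy is to line up the two formulas term by term.

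First, I would invoke Proposition~\ref{Prop:dext}, which under the hypothesis $kd_i \ge ld_j$ (and outside the condition $(\clubsuit)$) gives
\[
\dim_q \gext^1_{\tPi}(K^{(i)}_k, K^{(j)}_l) = q^{-kd_i}\frac{[ld_j]_q}{[d_j]_q}\sum_{u=0}^{rh^\vee}\tc_{ij}(u)q^{-u}.
\]
Applying the involution $q \mapsto q^{-1}$ to pass to $\dim_{q^{-1}}$, and observing that $[n]_q = (q^n-q^{-n})/(q-q^{-1})$ is palindromic (hence $[ld_j]_q/[d_j]_q$ is preserved), I would obtain
\[
\dim_{q^{-1}} \gext^1_{\tPi}(K^{(i)}_k, K^{(j)}_l) = q^{kd_i}\frac{[ld_j]_q}{[d_j]_q}\sum_{u=0}^{rh^\vee}\tc_{ij}(u)q^{u},
\]
which is precisely the right-hand side of \eqref{eq:KRO}.

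Second, I would recover the symmetric half of \eqref{eq:KRO}. Proposition~\ref{Prop:Ext} supplies a natural isomorphism $\gext^1_{\tPi}(K^{(i)}_k, K^{(j)}_l) \cong \gext^1_{\tPi}(K^{(j)}_l, K^{(i)}_k)$, so the $q^{-1}$-dimension in \eqref{eq:O=dext} does not depend on the order of the two arguments. This matches the identity $\mathfrak{O}(V^{(i)}_k, V^{(j)}_l) = \mathfrak{O}(V^{(j)}_l, V^{(i)}_k)$ built into \eqref{eq:KRO}, and it handles the fact that Proposition~\ref{Prop:dext} is only stated under the assumption $kd_i \ge ld_j$.

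Combining the two steps, \eqref{eq:O=dext} and \eqref{eq:KRO} coincide term by term, hence are simultaneously true or false; by Lemma~\ref{Lem:KRO}, \eqref{eq:KRO} is equivalent to \eqref{eq:KRd} of Conjecture~\ref{Conj:KRd}, yielding the claimed equivalence. There is no genuine obstacle in this argument; the only point requiring care is to track the $q \leftrightarrow q^{-1}$ substitution consistently through every factor, and to note that the excluded case $(\clubsuit)$ is carried over verbatim from Proposition~\ref{Prop:dext}.
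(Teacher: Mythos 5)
Your argument is correct and is essentially the paper's own justification: the paper obtains the equivalence in exactly this way, by combining Proposition~\ref{Prop:dext} (read through the substitution $q \leftrightarrow q^{-1}$, using that $[m]_{q^{-1}}=[m]_q$, so that $\dim_{q^{-1}}\gext^1_{\tPi}(K^{(i)}_k,K^{(j)}_l)$ becomes the right-hand side of \eqref{eq:KRO}) with the symmetry isomorphism of Proposition~\ref{Prop:Ext} and the reformulation provided by Lemma~\ref{Lem:KRO}. No further comment is needed.
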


Moreover, we find that the equality \eqref{eq:O=dext} still holds in some cases when the condition $(\clubsuit)$ is satisfied as in the next proposition, whose proof is given later in Appendix~\ref{Apx}.  

\begin{Prop} \label{Prop:evid}
The equality \eqref{eq:O=dext} still holds even if $\fg$ is either of type $\mathrm{C}_n$, $\mathrm{F}_4$ or $\mathrm{G}_2$, and we have $d_i=d_j=1, k=l<r$.
\end{Prop}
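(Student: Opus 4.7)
The plan is a finite case analysis, since the hypothesis $d_i = d_j = 1$ with $k = l < r$ admits only a handful of configurations: when $\fg$ is of type $\mathrm{C}_n$ or $\mathrm{F}_4$ (so $r = 2$) one is forced to $k = l = 1$ with $i, j$ ranging over the short-root vertices, while in type $\mathrm{G}_2$ (so $r = 3$) one is forced to $(i,j) = (2,2)$ with $k = l \in \{1,2\}$. For the $k = l = 1$ subcases, $K^{(i)}_1 \cong q^{d_i}\bar{I}_i$ and $V^{(i)}_1$ are both fundamental, so both sides of \eqref{eq:O=dext} reduce to information about fundamental objects that has been studied elsewhere.

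For the left-hand side $\mathfrak{O}(V^{(i)}_k, V^{(j)}_l)$, I would appeal to the known denominator formulas between fundamental KR modules in types $\mathrm{C}_n, \mathrm{F}_4, \mathrm{G}_2$, for instance from \cite{Fuj, FO} (the fundamental case is covered there even outside Conjecture~\ref{Conj:KRd}). For the remaining type $\mathrm{G}_2$ case $k = l = 2$, the denominator $d_{V^{(2)}_2, V^{(2)}_2}(z)$ can be obtained either by a hand computation on the small module $V^{(2)}_2$ via the fusion construction, or by recovering the relevant piece of \cite{OS} that is flagged in the remark following Conjecture~\ref{Conj:KRd}; in each case $\mathfrak{O}$ is an explicit short Laurent polynomial in $q$.

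For the right-hand side, I would begin from the surjection
\[
q^{-kd_i - ld_j}\, e_i(\tPi/\tPi\ep_j^l)\, e_j \twoheadrightarrow \gext^1_{\tPi}(K^{(i)}_k, K^{(j)}_l)
\]
provided by the proof of Proposition~\ref{Prop:Ext}, whose kernel is the image of $e_i\ep_i^k\tPi e_j$ in $e_i(\tPi/\tPi\ep_j^l)e_j$. Under $(\clubsuit)$ this kernel is non-zero precisely because Lemma~\ref{Lem:incl} fails, and the task is to evaluate it explicitly: using the relations (R1) and (R2), elements of $e_i\ep_i^k\tPi e_j$ can be rewritten as $\kk$-linear combinations of short paths between $i$ and $j$, and one pushes $\ep_i^k$ across each such path as far as possible before the process is obstructed by the appearance of a long-root vertex (or, in $\mathrm{G}_2$, the single long vertex $1$). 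In the cases covered by the proposition the neighbourhood of the short-root vertices is small enough that this is a bounded calculation; its outcome is recorded as Proposition~\ref{Prop:dext2}. Subtracting the resulting correction from the formula in Proposition~\ref{Prop:dext} then gives $\dim_{q^{-1}} \gext^1_{\tPi}(K^{(i)}_k, K^{(j)}_l)$ explicitly, and matching this against the $\mathfrak{O}(V^{(i)}_k, V^{(j)}_l)$ computed in the previous step is a finite check.

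The main obstacle will be the bookkeeping for the kernel computation when $(\clubsuit)$ holds: the interaction between the centralizing relation $\ep_i^{-c_{ij}}\alpha_{ij} = \alpha_{ij}\ep_j^{-c_{ji}}$ at a short-long edge and the sign-laden mesh relation (R2) at the short-root vertex produces cancellations whose combinatorics depends on the residue of $k$ modulo $r$, and it is exactly this residue that governs the exclusion in Lemma~\ref{Lem:incl}. Handling these cancellations uniformly (rather than separately in each type) is the delicate part; once the graded dimension in Proposition~\ref{Prop:dext2} is in hand, the identification with the $R$-matrix denominator from the first step is routine and completes the proof of Proposition~\ref{Prop:evid}.
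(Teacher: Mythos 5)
Your plan is correct and follows essentially the same route as the paper: the appendix establishes Proposition~\ref{Prop:dext2} by exactly the explicit relation/path computation of the kernel of the surjection from Proposition~\ref{Prop:Ext} that you describe, and Proposition~\ref{Prop:evid} is then a direct case-by-case comparison of the corrected $\gext^1$-dimension with the known denominator formulas quoted from \cite{FO} (originally \cite{AK,OS19}). The only small point is sourcing for the type $\mathrm{G}_2$ case $k=l=2$: this denominator is not part of the flagged gap in \cite{OS} (which concerns $k\ge r$), so it can simply be quoted rather than recomputed by hand.
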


This motivates us to propose the following conjecture, which generalizes Conjecture~\ref{Conj:KRd}. 

\begin{Conj} \label{Conj:main1}
For any $i,j \in I$ and $k,l \in \Z_{>0}$, the equality~\eqref{eq:O=dext} holds.
\end{Conj}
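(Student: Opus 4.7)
The plan is to split Conjecture~\ref{Conj:main1} into two regimes according to whether the exceptional condition $(\clubsuit)$ of Lemma~\ref{Lem:incl} holds, and treat each separately. In the generic (non-$(\clubsuit)$) regime, Proposition~\ref{Prop:dext} together with Lemma~\ref{Lem:KRO} and the symmetry $\gext^1_{\tPi}(K^{(i)}_k, K^{(j)}_l) \cong \gext^1_{\tPi}(K^{(j)}_l, K^{(i)}_k)$ from Proposition~\ref{Prop:Ext} reduces the desired equality \eqref{eq:O=dext} to the denominator formula \eqref{eq:KRd}, i.e.~to Conjecture~\ref{Conj:KRd}.  This is already known for types $\mathrm{A},\mathrm{B},\mathrm{C},\mathrm{D}$, and $\mathrm{G}_2$ by the results of~\cite{FO}, so the proof would be complete in those types. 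For the remaining exceptional types $\mathrm{E}_{6,7,8}$ and $\mathrm{F}_4$, I would try to bootstrap the denominator formula from the already-established fundamental case $(k,l)=(r/d_i,1)$ using the fusion procedure and the $T$-system, along the inductive lines of~\cite{AK,OS}: the denominator $d_{V^{(i)}_k, V^{(j)}_l}(z)$ should be recoverable from products of denominators involving KR modules of smaller level, provided one can rule out unexpected cancellations using short-exact-sequence arguments.

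Second, in the $(\clubsuit)$ regime, where $\fg$ is of type $\mathrm{C}_n,\mathrm{F}_4,$ or $\mathrm{G}_2$ with $d_i=d_j=1$ and $k=l \notin r\Z$, I would compute both sides explicitly. By Proposition~\ref{Prop:Ext}, the right-hand side is controlled by the intersection $e_i\ep_i^k\tPi e_j \cap e_i\tPi\ep_j^l e_j$, so the essential task is to measure how much of $e_i\ep_i^k\tPi e_j$ is already absorbed into $e_i\tPi\ep_j^l e_j$. Inspired by the monomial-rewriting arguments in the proof of Lemma~\ref{Lem:incl}, the plan is to fix a monomial basis of $e_i\tPi e_j$ adapted to the filtration by powers of $\ep_i$ and $\ep_j$, and to exploit the relation (R2) at the short-root vertices (yielding the anti-commutation $\ep_n \alpha_{n,n-1}\alpha_{n-1,n} = -\alpha_{n,n-1}\alpha_{n-1,n}\ep_n$ in type $\mathrm{B}_n$, and similar identities in type $\mathrm{C},\mathrm{F},\mathrm{G}$) to produce explicit rewrite rules pinning down the exact correction term in Proposition~\ref{Prop:dext}. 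On the left-hand side, the pole order $\mathfrak{o}(V^{(i)}_k, V^{(j)}_l)$ at each point $z=q^s$ would be extracted from the explicit universal-coefficient formulas of~\cite{OS}, with the gaps noted in the paper corrected by a direct analysis of the relevant Drinfeld polynomial data. Proposition~\ref{Prop:evid} already handles the sub-case $k=l<r$, and this template should extend to the remaining $k=l \notin r\Z$.

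The main obstacle will be the $(\clubsuit)$ regime in type $\mathrm{F}_4$, where the combinatorial structure of $e_i\tPi e_j$ is intricate and the number of pairs $(k,l)$ to verify grows quickly; although a brute-force verification seems in principle feasible, extracting a uniform statement from it is not obvious. A secondary obstacle is completing Conjecture~\ref{Conj:KRd} in type $\mathrm{E}$, whose $R$-matrix denominators are not yet addressed by any clean uniform argument in the literature. In the longer term, the cleanest resolution would be a Koszul-type duality that directly identifies pole multiplicities of $R_{M,N}(z)$ with bigraded $\Ext^1$-dimensions between the associated generic kernels, thereby upgrading Conjecture~\ref{CONJ} from a case-by-case numerical coincidence to a structural correspondence between the additive categorification by $\tPi$-modules and the monoidal categorification by $U_q'(\widehat{\fg})$-modules of the same cluster algebra.
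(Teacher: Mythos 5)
The statement you are trying to prove is Conjecture~\ref{Conj:main1}; the paper does not prove it, and neither does your proposal. What you have written is a research program, and its solid part is precisely what the paper already establishes: away from the condition $(\clubsuit)$, Proposition~\ref{Prop:Ext}, Proposition~\ref{Prop:dext} and Lemma~\ref{Lem:KRO} show that the equality \eqref{eq:O=dext} is \emph{equivalent} to the denominator formula \eqref{eq:KRd}, which is known in types $\mathrm{A}_n,\mathrm{B}_n,\mathrm{C}_n,\mathrm{D}_n,\mathrm{G}_2$ and in the case $(k,l)=(r/d_i,1)$ by \cite{FO}; and in the $(\clubsuit)$ regime Proposition~\ref{Prop:evid} (via the explicit correction term $\Delta_{ij}$ of Proposition~\ref{Prop:dext2}) verifies the sub-case $d_i=d_j=1$, $k=l<r$. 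Up to this point you are reproducing the paper's own evidence, not adding a proof.

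The genuine gaps are exactly the steps you flag as ``to be done'' and then treat as if they could be carried out: (i) Conjecture~\ref{Conj:KRd} in types $\mathrm{E}_{6,7,8}$ and $\mathrm{F}_4$ is open, and your proposed bootstrap via fusion and the $T$-system ``ruling out unexpected cancellations'' is precisely the hard part — no argument is given; (ii) in the $(\clubsuit)$ regime you propose to extract pole orders from the universal-coefficient formulas of \cite{OS} ``with the gaps noted in the paper corrected by a direct analysis,'' but the paper explicitly records that the proofs in \cite{OS} for the case $(\ast)$ (type $\mathrm{CFG}$, $d_i=d_j=1$, $k=l\in\Z_{\ge r}\setminus r\Z$) contain gaps, so this input cannot be cited as established; and (iii) Proposition~\ref{Prop:evid} covers only $k=l<r$, while the conjecture also requires $k=l>r$ with $k\notin r\Z$, where even though the $\gext^1$ side reduces (as in the proof of Proposition~\ref{Prop:dext2}) to the residue $s=k \bmod r$, the $R$-matrix side is exactly the unresolved case $(\ast)$. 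So the proposal does not close any case beyond what the paper already knows, and the conjecture remains open.
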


\begin{Lem}
The equation~\eqref{eq:O=dext} holds if and only if the equation 
\begin{equation} \label{eq:o=dExt}
\mathfrak{o}(V^{(i)}_{k,q^p}, V^{(j)}_{l, q^s}) = \dim_\kk \Ext_{\tPi}^1(q^pK^{(i)}_k, q^sK^{(j)}_l)
\end{equation}
holds for any $p, s \in \Z$.
\end{Lem}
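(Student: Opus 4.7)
The plan is to show that, as $(p,s)$ ranges over $\Z^2$, both sides of \eqref{eq:o=dExt} record the individual Laurent coefficients of the two Laurent polynomials appearing in \eqref{eq:O=dext}. The equivalence will then follow from the elementary fact that two Laurent polynomials in $q$ agree exactly when all of their coefficients agree.

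First, on the representation-theoretic side, I would apply the spectral-shift relation~\eqref{spshiftd} to write $d_{V^{(i)}_{k,q^p}, V^{(j)}_{l,q^s}}(z) \equiv d_{V^{(i)}_k, V^{(j)}_l}(q^{s-p}z) \pmod{\kb^\times}$. Taking divisors yields
\[
\mathfrak{O}(V^{(i)}_{k,q^p}, V^{(j)}_{l,q^s}) \;=\; q^{p-s}\,\mathfrak{O}(V^{(i)}_k, V^{(j)}_l)
\]
in $\Z[\kb^\times]$, and since all zeros of $d_{V^{(i)}_k, V^{(j)}_l}(z)$ lie in $q^\Z \subset \kb^\times$ (as remarked just before Lemma~\ref{Lem:KRO}), this is naturally an identity in $\Z[q^{\pm 1}]$. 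Extracting the coefficient of $1 \in \kb^\times$ then identifies $\mathfrak{o}(V^{(i)}_{k,q^p}, V^{(j)}_{l,q^s})$ with the coefficient of $q^{s-p}$ in the Laurent polynomial $\mathfrak{O}(V^{(i)}_k, V^{(j)}_l)$.

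Next, on the preprojective side, using the convention $\gext^1_{\tPi}(M,N)_a = \Ext^1_{\tPi}(q^a M, N)$ and the fact that a common grading shift of both arguments preserves the Ext group, I would identify
\[
\Ext^1_{\tPi}(q^p K^{(i)}_k,\, q^s K^{(j)}_l) \;\cong\; \Ext^1_{\tPi}(q^{p-s} K^{(i)}_k,\, K^{(j)}_l) \;=\; \gext^1_{\tPi}(K^{(i)}_k, K^{(j)}_l)_{p-s}.
\]
Therefore $\dim_\kk \Ext^1_{\tPi}(q^p K^{(i)}_k, q^s K^{(j)}_l)$ equals the coefficient of $q^{p-s}$ in $\dim_q \gext^1_{\tPi}(K^{(i)}_k, K^{(j)}_l)$, i.e.\ the coefficient of $q^{s-p}$ in $\dim_{q^{-1}} \gext^1_{\tPi}(K^{(i)}_k, K^{(j)}_l)$.

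Combining the two computations, the scalar equation \eqref{eq:o=dExt} at a given pair $(p,s)$ is exactly the equality of the $q^{s-p}$-coefficients of the two Laurent polynomials featuring in \eqref{eq:O=dext}. Since $s-p$ runs through all of $\Z$, the family of such scalar identities is equivalent to the coefficient-by-coefficient (hence full) equality of these two Laurent polynomials, which is \eqref{eq:O=dext}. I do not expect any serious obstacle: the argument is a bookkeeping that aligns spectral-parameter shifts on the quantum affine side with grading shifts on the preprojective side, the only point requiring a bit of care being the reduction to a polynomial identity via the fact that all zeros of $d_{V^{(i)}_k, V^{(j)}_l}(z)$ are contained in $q^\Z$, which has already been recorded in the paper.
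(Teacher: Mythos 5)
Your argument is correct and is essentially the paper's own proof: use \eqref{spshiftd} to get $\mathfrak{O}(V^{(i)}_{k,q^p}, V^{(j)}_{l,q^s}) = q^{p-s}\mathfrak{O}(V^{(i)}_k, V^{(j)}_l)$ and the grading-shift convention to get $q^{s-p}\dim_q \gext^1_{\tPi}(K^{(i)}_k, K^{(j)}_l)$, then compare the relevant coefficients as $s-p$ runs over $\Z$. The only difference is presentational (you phrase it as coefficient extraction of $q^{s-p}$ rather than constant terms of the shifted series), so there is nothing to add.
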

\begin{proof}
Note that the left hand side of \eqref{eq:o=dExt} is the constant term of 
\[ \mathfrak{O}(V^{(i)}_{k, q^p}, V^{(j)}_{l, q^s}) = q^{p-s} \mathfrak{O}(V^{(i)}_{k}, V^{(j)}_{l}),  \]
where the equality follows from the property \eqref{spshiftd}.
On the other hand, the right hand side of \eqref{eq:o=dExt} is the constant term of
\[ \dim_{q} \gext_{\tPi}^1(q^pK^{(i)}_k, q^sK^{(j)}_l) = q^{s-p} \dim_{q} \gext_{\tPi}^1(K^{(i)}_k, K^{(j)}_l). \]
From these observations, we obtain the assertion.
\end{proof}

Thus, we can rephrase Conjecture~\ref{Conj:main1} as follows.  

\begin{Conj}[$\Leftrightarrow$ Conjecture~\ref{Conj:main1}] \label{Conj:main2}
For any $i,j \in I$, $k,l \in \Z_{>0}$ and $p,s \in \Z$, the equality~\eqref{eq:o=dExt} holds. 
\end{Conj}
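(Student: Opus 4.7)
The plan is to prove the stated conjecture by reducing it, via the lemma immediately preceding it, to the graded equality of Conjecture~\ref{Conj:main1}:
\[ \mathfrak{O}(V^{(i)}_k, V^{(j)}_l) = \dim_{q^{-1}}\gext_{\tPi}^1(K^{(i)}_k, K^{(j)}_l) \]
for all $i,j \in I$ and $k,l \in \Z_{>0}$ with $kd_i \ge ld_j$. The case of general $p, s \in \Z$ then follows by matching the spectral parameter shift relation~\eqref{spshiftd} on the $R$-matrix side with the grading shift on the $\gext$ side, so it suffices to handle $p=s=0$.

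For cases where condition $(\clubsuit)$ is not satisfied, Proposition~\ref{Prop:dext} together with Lemma~\ref{Lem:KRO} shows that the desired equality is equivalent to the denominator formula~\eqref{eq:KRd} of Conjecture~\ref{Conj:KRd}. This is already known in types $\mathrm{A}, \mathrm{B}, \mathrm{C}, \mathrm{D}, \mathrm{G}_2$ and in the fundamental case $(k,l)=(r/d_i,1)$ for any type, by \cite{FO}. What remains is essentially type $\mathrm{F}_4$ with higher KR labels: I would attack this by combining the $T$-system for KR modules with a careful analysis of the extremal weights and socle structure of tensor products following the strategy of~\cite{OS,Fuj}, taking the output of Proposition~\ref{Prop:dext} to supply the matching representation-theoretic side automatically.

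For the remaining $(\clubsuit)$ cases, I would extend the computation carried out in Appendix~\ref{Apx} to prove Proposition~\ref{Prop:evid} (which handles $k=l<r$) to arbitrary $k=l\not\in r\Z$. The explicit calculation required is that of the cokernel
\[ q^{-kd_i-ld_j}\, e_i\tPi e_j / \bigl(e_i\ep_i^k\tPi e_j + e_i\tPi\ep_j^l e_j\bigr), \]
which by Proposition~\ref{Prop:Ext} computes $\gext^1$; equivalently, one needs a precise description of the intersection $e_i\ep_i^k\tPi e_j \cap e_i\tPi\ep_j^l e_j$ via explicit paths in $\tQ$. In these cases $d_i=d_j=1$, so the relation (R2) at the unique vertex with larger symmetrizer yields a braided commutation of $\ep_i$ past a pair $\alpha_{ij}\alpha_{ji}$ (as observed in the proof of Lemma~\ref{Lem:incl}); iterating this produces a small correction term, which should match the corresponding correction that the $T$-system predicts on the $R$-matrix side.

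The main obstacle is the $(\clubsuit)$ case with $k=l$ large: the relations produced by (R2) become increasingly entangled as $k$ grows, and a clean basis of the relevant quotient is not evident from a direct approach. A conceptually cleaner route, signposted in the introduction, is to realize both sides simultaneously as invariants of a common cluster algebra, whose additive categorification by (a suitable subcategory of) $\Cc(\tPi)$ and monoidal categorification by a subcategory of $\CC$ should together force the desired equality via matched cluster structures. This would bypass the case-by-case analysis entirely, but requires first establishing the compatibility of the two categorifications at the level of exchange triangles versus pole orders of $R$-matrices, which is itself a substantial step.
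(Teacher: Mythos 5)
The statement you are asked to prove is a \emph{conjecture}: the paper does not prove it, and its only actual content at the level of a proof is the equivalence with Conjecture~\ref{Conj:main1}, which is exactly the preceding Lemma (matching the spectral shift relation~\eqref{spshiftd} against the grading shift on $\gext^1$, so that the identity for all $p,s$ reduces to the constant-term identity at $p=s=0$). Your first paragraph reproduces this reduction correctly, and your observation that the non-$(\clubsuit)$ cases are, via Proposition~\ref{Prop:dext} and Lemma~\ref{Lem:KRO}, equivalent to the denominator formula~\eqref{eq:KRd} is also exactly how the paper frames the evidence. Up to that point you are in step with the paper.

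Everything beyond that, however, is a genuine gap rather than a proof. The type $\mathrm{F}_4$ higher-KR cases of Conjecture~\ref{Conj:KRd} are open, and your proposed route ``$T$-system plus the strategy of \cite{OS,Fuj}'' is not carried out; worse, the paper explicitly records that gaps were found in the proofs of \cite{OS} precisely in the case $(\ast)$ relevant to types $\mathrm{C}_n$ and $\mathrm{G}_2$, so leaning on that strategy without repairing it does not yield the needed denominators. For the $(\clubsuit)$ cases, the paper only computes $\Delta_{ij}(q)$ for $k=l<r$ (Proposition~\ref{Prop:evid} via Appendix~\ref{Apx}); your plan to extend this to all $k=l\not\in r\Z$ is a computation on the preprojective-algebra side only --- even if completed, it would give $\dim_q\gext^1_{\tPi}(K^{(i)}_k,K^{(j)}_k)$ but not the pole orders $\mathfrak{O}(V^{(i)}_k,V^{(j)}_k)$, which are unknown in exactly these cases, so there is nothing established to compare against. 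Finally, the cluster-theoretic route you sketch (matching exchange triangles in the additive categorification with pole orders in the monoidal one) is itself an unproven compatibility of at least the same depth as the conjecture, as you acknowledge. In short, your proposal correctly reproduces the paper's reduction of Conjecture~\ref{Conj:main2} to Conjecture~\ref{Conj:main1} and its catalogue of known cases, but it does not prove the conjecture, and the steps you add would not close the open cases as described.
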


\begin{Rem}
We can formally extend Conjecture~\ref{Conj:main1} (or \ref{Conj:main2}) beyond the KR modules. 
In the context of monoidal categorification of cluster algebras, for each real simple module $M \in \CC$ corresponding to a cluster monomial, one can define the corresponding generic kernel $K_M \in \tPi \gmod$ as discussed in \cite[\S5.2.2]{HL}. (Note that Conjecture~5.3 therein is now a theorem thanks to the recent progress~\cite{KKOP2}.) 
Then, as a generalization of the conjectural equality~\eqref{eq:o=dExt}, we may imagine that the equality 
\begin{equation}\label{eq:q=dExtgeneral}
\mathfrak{o}(M, N) = \dim_\kk \Ext_{\tPi}^1(K_M, K_N)
\end{equation}  
holds for any two simple modules $M,N \in \CC$ corresponding to cluster monomials. 
However, we do not have any pieces of evidence for the validity of such an equality \eqref{eq:q=dExtgeneral} except for the KR modules at this moment. 
\end{Rem}

\begin{appendix}
\section{Computations in the exceptional case $(\clubsuit)$}
\label{Apx}
In this appendix, we assume that our Lie algebra $\fg$ is of type $\mathrm{C}_n$, $\mathrm{F}_4$ or $\mathrm{G}_2$.
Following the convention of \cite[Chapter 4]{Kac}, we identify $I$ with the set $\{1,2,\ldots,n\}$ so that we have $i \sim j$ if and only if $|i-j|=1$, and
\[(d_1, \ldots, d_n) = \begin{cases}
(1,\ldots,1.2) & \text{if $\fg$ is of type $\mathrm{C}_n$}, \\
(2,2,1,1) & \text{if $\fg$ is of type $\mathrm{F}_4$}, \\
(3,1) & \text{if $\fg$ is of type $\mathrm{G}_2$}.
\end{cases}\] 

\begin{Prop} \label{Prop:dext2}
Let $\fg$ be either of type $\mathrm{C}_n$, $\mathrm{F}_4$ or $\mathrm{G}_2$.
Suppose that $i, j \in I$ and $k \in \Z_{> 0}$ satisfy $d_i = d_j =1$ and $k \not \in r\Z$.
Then we have
\begin{equation} \label{eq:dext2}
\dim_q \gext_{\tPi}^1(K^{(i)}_{k}, K^{(j)}_{k}) = q^{-k}[k]_q\sum_{u=0}^{rh^\vee}\tc_{ij}(u)q^{-u} - \Delta_{ij}(q^{-1}),
\end{equation}
where $\Delta_{ij}(q) \seq q^2 \dim_{q^{-1}}e_i( (\ep_i \tPi + \tPi \ep_j)/\tPi \ep_j) e_j $, which is explicitly computed as follows:
\begin{enumerate}
\item \label{Prop:dext2:C} 
When $\fg$ is of type $\mathrm{C}_n$ and $1 \le i,j <n$, we have
\[ \Delta_{ij}(q) = \sum_{a=1}^{i+j-n}q^{2n-i-j+2a+2}.\]
Note that $\Delta_{ij}(q) =0$ if $i+j \le n$.
\item \label{Prop:dext2:F}
When $\fg$ is of type $\mathrm{F}_4$, we have
\[ \Delta_{33}(q) = q^4+q^8+q^{10}+q^{14}, \quad \Delta_{34}(q)=\Delta_{43}(q)=q^9, \quad \Delta_{44}(q)=0. \] 
\item \label{Prop:dext2:G}
When $\fg$ is of type $\mathrm{G}_2$, we have
$ \Delta_{22}(q) = q^6.$ \qedhere
\end{enumerate}
\end{Prop}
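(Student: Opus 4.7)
The plan is to refine the argument of Proposition~\ref{Prop:dext} to cope with the failure of the inclusion~\eqref{eq:incl} under condition $(\clubsuit)$. Starting from the isomorphism in Proposition~\ref{Prop:Ext}, I will consider the short exact sequence
\begin{equation*}
0 \to N_k \to e_i(\tPi/\tPi\ep_j^k)e_j \to e_i\bigl(\tPi/(\ep_i^k\tPi + \tPi\ep_j^k)\bigr)e_j \to 0,
\end{equation*}
where $N_k \seq e_i\bigl((\ep_i^k\tPi + \tPi\ep_j^k)/\tPi\ep_j^k\bigr)e_j$. The middle term is computed via $e_i(\tPi/\tPi\ep_j^k)e_j \cong q^k\bD(e_iK^{(j)}_k)$ together with Lemma~\ref{Lem:dimK}, producing the first summand on the right-hand side of \eqref{eq:dext2}. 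It therefore suffices to prove $\dim_q N_k = q^{2(k-1)}\dim_q N_1$ (so that $q^{-2k}\dim_q N_k = \Delta_{ij}(q^{-1})$) and then to compute $\Delta_{ij}(q) = q^2\dim_{q^{-1}}N_1$ explicitly in each type.

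For the dimension equality, I will exploit the fact that $(\clubsuit)$ combined with $d_i = d_j = 1$ yields $\ep_i^r = \ep = \ep_j^r$ near the relevant idempotents, where $\ep$ is the central element of $\tPi$. Writing $k = qr + s$ with $1 \le s \le r-1$, centrality of $\ep$ gives
\begin{equation*}
\ep_i^k\tPi + \tPi\ep_j^k = \ep^q(\ep_i^s\tPi + \tPi\ep_j^s), \qquad \tPi\ep_j^k = \ep^q\tPi\ep_j^s,
\end{equation*}
so multiplication by $\ep^q$ induces a graded surjection $N_s \twoheadrightarrow q^{-2qr}N_k$. Since $e_i\Pi(\infty)e_j$ is graded free over $\kk[\ep_i]$ by Proposition~\ref{Prop:Piinfty}\eqref{Prop:Piinfty:lf}, left multiplication by $\ep = \ep_i^r$ is injective on it, and hence the map above is an isomorphism; Corollary~\ref{Cor:bd} ensures that the computation of $N_k$ in $\tPi$ coincides with the one in $\Pi(\infty)$. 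This gives $\dim_q N_k = q^{2qr}\dim_q N_s$, which proves the dimension equality for $s=1$. The remaining case $s \ge 2$ arises only in type $\mathrm{G}_2$ with $s=2$ and $i=j=2$, and is dispatched by a direct computation using the (R2) relation $\ep_2^2\alpha_{21}\alpha_{12} + \ep_2\alpha_{21}\alpha_{12}\ep_2 + \alpha_{21}\alpha_{12}\ep_2^2 = 0$ at vertex $2$ to verify $\dim_q N_2 = q^2\dim_q N_1$.

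The explicit values of $\Delta_{ij}(q)$ are obtained by analyzing $N_1$ as the image of left multiplication by $\ep_i$ on $e_i(\tPi/\tPi\ep_j)e_j \cong \bD(e_i\bar{I}_j)$, the graded dimension of the latter being determined by Proposition~\ref{Prop:dI} in terms of $\tc_{ij}(u)$. For type $\mathrm{C}_n$ with $1 \le i, j < n$, the contributing elements correspond to residue classes of paths $\alpha_{i,i+1}\cdots\alpha_{n-1,n}\alpha_{n,n-1}\cdots\alpha_{j+1,j}$ travelling through the long-root vertex $n$; systematic use of the (R1) relation $\ep_{n-1}^2\alpha_{n-1,n} = \alpha_{n-1,n}\ep_n$ and the (R2) relation $\alpha_{n,n-1}\alpha_{n-1,n}=0$ produces precisely the summation $\sum_{a=1}^{i+j-n}q^{2n-i-j+2a+2}$, with the vanishing for $i+j \le n$ explained by the absence of such obstructions. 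Types $\mathrm{F}_4$ and $\mathrm{G}_2$ are handled by analogous but shorter computations near the vertices with $d = 2$ (resp.~$d = 3$), yielding the stated closed-form expressions.

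The main obstacles will be the explicit case-by-case description of $N_1$ in Step 4 and the ancillary verification $\dim_q N_2 = q^2\dim_q N_1$ in type $\mathrm{G}_2$: both require delicate tracking of how the (R1) and (R2) relations interact with left multiplication by $\ep_i$ versus right multiplication by $\ep_j$, and in particular identifying which residue classes in $e_i(\tPi/\tPi\ep_j)e_j$ lie in the image of $\ep_i\cdot$ from the left.
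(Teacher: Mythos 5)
Your proposal is correct and follows essentially the same route as the paper's proof: the same short exact sequence reducing $\dim_q\gext^1_{\tPi}(K^{(i)}_k,K^{(j)}_k)$ to $\dim_q N_k$ via Proposition~\ref{Prop:Ext} and Lemma~\ref{Lem:dimK}, the same reduction $N_k\cong q^{2(k-1)}N_1$ using centrality of $\ep$ (with the only extra case $s=2$ in type $\mathrm{G}_2$ settled by the (R2) relation at the vertex $2$, exactly as in the paper), and the same final step of computing $N_1$ as the image of left multiplication by $\ep_i$ on $e_i(\tPi/\tPi\ep_j)e_j$ with basis controlled by Proposition~\ref{Prop:dI}; your added justification of injectivity of multiplication by $\ep^m$ via $\kk[\ep_i]$-freeness of $e_i\Pi(\infty)e_j$ and Corollary~\ref{Cor:bd} is a welcome clarification of a point the paper leaves implicit. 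The only part left genuinely unexecuted is the explicit case-by-case evaluation of $\Delta_{ij}(q)$ (in type $\mathrm{C}_n$ the contributing basis elements are actually the whole family $\alpha_{ia}\alpha_{an}\alpha_{nj}$ with $n-j<a\le i$, and the (R2) relations at the short vertices are also needed), which the paper carries out by the $\rho_a,\rho'_a$ computation and the tables for $\mathrm{F}_4$ and $\mathrm{G}_2$.
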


We give a proof of Proposition~\ref{Prop:dext2} in \S\ref{Ssec:pfdext2} below. 

\subsection{Proof of Proposition~\ref{Prop:evid}} \label{Ssec:list}
By Proposition~\ref{Prop:dext2}, it is enough to show that the equality
\begin{equation}
\mathfrak{O}(V^{(i)}_{k}, V^{(j)}_k) = \frac{1-q^{2k}}{1-q^2}\sum_{u=0}^{rh^\vee} \tc_{ij}(u)q^{u+1} - \Delta_{ij}(q)
\end{equation}
holds under the assumption $d_i = d_j = 1$ and $0 < k < r$.
We can check it directly using the known formulas listed below.
They are quoted from \cite[\S 4.3 and Appendix A]{FO}.
Note that the denominators are originally computed in \cite{AK,OS19}.

Type $\mathrm{C}_n$: If $1 \le i, j < n$, we have
\begin{align*} 
\sum_{u=0}^{rh^\vee} \tc_{ij}(u)q^u &= \sum_{u=1}^{\min(i,j)} (q^{|i-j|+2u-1} + q^{2n-i-j+2u+1}), \allowdisplaybreaks \\
\mathfrak{O}(V^{(i)}_1, V^{(j)}_1) &=  \sum_{u=1}^{\min(i,j,n-i,n-j)} q^{|i-j|+2u} + \sum_{u=1}^{\min(i,j)}q^{2n-i-j+2u+2}.
\end{align*}

Type $\mathrm{F}_4$: We have
\begin{align*}
\sum_{u=0}^{rh^\vee} \tc_{ij}(u)q^u &= \begin{cases}
q+q^3+q^5+2q^7+2q^9+2q^{11}+q^{13}+q^{15}+q^{17} &\text{if $(i,j) = (3,3)$}, \\
q^2+q^6+q^8+q^{10}+q^{12}+q^{16} &\text{if $\{i,j\}=\{3,4\}$}, \\
q+q^7+q^{11}+q^{17} &\text{if $(i,j)=(4,4)$},
\end{cases} \allowdisplaybreaks\\
\mathfrak{O}(V^{(i)}_1, V^{(j)}_1) &= \begin{cases}
q^2+q^6+q^8+q^{10}+2q^{12}+q^{16}+q^{18} &\text{if $(i,j) = (3,3)$}, \\
q^3+q^7+q^{11}+q^{13}+q^{17} &\text{if $\{i,j\}=\{3,4\}$}, \\
q^2+q^8+q^{12}+q^{18} &\text{if $(i,j)=(4,4)$}.
\end{cases}
\end{align*}

Type $\mathrm{G}_2$: We have
\begin{align*}
\sum_{u=0}^{rh^\vee} \tc_{22}(u)q^u &= q+q^5+q^7+q^{11},\allowdisplaybreaks\\
\mathfrak{O}(V^{(2)}_1, V^{(2)}_1) &= q^2+q^8+q^{12}, \allowdisplaybreaks\\
\mathfrak{O}(V^{(2)}_2, V^{(2)}_2) &= q^2+q^4+2q^8+q^{10}+q^{12}+q^{14}.
\end{align*}

\subsection{Proof of Proposition~\ref{Prop:dext2}}
\label{Ssec:pfdext2}
We shall prove the equality~\eqref{eq:dext2} by applying $\dim_q (-)$ to
the short exact sequence
\[ 0 \to q^{-2k}e_i\frac{\ep_i^k \tPi + \tPi \ep_j^k}{\tPi \ep_j^k}e_j \to q^{-2k}e_i\frac{\tPi}{\tPi \ep_j^k}e_j \to q^{-2k}e_i \frac{\tPi}{\ep_i^k \tPi + \tPi \ep_j^k}e_j \to 0\]
and then by using Lemma~\ref{Lem:dimK} and Proposition~\ref{Prop:Ext}.
To this end, we need to show that there is an isomorphism 
\[q^{-2k}e_i\frac{\ep_i^k \tPi + \tPi \ep_j^k}{\tPi \ep_j^k}e_j \cong q^{-2}e_i\frac{\ep_i \tPi + \tPi \ep_j}{\tPi \ep_j}e_j \]
of graded $\kk$-vector spaces.
Writing $k=mr+s$ with $m \in \Z_{\ge 0}$ and $0<s<r$, we have
\[ 
q^{-2k}e_i\frac{\ep_i^k \tPi + \tPi \ep_j^k}{\tPi \ep_j^k}e_j \cong q^{-2k}e_i\frac{(\ep_i^s \tPi + \tPi \ep_j^s)\ep^m}{\tPi \ep_j^s \ep^m}e_j \cong q^{-2s}e_i\frac{\ep_i^s \tPi + \tPi \ep_j^s}{\tPi \ep_j^s}e_j.
\]
Therefore, when $s=1$, we are done.
When $s=2$, it is necessarily the case when $\fg$ is of type $\mathrm{G}_2$ and $i=j=2$.
Then, by the similar argument as the proof of Lemma~\ref{Lem:incl} (Case \eqref{C4}), we see that $e_2 \ep_2^2 \tPi e_2 = e_2 \ep_2 \tPi \ep_2 e_2$ and hence
\[ q^{-4}e_2\frac{\ep_2^2 \tPi + \tPi \ep_2^2}{\tPi \ep_2^2}e_2 = q^{-4}e_2\frac{(\ep_2 \tPi + \tPi \ep_2)\ep_2}{\tPi \ep_2^2}e_2 \cong q^{-2} e_2\frac{\ep_2 \tPi + \tPi \ep_2}{\tPi \ep_2}e_2.\]  
This completes the proof of the equality \eqref{eq:dext2}.

Next, we shall compute $\Delta_{ij}(q)$ explicitly.
By the symmetry, we may assume $1 \le i \le j \le n$.
Note that the graded vector space $q^{-2}e_i( (\ep_i \tPi + \tPi \ep_j)/\tPi \ep_j)e_j$ is the image of the left multiplication map
\[ \ep_i \cdot \colon e_i(\tPi/\tPi\ep_j)e_j \to q^{-2} e_i(\tPi/\tPi\ep_j)e_j.\]
We shall compute this map case-by-case by choosing an explicit $\kk$-basis of the space $e_i(\tPi/\tPi\ep_j)e_j$ with the help of Proposition~\ref{Prop:dI} and the explicit formulas of $\tc_{ij}(u)$ listed in \S\ref{Ssec:list}.
To simplify the notation, for any $a,b \in I = \{1,\ldots,n\}$, we set
\[\alpha_{ab} \seq \begin{cases}
\alpha_{a, a+1} \alpha_{a+1,a+2} \cdots \alpha_{b-1,b} & \text{if $a < b$}, \\
\alpha_{a,a-1} \alpha_{a-1,a-2} \cdots \alpha_{b+1,b} & \text{if $a > b$}, \\
e_a & \text{if $a = b$}.
\end{cases} \] 

Type $\mathrm{C}_n$: 
With $1 \le i \le j <n$ fixed, we set
\[
\rho_a \seq \alpha_{ia}\alpha_{aj}, \qquad \rho'_a \seq \alpha_{ia} \alpha_{an} \alpha_{nj}
\]
for each $1 \le a \le i$, and regard them as elements of $e_i (\tPi / \tPi \ep_j) e_j$.
Note that they are homogeneous with $\deg_1 (\rho_a) = 2a -i-j$ and $\deg_1 (\rho'_a) = 2a-2n-i+j-2$.
Thanks to Proposition~\ref{Prop:dI} and the above explicit formula of $\tc_{ij}(u)$, we can easily see that the set $\{ \rho_a, \rho'_a\}_{1\le a \le i}$ gives a $\kk$-basis of $e_i (\tPi / \tPi \ep_j) e_j$.
Using the relations (R1) and (R2), we compute $\ep_i \rho_a =0$ for any $1 \le a \le i$, and
\begin{align*} 
\ep_i \rho'_a &= \pm \alpha_{ia} \alpha_{a,n-1} \alpha_{n-1,n-2} \alpha_{n-2,n-1} \alpha_{n-1,j}  \\
&=\begin{cases}
0 & \text{if $1 \le a \le \min(i, n-j) $}, \\
\rho_{a-n+j} & \text{if $n-j< a \le i$}. 
\end{cases}
\end{align*}
Therefore, we obtain \[\Delta_{ij}(q) = \sum_{n-j<a\le i}q^{-\deg_1(\rho'_{a})} 
= \sum_{a=1}^{i+j-n} q^{2n-i-j+2a+2}.\]

Type $\mathrm{F}_4$ and $\mathrm{G}_2$:
When $(i,j) = (4,4)$ in type $\mathrm{F}_4$, the left multiplication map by $\ep_4$ from $e_4(\tPi/\tPi\ep_4)e_4$ to $q^{-2} e_4(\tPi/\tPi\ep_4)e_4$ is zero because $\dim_{q} e_4(\tPi/\tPi\ep_4)e_4 = q^{-1} + q^{-7} + q^{-11} + q^{-17}$. 
Therefore we have $\Delta_{44}(q)=0$.
For the other case, we compute a homogeneous $\kk$-basis of the space $e_i(\tPi/\tPi\ep_j)e_j$ and its image under the map $x \mapsto \ep_i x$ as in the Tables \ref{Table:F1}, \ref{Table:F2} and $\ref{Table:G}$ below. 
These computations yield the desired formulas of $\Delta_{ij}(q)$.

\begin{table}[H]
\[
\begin{array}{|c|ccc|} \hline
\deg_1(x)& \text{basis element $x$} & \mapsto & \ep_i x \\ \hline \hline
0 & e_3 & \mapsto & 0 \\ \hline
-2 & \alpha_{34}\alpha_{43} & \mapsto & 0 \\ \hline
-4 & \alpha_{32}\alpha_{23} & \mapsto & \pm \alpha_{34}\alpha_{43} \\ \hline
\multirow{2}{*}{$-6$} & \alpha_{32}\alpha_{24}\alpha_{43} & \mapsto & 0 \\ 
& \alpha_{34}\alpha_{42}\alpha_{23} & \mapsto & 0 \\ \hline
\multirow{2}{*}{$-8$} & \alpha_{31}\alpha_{13} & \mapsto & \pm \alpha_{32}\alpha_{24}\alpha_{43} \pm \alpha_{34}\alpha_{42}\alpha_{23} \\ 
& \alpha_{34}\alpha_{42}\alpha_{24}\alpha_{43} & \mapsto & 0 \\ \hline
\multirow{2}{*}{$-10$} & \alpha_{31}\alpha_{14}\alpha_{43} & \mapsto & \pm\alpha_{34}\alpha_{42}\alpha_{24}\alpha_{43} \\
& \alpha_{34}\alpha_{41}\alpha_{13} & \mapsto & \pm\alpha_{34}\alpha_{42}\alpha_{24}\alpha_{43} \\ \hline
-12 & \alpha_{34}\alpha_{41}\alpha_{14}\alpha_{43} & \mapsto & 0 \\ \hline
-14 & \alpha_{31}\alpha_{13}\ep_3\alpha_{31}\alpha_{13} & \mapsto & \pm \alpha_{34}\alpha_{41}\alpha_{14}\alpha_{43} \\ \hline
-16 & \alpha_{32}\alpha_{24}\alpha_{42}\alpha_{24}\alpha_{42}\alpha_{23} & \mapsto & 0 \\ \hline
\end{array}
\]
\caption{$(i,j)=(3,3)$ in type $\mathrm{F}_4$}
\label{Table:F1}
\end{table}

\begin{table}[H]
\[\begin{array}{|c|ccc|} \hline
\deg_1(x) & \text{basis element $x$} & \mapsto & \ep_i x \\
\hline \hline
-1& \alpha_{34} & \mapsto & 0 \\ \hline
-5& \alpha_{32}\alpha_{24}& \mapsto &0  \\ \hline 
-7&\alpha_{34}\alpha_{42}\alpha_{24}&\mapsto &0 \\ \hline 
-9&\alpha_{31}\alpha_{14}&\mapsto&\pm\alpha_{34}\alpha_{42}\alpha_{24} \\ \hline 
-11&\alpha_{34}\alpha_{41}\alpha_{14}&\mapsto&0 \\ \hline  
-15&\alpha_{31}\alpha_{13}\ep_3 \alpha_{31}\alpha_{14}&\mapsto&0 \\ \hline  
\end{array}\]
\caption{$(i,j)=(3,4)$ in type $\mathrm{F}_4$}
\label{Table:F2}
\end{table}

\begin{table}[H]
\[
\begin{array}{|c|ccc|} \hline
\deg_1(x) & \text{basis element $x$} & \mapsto & \ep_i x \\ \hline \hline
0 & e_2 & \mapsto & 0 \\ \hline
-4 & \ep_2\alpha_{21}\alpha_{12} & \mapsto & 0 \\ \hline
-6 & \alpha_{21}\alpha_{12} & \mapsto & \ep_2\alpha_{21}\alpha_{12} \\ \hline
-10 & \alpha_{21}\alpha_{12}\ep_2\alpha_{21}\alpha_{12} & \mapsto & 0 \\ \hline
\end{array}
\]
\caption{$(i,j)=(2,2)$ in type $\mathrm{G}_2$}
\label{Table:G}
\end{table}
\end{appendix}
\subsection*{Acknowledgments}
We are grateful to David Hernandez, Osamu Iyama, Geoffrey Janssens, Syu Kato, Bernhard Keller, Taro Kimura, Bernard Leclerc, Yuya Mizuno, Se-jin Oh and Hironori Oya for useful discussions or comments.
Moreover, the idea of expressing deformed Cartan matrices in terms of the graded Euler-Poincar\'{e} pairings was inspired in part by Bernhard Keller's talk \cite{Kels}.
We thank him for sharing his insights.
Yuya Mizuno asked us about Jordan-H\"older filtrations of projective modules over the generalized preprojective algebras. The idea for Corollary~\ref{Cor:filtJ} was motivated from his question, and we thank him.
A part of this project was motivated by \cite{HK}, on which the second named author is assigned to give his lecture at Winter School ``Koszul Algebra and Koszul Duality" at Osaka City University. 
He thanks the organizers of the school for giving this opportunity.
\bibliography{ref}
\end{document}